\newcommand*{\U}{\smallsmile}
\newcommand*{\pp}{\prec\!\!\prec}
\newcommand*{\qq}{\succ\!\!\succ}
\numberwithin{equation}{section}
\newtheorem*{thm*}{Theorem}
\newtheorem{thm}{Theorem}[section]
\newaliascnt{prp}{thm}
\newtheorem{prp}[prp]{Proposition}
\newaliascnt{cor}{thm}
\newtheorem{cor}[cor]{Corollary}
\newaliascnt{lem}{thm}
\newtheorem{lem}[lem]{Lemma}
\theoremstyle{definition}
\newaliascnt{dfn}{thm}
\newtheorem{dfn}[dfn]{Definition}
\newaliascnt{qst}{thm}
\newaliascnt{xpl}{thm}
\newtheorem{xpl}[xpl]{Example}
\newaliascnt{rmk}{thm}
\newtheorem{rmk}[rmk]{Remark}
\author{Tristan Bice}
\email{tristan.bice@gmail.com}
\thanks{The first author is supported by IMPAN (Poland)}
\author{Charles Starling}
\email{cstar@math.carleton.ca}
\thanks{The second author is supported by a Carleton University internal research grant}
\thanks{This collaboration began at the Fields Institute ``Workshop on Dynamical Systems and Operator Algebras'' and ``Workshop on New Directions in Inverse Semigroups'' held at the University of Ottawa in May-June 2016.}
\keywords{compact, Hausdorff, basis, Stone space, rather below, \'{e}tale groupoid, inverse semigroup}
\subjclass[2010]{03C65, 06E15, 06E75, 06B35, 54D45, 54D70, 54D80}
\title[Non-Commutative Locally Compact Locally Hausdorff Stone Duality]{General Non-Commutative Locally Compact Locally Hausdorff Stone Duality}
\begin{document}

\begin{abstract}
We extend the classical Stone duality between zero dimensional compact Hausdorff spaces and Boolean algebras.  Specifically, we simultaneously remove the zero dimensionality restriction and extend to \'{e}tale groupoids, obtaining a duality with an elementary class of inverse semigroups.
\end{abstract}

\maketitle

\section*{Introduction}

\subsection*{Motivation}

There has been a recent surge of interest in extending classical Stone dualities between certain lattices and topological spaces to corresponding non-commutative objects like inverse semigroups and \'{e}tale groupoids. This line of investigation started with work of Kellendonk \cite{Kellendonk1997} and Lenz \cite{Lenz2008} who aimed to reconstruct tiling groupoids (which are zero-dimensional) from a certain inverse semigroup of its compact bisections defined from the geometry of the tiling in question. Exel \cite{Exel2010} then showed one could always reconstruct a zero-dimensional \'etale groupoid from its inverse semigroup of compact bisections. A number of recent papers, notably by Kudryavtseva, Lawson, Lenz and Resende, have generalized these to various settings.  For example \cite{Lawson2012} and \cite{KudryavtsevaLawson2016} extend the classical Stone duality between generalized Boolean algebras and zero-dimensional locally compact Hausdorff topological spaces, while \cite{Resende2007}, \cite{LawsonLenz2013} and \cite{KudryavtsevaLawson2017} extend the duality between spatial frames/locales and sober topological spaces.  However, even in the classical commutative cases, both these dualities have their drawbacks.  Specifically, zero-dimensional spaces are often too restrictive for applications, for example when we want use \'{e}tale groupoids to define C*-algebras with few projections.  On the other hand, while frames describe very general topological spaces, they are often too big, usually uncountable even when the spaces they describe are countable (e.g. the frame of open sets of $\mathbb{Q}$).  In model theoretic terms the problem is that, while Boolean algebras are first order structures, frames are second order structures, relying as they do on infinitary operations, namely infinite joins.

Our goal is to show that it is possible to get the best of both worlds, removing the zero-dimensionality restriction while still obtaining a duality with a first order structure.  More precisely we show that certain bases of general locally compact locally Hausdorff \'{e}tale groupoids are dual to a natural first order finitely axiomatizable class of inverse semigroups.  The key is to consider not just the canonical order $\leq$ in the semigroup but also the `rather below' relation $\prec$, which allows one to recover compact containment on the corresponding basis elements.

In fact a similar idea, at least in the commutative case, already appears in \cite{Shirota1952}, which has led to the study of `compingent algebras', i.e. Boolean algebras together with an extra proximal neighbourhood relation $\ll$ - see \cite{deVries1962} and \cite{BaayendeRijk1963}.  These are dual to algebras of regular open sets in compact Hausdorff spaces, where the join operation is given by $\overline{O\cup N}^\circ$.  In contrast, we consider general open sets where the join operation is given simply by the union $O\cup N$.  We feel this is the more natural operation to consider, although it would also be interesting to see if a similar non-commutative extension of Shirota/de Vries duality could be obtained with appropriately defined `compingent semigroups'.

\subsection*{Outline}

In \cite[\S1-\S4]{BiceStarling2016} we obtained a duality between basic lattices and certain bases of locally compact Hausdorff spaces.  The first order of business is to extend this from Hausdorff to locally Hausdorff spaces, which is the content of \autoref{secAuxiliaryRelations}-\autoref{secFunctoriality}.  Apart from the inherent interest in generalization, there are natural examples of \'{e}tale groupoids which are only locally Hausdorff and we would like our duality to cover such groupoids.  As a union of Hausdorff subsets is not necessarily Hausdorff, this means we can no longer assume our bases are closed under arbitrary finite unions.  We also no longer assume they are closed under finite intersections.  Again the motivation for this comes from \'{e}tale groupoids where often one deals with compact (open) bisections, which are not closed under taking intersections in the non-Hausdorff case.  On a more commutative level, this also allows us to answer \cite[Question 3.6]{BiceStarling2016} in the affirmative.

On the order theoretic side of things, this means we are no longer dealing with lattices but only conditional $\vee$-semilattices (i.e. only bounded pairs have joins \textendash\, see \cite[Definition I-4.5]{GierzHofmannKeimelLawsonMisloveScott2003}).  To extend the theory to such posets, in \autoref{subsecDistributivity} we investigate an appropriate version of distributivity for a general auxiliary relation $\prec$ (see \cite[Definition I-1-11]{GierzHofmannKeimelLawsonMisloveScott2003}), as well as a weaker notion in \autoref{subsecDecomposition}.  In \autoref{subsecHausdorff} we define and investigate an order theoretic analog $\U$ of the `Hausdorff union' relation.  From \autoref{subsecRatherBelow} onwards, we restrict our attention to the rather below relation $\prec$, which is the order theoretic analog of `compact containment'.

The abstract counterparts to the bases we consider are the basic posets we define in \autoref{secBasicPosets}.  We first investigate a couple of properties specific to basic posets in \autoref{subsecBasicProperties} and then provide some Boolean examples in \autoref{subsecBooleanExamples}.  We finish this section with \autoref{Ubases}, showing that $\cup$-bases of locally compact locally Hausdorff spaces are indeed $\U$-basic posets.  It might even be helpful to look at \autoref{Ubases} first to get some idea of the significance of the posets and relations we are considering. 

To obtain the other direction of the duality, we need to examine filters.  The first step is to show that characterizations of ultrafilters for Boolean algebras extend to basic posets, as shown in \autoref{ultrachars}.  With the help of the key \autoref{abC}, we then show in \autoref{SpacesFromPosets} how basic posets indeed become bases of their $\prec$-ultrafilter spaces, which are always locally compact locally Hausdorff spaces.

This completes duality between $\cup$-bases and $\U$-basic posets, at least as far objects in the respective categories are concerned.  In \autoref{secFunctoriality}, we show that this duality is also functorial for (even partially defined) continuous maps between spaces and certain relational basic morphisms between basic posets.  The extension to `non-commutative posets', i.e. inverse semigroups, and `non-commutative spaces', i.e. \'{e}tale groupoids, is then dealt with in \autoref{InverseSemigroups} and \autoref{Groupoids}.

\subsection*{Summary}\label{Summary}

The classic Stone duality is usually stated as `zero-dimensional locally compact Hausdorff spaces are dual to generalized Boolean algebras'.  But the duality really concerns bases rather than the spaces themselves.  To make this explicit, let us call a basis of a topological space a \emph{clopen $\cup$-basis} if it consists of compact\footnote{Note for us `compact' just means `every open cover has a finite subcover' or, equivalently, `every net has a convergent subnet', i.e. we do not require compact sets to be Hausdorff.} clopen subsets and is closed under finite unions $\cup$.  In fact, it is not hard to see that a clopen $\cup$-basis must actually consist of \emph{all} compact clopen subsets.  The following is then a slightly more accurate summary of classic Stone duality.

\begin{thm*}[Stone]
Clopen $\cup$-bases are dual to generalized Boolean algebras.
\end{thm*}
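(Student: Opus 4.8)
The plan is to read the statement as a non-unital refinement of Stone's representation theorem and to split the work into an object correspondence and a morphism correspondence.

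First I would establish the easy half of the object correspondence: that a clopen $\cup$-basis $B$ of a space $X$ is a generalized Boolean algebra. Closure under $\cup$ is assumed, so it remains to produce meets and relative complements. For $U,V\in B$, the sets $U\cap V$ and, when $U\subseteq V$, the set $V\setminus U$ are clopen and, being closed subsets of the compact sets $U$ or $V$, are themselves compact; as open sets they are finite unions of basis elements, hence lie in $B$. Distributivity and the relative-complement identities are inherited from the powerset, and $\emptyset\in B$ as the empty union, so $B$ is a generalized Boolean algebra. The same compactness argument shows every compact clopen $K\subseteq X$ lies in $B$, confirming the remark that a clopen $\cup$-basis consists of all compact clopen sets.

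Next I would build the converse passage. Given a generalized Boolean algebra $A$, I would form its dual space $X_A$ of ultrafilters, topologised by the sets $\hat a=\{p\in X_A:a\in p\}$. The standard ultrafilter computations show that each $\hat a$ is compact clopen, that $a\mapsto\hat a$ respects joins, meets and relative complements, that it is injective (distinct elements are separated by an ultrafilter), and, by Stone's theorem in the non-unital form, that every compact clopen subset of $X_A$ is some $\hat a$; hence $A\cong\{\hat a:a\in A\}$. In the other direction, for a clopen $\cup$-basis $B$ of $X$ I would check that $x\mapsto\{U\in B:x\in U\}$ is a homeomorphism onto $X_B$: the clopen basis separates points, giving injectivity; the identity that $\hat U$ pulls back to $U$ makes the map an open embedding; and compactness of each $\hat U$ forces every ultrafilter of $B$ to converge to a point, giving surjectivity. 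These two constructions invert one another, establishing the object-level bijection.

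The main obstacle is upgrading this bijection to a genuine duality by matching morphisms. One must fix the right categories — continuous \emph{proper} maps on the topological side and \emph{proper} homomorphisms on the algebraic side — and check that $f\mapsto f^{-1}$ and $\phi\mapsto(p\mapsto\phi^{-1}(p))$ are mutually inverse contravariant functors, natural in the isomorphisms above. The delicate points all stem from the absence of a top element in the non-unital setting: one must verify that preimages of compact clopen sets stay compact clopen, that an abstract proper homomorphism induces a genuinely total continuous proper map on ultrafilter spaces (rather than merely a partial one), and that the two unit and counit transformations are natural. Establishing functoriality and naturality, rather than the representation theorem itself, is where the real content lies.
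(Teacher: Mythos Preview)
The paper does not actually prove this statement: it is presented as the classical Stone duality, stated as background in the Summary section with attribution to Stone and no proof. The paper only elaborates informally that ``every clopen $\cup$-basis is a generalized Boolean algebra \ldots\ and conversely every generalized Boolean algebra can be represented as a clopen $\cup$-basis of some topological space,'' treating this as known. The paper's own contribution is the \emph{generalization} to arbitrary $\cup$-bases and $\U$-basic posets, proved via \autoref{Ubases}, \autoref{SpacesFromPosets}, and \autoref{SubsetUltrafilters}; the classical case would then be recoverable as the special case $\prec\ =\ \leq$ and $\U=S\times S$ discussed in \autoref{subsecBooleanExamples}.

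Your outline is a correct and standard account of the classical non-unital Stone representation theorem at the object level, and your argument that a clopen $\cup$-basis is automatically closed under $\cap$ and relative complements (via compactness and finite covers by basis elements) is exactly right. Two remarks. First, your morphism discussion goes beyond what this particular theorem statement encompasses in the paper: the paper separates the object-level duality from the morphism-level duality, stating the latter as a distinct theorem (``Continuous maps are dual to Boolean homomorphisms'') and, in its own generalization, replacing homomorphisms by \emph{relational} basic morphisms rather than your proper homomorphisms --- a genuinely different formalism designed to cope with bases not closed under preimage. Second, invoking ``Stone's theorem in the non-unital form'' to conclude that every compact clopen subset of $X_A$ is some $\hat a$ is mildly circular given what you are asked to prove; you would want to argue this directly (cover by basic clopens, pass to a finite subcover, take the join in $A$).
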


More precisely, every clopen $\cup$-basis is a generalized Boolean algebra, when considered as a poset with respect to inclusion $\subseteq$, and conversely every generalized Boolean algebra can be represented as a clopen $\cup$-basis of some topological space.  Of course, to actually have a clopen ($\cup$-)basis the topological space must be zero-dimensional locally compact Hausdorff, in which case the clopen $\cup$-basis is unique, which is why such spaces and bases are usually conflated.

We consider more general $\cup$-bases \textendash\, see \autoref{Ubasisdef}.  Basically, a $\cup$-basis consists of relatively compact Hausdorff subsets and must be closed under finite unions whenever possible.  Thus the spaces that have $\cup$-bases are precisely the locally compact locally Hausdorff spaces \textendash\, see \autoref{LCLHchars} \textendash\, the key point being that they no longer need to be zero-dimensional.  On the abstract side of things, we consider `$\U$-basic posets' \textendash\, see \autoref{BasicUPoset} \textendash\, which generalize Boolean algebras \textendash\, see \autoref{subsecBooleanExamples}.  Our duality can be summarized as follows.

\begin{thm*}
$\cup$-bases are dual to $\U$-basic posets.
\end{thm*}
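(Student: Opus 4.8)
The plan is to read ``dual'' as a contravariant equivalence between the category of $\cup$-bases (with partially defined continuous maps, encoded by their inverse-image relations, as morphisms) and the category of $\U$-basic posets (with relational basic morphisms). Accordingly I would build two functors running in opposite directions and verify that they are mutually inverse up to natural isomorphism, both on objects and on morphisms. The two constructions are: the passage $X \mapsto B$ sending a locally compact locally Hausdorff space together with a chosen $\cup$-basis $B$ to that basis ordered by inclusion $\subseteq$, with the auxiliary relation $\prec$ read off as compact containment; and the passage $P \mapsto \widehat{P}$ sending a $\U$-basic poset $P$ to its space of $\prec$-ultrafilters.

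For the object correspondence in the first direction the work is already isolated in \autoref{Ubases}: one checks that a $\cup$-basis $B$, ordered by $\subseteq$, satisfies the axioms of a $\U$-basic poset, with $\prec$ realized by ``$\overline{U}$ is a compact subset of $V$'' and $\U$ realized by the Hausdorff-union relation. For the reverse direction I would take $P$ a $\U$-basic poset and topologize its set of $\prec$-ultrafilters so that each $p \in P$ yields a basic open set $\{U : p \in U\}$. The crux is \autoref{SpacesFromPosets}: using the ultrafilter characterizations of \autoref{ultrachars} together with the key \autoref{abC}, one shows these sets indeed form a $\cup$-basis and that the resulting space $\widehat{P}$ is locally compact locally Hausdorff. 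In particular, $\prec$ must be shown to translate back into compact containment and $\U$ into the Hausdorff-union relation on $\widehat{P}$.

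It then remains to close the loop. Starting from $(X,B)$, forming $\widehat{B}$ and comparing with $X$ should yield a homeomorphism, the points of $X$ being recovered as the $\prec$-ultrafilters of the basis elements converging to them; starting from $P$, the map $p \mapsto \{U : p \in U\}$ should be an isomorphism of $P$ onto the canonical $\cup$-basis of $\widehat{P}$, preserving $\subseteq$, $\prec$, and $\U$. Finally, \autoref{secFunctoriality} upgrades this object-level correspondence to a functorial duality: each (possibly partially defined) continuous map between spaces induces a relational basic morphism by taking inverse images of basis elements, and conversely, and one verifies that these assignments respect identities and composition and are mutually inverse, so that the two functors constitute a genuine duality.

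The main obstacle I expect is the reverse direction, specifically establishing that the $\prec$-ultrafilter space is locally compact and \emph{locally} Hausdorff while simultaneously recovering $\prec$ as compact containment. Because we have dropped closure under arbitrary finite unions and under finite intersections, the usual Boolean/Stone arguments do not apply directly: separating ultrafilters and producing compact neighbourhoods must be driven purely by the auxiliary relation $\prec$ and the relation $\U$, which is exactly what \autoref{abC} is designed to supply. A secondary subtlety is the use of \emph{relational} rather than functional morphisms in \autoref{secFunctoriality}, needed to accommodate partially defined continuous maps; checking that composition of relations matches composition of maps is where the bookkeeping is heaviest.
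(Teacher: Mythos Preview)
Your proposal is essentially correct and follows the paper's route: \autoref{Ubases} for one direction, \autoref{SpacesFromPosets} (driven by \autoref{ultrachars} and \autoref{abC}) for the other, and \autoref{SubsetUltrafilters} to close the loop. The only mismatch is scope: the paper's proof of \emph{this} theorem is purely object-level---it cites exactly those three results and stops---while the morphism-level duality is stated and proved as a separate theorem via \autoref{ctsphi} and \autoref{Stonects}; your reading of ``dual'' as a full categorical equivalence folds that second theorem in.

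One minor imprecision: you describe $\prec$ as ``$\overline{U}$ is a compact subset of $V$'', but in the locally Hausdorff setting the paper uses the compact-containment relation $O\Subset N$ (existence of compact $C$ with $O\subseteq C\subseteq N$), which only agrees with closure-based formulations relative to a fixed Hausdorff superset---see \eqref{OSubNequivs}. Also be aware that the functorial bookkeeping you flag is genuinely delicate: basic $\vee$-morphisms are \emph{not} closed under composition (see the examples in \autoref{subsecComposition}), so the paper does not package the duality as a strict categorical equivalence in the usual sense.
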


\begin{proof}
By \autoref{Ubases}, every $\cup$-basis is a $\U$-basic poset with respect to the inclusion ordering.  Conversely, every $\U$-basic poset can be represented as $\cup$-basis of its $\prec$-ultrafilter space, by \autoref{SpacesFromPosets}.  Moreover, if this $\prec$-ultrafilter space representation is applied to a $\cup$-basis, we recover the original space, by \autoref{SubsetUltrafilters}.
\end{proof}

We also have the following non-commutative extension to certain bases of \'{e}tale groupoids and certain kinds of inverse semigroups.

\begin{thm*}
$\cup$-\'{e}tale bases are dual to $\simeq$-basic semigroups.
\end{thm*}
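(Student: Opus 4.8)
The plan is to bootstrap the non-commutative duality from the commutative one already established in the previous theorem, exactly as one lifts classical Stone duality for Boolean algebras to the étale/inverse-semigroup setting. First I would make precise the two sides: a $\cup$-\emph{étale basis} should be a basis of compact-Hausdorff open bisections of a locally compact locally Hausdorff étale groupoid, closed under the groupoid product and under finite unions of compatible (jointly-bisection-forming) elements, so that the idempotent bisections form a $\cup$-basis of the unit space in the sense of \autoref{Ubasisdef}. A $\simeq$-\emph{basic semigroup} should be an inverse semigroup whose idempotent semilattice, equipped with the induced relation $\prec$ and the Hausdorff-union relation $\U$, is a $\U$-basic poset, with $\simeq$ the compatible-join relation encoding when two elements may be joined. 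The goal is then the two functorial constructions $S \mapsto \mathsf{G}(S)$ (an étale groupoid from the semigroup) and $G \mapsto \mathsf{S}(G)$ (the inverse semigroup of basic open bisections), together with natural isomorphisms $S \cong \mathsf{S}(\mathsf{G}(S))$ and $G \cong \mathsf{G}(\mathsf{S}(G))$.

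For the object-level statements, the key step is to build the groupoid $\mathsf{G}(S)$ from a $\simeq$-basic semigroup $S$. The standard device is the groupoid of germs (in the style of Exel): one takes the $\prec$-ultrafilter space of the idempotent semilattice $E(S)$ as the unit space, which by \autoref{SpacesFromPosets} is already a locally compact locally Hausdorff space carrying a $\cup$-basis, and then lets $S$ act on this space by the partial homeomorphisms induced by its elements. Concretely, each $s \in S$ determines a partial map sending the $\prec$-ultrafilter associated to $s^*s$-germs to the one associated to $ss^*$-germs, and the germ relation glues these into an étale groupoid whose unit space is the $\prec$-ultrafilter space of $E(S)$. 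I would verify that the resulting germ groupoid is locally Hausdorff (its Hausdorff pieces coming from the individual bisections determined by elements of $S$) and that the images of the $s \in S$ form a $\cup$-étale basis; the fact that $E(S)$ is $\U$-basic feeds directly into \autoref{Ubases} to give that the unit-space slice is a genuine $\cup$-basis.

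The reconstruction $S \cong \mathsf{S}(\mathsf{G}(S))$ then splits into an idempotent part and a non-idempotent part. On idempotents it is precisely the commutative duality already in hand: $E(S)$ is recovered as the $\cup$-basis of the $\prec$-ultrafilter space by the previous theorem. For general elements one shows that every basic open bisection of $\mathsf{G}(S)$ arises from a unique $s \in S$, using that $s$ is determined by its action together with its source/range idempotents $s^*s, ss^*$; injectivity and surjectivity here should follow from the same ultrafilter-separation arguments (\autoref{ultrachars}, \autoref{abC}) applied fibrewise over the unit space. Conversely, for a $\cup$-étale basis $B$ on a groupoid $G$, the identity $G \cong \mathsf{G}(\mathsf{S}(G))$ reduces on units to \autoref{SubsetUltrafilters} and globally to the observation that the germ groupoid of the action of $B$ on its own unit ultrafilter space is canonically isomorphic to $G$, an étale-groupoid analogue of recovering a space from its basis.

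The main obstacle, I expect, is the local-Hausdorffness bookkeeping in the germ groupoid together with the correct axiomatization of $\simeq$. In the Hausdorff case the compatible-join structure is unambiguous, but here two bisections may fail to have a bisection union even when their idempotent parts are $\U$-related, so the relation $\simeq$ must simultaneously encode compatibility in the inverse-semigroup sense (agreement on the overlap, i.e. $s^*t$ and $st^*$ idempotent) \emph{and} the $\U$-relation on the associated source/range idempotents guaranteeing the union is again a single bisection. Getting the finite first-order axioms for $\simeq$-basic semigroups exactly right, so that they correspond precisely to $\cup$-étale bases and no more, is the delicate point; once the definition is pinned down, functoriality and the two reconstruction isomorphisms should follow by combining the commutative results with routine germ-groupoid arguments, carried out in \autoref{InverseSemigroups} and \autoref{Groupoids}.
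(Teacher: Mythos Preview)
Your overall plan is sound, but it takes a genuinely different route from the paper's. You build $\mathsf{G}(S)$ as an Exel-style \emph{germ groupoid}: the unit space is the $\prec$-ultrafilter space of the idempotent semilattice $E(S)$, and the rest of the groupoid is assembled from germs of the partial homeomorphisms induced by general $s\in S$. The paper instead works directly with $\prec$-ultrafilters on \emph{all} of $S$, not just $E(S)$: the points of the groupoid are $\prec$-ultrafilters $U\subseteq S$, and the multiplication is the restriction of the Lenz filter product $T\cdot U=(TU)^{\leq}$, with composability given by $(U^{-1}U)^{\leq}=(VV^{-1})^{\leq}$ (\autoref{SGaction}--\autoref{SemigroupRep}). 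The reconstruction in the reverse direction is then the map $g\mapsto U_g$ from \autoref{UltrafilterMultiplication}, showing $U_{gh}=(U_gU_h)^{\subseteq}$.

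What each approach buys: the paper's Lenz-filter construction is exactly parallel to the commutative case---one simply replaces ``poset'' by ``semigroup'' and reuses the same $\prec$-ultrafilter machinery, so the topological results (\autoref{SpacesFromPosets}) carry over verbatim and the only new work is the multiplicative structure; there is no separate germ-equivalence to manage. Your germ approach is closer to the standard literature (Exel, Paterson) and makes the unit space explicit from the outset, but requires you to set up the action, verify the germ relation behaves well in the locally Hausdorff setting, and then argue separately that the bisections $O_s$ recover $S$---the ``fibrewise'' ultrafilter arguments you anticipate. Also note that the definitions you propose to pin down ($\cup$-\'etale basis, $\simeq$-basic semigroup, the relation $\simeq=\sim\cap\U$) are already given precisely in \autoref{EB} and \autoref{SimeqBasicSemigroup}; in particular $\simeq$ is exactly the conjunction of inverse-semigroup compatibility $\sim$ and the Hausdorff relation $\U$, so the speculative part of your final paragraph is already resolved.
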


\begin{proof}
By \autoref{EtaleBases}, every $\cup$-\'{e}tale basis is a $\simeq$-basic semigroup.  Conversely, every $\simeq$-basic semigroup can be represented as a $\cup$-\'{e}tale basis of its $\prec$-ultrafilter groupoid, by \autoref{SemigroupRep}.  Moreover, if this $\prec$-ultrafilter groupoid representation is applied to a $\cup$-basis, we recover the original groupoid, by \autoref{UltrafilterMultiplication}.
\end{proof}

The classic Stone duality is also functorial with respect to the appropriate morphisms, which can be summarized as follows.

\begin{thm*}[Stone]
Continuous maps are dual to Boolean homomorphisms.
\end{thm*}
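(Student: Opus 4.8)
The plan is to promote the object-level bijection recorded above to a contravariant equivalence of categories, by specifying each functor on arrows and checking that the two assignments are mutually inverse. On one side, send a space $X$ to its generalized Boolean algebra $\mathcal{B}(X)$ of compact clopen subsets and a continuous map $f\colon X\to Y$ to the preimage operation $f^{-1}\colon\mathcal{B}(Y)\to\mathcal{B}(X)$. On the other, send a generalized Boolean algebra $B$ to its ultrafilter space $\mathrm{Ult}(B)$ and a homomorphism $h\colon B\to C$ to the \emph{dual map} $h^{*}\colon\mathrm{Ult}(C)\to\mathrm{Ult}(B)$ given by $h^{*}(\mathcal{U})=\{b\in B:h(b)\in\mathcal{U}\}$. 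The theorem then amounts to showing that these are well defined, contravariantly functorial, and inverse to one another up to the canonical object isomorphisms $X\cong\mathrm{Ult}(\mathcal{B}(X))$ and $B\cong\mathcal{B}(\mathrm{Ult}(B))$ already supplied by the duality on objects.

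For the first functor, the only thing to verify is that $f^{-1}$ really is a homomorphism of generalized Boolean algebras taking values in $\mathcal{B}(X)$. That $f^{-1}$ commutes with the Boolean operations is immediate and purely set-theoretic, since $f^{-1}(U\cup V)=f^{-1}(U)\cup f^{-1}(V)$, $f^{-1}(U\cap V)=f^{-1}(U)\cap f^{-1}(V)$, $f^{-1}(U\setminus V)=f^{-1}(U)\setminus f^{-1}(V)$ and $f^{-1}(\emptyset)=\emptyset$; functoriality $(g\circ f)^{-1}=f^{-1}\circ g^{-1}$ is equally formal. The genuine content is that $f^{-1}$ maps $\mathcal{B}(Y)$ \emph{into} $\mathcal{B}(X)$: clopenness of $f^{-1}(U)$ is just continuity, but compactness of $f^{-1}(U)$ does not follow for an arbitrary continuous map and forces us to pin down the correct class of arrows, namely the \emph{proper} continuous maps (those with $f^{-1}(K)$ compact for each compact $K$) or, allowing partial domains, continuous maps defined on an open subset. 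This identification of the morphism class is the one place requiring care.

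For the second functor I would first check that $h^{*}(\mathcal{U})$ is again an ultrafilter, i.e. a prime filter. Upward closure and closure under existing meets follow from $h$ being order- and meet-preserving, while primeness follows from $h$ preserving finite joins: if $a\vee b\in h^{*}(\mathcal{U})$ then $h(a)\vee h(b)\in\mathcal{U}$, so $h(a)\in\mathcal{U}$ or $h(b)\in\mathcal{U}$ by primeness of $\mathcal{U}$, and preservation of relative complements secures maximality in the non-unital sense, matching the ultrafilter characterizations of \autoref{ultrachars}. Continuity of $h^{*}$ is then transparent on the canonical basis: for $b\in B$ the basic clopen set $\widehat{b}=\{\mathcal{U}:b\in\mathcal{U}\}$ pulls back to $(h^{*})^{-1}(\widehat{b})=\widehat{h(b)}$, again basic clopen, and $h\mapsto h^{*}$ is contravariant since $(k\circ h)^{*}=h^{*}\circ k^{*}$ directly from the definition. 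Here the absence of a top element reappears: $h^{*}(\mathcal{U})$ can be empty when no $h(b)$ lies in $\mathcal{U}$, so $h^{*}$ is naturally a proper, respectively partial, map, exactly matching the morphism class found on the space side.

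Finally I would verify that the two functors are mutually inverse on arrows, compatibly with the object isomorphisms. Concretely, $\mathcal{B}(\mathrm{Ult}(\,\cdot\,))$ applied to $h$ returns $h$ under the isomorphism $B\cong\mathcal{B}(\mathrm{Ult}(B))$ sending $b$ to $\widehat{b}$, because the computation $(h^{*})^{-1}(\widehat{b})=\widehat{h(b)}$ above says precisely that the round trip carries $b$ to $h(b)$; dually, $\mathrm{Ult}(\mathcal{B}(\,\cdot\,))$ applied to $f$ returns $f$ under $X\cong\mathrm{Ult}(\mathcal{B}(X))$, since the ultrafilter corresponding to a point $x$ is sent by $(f^{-1})^{*}$ to the ultrafilter corresponding to $f(x)$. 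The main obstacle throughout is not any single calculation but correctly aligning the morphism classes: the failure of preimages of compact sets to stay compact, equivalently the absence of a top element to be preserved, is what forces proper or partial maps, and it is exactly this phenomenon that the general functoriality of the later sections handles systematically via relational basic morphisms.
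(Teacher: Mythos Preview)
The paper does not actually prove this theorem: it is stated as Stone's classical result, with the ``More precisely'' paragraph merely unpacking what the slogan means (preimage gives the homomorphism, and every homomorphism arises this way) before moving on to the paper's own generalization. So there is no proof in the paper to compare against; your write-up is a self-contained sketch of the standard argument, and as such it is essentially correct.

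One point of alignment worth noting: the paper's ``more precisely'' sentence restricts to zero-dimensional \emph{compact} Hausdorff $G$ and $H$, i.e.\ to Boolean algebras with unit. In that setting every continuous map is proper, $f^{-1}$ of a compact clopen set is automatically compact clopen, and $h^*(\mathcal{U})$ is never empty since $h(1)=1\in\mathcal{U}$. Your discussion of proper maps, partial domains, and possibly empty $h^*(\mathcal{U})$ is therefore not needed for the theorem as the paper states it; you are tacitly treating the locally compact / generalized Boolean algebra version instead. That is a reasonable thing to do, and indeed it anticipates exactly the phenomenon the paper highlights in the next paragraph (preimages need not land in a prescribed basis) as motivation for passing to relational basic morphisms. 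But if you mean to prove precisely the theorem as formulated, you can drop the properness caveat and simply note that clopen subsets of a compact Hausdorff space are compact.
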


More precisely, any continuous map $F:G\rightarrow H$, for zero dimensional compact Hausdorff spaces $G$ and $H$, yields a Boolean homomorphism $\pi:\mathrm{Clopen}(H)\rightarrow\mathrm{Clopen}(G)$ defined by
\[\pi(O)=F^{-1}[O],\]
and conversely every Boolean homomorphism $\pi:\mathrm{Clopen}(H)\rightarrow\mathrm{Clopen}(G)$ arises in this way from some continuous map $F$.

Here we can not simply replace $\mathrm{Clopen}(G)$ and $\mathrm{Clopen}(H)$ with $\cup$-bases $S$ and $T$ as these are not uniquely defined by $G$ and $H$, so there is no guarantee that $F^{-1}$ will take elements of $T$ to elements of $S$.  Instead of homomorphisms, we consider relational `basic morphisms' $\sqsubset$ \textendash\, see \autoref{BasicMorph} \textendash\, arising from partial continuous maps $F$ as follows.
\[O\sqsubset N\qquad\Leftrightarrow\qquad O\subseteq F^{-1}[N].\]

\begin{thm*}
Partial continuous maps are dual to basic morphisms.
\end{thm*}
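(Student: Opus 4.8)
The object-level duality is already in hand, so my plan is to promote it to morphisms by the same three-step scheme used for objects: construct a passage from partial continuous maps to basic morphisms, construct the reverse passage through the $\prec$-ultrafilter spaces, and then check the two are mutually inverse after identifying each space with its $\prec$-ultrafilter space via \autoref{SubsetUltrafilters}. Throughout I fix $\cup$-bases $S$ and $T$ of $G$ and $H$, which are $\U$-basic posets by \autoref{Ubases}, and a partial continuous map is understood to be continuous on its (sub)domain.

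First, given a partial continuous $F\colon G\to H$, I would define $\sqsubset_F\subseteq S\times T$ by $O\sqsubset_F N\Leftrightarrow O\subseteq F^{-1}[N]$, exactly as in the statement. Verifying that $\sqsubset_F$ satisfies the axioms of a basic morphism (\autoref{BasicMorph}) should be routine: continuity makes $F^{-1}[N]$ open, so $O\subseteq F^{-1}[N]$ is monotone in both arguments, it interacts correctly with the finite unions that exist in $S$ and $T$, and it is compatible with the rather-below relation $\prec$ since the relative compactness packaged in $\prec$ matches the fact that $F$ carries a relatively compact $O$ into the corresponding $N$.

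Conversely, given a basic morphism $\sqsubset$, I would define a partial map $F_\sqsubset$ between the $\prec$-ultrafilter spaces by
\[F_\sqsubset(\mathcal U)=\{N\in T:O\sqsubset N\text{ for some }O\in\mathcal U\},\]
declared to be defined precisely when the right-hand side is a genuine $\prec$-ultrafilter on $T$; this is the point at which partiality enters, the domain of $F_\sqsubset$ being exactly this set of $\mathcal U$. The basic morphism axioms, together with the ultrafilter characterizations of \autoref{ultrachars} and the key \autoref{abC}, should guarantee that the displayed set is a $\prec$-ultrafilter on its domain. Continuity is then immediate, since for a basic open set $\hat N$ of the $\prec$-ultrafilter space of $T$ one has $F_\sqsubset^{-1}[\hat N]=\bigcup_{O\sqsubset N}\hat O$ relative to the domain, which is open.

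Finally I would check the two assignments are mutually inverse. Passing $F$ to $\sqsubset_F$ and back recovers $F$: for the $\prec$-ultrafilter $\mathcal U_x$ attached to a point $x$ in the domain of $F$ by \autoref{SubsetUltrafilters}, one computes $F_{\sqsubset_F}(\mathcal U_x)=\mathcal U_{F(x)}$, using that $S$ is a basis to interpolate a basic $O$ with $x\in O\subseteq F^{-1}[N]$ whenever $F(x)\in N$, and matching domains. For the other composite, $O\sqsubset_{F_\sqsubset}N$ unwinds to the assertion that every $\prec$-ultrafilter through $O$ carries some member below $N$ under $\sqsubset$, and I must show this is equivalent to $O\sqsubset N$. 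I expect this equivalence to be the main obstacle. One direction is trivial, taking the member to be $O$ itself; for the converse I would argue contrapositively, using the basic morphism axioms to manufacture a $\prec$-ultrafilter through $O$ witnessing $O\not\sqsubset N$, the requisite finiteness coming from the compact-containment content of $\prec$ (so that a cover by the $\hat O$ with $O\sqsubset N$ reduces to a finite subcover). Assembling exactly the right axioms of \autoref{BasicMorph} to build that witnessing ultrafilter, while simultaneously tracking the partial domain so the identifications above respect partiality, is the delicate step; the remainder is bookkeeping against the already-established object duality.
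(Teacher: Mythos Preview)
Your two constructions match the paper's exactly: the passage from a partial continuous $F$ to $\sqsubset_F$ is precisely \autoref{ctsphi}, and the passage from a basic morphism $\sqsubset$ to $F_\sqsubset(U)=U^\sqsubset$ is precisely \autoref{Stonects}. The paper's proof of the summary theorem literally just cites these two results and stops; it does \emph{not} claim the two assignments are mutually inverse, and in fact they are not.

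The gap is in your third step. The composite ``continuous $\to$ morphism $\to$ continuous'' does recover $F$ (this is \eqref{DInterior} and \eqref{FRecovery} in \autoref{ctsphi}), so that half is fine. But the other composite fails for general basic morphisms: what you get back is not $\sqsubset$ but its extension $\sqsubseteq$, defined by
\[a\sqsubseteq a'\quad\Leftrightarrow\quad\forall b\prec a\ \exists\text{ finite }F\sqsubset a'\ (b\prec\textstyle\bigvee F).\]
This is exactly the content of \eqref{Sqsubseteq} in \autoref{Stonects}: one has $O_a\subseteq F_\sqsubset^{-1}[O_{a'}]$ iff $a\sqsubseteq a'$, not iff $a\sqsubset a'$. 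The paper shows $\sqsubseteq\ =\ \sqsubset$ only when $\sqsubset$ is a basic $\vee$-morphism (so satisfies \eqref{veePreserving}, \eqref{LowerRelation}, \eqref{Bottom}), and explicitly discusses in \autoref{subsecComposition} that one must either accept that distinct basic morphisms can yield the same continuous map, or else pass to the $\sqsubseteq$-closure. Your proposed contrapositive argument---building a $\prec$-ultrafilter through $O$ witnessing $O\not\sqsubset N$---will therefore not go through, because such an ultrafilter need not exist when $O\sqsubseteq N$ but $O\not\sqsubset N$. The ``duality'' in the theorem statement is meant in this looser sense.
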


\begin{proof}
Every partial continuous map defines a basic ($\vee$-)morphism, by \autoref{ctsphi}.  Conversely, every basic morphism defines a partial continuous map on the corresponding $\prec$-ultrafilter spaces, by \autoref{Stonects}.
\end{proof}

\section{Auxiliary Relations}\label{secAuxiliaryRelations}
We will deal with quite general posets that are not lattices or even semilattices.  However they will still satisfy a version of distributivity with respect to a relation $\prec$ which is auxiliary to $\leq$ in the sense of \cite[Definition I-1-11]{GierzHofmannKeimelLawsonMisloveScott2003}, namely the rather below relation introduced in \autoref{subsecRatherBelow}.

But until then it will be more convenient to consider general auxiliary relations.

\begin{center}
\textbf{From now on assume $S$ is a poset and $\prec\ \subseteq\ \leq$ is an auxiliary relation, i.e.}
\end{center}
\[\label{Auxiliarity}\tag{Auxiliarity}a\leq b\prec c\leq d\qquad\Rightarrow\qquad a\prec d.\]
In particular, if $a\prec b\prec c$ then $a\prec b\leq c$ so $a\prec c$, i.e. $\prec$ is transitive.  However, $\prec$ need not be reflexive.  Indeed, the only reflexive auxiliary relation is $\leq$ itself.

Incidentally, while posets are usually denoted by letters like $\mathbb{P}$, we use $S$ to emphasize that we are primarily interested in posets coming from inverse semigroups \textendash\, see \autoref{InverseSemigroups}.

\subsection{Distributivity}\label{subsecDistributivity}

\begin{dfn}
We say $S$ is \emph{$\prec$-distributive} if, whenever $b,c\in S$ have a join $b\vee c$,
\[\label{precDistributivity}\tag{$\prec$-Distributivity}a\leq b\vee c\quad\Leftrightarrow\quad\forall a'\prec a\ \exists b'\prec b\ \exists c'\prec c\ (a'\prec\,b'\vee c'\,\prec a).\]
\end{dfn}

At first sight this may seem like a strange notion of distributivity.  Indeed, we do not know if any analog of distributivity for non-reflexive transitive relations has been considered before.  However, the reflexive case does reduce to something more familiar.  Specifically, for $\leq$-distributivity we can take $a'=a$ to obtain
\[\label{leqDistributivity}\tag{$\leq$-Distributivity}a\leq b\vee c\qquad\Leftrightarrow\qquad\exists b'\leq b\ \exists c'\leq c\ (a=b'\vee c'),\]
which is the usual notion of distributivity for $\vee$-semilattices.  Also, \eqref{precDistributivity} has several consequences familiar from domain theory.  For example, the $\Rightarrow$ part of \eqref{precDistributivity} in the $a=b=c$ case yields
\[\label{Interpolation}\tag{Interpolation}a\prec b\qquad\Rightarrow\qquad\exists c\ (a\prec c\prec b).\]
Then the $\Leftarrow$ part of \eqref{precDistributivity} can be expressed in a simpler form.

\begin{prp}\label{LeftprecDistributivity}
If \eqref{Interpolation} holds then the $\Leftarrow$ part of \eqref{precDistributivity} is equivalent to either of the following.
\begin{align}
\label{Approximation}\tag{Approximation}\forall c\prec a\ (c\leq b)\qquad&\Rightarrow\qquad a\leq b.\\
\label{LowerOrder}\tag{Lower Order}\forall c\prec a\ (c\prec b)\qquad&\Rightarrow\qquad a\leq b.
\end{align}
\end{prp}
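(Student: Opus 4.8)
The plan is to prove the cyclic chain of implications \eqref{Approximation} $\Rightarrow$ \textup{(}$\Leftarrow$ part of \eqref{precDistributivity}\textup{)} $\Rightarrow$ \eqref{LowerOrder} $\Rightarrow$ \eqref{Approximation}. Since a cycle of three implications renders all three statements pairwise equivalent, this simultaneously yields both claimed equivalences. Throughout I would freely use that $\prec\ \subseteq\ \leq$ and that $\prec$ is transitive, both recorded just after \eqref{Auxiliarity}.

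For \eqref{Approximation} $\Rightarrow$ the $\Leftarrow$ part of \eqref{precDistributivity}, suppose $b\vee c$ exists and the right-hand side of \eqref{precDistributivity} holds. To conclude $a\leq b\vee c$ by \eqref{Approximation}, applied with $b\vee c$ in place of $b$, it suffices to show $a'\leq b\vee c$ for every $a'\prec a$. Given such $a'$, the hypothesis supplies $b'\prec b$ and $c'\prec c$ with $a'\prec b'\vee c'\prec a$. As $b'\leq b\leq b\vee c$ and $c'\leq c\leq b\vee c$, the join $b'\vee c'$ is below $b\vee c$, whence $a'\leq b'\vee c'\leq b\vee c$. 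This step uses only the least-upper-bound property of joins and needs no interpolation.

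For the $\Leftarrow$ part of \eqref{precDistributivity} $\Rightarrow$ \eqref{LowerOrder}, I would specialize to $b=c$, so that the relevant join exists trivially as the idempotent $b\vee b=b$. Assuming the hypothesis of \eqref{LowerOrder}, namely $c\prec b$ for all $c\prec a$, I verify the antecedent of \eqref{precDistributivity} for the pair $(b,b)$: given $a'\prec a$, \eqref{Interpolation} produces $d$ with $a'\prec d\prec a$, and then $d\prec b$ by hypothesis, so taking $b'=c'=d$ gives $a'\prec b'\vee c'=d\prec a$ with $b',c'\prec b$. The $\Leftarrow$ part then delivers $a\leq b\vee b=b$. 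Finally, for \eqref{LowerOrder} $\Rightarrow$ \eqref{Approximation}, assuming $c\leq b$ for all $c\prec a$, I upgrade this to $c\prec b$ for all $c\prec a$ so that \eqref{LowerOrder} applies: given $c\prec a$, \eqref{Interpolation} yields $d$ with $c\prec d\prec a$, and then $d\leq b$ by hypothesis, so $c\prec d\leq b$ gives $c\prec b$ by \eqref{Auxiliarity}.

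I expect the main subtlety to lie in the second implication: the $\Leftarrow$ part of \eqref{precDistributivity} is a statement about joins, whereas \eqref{LowerOrder} is join-free, and the crux is recognizing that the idempotent join $b\vee b=b$ together with \eqref{Interpolation} is exactly what bridges the two. More broadly, \eqref{Interpolation} is the essential device throughout, since it is what converts the $\leq$-hypotheses appearing in \eqref{Approximation} into the $\prec$-hypotheses demanded by \eqref{LowerOrder}.
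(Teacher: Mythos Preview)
Your cycle is correct, but you traverse it in the opposite direction to the paper. The paper proves
\[
\eqref{Approximation}\ \Rightarrow\ \eqref{LowerOrder}\ \Rightarrow\ (\Leftarrow\text{ in }\eqref{precDistributivity})\ \Rightarrow\ \eqref{Approximation},
\]
noting explicitly that the first two implications hold \emph{without} \eqref{Interpolation} (the first is immediate from $\prec\ \subseteq\ \leq$, since the hypothesis of \eqref{LowerOrder} is stronger than that of \eqref{Approximation}); only the final step back to \eqref{Approximation} invokes \eqref{Interpolation}. You instead go \eqref{Approximation} $\Rightarrow$ ($\Leftarrow$ part) $\Rightarrow$ \eqref{LowerOrder} $\Rightarrow$ \eqref{Approximation}, using \eqref{Interpolation} in the last two steps. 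Both routes are valid, but the paper's organization pays a small dividend: the unconditional chain \eqref{Approximation} $\Rightarrow$ \eqref{LowerOrder} $\Rightarrow$ ($\Leftarrow$ part) is recorded as a standalone display and reused later (in the proof of \autoref{Ubases}) in a setting where \eqref{Interpolation} has not yet been established. Your argument for ($\Leftarrow$ part) $\Rightarrow$ \eqref{LowerOrder} via the idempotent join $b\vee b=b$ is essentially the same device the paper uses for ($\Leftarrow$ part) $\Rightarrow$ \eqref{Approximation}, just targeting the other condition.
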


\begin{proof} Even without \eqref{Interpolation}, we have
\begin{equation}\label{Approx=>}
\eqref{Approximation}\quad\Rightarrow\quad\eqref{LowerOrder}\quad\Rightarrow\quad(\Leftarrow\text{ in \ref{precDistributivity}}).
\end{equation}
Indeed, the first $\Rightarrow$ is immediate from $\prec\ \subseteq\ \leq$.  For the second $\Rightarrow$, say \eqref{LowerOrder} and right side of \eqref{precDistributivity} hold.  So, for all $a'\prec a$, we have $b'\prec b$ and $c'\prec c$ with $a'\prec b'\vee c'\leq b\vee c$ and hence $a'\prec b\vee c$, by \eqref{Auxiliarity}.  As $a'$ was arbitrary, \eqref{LowerOrder} yields $a\leq b\vee c$, as required.

Lastly, assume \eqref{Interpolation} and the $\Leftarrow$ part of \eqref{precDistributivity} hold.  To see that \eqref{Approximation} then holds, take $a$ and $b$ satisfying $\forall c\prec a\ (c\leq b)$.  If $a'\prec a$ then \eqref{Interpolation} yields $b',b''\in S$ with $a'\prec b'\prec b''\prec a$.  By assumption, $b''\leq b$ so $a'\prec b'\prec b$, by \eqref{Auxiliarity}.  This shows that the right side of \eqref{precDistributivity} is satisfied for $b=c$ so the $\Leftarrow$ part yields $a\leq b$.
\end{proof}

The $\Rightarrow$ part of \eqref{precDistributivity} for $a=b\vee c$ also yields
\[\label{Shrinking}\tag{Shrinking}a\prec b\vee c\qquad\Rightarrow\qquad\exists b'\prec b\ \exists c'\prec c\ (a\leq b'\vee c').\]
Conversely, using this we can get \eqref{precDistributivity} from \eqref{leqDistributivity}.

\begin{prp}
If $\prec$ satisfies \eqref{Interpolation}, \eqref{LowerOrder} and \eqref{Shrinking},
\[\eqref{leqDistributivity}\qquad\Rightarrow\qquad\eqref{precDistributivity}.\]
\end{prp}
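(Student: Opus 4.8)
The goal is to show that, assuming $\prec$ satisfies \eqref{Interpolation}, \eqref{LowerOrder} and \eqref{Shrinking}, the ordinary semilattice distributivity \eqref{leqDistributivity} implies the full $\prec$-distributivity \eqref{precDistributivity}. My plan is to establish the two directions of \eqref{precDistributivity} separately, since the hypotheses are tailored to each: the $\Leftarrow$ direction comes essentially for free from \eqref{LowerOrder} via \autoref{LeftprecDistributivity}, so the real work is in the $\Rightarrow$ direction, where \eqref{leqDistributivity}, \eqref{Interpolation} and \eqref{Shrinking} must be combined.

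For the $\Leftarrow$ direction, I would simply invoke \autoref{LeftprecDistributivity}: since \eqref{Interpolation} is assumed, that proposition tells us the $\Leftarrow$ part of \eqref{precDistributivity} is equivalent to \eqref{LowerOrder}, which is among our hypotheses. So this half is immediate and requires no further argument.

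For the $\Rightarrow$ direction, I would start from $a\leq b\vee c$ (the join existing by assumption) and fix an arbitrary $a'\prec a$; the task is to produce $b'\prec b$ and $c'\prec c$ with $a'\prec b'\vee c'\prec a$. The natural strategy is to interpolate twice so that there is ``room'' to apply the shrinking step. Concretely, using \eqref{Interpolation} on $a'\prec a$, I would insert an intermediate element, say $a'\prec a''\prec a$, and then since $a''\prec a\leq b\vee c$ gives $a''\prec b\vee c$ by \eqref{Auxiliarity}, I can apply \eqref{Shrinking} to $a''\prec b\vee c$ to obtain $b''\prec b$ and $c''\prec c$ with $a''\leq b''\vee c''$. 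Now \eqref{leqDistributivity} applied to $a''\leq b''\vee c''$ decomposes $a''$ as an actual join $a''=b_0\vee c_0$ with $b_0\leq b''\prec b$ and $c_0\leq c''\prec c$, whence $b_0\prec b$ and $c_0\prec c$ by \eqref{Auxiliarity}. Then $a'\prec a''=b_0\vee c_0$ and $b_0\vee c_0=a''\prec a$, so taking $b'=b_0$, $c'=c_0$ yields exactly $a'\prec b'\vee c'\prec a$, as required.

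The main obstacle I anticipate is the precise bookkeeping of which relation ($\prec$ versus $\leq$) holds at each intermediate element, and ensuring that after applying \eqref{leqDistributivity} the resulting summands still sit strictly below $b$ and $c$ in the $\prec$ sense rather than merely below them in $\leq$; this is where \eqref{Auxiliarity} must be applied carefully to upgrade $\leq$-relations to $\prec$-relations through the shrunk elements $b'',c''$. A secondary subtlety is making sure the single interpolation step genuinely suffices to secure both the left inequality $a'\prec b'\vee c'$ and the right inequality $b'\vee c'\prec a$ simultaneously; if one interpolation proves insufficient to arrange both, I would interpolate a second time to create an extra buffer element between $a'$ and $a$, but I expect the decomposition $a''=b_0\vee c_0$ to handle both endpoints cleanly since $b_0\vee c_0$ equals $a''$ on the nose.
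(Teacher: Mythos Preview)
Your proposal is correct and follows essentially the same approach as the paper: both arguments handle the $\Leftarrow$ direction via \autoref{LeftprecDistributivity} and \eqref{LowerOrder}, and both prove the $\Rightarrow$ direction by interpolating once to get $a'\prec a''\prec a$, applying \eqref{Shrinking} to $a''\prec b\vee c$, then using \eqref{leqDistributivity} to decompose $a''$ exactly as a join of pieces below the shrunk elements. The variable names differ but the logic is identical, and your anticipated ``secondary subtlety'' resolves exactly as you predict---a single interpolation suffices because $b_0\vee c_0=a''$ on the nose.
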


\begin{proof}
Assume \eqref{Interpolation}, \eqref{LowerOrder}, \eqref{Shrinking} and \eqref{leqDistributivity}.  The $\Leftarrow$ part of \eqref{precDistributivity} is then immediate from \autoref{LeftprecDistributivity}.  For the $\Rightarrow$ part, say we are given $a'\prec a\leq b\vee c$.  By \eqref{Interpolation}, we have $a''\in S$ with $a'\prec a''\prec a$.  By \eqref{Auxiliarity}, $a''\prec b\vee c$.  By \eqref{Shrinking}, we have $b'\prec b$ and $c'\prec c$ with $a''\leq b'\vee c'$.  By \eqref{leqDistributivity}, we have $b''\leq b'$ and $c''\leq c'$ with $a''=b''\vee c''$.  Thus \eqref{Auxiliarity} yields
\[b''\prec b,\quad c''\prec c\quad\text{and}\quad a'\prec b''\vee c''\prec a,\]
i.e. the right side of \eqref{precDistributivity} holds, as required.
\end{proof}

The following result generalizes the fact that, in distributive lattices, not only do meets distribute over joins but also joins distribute over meets.

\begin{prp}\label{precabcd}
If $S$ is $\prec$-distributive and both $b\vee c$ and $b\vee d$ exist then
\[a\leq b\vee c\quad\text{and}\quad a\leq b\vee d\qquad\Rightarrow\qquad\forall a'\prec a\ \exists e\prec c,d\ (a'\prec b\vee e).\]
\end{prp}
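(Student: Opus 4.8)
The plan is to strip the $b$-summand off in stages, using only the $\Rightarrow$ direction of \eqref{precDistributivity} together with \eqref{Auxiliarity}. Fix $a'\prec a$. First I would apply the $\Rightarrow$ direction of \eqref{precDistributivity} to $a\leq b\vee c$ with test element $a'$, obtaining $b_1\prec b$ and $c_1\prec c$ with $a'\prec b_1\vee c_1\prec a$. Since $b_1\vee c_1\prec a\leq b\vee d$, \eqref{Auxiliarity} gives $b_1\vee c_1\leq b\vee d$, so a second application of the $\Rightarrow$ direction of \eqref{precDistributivity}, now with test element $a'\prec b_1\vee c_1$, yields $b_2\prec b$ and $d_1\prec d$ with $a'\prec b_2\vee d_1\prec b_1\vee c_1$. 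The gain from this second step is that the resulting $d$-part already lies below $b\vee c$: indeed $d_1\leq b_2\vee d_1\prec b_1\vee c_1\leq b\vee c$, so \eqref{Auxiliarity} gives $d_1\prec b\vee c$.

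It remains to strip the residual $b$-contribution off $d_1$ so as to land below $c$ as well, and this is the main obstacle: to decompose $d_1\leq b\vee c$ via the $\Rightarrow$ direction of \eqref{precDistributivity} I need a test element strictly $\prec$-below $d_1$, and none of the relations produced so far supplies one directly (every decomposition bounds a summand from above or the join from below, never a single summand from below). The trick I would use is to manufacture such a predecessor from the join $b_2\vee d_1$ in which $d_1$ already appears as a summand: applying the $\Rightarrow$ direction of \eqref{precDistributivity} to the trivial inequality $b_2\vee d_1\leq b_2\vee d_1$ with test element $a'\prec b_2\vee d_1$ produces $b_5\prec b_2$ and $d_5\prec d_1$ with $a'\prec b_5\vee d_5$. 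The element $d_5\prec d_1$ is exactly the predecessor I need.

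With $d_5$ in hand I would apply the $\Rightarrow$ direction of \eqref{precDistributivity} one last time, to $d_1\leq b\vee c$ with test element $d_5$, obtaining $b_6\prec b$ and $e\prec c$ with $d_5\prec b_6\vee e\prec d_1$; this $e$ is the desired element. On the one hand $e\prec c$ by construction, while $e\leq b_6\vee e\prec d_1\prec d$ gives $e\prec d$ by \eqref{Auxiliarity}, so $e\prec c,d$. On the other hand $d_5\prec b_6\vee e\leq b\vee e$ and $b_5\leq b_2\leq b$ together give $b_5\vee d_5\leq b\vee e$, whence $a'\prec b_5\vee d_5\leq b\vee e$ yields $a'\prec b\vee e$ by \eqref{Auxiliarity}; here $b\vee e$ exists because $b$ and $e\prec c$ are both bounded above by $b\vee c$ and $S$ is a conditional $\vee$-semilattice. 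As $a'\prec a$ was arbitrary, this would complete the argument, the only delicate point being the manufacture of the predecessor $d_5\prec d_1$, with everything else reducing to bookkeeping via \eqref{Auxiliarity}.
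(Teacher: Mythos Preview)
Your proof is correct and uses the same mechanism as the paper's: iterated applications of the $\Rightarrow$ direction of \eqref{precDistributivity} together with \eqref{Auxiliarity}. The paper's version is one step shorter because it orders the decompositions so that the needed predecessor is already in hand. After obtaining $b'\prec b$, $c'\prec c$ with $a'\prec b'\vee c'\prec a$, it refines once (your same trick, applied to $b'\vee c'\leq b'\vee c'$) to get $c''\prec c'$ with $a'\prec b'\vee c''$; but now, since $c''\prec c'\leq a\leq b\vee d$, it can decompose $c'\leq b\vee d$ directly with test element $c''$, obtaining $b''\prec b$ and $e\prec d$ with $c''\prec b''\vee e\prec c'$. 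This gives $e\prec c$ automatically (from $e\leq b''\vee e\prec c'\prec c$) and $a'\prec b'\vee c''\leq b'\vee b''\vee e\leq b\vee e$. In other words, by extracting and refining the $c$-piece \emph{before} switching to $b\vee d$, the paper sidesteps exactly the ``delicate point'' you identified. Your explicit appeal to $S$ being a conditional $\vee$-semilattice to guarantee that $b\vee e$ exists is not among the stated hypotheses, but the paper's proof tacitly relies on the same thing when it writes $b'\vee b''\vee e\leq b\vee e$.
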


\begin{proof}
Given $a'\prec a$, \eqref{precDistributivity} applied to $a\leq b\vee c$ yields $b'\prec b$ and $c'\prec c$ with $a'\prec b'\vee c'\prec a$.  By \eqref{precDistributivity} applied to $a'\prec b'\vee c'\leq b'\vee c'$, we have $c''\prec c'$ with $a'\prec b'\vee c''$.  As $c''\prec c'\leq a\leq b\vee d$, applying \eqref{precDistributivity} again yields $b''\prec b$ and $e\prec d$ with $c''\prec b''\vee e\prec c'$.   By \eqref{Auxiliarity}, $e\prec c$ and $a'\prec b\vee e$, as $a'\prec b'\vee c''\leq b'\vee b''\vee e\leq b\vee e$.
\end{proof}


\subsection{Decomposition}\label{subsecDecomposition}

\begin{dfn}\label{Decomposition}
We say $S$ has \emph{$\prec$-decomposition} if, whenever $b,c\in S$ have a join,
\[\label{precDecomposition}\tag{$\prec$-Decomposition}a\leq b\vee c\qquad\Leftrightarrow\qquad a=\bigvee\{a'\prec a:a'\prec b\text{ or }a'\prec c\}.\]
\end{dfn}

In particular,
\[\label{leqDecomposition}\tag{$\leq$-Decomposition}a\leq b\vee c\qquad\Leftrightarrow\qquad a=\bigvee\{a'\leq a:a'\leq b\text{ or }a'\leq c\}\]
can be restated as follows whenever $a$ has a meet with both $b$ and $c$:
\[\label{leqDecomposition'}\tag{$\leq$-Decomposition$'$}a\leq b\vee c\qquad\Leftrightarrow\qquad a=(a\wedge b)\vee(a\wedge c).\]
Again, this is the familiar notion of distributivity for lattices.


Note that it suffices to verify $\Rightarrow$ in \eqref{precDecomposition}, as the $\Leftarrow$ part is immediate from $\prec\ \subseteq\ \leq$.  Likewise, $\prec\ \subseteq\ \leq$ immediately yields $\geq$ on the right side so we can rewrite \eqref{precDecomposition} equivalently as follows.
\[\tag{$\prec$-Decomposition}a\leq b\vee c\qquad\Rightarrow\qquad a\leq\bigvee\{a'\prec a:a'\prec b\text{ or }a'\prec c\}.\]
Also note that \eqref{Approximation} is just \eqref{precDecomposition} with $a=b=c$, i.e.
\[\tag{Approximation}\forall a\in S\ (a=\bigvee_{b\prec a}b).\]
In fact, \eqref{precDecomposition} itself follows from \eqref{precDistributivity}.

\begin{prp}\label{Distributivity=>Decomposition}
We have the following general implications.
\[\eqref{precDistributivity}\qquad\Rightarrow\qquad\eqref{precDecomposition}\qquad\Rightarrow\qquad\eqref{leqDecomposition}.\]
\end{prp}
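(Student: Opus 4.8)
The plan is to verify only the $\Rightarrow$ directions of each implication, since, as already noted in the text, the $\Leftarrow$ directions of \eqref{precDecomposition} and \eqref{leqDecomposition} are immediate from $\prec\ \subseteq\ \leq$. I would dispatch the second implication first, as it is purely an upper-bound bookkeeping argument. Assuming \eqref{precDecomposition} and $a\leq b\vee c$, set $X=\{a'\prec a:a'\prec b\text{ or }a'\prec c\}$ and $Y=\{a'\leq a:a'\leq b\text{ or }a'\leq c\}$. Since $\prec\ \subseteq\ \leq$ we have $X\subseteq Y$, and both sets are bounded above by $a$. From \eqref{precDecomposition} we know $a=\bigvee X$; but any upper bound of $Y$ is in particular an upper bound of $X$, hence lies above $\bigvee X=a$, so $a$ is also the least upper bound of $Y$, giving \eqref{leqDecomposition}.

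The real content is the first implication, so I would concentrate my effort there. Assuming \eqref{precDistributivity}, I first harvest the consequences recorded earlier: the $\Rightarrow$ part with $a=b=c$ gives \eqref{Interpolation}, and then \autoref{LeftprecDistributivity} promotes the $\Leftarrow$ part of \eqref{precDistributivity} to \eqref{Approximation}. Now fix $a\leq b\vee c$ with $b\vee c$ existing, and set $X=\{a'\prec a:a'\prec b\text{ or }a'\prec c\}$; clearly $a$ is an upper bound of $X$, so the whole game is to show it is the \emph{least} one. To that end I would take an arbitrary upper bound $d$ of $X$ and aim for $a\leq d$.

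Picking any $a'\prec a$, the $\Rightarrow$ part of \eqref{precDistributivity} supplies $b'\prec b$ and $c'\prec c$ with $a'\prec b'\vee c'\prec a$. The key observation is that these witnesses actually land in $X$: since $b',c'\leq b'\vee c'\prec a$, \eqref{Auxiliarity} yields $b'\prec a$ and $c'\prec a$, so $b',c'\in X$ and therefore $b',c'\leq d$, whence $b'\vee c'\leq d$. Then $a'\prec b'\vee c'\leq d$ gives $a'\leq d$ by \eqref{Auxiliarity}. As $a'\prec a$ was arbitrary, \eqref{Approximation} upgrades this to $a\leq d$, so $a=\bigvee X$ and \eqref{precDecomposition} holds.

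The step I expect to be the main obstacle, and the crux of the argument, is recognizing that establishing \eqref{precDecomposition} is not merely about exhibiting $a$ as \emph{an} upper bound of $X$ but as the \emph{least} one, and that this minimality is exactly what forces us to invoke \eqref{Approximation} (equivalently the $\Leftarrow$ half of \eqref{precDistributivity} together with \eqref{Interpolation}). The matching realization that the witnesses $b',c'$ produced by distributivity can be reinterpreted as members of the join-set $X$ via \eqref{Auxiliarity} is what lets the chosen upper bound $d$ dominate $b'\vee c'$, and hence $a'$; without that reinterpretation the produced elements would be irrelevant to $\bigvee X$.
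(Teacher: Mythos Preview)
Your proof is correct and follows essentially the same approach as the paper. Both arguments derive \eqref{Approximation} from \eqref{precDistributivity} via \autoref{LeftprecDistributivity}, use the $\Rightarrow$ part of \eqref{precDistributivity} to produce $b'\prec b$ and $c'\prec c$ with $a'\prec b'\vee c'\prec a$, and then apply \eqref{Auxiliarity} to see $b',c'\prec a$ so that $b',c'$ lie in the join-set; the only difference is cosmetic, in that the paper compresses your upper-bound argument into a single displayed chain of suprema, while you unpack it by fixing an arbitrary upper bound $d$.
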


\begin{proof}
For the first $\Rightarrow$, say $a\leq b\vee c$.  For every $a'\prec a$, \eqref{precDistributivity} yields $b'\prec b$ and $c'\prec c$ with $a'\prec b'\vee c'\prec a$.  By \eqref{Auxiliarity}, $b',c'\prec a$.  Also $a'\leq b'\vee c'$, as $\prec\ \subseteq\ \leq$.  By \autoref{LeftprecDistributivity}, \eqref{precDistributivity} yields \eqref{Approximation} so
\[a=\bigvee_{a'\prec a}a'\leq\bigvee\{b'\vee c'\prec a:b'\prec b\text{ and }c'\prec c\}\leq\bigvee\{a''\prec a:a''\prec b\text{ or }a''\prec c\},\]
i.e. \eqref{precDecomposition} holds.  The second $\Rightarrow$ is immediate from $\prec\ \subseteq\ \leq$.
\end{proof}

\begin{cor}\label{precDistributivity=>leqDistributivity}
If $S$ is a $\wedge$-semilattice then
\[\eqref{precDistributivity}\qquad\Rightarrow\qquad\eqref{leqDistributivity}\qquad\Leftrightarrow\qquad\eqref{leqDecomposition}.\]
\end{cor}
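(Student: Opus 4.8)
The plan is to prove the equivalence \eqref{leqDistributivity} $\Leftrightarrow$ \eqref{leqDecomposition} directly, and then obtain \eqref{precDistributivity} $\Rightarrow$ \eqref{leqDistributivity} almost for free by composing this equivalence with \autoref{Distributivity=>Decomposition}. Neither implication looks difficult; the only place where the $\wedge$-semilattice hypothesis will be needed is in passing from \eqref{leqDecomposition} to its restatement \eqref{leqDecomposition'}.

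For \eqref{leqDistributivity} $\Rightarrow$ \eqref{leqDecomposition} I would not even use the semilattice structure. Given $a\leq b\vee c$, distributivity produces $b'\leq b$ and $c'\leq c$ with $a=b'\vee c'$; since $b',c'\leq b'\vee c'=a$, both lie in the set $\{a'\leq a:a'\leq b\text{ or }a'\leq c\}$, whose members are all bounded by $a$. Thus $a$ is an upper bound of this set, while every upper bound dominates $b'\vee c'=a$, so $a$ is exactly its join. For the converse \eqref{leqDecomposition} $\Rightarrow$ \eqref{leqDistributivity} I would invoke the $\wedge$-semilattice hypothesis, which guarantees that $a\wedge b$ and $a\wedge c$ exist and hence that \eqref{leqDecomposition} takes the form \eqref{leqDecomposition'}. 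Then $a\leq b\vee c$ yields $a=(a\wedge b)\vee(a\wedge c)$, and setting $b'=a\wedge b\leq b$ and $c'=a\wedge c\leq c$ gives the witnesses demanded by the $\Rightarrow$ direction of \eqref{leqDistributivity}; its $\Leftarrow$ direction is immediate, since any $b'\leq b$ and $c'\leq c$ with $a=b'\vee c'$ satisfy $a\leq b\vee c$.

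Finally, \eqref{precDistributivity} $\Rightarrow$ \eqref{leqDistributivity} follows by chaining: \autoref{Distributivity=>Decomposition} already gives \eqref{precDistributivity} $\Rightarrow$ \eqref{leqDecomposition}, and the equivalence just established upgrades this to \eqref{leqDistributivity}. The only subtlety I anticipate is ensuring that the join $(a\wedge b)\vee(a\wedge c)$ in the converse genuinely exists rather than merely having an upper bound; but this needs no separate argument, because \eqref{leqDecomposition} itself asserts that the relevant set has join $a$, and identifying that join with $(a\wedge b)\vee(a\wedge c)$ exhibits the latter as existing and equal to $a$.
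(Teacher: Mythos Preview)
Your proposal is correct and follows essentially the same route as the paper: both obtain the $\Leftrightarrow$ by passing through \eqref{leqDecomposition'} (where the $\wedge$-semilattice hypothesis enters), and both derive \eqref{precDistributivity}$\Rightarrow$\eqref{leqDistributivity} by chaining \autoref{Distributivity=>Decomposition} with that equivalence. Your write-up simply unpacks the details that the paper leaves implicit, and your remark that \eqref{leqDistributivity}$\Rightarrow$\eqref{leqDecomposition} needs no meets is a correct sharpening.
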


\begin{proof}
The $\Rightarrow$ follows from \autoref{Distributivity=>Decomposition}.  The $\Leftrightarrow$ follows from the equivalent of \eqref{leqDecomposition} given above when $a$ has a meet with $b$ and $c$.
\end{proof}

The following is an analog of \autoref{precabcd}.

\begin{prp}\label{veeoverwedge}
If $S$ has $\prec$-decomposition and both $b\vee c$ and $b\vee d$ exist then
\[a\leq b\vee c\quad\text{and}\quad a\leq b\vee d\qquad\Rightarrow\qquad a=\bigvee\{a'\prec a:a'\prec b\text{ or }a'\prec c,d\}.\]
\end{prp}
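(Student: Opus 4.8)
The plan is to prove $a=\bigvee T$, where $T=\{a'\prec a:a'\prec b\text{ or }a'\prec c,d\}$, by showing that $a$ is the least upper bound of $T$. That $a$ is an upper bound is immediate, since every member of $T$ satisfies $a'\prec a$ and hence $a'\leq a$. The substance is leastness: I would fix an \emph{arbitrary} upper bound $u$ of $T$ and prove $a\leq u$, peeling off the two hypotheses $a\leq b\vee c$ and $a\leq b\vee d$ by nested applications of \eqref{precDecomposition}. Phrasing everything through a generic $u$ sidesteps any worry about whether the intermediate joins exist, since I only ever use the universal property of the joins that \eqref{precDecomposition} supplies.

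First I would invoke \eqref{Approximation} (which, as noted before \autoref{Distributivity=>Decomposition}, is the $a=b=c$ instance of \eqref{precDecomposition}) to write $a=\bigvee_{a'\prec a}a'$; thus it suffices to show $a'\leq u$ for each $a'\prec a$. Fixing such an $a'$, from $a'\prec a$ we get $a'\leq a\leq b\vee c$, so \eqref{precDecomposition} gives $a'=\bigvee\{x\prec a':x\prec b\text{ or }x\prec c\}$, and it now suffices to show $x\leq u$ for each such $x$. If $x\prec b$, then $x\prec a'\prec a$ forces $x\prec a$ by transitivity of $\prec$, so $x\in T$ and $x\leq u$ is automatic. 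The remaining case $x\prec c$ is where the second hypothesis must enter.

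For $x\prec c$, transitivity gives $x\prec a$, hence $x\leq a\leq b\vee d$, and a further application of \eqref{precDecomposition} yields $x=\bigvee\{y\prec x:y\prec b\text{ or }y\prec d\}$; again it suffices to check $y\leq u$ for each such $y$. If $y\prec b$, then $y\prec x\prec a$ gives $y\prec a$, so $y\in T$. If instead $y\prec d$, then $y\prec x\prec c$ gives $y\prec c$ and $y\prec x\prec a$ gives $y\prec a$, so $y$ satisfies $y\prec a$ together with both $y\prec c$ and $y\prec d$, placing $y\in T$ once more. In either subcase $y\leq u$, so $u$ bounds the decomposition of $x$ and hence $x\leq u$; this bounds the decomposition of $a'$, giving $a'\leq u$; and this in turn bounds the $\{a'\prec a\}$ decomposition of $a$, giving $a\leq u$, as required.

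I expect the main obstacle to be bookkeeping rather than anything conceptual: the proof is just a double application of \eqref{precDecomposition}, and the delicate point is ensuring that the twice-decomposed elements actually land in $T$. The only genuinely nontrivial branch is the $y\prec d$ subcase, where membership in $T$ demands $y\prec c$ \emph{and} $y\prec d$ at once; the latter is handed to us directly by the inner decomposition, while the former is recovered only because we entered this branch via $x\prec c$ and can chain through $y\prec x\prec c$. Keeping the relation $y\prec x\prec c$ (rather than merely $x\leq c$) available at that moment is therefore the single place where care is needed.
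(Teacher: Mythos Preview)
Your proof is correct and follows essentially the same idea as the paper's: fix an arbitrary upper bound of $T$, then peel off the hypotheses via nested applications of \eqref{precDecomposition}, with the crucial branch being where an element $\prec c$ is further decomposed using $a\leq b\vee d$ to land in $T$. The one difference is that your initial use of \eqref{Approximation} is redundant: since $a\leq b\vee c$, you can apply \eqref{precDecomposition} directly to $a$ (rather than to each $a'\prec a$), obtaining $a=\bigvee\{x\prec a:x\prec b\text{ or }x\prec c\}$ immediately and saving one layer of nesting---this is exactly what the paper does.
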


\begin{proof}
Assume $a\leq b\vee c$ and $a\leq b\vee d$.  Take $e$ such that $e\geq a'$ whenever $a'\prec a,b$ or $a'\prec a,c,d$.  We claim that then $e\geq a'$ whenever $a'\prec a,c$.  Indeed, for any $a'\prec a,c$, we have $a'\leq b\vee d$ so \eqref{precDecomposition} yields
\[a'=\bigvee\{a''\prec a':a''\prec b\text{ or }a''\prec d\}.\]
As $\{a''\prec a':a''\prec b\text{ or }a''\prec d\}\subseteq\{a''\prec a:a''\prec b\text{ or }a''\prec c,d\}$ and $e$ is above the latter set, we must have $e\geq a'$.  Thus $e\geq a'$ whenever $a'\prec a,b$ or $a'\prec a,c$.  But $a\leq b\vee c$ so by \eqref{precDecomposition} this implies $e\geq a$.  As $e$ was arbitrary,
\[a=\bigvee\{a'\prec a:a'\prec b\text{ or }a'\prec c,d\}.\qedhere\]
\end{proof}

\subsection{Hausdorff}\label{subsecHausdorff}

The name for the following is due to \eqref{OUN} in \autoref{Ubases}.

\begin{dfn}\label{Udef}
Define the \emph{Hausdorff} relation $\U$ on $S$ from $\prec$ as follows.
\[a\U b\qquad\Leftrightarrow\qquad\forall a'\prec a\ \ \forall b'\prec b\ \ \exists c\prec a,b\ \ \forall c'\prec a',b'\ (c'\prec c).\]
\end{dfn}

In a sense, $a\U b$ is saying that $a$ and $b$ have a meet relative to $\prec$ rather than $\leq$.

\begin{prp}\label{Uwedge}
If $\prec\ =\ \leq$ then $a\U b$ holds if and only if $a\wedge b$ exists.
\end{prp}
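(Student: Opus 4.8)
The plan is to unfold \autoref{Udef} under the standing hypothesis $\prec\ =\ \leq$, which in particular renders $\prec$ reflexive, and then to treat the two implications separately. Under this hypothesis the defining condition for $a\U b$ reads: for all $a'\leq a$ and all $b'\leq b$ there exists some $c\leq a,b$ such that every common lower bound $c'$ of $a'$ and $b'$ satisfies $c'\leq c$.

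For the ($\Leftarrow$) direction I would assume $a\wedge b$ exists and verify $a\U b$ by producing a single witness $c$ that works uniformly in $a'$ and $b'$. The natural candidate is $c=a\wedge b$ itself: it is certainly a common lower bound of $a$ and $b$, and given any $a'\leq a$, $b'\leq b$ together with any $c'\leq a',b'$, transitivity of $\leq$ gives $c'\leq a$ and $c'\leq b$, so that $c'$ is a common lower bound of $a$ and $b$ and hence $c'\leq a\wedge b=c$. This part is routine.

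The ($\Rightarrow$) direction is where reflexivity does the real work. I would instantiate the definition of $a\U b$ at the extremal choices $a'=a$ and $b'=b$, which are legitimate precisely because $\prec\ =\ \leq$ is reflexive. This produces some $c\leq a,b$ with the property that every common lower bound $c'$ of $a$ and $b$ satisfies $c'\leq c$; but this says exactly that $c$ is the greatest lower bound of $\{a,b\}$, so $a\wedge b$ exists and equals $c$.

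The only real subtlety is recognizing how the nested quantifier structure of \autoref{Udef} collapses in the reflexive case. In the forward direction one must push $a'$ and $b'$ as high as possible, namely to $a$ and $b$, so that $c$ is forced to dominate all common lower bounds; in the backward direction the meet serves as a universal witness, independent of $a'$ and $b'$. No interpolation, distributivity, or decomposition hypotheses enter, so the argument is self-contained given only reflexivity and transitivity of $\leq$.
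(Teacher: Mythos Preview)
Your proof is correct and follows essentially the same approach as the paper. The paper compresses both directions into a single remark, observing that under $\prec=\leq$ the defining condition for $a\U b$ ``reduces to the $a'=a$ and $b'=b$ case, in which case we must have $c=a\wedge b$''; your argument simply unpacks this by treating the two implications separately, using reflexivity to instantiate $a'=a$, $b'=b$ in the forward direction and taking $c=a\wedge b$ uniformly in the backward direction.
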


\begin{proof}
If $\prec\ =\ \leq$ then the right side of the definition of $a\U b$ reduces to the $a'=a$ and $b'=b$ case, in which case we must have $c=a\wedge b$ by the definition of a meet.
\end{proof}

Note $\leq$ is stronger than $\U$, as witnessed by taking $c=a'$ above, i.e.
\[\label{leqU}\tag{$\leq\ \subseteq\ \U$}a\leq b\qquad\Rightarrow\qquad a\U b.\]
In particular, $\U$ is reflexive.  We can also extend \eqref{Interpolation} to Hausdorff pairs.

\begin{prp}
\eqref{Interpolation} is equivalent to
\[\label{UInterpolation}\tag{$\U$-Interpolation}a\U b\qquad\Rightarrow\qquad\forall c\prec a,b\ \exists d\ (c\prec d\prec a,b).\]
\end{prp}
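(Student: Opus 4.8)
The plan is to prove the two implications separately. The easy direction, \eqref{UInterpolation} $\Rightarrow$ \eqref{Interpolation}, exploits the reflexivity of $\U$ noted just above. Given $a\prec b$, I would observe that $b\U b$ and apply \eqref{UInterpolation} to this pair with the element $a$ in the role of $c$: since $a\prec b,b$, the hypothesis immediately yields $d$ with $a\prec d\prec b,b$, which is exactly \eqref{Interpolation}.

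The substance of the argument is the converse, \eqref{Interpolation} $\Rightarrow$ \eqref{UInterpolation}. Assuming \eqref{Interpolation}, I would take $a\U b$ and $c\prec a,b$ and aim to produce $d$ with $c\prec d\prec a,b$. The key idea is that the inner universally quantified clause in the definition of $a\U b$ can be instantiated at $c$ itself, but only once $c$ has been pushed strictly $\prec$-below suitable witnesses. So first I would apply \eqref{Interpolation} to $c\prec a$ and to $c\prec b$, obtaining $a'$ and $b'$ with $c\prec a'\prec a$ and $c\prec b'\prec b$. Feeding these $a'\prec a$ and $b'\prec b$ into the definition of $a\U b$ produces some $d\prec a,b$ with the property that every $c'\prec a',b'$ satisfies $c'\prec d$. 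Since $c\prec a'$ and $c\prec b'$, instantiating this at $c'=c$ gives $c\prec d$, and combined with $d\prec a,b$ this is precisely the conclusion required by \eqref{UInterpolation}.

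The main obstacle — and the reason \eqref{Interpolation} is genuinely needed — is the non-reflexivity of $\prec$. One cannot simply take $a'=c$ and $b'=c$ as the witnesses in the definition of $a\U b$, since the instantiation at $c'=c$ would then demand $c\prec c$, which may fail. \eqref{Interpolation} is exactly what lets me insert the intermediate elements $a'$ and $b'$ strictly between $c$ and $a,b$, after which the Hausdorff condition applies directly and no further calculation is required.
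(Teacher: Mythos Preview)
Your proof is correct and follows essentially the same approach as the paper: the easy direction uses reflexivity of $\U$ by specializing to $a=b$, and the converse interpolates $c\prec a'\prec a$ and $c\prec b'\prec b$ before invoking the definition of $a\U b$ with $c'=c$. Your closing remark about why \eqref{Interpolation} is genuinely needed (non-reflexivity of $\prec$ prevents taking $a'=b'=c$) is a helpful clarification not spelled out in the paper's more terse version.
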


\begin{proof}
If \eqref{UInterpolation} holds then, as $\U$ is reflexive, the $a=b$ case reduces to \eqref{Interpolation}.  Conversely, assume \eqref{Interpolation} holds and $a\U b$.  For any $c\prec a,b$, \eqref{Interpolation} yields $a',b'\in S$ with $c\prec a'\prec a$ and $c\prec b'\prec b$.  Then $a\U b$ yields $d\prec a,b$ with $c\prec d$, as $c\prec a',b'$.
\end{proof}

We can then replace the second last $\prec$ with $\leq$ in \autoref{Udef}.

\begin{prp}
Under \eqref{LowerOrder}, \eqref{Interpolation} is equivalent to
\[\label{UEquivalent}\tag{$\U$-Equivalent}a\U b\quad\Leftrightarrow\quad\forall a'\prec a\ \ \forall b'\prec b\ \ \exists c\prec a,b\ \ \forall c'\leq a',b'\ (c'\prec c).\]
\end{prp}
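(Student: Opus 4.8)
The plan is to prove the two implications separately, keeping in mind that the genuine content of \eqref{UEquivalent} lies in the forward direction of its internal biconditional: since $\prec\ \subseteq\ \leq$, every $c'\prec a',b'$ also satisfies $c'\leq a',b'$, so the right-hand side of \eqref{UEquivalent} (which quantifies over the larger collection of $c'$) trivially implies the defining condition for $a\U b$ in \autoref{Udef}. Thus the $\Leftarrow$ half of the biconditional in \eqref{UEquivalent} holds unconditionally, and only the $\Rightarrow$ half, namely $a\U b\Rightarrow\forall a'\prec a\ \forall b'\prec b\ \exists c\prec a,b\ \forall c'\leq a',b'\ (c'\prec c)$, must be extracted from \eqref{Interpolation}.

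For \eqref{Interpolation}$\Rightarrow$\eqref{UEquivalent}, I would mimic the padding argument already used for \eqref{UInterpolation}. Assume $a\U b$ and fix $a'\prec a$, $b'\prec b$. Using \eqref{Interpolation}, interpolate to obtain $a''$ and $b''$ with $a'\prec a''\prec a$ and $b'\prec b''\prec b$. Applying \autoref{Udef} to the pair $(a'',b'')$ (legitimate since $a''\prec a$ and $b''\prec b$) yields some $c\prec a,b$ with $c''\prec c$ for every $c''\prec a'',b''$. The decisive step is then to absorb the $\leq$-predecessors: any $c'\leq a',b'$ satisfies $c'\leq a'\prec a''$ and $c'\leq b'\prec b''$, so \eqref{Auxiliarity} upgrades these to $c'\prec a'',b''$ and hence $c'\prec c$. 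Since such a $c\prec a,b$ was produced for arbitrary $a',b'$, this is exactly the right-hand side of \eqref{UEquivalent}.

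For \eqref{UEquivalent}$\Rightarrow$\eqref{Interpolation}, I would specialize \eqref{UEquivalent} at a reflexive pair. Given $a\prec b$, reflexivity of $\U$ (that is, \eqref{leqU} applied to $b\leq b$) gives $b\U b$, so the right-hand side of \eqref{UEquivalent} holds for the pair $(b,b)$. Feeding in $a'=b'=a$ (both $\prec b$) produces $c\prec b$ with $c'\prec c$ for every $c'\leq a$; taking $c'=a$ yields $a\prec c$, whence $a\prec c\prec b$, which is precisely \eqref{Interpolation}.

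The main obstacle, and the only step requiring real care, is the absorption in the forward direction: one must fatten $a',b'$ via \eqref{Interpolation} \emph{before} invoking \autoref{Udef}, since the raw definition controls only $\prec$-predecessors and it is \eqref{Auxiliarity} that upgrades $c'\leq a'$ to $c'\prec a''$. I should also pin down the precise role of the standing hypothesis \eqref{LowerOrder}: the argument sketched above appears to rely only on \eqref{Interpolation}, \eqref{Auxiliarity} and reflexivity \eqref{leqU}, so part of finalizing the proof is to verify whether \eqref{LowerOrder} is genuinely invoked or is merely a convenient ambient assumption inherited from the surrounding results.
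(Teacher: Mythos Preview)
Your proof is correct, and in fact cleaner than the paper's. The paper's argument for the forward half of \eqref{UEquivalent} applies \autoref{Udef} directly at $(a',b')$ to obtain some $d\prec a,b$ dominating all $\prec$-predecessors of $a',b'$, then invokes \eqref{UInterpolation} to fatten $d$ to $c$ with $d\prec c\prec a,b$; to handle an arbitrary $c'\leq a',b'$ it observes that every $d'\prec c'$ satisfies $d'\prec a',b'$ and hence $d'\prec d$, and then appeals to \eqref{LowerOrder} to conclude $c'\leq d\prec c$. Your trick of pre-interpolating $a'\prec a''\prec a$ and $b'\prec b''\prec b$ \emph{before} invoking \autoref{Udef} lets \eqref{Auxiliarity} do the work directly, so the detour through \eqref{LowerOrder} disappears. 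Your closing suspicion is therefore justified: your argument genuinely uses only \eqref{Interpolation}, \eqref{Auxiliarity} and the reflexivity of $\U$, and hence establishes the proposition without the \eqref{LowerOrder} hypothesis; the paper's statement is slightly weaker than what you have actually proved.
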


\begin{proof}
As $\U$ is reflexive, \eqref{UEquivalent} with $a=b$ and $a'=b'=c'$ yields \eqref{Interpolation}.

Conversely, assume \eqref{LowerOrder} and \eqref{Interpolation} hold.  If $a'\prec a\U b\succ b'$ then we have $d\prec a,b$ with $d'\prec d$, for all $d'\prec a',b'$.  By \eqref{UInterpolation}, we have $c$ with $d\prec c\prec a,b$.  Now if $c'\leq a',b'$ then, for all $d'\prec c'$, \eqref{Auxiliarity} yields $d'\prec a',b'$ so $d'\prec d$.  As $d'$ was arbitrary,  \eqref{LowerOrder} yields $c'\leq d\prec c$ so \eqref{Auxiliarity} yields $c'\prec c$.  This verifies the $\Rightarrow$ part of \eqref{UEquivalent}.  The $\Leftarrow$ part is immediate from $\prec\ \subseteq\ \leq$ and the defintion of $\U$ in \autoref{Udef}.
\end{proof}

When $S$ also has meets, $\Rightarrow$ becomes $\Leftrightarrow$ in \eqref{UInterpolation}.  Put another way, $\U$ is really about $\prec$ preserving meets.

\begin{prp}
If $S$ is a $\wedge$-semilattice satisfying \eqref{Interpolation} then
\[\label{wedgePreservation}\tag{$\wedge$-Preservation}a\U b\qquad\Leftrightarrow\qquad\forall a'\prec a\ \forall b'\prec b\ (a'\wedge b'\prec a\wedge b).\]
\end{prp}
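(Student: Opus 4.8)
The plan is to prove both implications directly from the definition of $\U$ in \autoref{Udef}, using \eqref{Interpolation} in each direction to reconcile the $\prec$ appearing in that definition with the $\leq$ that the meet operation naturally produces. Two standing observations will do most of the work: since $\prec\ \subseteq\ \leq$, any $c$ with $c\prec a$ and $c\prec b$ satisfies $c\leq a\wedge b$; and \eqref{Auxiliarity} lets me freely enlarge the upper term or shrink the lower term of any $\prec$-relation. The only genuinely non-formal ingredient is \eqref{Interpolation}, and I expect it to be needed in \emph{both} directions, which is reassuring given that it is exactly what the hypothesis supplies.

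For the $\Leftarrow$ direction I would assume $a'\wedge b'\prec a\wedge b$ for all $a'\prec a$, $b'\prec b$, and then fix such a pair $a',b'$ and manufacture the witness $c$ demanded by \autoref{Udef}. The naive choice $c=a'\wedge b'$ fails, since a given $c'\prec a',b'$ only yields $c'\leq a'\wedge b'$, not $c'\prec a'\wedge b'$. So instead I would apply \eqref{Interpolation} to the hypothesis $a'\wedge b'\prec a\wedge b$ to obtain $c$ with $a'\wedge b'\prec c\prec a\wedge b$. Then $a\wedge b\leq a,b$ together with \eqref{Auxiliarity} gives $c\prec a,b$, while any $c'\prec a',b'$ satisfies $c'\leq a'\wedge b'\prec c$ and hence $c'\prec c$ by \eqref{Auxiliarity}. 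This is precisely $a\U b$.

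For the $\Rightarrow$ direction I would assume $a\U b$, fix $a'\prec a$ and $b'\prec b$, and aim to show $a'\wedge b'\prec a\wedge b$. Here the obstacle is a quantifier mismatch: I cannot apply the defining condition of $a\U b$ to the pair $(a',b')$ and instantiate the universally quantified $c'$ at $a'\wedge b'$, because that would require $a'\wedge b'\prec a',b'$, which generally fails. The remedy is to interpolate first. Using \eqref{Interpolation} I would pick $a'',b''$ with $a'\prec a''\prec a$ and $b'\prec b''\prec b$, and apply $a\U b$ to the pair $(a'',b'')$ to get $c\prec a,b$ with $c'\prec c$ for every $c'\prec a'',b''$. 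Now $a'\wedge b'\leq a'\prec a''$ and $a'\wedge b'\leq b'\prec b''$ give $a'\wedge b'\prec a'',b''$ via \eqref{Auxiliarity}, so $a'\wedge b'$ is a legitimate value of $c'$ and therefore $a'\wedge b'\prec c$. Since $c\prec a,b$ forces $c\leq a\wedge b$, one more application of \eqref{Auxiliarity} yields $a'\wedge b'\prec a\wedge b$, which is \eqref{wedgePreservation}.

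I expect the crux to be this last direction, and specifically the realization that a single interpolation step bumping $a',b'$ up to $a'',b''$ is exactly what turns $a'\wedge b'$ into an admissible witness for the inner universal quantifier; once that move is identified, everything else is routine bookkeeping with \eqref{Auxiliarity} and the inclusion $\prec\ \subseteq\ \leq$.
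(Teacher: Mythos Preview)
Your proposal is correct and follows essentially the same argument as the paper's proof: both directions use \eqref{Interpolation} in exactly the way you describe, interpolating $a'\prec a''\prec a$ and $b'\prec b''\prec b$ for the $\Rightarrow$ direction so that $a'\wedge b'$ becomes an admissible $c'$, and interpolating $a'\wedge b'\prec c\prec a\wedge b$ for the $\Leftarrow$ direction so that $c$ becomes the required witness. Your write-up is more explicit about why the naive choices fail, but the mathematical content is identical.
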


\begin{proof}
Assume $S$ is a $\wedge$-semilattice satisfying \eqref{Interpolation}.  If $a\U b$ then, for any $a'\prec a$ and $b'\prec b$, \eqref{Interpolation} yields $a'',b''\in S$ with $a'\prec a''\prec a$ and $b'\prec b''\prec b$.  By \eqref{Auxiliarity}, $a'\wedge b'\prec a'',b''$ so $a\U b$ yields $c\prec a,b$ with $a'\wedge b'\prec c$.  As $c\leq a\wedge b$, \eqref{Auxiliarity} again yields $a'\wedge b'\prec a\wedge b$.

Conversely, if the right side holds then, by \eqref{Interpolation}, we have some $c$ with $a'\wedge b'\prec c\prec a\wedge b$, which thus witnesses $a\U b$, again by \eqref{Auxiliarity}.
\end{proof}

Often $\U$ will hold for all bounded pairs (e.g. when $\prec$ is  the rather below relation as in \autoref{LocallyHausdorffprp} below) meaning that we have
\[\label{LocallyHausdorff}\tag{Locally Hausdorff}a,b\leq c\qquad\Rightarrow\qquad a\U b.\]
Then $\U$ will have the appropriate auxiliarity property.

\begin{prp}
If $S$ satisfies \eqref{LocallyHausdorff} and \eqref{Interpolation} then
\[\label{UAuxiliarity}\tag{$\U$-Auxiliarity}a\leq a'\U b'\geq b\qquad\Rightarrow\qquad a\U b.\]
\end{prp}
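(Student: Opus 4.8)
The plan is to verify the definition of $a\U b$ from \autoref{Udef} directly. So I fix approximants $a_0\prec a$ and $b_0\prec b$ and aim to produce some $c\prec a,b$ with $c'\prec c$ for every $c'\prec a_0,b_0$. Since $a\leq a'$ and $b\leq b'$, \eqref{Auxiliarity} immediately upgrades these to $a_0\prec a'$ and $b_0\prec b'$, so the hypothesis $a'\U b'$ becomes applicable to them. Feeding $a_0,b_0$ into $a'\U b'$ — or rather, after one preliminary round of \eqref{Interpolation} producing $a_0\prec a_1\prec a$ and $b_0\prec b_1\prec b$, feeding in the slightly larger $a_1,b_1$ so as to leave room — yields some $w\prec a',b'$ which dominates all the small elements, in the sense that $c'\prec w$ whenever $c'\prec a_0,b_0$.

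The only defect of $w$ is that it is rather below $a'$ and $b'$ rather than below $a$ and $b$, so the substance of the proof is to relocate $w$ inside $a$ and $b$ without losing its domination property. This is exactly where \eqref{LocallyHausdorff} is meant to enter: since $w\leq a'$ and $a\leq a'$ it gives $a\U w$, and since $w\leq b'$ and $b\leq b'$ it gives $b\U w$. I would then unwind these two Hausdorff relations in turn, first carving out of $w$ an element $d\prec a,w$ (applying the definition of $a\U w$ with the approximant $a_0$ of $a$), and then carving out of $d$ an element $c\prec b,d$ (applying $b\U d$, which is legitimate since $d\prec w\leq b'$ and $b\leq b'$). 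As $c\prec d\prec a$, \eqref{Auxiliarity} gives $c\prec a$, and together with $c\prec b$ this is the required witness $c\prec a,b$.

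The main obstacle, and the step I expect to require the most care, is keeping the domination alive through this relocation. Each appeal to the definition of $a\U w$ (and then of $b\U d$) consumes a \emph{sub}-approximant of $w$ (respectively of $d$), and for the element produced to still dominate $\{c':c'\prec a_0,b_0\}$ that sub-approximant must already dominate this set. In other words the crux is a domination-interpolation: from the fact that $w$ dominates the small elements, one must extract a strict $\prec$-predecessor of $w$ that still dominates them. This is precisely what the interpolation room $a_0\prec a_1\prec a$ and $b_0\prec b_1\prec b$ built in at the outset is for, and it is also where the local Hausdorffness of the bounded region below $a'$ and $b'$, supplied by \eqref{LocallyHausdorff}, does the real work; I expect establishing this interpolation to be the delicate heart of the argument, with the remaining manipulations being routine applications of \eqref{Auxiliarity}.
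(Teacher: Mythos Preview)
Your overall strategy is correct and close in spirit to the paper's, but the two differ in shape and you have misidentified the mechanism at the one step you flag as delicate.

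The paper proceeds \emph{symmetrically}: from the witness $c'$ (your $w$) it first interpolates $c'\prec c\prec a',b'$ via \eqref{UInterpolation}, then uses \eqref{LocallyHausdorff} twice in parallel to get $a''''\prec a,c$ and $b''''\prec b,c$ (each dominating), interpolates each of these, and finally applies \eqref{LocallyHausdorff} a third time to $a''',b'''\leq c$ to merge them into a single $c'''\prec a,b$. That is four applications of a $\U$ relation. Your \emph{sequential} variant --- carve $d\prec a,w$ first, then $c\prec b,d$ --- works too and needs only three, so it is a genuine (if minor) streamlining.

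Where you go astray is the ``domination--interpolation'' step. The input-side room $a_0\prec a_1\prec a$ and $b_0\prec b_1\prec b$ does \emph{not} by itself produce a $\prec$-predecessor of $w$ that still dominates: feeding $a_1,b_1$ into $a'\U b'$ gives a witness $w$, but applying $a'\U b'$ again at level $a_0,b_0$ gives an unrelated witness with no reason to lie below $w$. The correct device is interpolation on the \emph{output} side, exactly as the paper does: after obtaining the witness $w_0\prec a',b'$ dominating $\{c':c'\prec a_0,b_0\}$, apply \eqref{UInterpolation} (which holds since $a'\U b'$) to get $w_0\prec w\prec a',b'$; now $w_0$ itself is the required dominating sub-approximant of $w$. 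The same trick then works at the next stage with $d_0\prec d\prec a,w$ via \eqref{UInterpolation} and $a\U w$. With this correction your sequential argument goes through cleanly, and the preliminary interpolation $a_0\prec a_1$, $b_0\prec b_1$ can simply be dropped.
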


\begin{proof}
Say $a''\prec a\leq a'\U b'\geq b\succ b''$.  By \eqref{Auxiliarity}, $a''\prec a'\U b'\succ b''$ so we have $c'\prec a',b'$ with $d\prec c'$, for all $d\prec a'',b''$.  Then \eqref{UInterpolation} yields $c$ with $c'\prec c\prec a',b'$.  As $a,c\leq a'$, \eqref{LocallyHausdorff} implies $a''\prec a\U c\succ c'$, so we have $a''''\prec a,c$ with $d\prec a''''$, for all $d\prec a'',c'$.  Again \eqref{UInterpolation} yields $a'''$ with $a''''\prec a'''\prec a,c$.  Likewise, as $b,c\leq b'$, \eqref{LocallyHausdorff} implies $b''\prec b\U c\succ c'$, so we have $b''''\prec b,c$ with $d\prec b''''$, for all $d\prec b'',c'$.  Yet again \eqref{UInterpolation} yields $b'''$ with $b''''\prec b'''\prec b,c$.  As $a''',b'''\leq c$, \eqref{LocallyHausdorff} again implies $a''''\prec a'''\U b'''\succ b''''$, so we have $c'''\prec a''',b'''$ with $d\prec c'''$, for all $d\prec a'''',b''''$.  Thus $c'''\prec a'''\prec a$ and $c'''\prec b'''\prec b$.  Also, any $d\prec a'',b''$ satisfies $d\prec c'$ and hence $d\prec a'''',b''''$ so $d\prec c'''$.  Thus $c'''$ witnesses $a\U b$.
\end{proof}

\subsection{Disjoint}\label{subsecDisjoint}

We now wish to examine the disjoint relation $\perp$.  The appropriate definition of $\perp$ depends on whether the poset has a minimum or not.  For us, particularly when we deal with inverse semigroups in \autoref{InverseSemigroups}, it will be more natural to actually have a minimum so

\begin{center}
\textbf{from now on assume $S$ has a minimum $0$, i.e.}
\[\label{Minimum}\tag{Minimum}\forall a\in S\ (0\leq a).\]
\end{center}

The following concept will play an important role later on.

\begin{dfn}
We call $U\subseteq S$ \emph{$\prec$-round} if
\[\label{precRound}\tag{$\prec$-Round}\forall u\in U\ \exists v\in U\ (v\prec u).\]
\end{dfn}

For the moment we simply note that $S\setminus\{0\}$ is often $\prec$-round.

\begin{prp}\label{Snot0}
If \eqref{Approximation} holds then $S\setminus\{0\}$ is $\prec$-round.
\end{prp}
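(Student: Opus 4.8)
The plan is a short proof by contradiction resting on the implication form of \eqref{Approximation}, namely $\forall c \prec a\ (c \leq b) \Rightarrow a \leq b$. Fix $u \in S \setminus \{0\}$; to verify \eqref{precRound} I must exhibit some $v \in S \setminus \{0\}$ with $v \prec u$.

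Suppose, toward a contradiction, that no such $v$ exists, so that every $c \prec u$ satisfies $c = 0$ and hence $c \leq 0$. Then the hypothesis of \eqref{Approximation} holds with $b = 0$, since $\forall c \prec u\ (c \leq 0)$. Applying \eqref{Approximation} yields $u \leq 0$, while \eqref{Minimum} gives $0 \leq u$, so $u = 0$, contradicting $u \in S \setminus \{0\}$.

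Consequently some $v \prec u$ must be nonzero, i.e. $v \in S \setminus \{0\}$, which is precisely what \eqref{precRound} demands. There is essentially no obstacle here beyond observing that the \emph{only} way for $u$ to fail the roundness condition is for $0$ to be its unique $\prec$-predecessor, and this is exactly the case \eqref{Approximation} rules out once \eqref{Minimum} is in force. I note that the same argument goes through equally well using the join form of \eqref{Approximation}, where one instead remarks that the supremum of a set of $\prec$-predecessors contained in $\{0\}$ collapses to the minimum $0$.
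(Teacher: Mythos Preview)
Your proof is correct and follows essentially the same approach as the paper's: both argue by contradiction that if a nonzero $u$ has only (possibly) $0$ as a $\prec$-predecessor, then \eqref{Approximation} with $b=0$ forces $u\leq 0$, hence $u=0$.
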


\begin{proof}
Assume \eqref{Approximation} holds but $S\setminus\{0\}$ is not $\prec$-round.  This means we have some non-zero $a\in S$ such that $b\prec a$ only holds possibly for $b=0$.  By \eqref{Approximation}, $a\leq0$ and hence $a=0$, a contradiction.
\end{proof}

\begin{dfn}
Define the \emph{disjoint} relation $\perp$ on $S$ by
\[a\perp b\qquad\Leftrightarrow\qquad a\wedge b=0.\]
\end{dfn}

We immediately see that $\perp$ has the following auxiliarity property.
\[\label{perpAuxiliarity}\tag{$\perp$-Auxiliarity}a\leq a'\perp b'\geq b\qquad\Rightarrow\quad a\perp b.\]
As long as $0\prec0$ then $\perp$ is also stronger than $\U$ from \autoref{Udef}, i.e.
\[\label{perpU}\tag{$\perp\ \subseteq\ \U$}a\perp b\qquad\Rightarrow\qquad a\U b.\]
In contrast to $\U$, however, $\perp$ is rarely reflexive.  Also, by \autoref{Snot0}, if \eqref{Approximation} holds then $\perp$ can be characterized by $\prec$, specifically
\[\label{perpEquivalent}\tag{$\perp$-Equivalent}a\perp b\qquad\Leftrightarrow\qquad\nexists c\neq0\ (c\prec a,b).\]

Let us also consider the following properties of $\perp$, whenever $b\vee c$ exists.
\begin{align}
\label{perpDecomposition}\tag{$\perp$-Decomposition}&a\perp b\quad\text{and}\quad a\leq b\vee c\hspace{-25pt}&&\Rightarrow\qquad a\leq c.\\
\label{perpDistributivity}\tag{$\perp$-Distributivity}&a\perp b\quad\text{and}\quad a\prec b\vee c\hspace{-25pt}&&\Rightarrow\qquad a\prec c.\\
\label{veePreservation}\tag{$\vee$-Preservation}&a\perp b\quad\text{and}\quad a\perp c\hspace{-25pt}&&\Rightarrow\qquad a\perp b\vee c.
\end{align}

\begin{prp}
We have the following implications.
\begin{align*}
\eqref{leqDecomposition}\qquad&\Rightarrow\qquad\eqref{perpDecomposition}\quad&&\Rightarrow\qquad\eqref{veePreservation}.\\
\eqref{precDistributivity}\qquad&\Rightarrow\qquad\eqref{perpDistributivity}.&&
\end{align*}
\end{prp}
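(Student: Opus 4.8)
The plan is to prove the three implications separately by unwinding the definition $a\perp b\Leftrightarrow a\wedge b=0$ in each case. The first two implications are essentially lattice-theoretic, manipulating common lower bounds, while the last one forces me to upgrade a non-strict inequality $\leq$ to the strict relation $\prec$, which is where the genuine difficulty lies.

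For $\eqref{leqDecomposition}\Rightarrow\eqref{perpDecomposition}$, I would suppose $a\perp b$ and $a\leq b\vee c$ and first write $a=\bigvee\{a'\leq a:a'\leq b\text{ or }a'\leq c\}$ using $\eqref{leqDecomposition}$. The point is that $c$ is an upper bound of this set: any $a'$ in it with $a'\leq b$ also satisfies $a'\leq a$, hence $a'\leq a\wedge b=0\leq c$, whereas any $a'$ with $a'\leq c$ needs no argument. Since $a$ is the least upper bound of the set, I conclude $a\leq c$.

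For $\eqref{perpDecomposition}\Rightarrow\eqref{veePreservation}$, I would suppose $a\perp b$, $a\perp c$, and that $b\vee c$ exists, aiming at $a\wedge(b\vee c)=0$. As $0$ is already a common lower bound, it suffices to show that every common lower bound $d$ of $a$ and $b\vee c$ satisfies $d=0$. From $d\leq a\perp b$ and $\eqref{perpAuxiliarity}$ I get $d\perp b$, so $\eqref{perpDecomposition}$ applied to $d\leq b\vee c$ yields $d\leq c$. Then $d\leq a$ and $d\leq c$ together with $a\perp c$ force $d\leq a\wedge c=0$, making $0$ the meet, i.e. $a\perp b\vee c$.

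For $\eqref{precDistributivity}\Rightarrow\eqref{perpDistributivity}$, I would suppose $a\perp b$ and $a\prec b\vee c$, the goal being the \emph{strict} conclusion $a\prec c$. The key step is to invoke $\eqref{Shrinking}$, derived from the $\Rightarrow$ part of $\eqref{precDistributivity}$, to obtain $b'\prec b$ and $c'\prec c$ with $a\leq b'\vee c'$ (so the join $b'\vee c'$ exists). Since $b'\leq b$, $\eqref{perpAuxiliarity}$ gives $a\perp b'$. Because $\eqref{precDistributivity}$ implies $\eqref{perpDecomposition}$ via \autoref{Distributivity=>Decomposition} together with the first implication above, I may apply $\eqref{perpDecomposition}$ to $a\perp b'$ and $a\leq b'\vee c'$ to get $a\leq c'$. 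The hard part will be arranging the strict bound rather than a mere $\leq c$, and this is exactly what $\eqref{Shrinking}$ supplies: with $a\leq c'\prec c$ in hand, $\eqref{Auxiliarity}$ upgrades it to $a\prec c$, completing the proof.
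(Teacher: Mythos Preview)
Your proof is correct and follows essentially the same route as the paper: the same use of $\eqref{leqDecomposition}$ to bound $a$ by $c$, the same reduction of $\eqref{veePreservation}$ to $\eqref{perpDecomposition}$ via a common lower bound (the paper phrases this contrapositively but the argument is identical), and the same key invocation of $\eqref{Shrinking}$ followed by $\eqref{perpDecomposition}$ and $\eqref{Auxiliarity}$ for the last implication.
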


\begin{proof}  If \eqref{leqDecomposition} holds then $a\leq b\vee c$ and $a\perp b$ implies
\[a=\bigvee\{a'\leq a:a'\leq b\text{ or }a'\leq c\}=\bigvee\{a'\leq a:a'\leq c\}\leq c,\]
so \eqref{perpDecomposition} holds.  If \eqref{perpDecomposition} holds then $a\perp b$ and $a\not\perp b\vee c$ implies we have non-zero $a'\leq a$ with $a'\leq a,b\vee c$.  Thus $a'\leq a\perp b$ so, by \eqref{perpDecomposition}, $a'\leq c$ and hence $a\not\perp c$, showing that \eqref{veePreservation} holds.

If \eqref{precDistributivity} holds and $a\prec b\vee c$ then \eqref{Shrinking} yields $b'\prec b$ and $c'\prec c$ with $a\leq b'\vee c'$.  Then $a\perp b\geq b'$ implies $a\leq c'\prec c$, by \eqref{perpDecomposition}, so $a\prec c$, by \eqref{Auxiliarity}, showing that \eqref{perpDistributivity} holds.
\end{proof}

%

\subsection{Rather Below}\label{subsecRatherBelow}

\begin{dfn}\label{RatherBelowdfn}
Define the \emph{rather below} relation $\prec$ on $S$ by
\[a\prec b\qquad\Leftrightarrow\qquad a\leq b\quad\text{and}\quad\forall c\geq b\ \exists a'\perp a\ (\exists d\geq a',b\quad\text{and}\quad\forall d\geq a',b\ (c\leq d)).\]
\end{dfn}

This is a poset analog of `compact containment', as can be seen from \eqref{OprecN} below.

It is immediate from \autoref{RatherBelowdfn} that rather below $\prec$ is auxiliary to $\leq$, i.e.
\[\label{llAuxiliarity}\tag{Auxiliarity}a\leq a'\prec b'\leq b\qquad\Rightarrow\qquad a\prec b.\]
Also note that $0\prec 0$.  In the presence of joins, we can simplify the definition of $\prec$.

\begin{dfn}
$S$ is a \emph{conditional $\vee$-semilattice} if bounded pairs have joins, i.e.
\[a,b\leq c\qquad\Rightarrow\qquad a\vee b\text{ exists}.\]
\end{dfn}

When $S$ is a conditional $\vee$-semilattice, the rather below relation $\prec$ is given by
\[\label{RatherBelow}\tag{Rather Below}a\prec b\qquad\Leftrightarrow\qquad a\leq b\quad\text{and}\quad\forall c\geq b\ \exists a'\perp a\ (c\leq a'\vee b).\]
This form makes it clearer that $\prec$ is a variant of the rather below relation from \cite[Ch 5 \S5.2]{PicadoPultr2012} or the `well inside' relation from \cite[III.1.1]{Johnstone1986}.  It is analogous to the `way-below' relation $\ll$ from domain theory, which it is sometimes compared to.  For example, on the entire open set lattice of a compact Hausdorff space both the rather below and way-below relations coincide with compact containment $\overline{O}\subseteq N$.  In fact, as we will later note, the rather below relation recovers compact containment even on a basis which is just closed under finite unions.

However, the same can no longer be said for the way-below relation.

\begin{xpl}
Let $S$ be the clopen subsets of the Cantor space $C=2^\mathbb{N}$.  Certainly $\overline{C}\subseteq C$, even though $C\not\ll C$ in $S$.  To see this, pick any point $x\in C$.  The elements of $S$ avoiding $x$ form an ideal $I$ with $\bigcup I=C\setminus\{x\}$.  As $x$ is not isolated and the elements of $S$ are closed, $\bigvee I=C$ in $S$ even though $C\notin I$, i.e. $C\not\ll C$.
\end{xpl}

Just as the way-below relation is the strongest possible approximating relation \textendash\, see \cite[Proposition I-1.15]{GierzHofmannKeimelLawsonMisloveScott2003} \textendash\, the rather below relation is the strongest possible $\succ$-round $\perp$-distributive relation, at least under appropriate conditions.

Note $\succ$-round refers to the dual to \eqref{precRound}, i.e. $U$ is $\succ$-round if
\[\label{succRound}\tag{$\succ$-Round}\forall u\in U\ \exists v\in U\ (u\prec v).\]


\begin{prp}\label{precMinimal}
If $S$ is a conditional $\vee$-semilattice, the rather below relation is contained in all auxiliary $\prec\ \subseteq\ \leq$ satisfying \eqref{succRound} and \eqref{perpDistributivity}.  If $S$ is even a $\vee$-semilattice satisfying \eqref{perpDecomposition} then the rather below relation is precisely the intersection of all such $\prec$.
\end{prp}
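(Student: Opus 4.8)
The plan is to prove the two assertions separately, treating the first sentence (containment in every member of the family) as the ingredient that simultaneously yields one half of the intersection identity, and then establishing the reverse half by producing, for each pair that the rather below relation $\prec_{\mathrm{rb}}$ does not relate, a single member $\prec$ of the family that also fails to relate it.

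For the containment, suppose $a\prec_{\mathrm{rb}}b$ and let $\prec$ be any auxiliary relation satisfying \eqref{succRound} and \eqref{perpDistributivity}. First I would use \eqref{succRound} to produce $c$ with $b\prec c$; since $\prec\ \subseteq\ \leq$ this gives $c\geq b$, so \eqref{RatherBelow} applied to $c$ yields $a'\perp a$ with $c\leq a'\vee b$. Then $a\leq b\prec c\leq a'\vee b$ together with auxiliarity give $a\prec a'\vee b$, whereupon $a\perp a'$ and \eqref{perpDistributivity} give $a\prec b$. This argument uses only that $S$ is a conditional $\vee$-semilattice (the join $a'\vee b$ exists because it already appears in \eqref{RatherBelow}), so it proves the first sentence, and in particular the inclusion of $\prec_{\mathrm{rb}}$ into the intersection.

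For the reverse inclusion, under the stronger hypotheses, I would argue contrapositively: given $a\not\prec_{\mathrm{rb}}b$, exhibit a member $\prec$ of the family with $a\not\prec b$. The first observation is that $\leq$ itself lies in the family, since it is auxiliary, \eqref{succRound} is trivial for it, and \eqref{perpDistributivity} for $\leq$ is exactly \eqref{perpDecomposition}; hence the intersection is contained in $\leq$, which disposes of the case $a\not\leq b$. In the remaining case $a\leq b$, failure of \eqref{RatherBelow} supplies a witness $c^*\geq b$ such that no $a'\perp a$ satisfies $c^*\leq a'\vee b$. I would then define $x\prec^* y$ to mean $x\leq y$ together with the existence of some $z\perp x$ with $c^*\leq z\vee y$, and check that $\prec^*$ belongs to the family while $a\not\prec^* b$, the latter being immediate from the choice of $c^*$. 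Auxiliarity follows from \eqref{perpAuxiliarity} and monotonicity of $\vee$; $\succ$-round holds because $x\prec^*(x\vee c^*)$ via $z=0$ (this is the one place the full $\vee$-semilattice hypothesis is needed, to form $x\vee c^*$); and \eqref{perpDistributivity} for $\prec^*$ follows by using \eqref{perpDecomposition} to pass from $x\perp w$ and $x\leq w\vee v$ to $x\leq v$, and \eqref{veePreservation} to see that the replacement witness $z\vee w$ is again disjoint from $x$.

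I expect the construction of $\prec^*$ to be the crux. The subtle point is that $\prec_{\mathrm{rb}}$ need not itself satisfy \eqref{succRound} (for instance on the chain $0<1<2<\cdots$ nothing nonzero is rather below anything, although the intersection identity still holds there), so one cannot simply argue that $\prec_{\mathrm{rb}}$ is the least element of the family; instead each excluded pair must be separated by its own tailor-made relation. The real work is in verifying that $\prec^*$ genuinely lies in the family, in particular that $\perp$-distributivity survives, which is precisely where \eqref{perpDecomposition} and \eqref{veePreservation}, hence the full strength of the part-two hypotheses, enter.
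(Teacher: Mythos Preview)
Your proof is correct and follows essentially the same approach as the paper's: the first part is identical, and for the second part you construct the same auxiliary relation $\prec^*$ (the paper writes $d\prec e\Leftrightarrow d\leq e$ and $\exists d'\perp d\ (c\leq d'\vee e)$) and verify the same three properties in the same way, with your observation that $\leq$ itself lies in the family handling the case $a\nleq b$ exactly as the paper does. Your write-up is actually slightly more explicit than the paper's in invoking \eqref{perpDecomposition} to obtain $x\leq v$ in the $\perp$-distributivity check, and your remark that $\prec_{\mathrm{rb}}$ need not itself be $\succ$-round (so one cannot simply take it as the least element of the family) is a genuinely helpful clarification.
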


\begin{proof}
Say $a$ is rather below $b$.  As $S$ satisfies \eqref{succRound}, we have $c\succ b$.  In particular, $b\leq c$ so we must have $a'\perp a$ with $c\leq a'\vee b$.  As $a\leq b\prec c\leq a'\vee b$, \eqref{perpDistributivity} yields $a\prec b$.

Now say $S$ is a $\vee$-semilattice satisfying \eqref{perpDecomposition} and $a$ is not rather below $b$.  We need to find auxiliary $\prec\ \subseteq\ \leq$ with $a\not\prec b$ satisfying \eqref{succRound} and \eqref{perpDistributivity}.  If $a\nleq b$ then we can just take $\prec\ =\ \leq$.  If $a\leq b$ then we must have $c\geq b$ such that there is no $a'\perp a$ with $c\leq a'\vee b$.  In this case, define $\prec$ by
\[d\prec e\qquad\Leftrightarrow\qquad d\leq e\text{ and }\exists d'\perp d\ (c\leq d'\vee e).\]
Certainly $\prec\ \subseteq\ \leq$ is auxiliary to $\leq$ and $a\not\prec b$.  Also we always have $d\prec c\vee d$ (witnessed by $d'=0$) so \eqref{precRound} holds.  To see that \eqref{perpDistributivity} holds, say $d\perp e$ and $d\prec e\vee f$, so we have $d'\perp d$ with $c\leq d'\vee e\vee f$.  By \eqref{veePreservation}, $d\perp d'\vee e$ so it follows that $d\prec f$, as required.
\end{proof}

Let us now stop considering arbitrary auxiliary relations and restrict our attention to the rather below relation.
\begin{center}
\textbf{From now on $\prec$ specifically refers to the rather below relation.}
\end{center}

Let us now show that the $a\leq b$ part of \eqref{RatherBelow} follows automatically if $a$ and $b$ are bounded, as long as we have some degree of distributivity.

\begin{prp}\label{llequivalent}
If $S$ is a conditional $\vee$-semilattice satisfying \eqref{perpDecomposition},
\[\label{allb}a\prec b\qquad\Leftrightarrow\qquad \exists c\geq a,b\quad\text{and}\quad\forall c\geq b\ \exists a'\perp a\ (c\leq a'\vee b).\]
\end{prp}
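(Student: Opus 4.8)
The plan is to observe that the right-hand side of \eqref{allb} and the defining condition \eqref{RatherBelow} share an identical second conjunct, the universally quantified clause $\forall c\geq b\ \exists a'\perp a\ (c\leq a'\vee b)$; they differ only in their first conjunct, with \eqref{RatherBelow} demanding $a\leq b$ and \eqref{allb} demanding merely that $a$ and $b$ be bounded above. So, assuming the common second conjunct throughout, it suffices to prove that $a\leq b$ is equivalent to $\exists c\geq a,b$. The forward implication is trivial: if $a\prec b$ then \eqref{RatherBelow} gives $a\leq b$, whence $b$ is itself an upper bound of $a$ and $b$, so $\exists c\geq a,b$, and the second conjunct holds by definition.

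For the converse, I would assume $\exists c\geq a,b$ together with the second conjunct and aim to recover $a\leq b$. Since $a$ and $b$ are bounded above and $S$ is a conditional $\vee$-semilattice, the join $a\vee b$ exists. The key step is to instantiate the universal second conjunct at $a\vee b$, which is permitted exactly because $a\vee b\geq b$; this produces some $a'\perp a$ with $a\vee b\leq a'\vee b$, and therefore $a\leq a'\vee b$. Applying \eqref{perpDecomposition} to $a\perp a'$ and $a\leq a'\vee b$ (the join $a'\vee b$ existing, as it figures in the hypothesis) then yields $a\leq b$. Combined with the second conjunct, this is precisely \eqref{RatherBelow}, so $a\prec b$.

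The whole argument hinges on the single observation that the boundedness hypothesis $\exists c\geq a,b$ is exactly what licenses the existence of $a\vee b$, so that $a\vee b$ may be substituted into the universal clause; once this is in place, \eqref{perpDecomposition} mechanically collapses $a\leq a'\vee b$ down to $a\leq b$. I therefore expect no genuine obstacle beyond spotting this instantiation.
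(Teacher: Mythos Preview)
Your proof is correct and follows essentially the same line as the paper's. The only cosmetic difference is that the paper instantiates the universal clause directly at the witness $c\geq a,b$ (obtaining $a\leq c\leq a'\vee b$) rather than first forming $a\vee b$; this spares the appeal to the conditional $\vee$-semilattice axiom, but the remainder---applying \eqref{perpDecomposition} to $a\perp a'$ and $a\leq a'\vee b$ to conclude $a\leq b$---is identical.
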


\begin{proof}
We immediately have $\Rightarrow$.  Conversely, assume the right side holds.  Then we can take $c\geq a,b$ and $a'\perp a$ with $a\leq c\leq a'\vee b$.  Then \eqref{perpDecomposition} yields $a\leq b$ and hence $a\prec b$, by \eqref{RatherBelow}.
\end{proof}

Another thing worth pointing out is that in general the inequality $c\leq a'\vee b$ above can be strict.  Although if $S$ were $\leq$-distributive then we could replace $a'$ with something smaller to obtain $c=a'\vee b$.  Even under $\prec$-distributivity, we can still ensure that $a'\vee b$ is not too large.

\begin{prp}
If $S$ is a $\prec$-distributive conditional $\vee$-semilattice then
\[\label{Complements}\tag{Complements}a\prec b\leq c\prec d\qquad\Rightarrow\qquad\ \exists a'\perp a\ (c\prec a'\vee b\prec d).\]
\end{prp}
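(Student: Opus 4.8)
The plan is to realize the complement $a'$ as a \emph{trimmed} relative complement of $a$: first use \eqref{RatherBelow} to extract a raw complement $u\perp a$, and then use \eqref{precDistributivity} to cut $u$ down to the right size. The guiding picture, in which $\prec$ is compact containment $\overline{O}\subseteq N$, is that $a'$ should behave like ``$e\setminus\overline{a}$'' for an interpolating element $e$ with $\overline{c}\subseteq e\subseteq\overline{e}\subseteq d$, so that $a'\vee b=e$ lands strictly between $c$ and $d$.

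First I would make room on the upper side. As $S$ is $\prec$-distributive, \eqref{Interpolation} holds, so applying it to $c\prec d$ produces $e$ with $c\prec e\prec d$; note $b\le c\le e$, so $e$ is an upper bound of $b$. Feeding this upper bound into \eqref{RatherBelow} for $a\prec b$ yields $u\perp a$ with $e\le u\vee b$, and in particular the join $u\vee b$ exists.

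The decisive step, and the one I expect to be the main obstacle, is trimming $u$: I want a complement that lies below $e$ (to control $a'\vee b$ from above) but still covers $c$ (to secure the lower strictness), and with no meets available I cannot simply form $u\wedge e$. The resolution is to apply the $\Rightarrow$ direction of \eqref{precDistributivity} to $e\le u\vee b$ at the single approximant $c\prec e$, obtaining $u'\prec u$ and $b'\prec b$ with $c\prec u'\vee b'\prec e$. Setting $a':=u'$ then settles everything at once: from $a'\le u\perp a$ and \eqref{perpAuxiliarity} we get $a'\perp a$; from $u'\vee b'\prec e$ we get $u'\le e$, so $a'\vee b=u'\vee b$ exists and lies below $e$, whence $a'\vee b\le e\prec d$ gives $a'\vee b\prec d$ by \eqref{llAuxiliarity}; and $c\prec u'\vee b'\le u'\vee b=a'\vee b$ gives $c\prec a'\vee b$, again by \eqref{llAuxiliarity}.

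The subtlety worth emphasising is why one application suffices. The naive approach would collect, over all approximants of $e$, the pieces $u'$ delivered by $\prec$-distributivity and join them into a single complement, which would demand infinite joins that need not exist in a mere conditional $\vee$-semilattice. Choosing the approximant to be exactly $c$ sidesteps this: the single invocation simultaneously pushes $u'$ below $e$, giving the upper strictness $a'\vee b\prec d$, and keeps $c$ covered, giving the lower strictness $c\prec a'\vee b$, so no reassembly is needed.
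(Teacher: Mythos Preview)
Your proof is correct and follows essentially the same approach as the paper: interpolate to obtain $e$ with $c\prec e\prec d$ (the paper's $d'$), use \eqref{RatherBelow} to extract a raw complement $u\perp a$ with $e\le u\vee b$, then apply \eqref{precDistributivity} at the single approximant $c\prec e$ to trim $u$ down to $u'$ (the paper's $a''$), and finish exactly as you do. The only cosmetic difference is notation and that the paper writes the final chain $c\prec a''\vee b'\le a''\vee b\le d'\prec d$ in one line rather than separately arguing $u'\le e$ and $b\le e$.
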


\begin{proof}
Assume $a\prec b\leq c\prec d$.  By \eqref{Interpolation}, we have $d'$ with $c\prec d'\prec d$.  Now take $a'\perp a$ with $d'\leq a'\vee b$.  As $c\prec d'\leq a'\vee b$, $\prec$-distributivity yields $a''\prec a'$ and $b'\prec b$ with $c\prec a''\vee b'\prec d'$.  As $b'\prec b\leq c\prec d'$, we have
\[c\prec a''\vee b'\leq a''\vee b\leq d'\prec d.\]
As $a''\prec a'\perp a$, we also have $a''\perp a$ so we are done.
\end{proof}

\begin{cor}
If $S$ is a $\prec$-distributive conditional $\vee$-semilattice then the rather below relation defined within
\[d^\succ=\{a\in S:a\prec d\}\]
is just the restriction to $d^\succ$ of the rather below relation $\prec$ defined within $S$.
\end{cor}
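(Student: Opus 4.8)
The plan is to view $d^\succ$ as a poset in its own right, write $\prec_d$ for its rather below relation, and show that $\prec_d$ coincides with $\prec$ restricted to $d^\succ$ by proving both inclusions. First I would check that $d^\succ$ carries exactly the structure needed for \eqref{RatherBelow} to define $\prec_d$. Auxiliarity makes $d^\succ$ downward closed: if $x\leq a\prec d$ then $x\prec d$. Hence if $a,b\in d^\succ$ have an upper bound $w\in d^\succ$, their join $a\vee b$ in $S$ satisfies $a\vee b\leq w\prec d$, so $a\vee b\in d^\succ$ and is also the join in $d^\succ$; thus $d^\succ$ is a conditional $\vee$-semilattice whose joins agree with those of $S$. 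The common lower bounds of a pair in $d^\succ$ are the same as in $S$, so $a\wedge b=0$ means the same thing in both and $\perp$ restricts correctly, while $0\prec 0\leq d$ gives $0\in d^\succ$ as minimum. In particular $\prec_d$ is governed by \eqref{RatherBelow} using the very same $\vee$ and $\perp$ as in $S$.

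For the inclusion of $\prec$ into $\prec_d$ on $d^\succ$, I would apply \eqref{Complements}. Suppose $a,b\in d^\succ$ with $a\prec b$, and let $c\in d^\succ$ with $b\leq c$; then $c\prec d$, so $a\prec b\leq c\prec d$ and \eqref{Complements} yields $a'\perp a$ with $c\prec a'\vee b\prec d$. In particular $c\leq a'\vee b$, and since $a'\leq a'\vee b\prec d$ we also get $a'\in d^\succ$. As $c$ ranged over all elements of $d^\succ$ above $b$, this is precisely the condition defining $a\prec_d b$.

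For the reverse inclusion I would invoke the minimality established in \autoref{precMinimal}, applied to the conditional $\vee$-semilattice $d^\succ$. It suffices to verify that the restriction of $\prec$ to $d^\succ$ is an auxiliary relation on $d^\succ$ that is \eqref{succRound} and satisfies \eqref{perpDistributivity}; \autoref{precMinimal} then forces $\prec_d$ to be contained in it. Auxiliarity on $d^\succ$ is immediate from auxiliarity of $\prec$ on $S$. For \eqref{succRound}, any $u\in d^\succ$ has $u\prec d$, so \eqref{Interpolation} gives $v$ with $u\prec v\prec d$, whence $v\in d^\succ$ and $u\prec v$. Finally \eqref{perpDistributivity} on $d^\succ$ is just \eqref{perpDistributivity} on $S$, since $\vee$, $\perp$ and $\prec$ all agree on $d^\succ$, and it holds in $S$ because $S$ is $\prec$-distributive.

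Putting the two inclusions together yields that $\prec_d$ agrees with $\prec$ on $d^\succ$. I expect the genuine work to sit entirely in the first paragraph: everything downstream relies on $d^\succ$ being downward closed and closed under joins of pairs bounded inside it, which is exactly what forces $\vee$ and $\perp$ to be computed identically in $d^\succ$ and in $S$. Once that compatibility is secured, one inclusion is a single use of \eqref{Complements} and the other is immediate from \autoref{precMinimal}, so the main (if routine) obstacle is the careful bookkeeping of these structural agreements.
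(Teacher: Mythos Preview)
Your proof is correct and follows essentially the same route as the paper's: one inclusion via \eqref{Complements}, the other via \autoref{precMinimal} applied to the restriction of $\prec$ on $d^\succ$, after checking $\succ$-roundness from \eqref{Interpolation} and \eqref{perpDistributivity}. The paper leaves implicit the structural bookkeeping you carry out in your first paragraph (that $d^\succ$ is downward closed, a conditional $\vee$-semilattice with the same joins, $\perp$, and minimum as $S$), but this is exactly what is needed and your explicit verification is a welcome addition rather than a departure.
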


\begin{proof}
From \eqref{Complements} it follows that $a\prec b\prec d$ implies that $a$ is rather below $b$ in $d^\succ$.  Conversely, the restriction of $\prec$ to $d^\succ$ is \eqref{succRound} in $d^\succ$, by \eqref{Interpolation}, and also remains $\prec$-Distributive and hence $\perp$-Distributive in $d^\succ$.  Thus $\prec$ must contain the rather below relation in $d^\succ$, by \autoref{precMinimal}.
\end{proof}

Note we are now considering $\U$ to be defined specifically from the rather below relation too, rather than some arbitrary auxiliary relation.  This allows us to prove further properties of $\U$.  For example, under \eqref{precDistributivity}, $\U$ not only contains $\leq$ but in fact holds for all bounded pairs.

\begin{prp}\label{LocallyHausdorffprp}
If $S$ is a $\prec$-distributive conditional $\vee$-semilattice then
\[\tag{\ref{LocallyHausdorff}}a,b\leq c\qquad\Rightarrow\qquad a\U b.\]
\end{prp}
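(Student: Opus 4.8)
The plan is to unfold the definition of $\U$ from \autoref{Udef} and, given an upper bound $a,b\leq e$ together with arbitrary $a'\prec a$ and $b'\prec b$, to manufacture a single witness $f\prec a,b$ with $c'\prec f$ for every $c'\prec a',b'$. The guiding idea is that such an $f$ should behave like a piece of $a$ that simultaneously sits below $b$; the obstruction is that $a'$ itself need not lie below $b$, so one cannot simply interpolate between $a'$ and $a$ underneath $b$. The remedy is to trade $a'$ for a disjoint complement and feed it into \eqref{precDistributivity}.

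Concretely, first I would exploit the rather below relation: since $a'\prec a\leq e$, the conditional $\vee$-semilattice form \eqref{RatherBelow} applied with the upper bound $e\geq a$ produces $a''\perp a'$ with $e\leq a''\vee a$ (in particular $a''\vee a$ exists). Because $b\leq e\leq a''\vee a$, I would then apply \eqref{precDistributivity} to $b'\prec b\leq a''\vee a$ to split $b'$, obtaining $x\prec a''$ and $y\prec a$ with $b'\prec x\vee y\prec b$. Taking $f:=y$, note $y\prec a$ by construction and $y\prec b$ since $y\leq x\vee y\prec b$ by \eqref{Auxiliarity}; thus $f\prec a,b$ as required.

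It then remains to check that this $f$ dominates, relative to $\prec$, everything rather below both $a'$ and $b'$. Fixing $c'\prec a',b'$, since $x\leq a''\perp a'$, \eqref{perpAuxiliarity} gives $x\perp a'$, whence $c'\leq a'\perp x$ gives $c'\perp x$, again by \eqref{perpAuxiliarity}. On the other hand $c'\prec b'\leq x\vee y$ gives $c'\prec x\vee y$ by \eqref{Auxiliarity}. Now \eqref{precDistributivity} entails \eqref{perpDistributivity}, so from $c'\perp x$ and $c'\prec x\vee y$ I conclude $c'\prec y=f$, as needed.

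The main obstacle, as flagged above, is the asymmetry between $a'$ and $b$: the whole point of passing to the complement $a''$ of $a'$ is to let $a''$ absorb the part of $b$ that is disjoint from $a'$, so that the distributive splitting of $b$ isolates a piece $y$ lying below both $a$ and $b$, while the companion piece $x$ is forced disjoint from every $c'\prec a'$. A secondary but essential point is that the final step needs the strict conclusion $c'\prec y$ rather than merely $c'\leq y$; this is precisely why \eqref{perpDistributivity}, and not just the weaker $\perp$-decomposition, must be invoked.
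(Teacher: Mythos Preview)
Your argument is correct and is essentially the paper's own proof with different variable names: your complement $a''$, splitting pieces $x,y$, and witness $f=y$ correspond exactly to the paper's $d$, $d',a''$, and witness $a''$, and both proofs finish by combining \eqref{perpAuxiliarity} with \eqref{perpDistributivity} in the same way.
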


\begin{proof}
Say $a'\prec a$ and $b'\prec b$, for some $a,b\leq c$.  By \eqref{RatherBelow}, $a'\prec a$ means we have $d\perp a'$ with $c\leq d\vee a$.  As $b'\prec b\leq c\leq d\vee a$, \eqref{precDistributivity} yields $d'\prec d$ and $a''\prec a$ with $b'\prec d'\vee a''\prec b$.  Note $a''\prec a,b$, by \eqref{Auxiliarity}.  Also, for any $c'\prec a',b'$, we have $c'\prec a'\perp d\succ d'$ and $c'\prec b'\prec d'\vee a''$ and hence $c'\prec a''$, by \eqref{perpDistributivity}.  Thus $a''$ witnesses $a\U b$. 
\end{proof}

If $S$ is even a lattice then $\U$ is trivial, i.e. $\prec$ preserves meets.

\begin{prp}
If $S$ is a distributive lattice, \eqref{Interpolation} implies $\U\ =S\times S$.
\end{prp}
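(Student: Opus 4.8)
The plan is to reduce everything to the meet-preservation criterion \eqref{wedgePreservation}, which applies here because a distributive lattice is in particular a $\wedge$-semilattice and we are assuming \eqref{Interpolation}. Thus, to prove $\U = S\times S$, it suffices to fix arbitrary $a,b\in S$ and verify the right-hand side of \eqref{wedgePreservation}, namely that $a'\wedge b'\prec a\wedge b$ whenever $a'\prec a$ and $b'\prec b$.

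So fix $a'\prec a$ and $b'\prec b$. The inequality $a'\wedge b'\leq a\wedge b$ is immediate, so by \eqref{RatherBelow} (available since a lattice is a conditional $\vee$-semilattice) it remains to produce, for each $c\geq a\wedge b$, some $e\perp a'\wedge b'$ with $c\leq e\vee(a\wedge b)$. The idea is to feed the right upper bounds into the definitions of $a'\prec a$ and $b'\prec b$: applying \eqref{RatherBelow} to $a'\prec a$ at the element $c\vee a\geq a$ yields $p\perp a'$ with $c\leq c\vee a\leq p\vee a$, and applying it to $b'\prec b$ at $c\vee b\geq b$ yields $q\perp b'$ with $c\leq q\vee b$. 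Hence $c\leq(p\vee a)\wedge(q\vee b)$.

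The key step is to use distributivity to peel off the term $a\wedge b$. Expanding gives
\[(p\vee a)\wedge(q\vee b)=(a\wedge b)\vee\big[(p\wedge q)\vee(p\wedge b)\vee(a\wedge q)\big],\]
so setting $e:=(p\wedge q)\vee(p\wedge b)\vee(a\wedge q)$ we have $c\leq e\vee(a\wedge b)$, exactly as required. It then only remains to check $e\perp a'\wedge b'$, i.e. $e\wedge a'\wedge b'=0$, which follows termwise: each of the three joinands in $e$, when met with $a'\wedge b'$, lies below either $p\wedge a'=0$ or $q\wedge b'=0$. With $e\perp a'\wedge b'$ established and $c\geq a\wedge b$ arbitrary, \eqref{RatherBelow} gives $a'\wedge b'\prec a\wedge b$, and \eqref{wedgePreservation} then yields $a\U b$; as $a,b$ were arbitrary, $\U = S\times S$.

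I expect the main obstacle to be the bookkeeping of the definition of $\prec$: one must apply \eqref{RatherBelow} not at $c$ itself (which need not dominate $a$ or $b$) but at the enlarged bounds $c\vee a$ and $c\vee b$, and then recognize the correct complement $e$ after the distributive expansion. Distributivity is genuinely used here \textendash\, in a non-distributive lattice the product $(p\vee a)\wedge(q\vee b)$ cannot be split in this way \textendash\, which is why the hypothesis cannot be weakened to an arbitrary lattice.
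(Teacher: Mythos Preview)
Your proof is correct and follows essentially the same approach as the paper: enlarge the given upper bound so that \eqref{RatherBelow} can be applied to each of the two $\prec$-relations, then use lattice distributivity to expand $(p\vee a)\wedge(q\vee b)$ and isolate $a\wedge b$ together with a disjoint complement. The paper packages the argument slightly more cleanly by proving the intermediate statement $a\prec b,c\Rightarrow a\prec b\wedge c$ (from which \eqref{wedgePreservation} follows by auxiliarity), uses a single common bound $d\vee b\vee c$ rather than two separate ones, and obtains the simpler complement $b'\vee c'$ whose disjointness from $a$ comes directly via \eqref{veePreservation}; but these are cosmetic differences, not a different route.
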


\begin{proof}
By \eqref{wedgePreservation}, we have to show that, for all $a,b,c\in S$,
\[a\prec b,c\qquad\Rightarrow\qquad a\prec b\wedge c.\]
If $a\prec b,c$ and $d\geq b\wedge c$ then, as $S$ is a lattice, we can set $d'=d\vee b\vee c$.  Applying \eqref{RatherBelow} to $a\prec b$ and then $a\prec c$, we obtain $b',c'\perp a$ with $d'\leq b'\vee b\leq c'\vee c$.  By \eqref{veePreservation}, $b'\vee c'\perp a$ and, by distributivity,
\[d\leq d'\leq(b'\vee b)\wedge(c'\vee c)\leq b'\vee c'\vee(b\wedge c).\]
As $d$ was arbitrary, this shows that $a\prec b\wedge c$.
\end{proof}

A similar argument shows $\prec$ preserves joins in a distributive lattice.  In fact, $\prec$-distributivity suffices, so long as $S$ is also $\succ$-round.  We examine such posets further in the next section, but first let us show that \eqref{succRound} (for $S$) is not automatic from the other conditions we have considered so far.

\begin{xpl}
Take a sequence $(a_n)$ of subsets of $\mathbb{N}$ such that $a_1=\emptyset$ and, for all $n\in\mathbb{N}$, $a_n\subseteq a_{n+1}$ and $a_{n+1}\setminus a_n$ is infinite (e.g. $a_n=\mathbb{N}\setminus\{k2^{n-1}:k\in\mathbb{N}\}$).  Let $S$ be the collection of subsets of $\mathbb{N}$ with finite symmetric difference with some $a_n$, i.e.
\[S=\{b\subseteq\mathbb{N}:(b\setminus a_n)\cup(a_n\setminus b)\text{ is finite, for some }n\in\mathbb{N}.\}\]
Considering $S$ as a poset w.r.t. inclusion $\subseteq$, we immediately see that $S$ is a distributive lattice with minimum $\emptyset$ and meets and joins given by $\cap$ and $\cup$.  Moreover, $\prec$ is just the restriction of $\subseteq$ to finite subsets on the left, i.e.
\begin{equation}\label{precFinite}
a\prec b\qquad\Leftrightarrow\qquad a\text{ is a finite subset of }b,
\end{equation}
for all $a,b\in S$.  Indeed, if $a$ is finite then, for any $c\supseteq a$, we have $a'=c\setminus a\in S$ so $a'\perp a$ and $c=a'\cup a$, i.e. $a\prec a$ and hence $a\prec b$, for any $b\supseteq a$.  On the other hand, if $a$ is infinite then any $a'\in S$ with $a'\perp a$ must be finite.  Thus, for any $b\supseteq a$, we can take $c\in S$ with $c\supseteq b$ and $c\setminus b$ infinite, which means there is no $a'\in S$ with $a'\perp a$ and $c\subseteq a'\cup b$.  This shows that there is no $b\in S$ with $a\prec b$, i.e. \eqref{succRound} fails for infinite $a$.  However, from \eqref{precFinite} it can also be verified that $S$ satisfies \eqref{precDistributivity} (note \eqref{LowerOrder} and hence the $\Leftarrow$ part of \eqref{precDistributivity} would fail if just considered the poset formed from $(a_n)$ without adding in finite differences).
\end{xpl}

\section{Basic Posets}\label{secBasicPosets}

We are now in a position to examine posets we are primarily interested in.

\begin{dfn}\label{BasicUPoset}
We call $S$ a \emph{$\U$-basic} poset if $S$ is a $\prec$-distributive and
\[\label{HausdorffJoins}\tag{$\U$-Joins}a\vee b\text{ exists}\qquad\Leftrightarrow\qquad\exists a',b'\ (a\prec a'\U b'\succ b),\]
\end{dfn}

Just to reiterate, $\prec$ denotes the rather below relation defined from $\leq$ as in \autoref{RatherBelowdfn} and $\U$ denotes the Hausdorff relation defined from $\prec$ as in \autoref{Udef}.

Our goal will be to show that $\U$-basic posets are dual to `$\cup$-bases' (see \autoref{subsecUBases}) of locally compact locally Hausdorff topological spaces, hence the name `$\U$-basic'.  In fact, we will show in \autoref{SpacesFromPosets} that the following slightly more general posets can also be represented as bases of locally compact locally Hausdorff spaces.

\begin{dfn}\label{BasicVeePoset}
We call $S$ a \emph{basic} poset if
\begin{enumerate}
\item $S$ is $\succ$-round,
\item $S$ is $\prec$-distributive, and
\item $S$ is a conditional $\vee$-semilattice.
\end{enumerate}
\end{dfn}

\begin{prp}\label{Ubasic=>basic}
Every $\U$-basic poset $S$ is a basic poset.
\end{prp}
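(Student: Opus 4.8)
The definition of basic poset has three clauses, and the second—$\prec$-distributivity—is part of the very definition of a $\U$-basic poset, so there is nothing to do there. The plan is therefore to extract the other two clauses, \eqref{succRound} and the conditional $\vee$-semilattice property, directly from \eqref{HausdorffJoins}. The one pitfall to avoid is circularity: \autoref{LocallyHausdorffprp} already gives $a\U b$ for bounded pairs, but its proof presupposes that $S$ is a conditional $\vee$-semilattice, which is precisely the clause I am trying to prove, so I may not appeal to it here.

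First I would establish \eqref{succRound} using a degenerate join. For any $a\in S$ the pair $a,a$ has a join, namely $a\vee a=a$. Feeding this into the forward direction of \eqref{HausdorffJoins} with $b=a$ produces $a',b'$ with $a\prec a'\U b'\succ a$; the first conjunct $a\prec a'$ is exactly what \eqref{succRound} demands, and since $a$ was arbitrary, $S$ is $\succ$-round.

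Next I would use \eqref{succRound} together with the reflexivity of $\U$ to obtain the conditional $\vee$-semilattice property. Suppose $a,b\leq c$. By $\succ$-round there is $c'$ with $c\prec c'$, and then \eqref{Auxiliarity} applied to $a\leq c\prec c'$ gives $a\prec c'$, and likewise $b\prec c'$. Since $\U$ is reflexive (by \eqref{leqU}), we have $c'\U c'$, so $a\prec c'\U c'\succ b$ witnesses the right-hand side of \eqref{HausdorffJoins}; hence $a\vee b$ exists. As the bounded pair was arbitrary, $S$ is a conditional $\vee$-semilattice, completing the verification.

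The argument is short and I do not expect a genuine obstacle. The only real content is spotting the two tricks: using $a\vee a=a$ to manufacture $\succ$-round witnesses, and collapsing the $\U$-requirement in \eqref{HausdorffJoins} to reflexivity by placing both $a$ and $b$ beneath a single $\prec$-upper bound $c'$ of their common bound $c$. This last move is exactly what sidesteps the circular use of \autoref{LocallyHausdorffprp}.
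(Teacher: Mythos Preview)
Your proof is correct and follows essentially the same route as the paper: extract \eqref{succRound} from the $a=b$ case of the forward direction of \eqref{HausdorffJoins}, then use $\succ$-roundness and reflexivity of $\U$ to feed any bounded pair through the backward direction. Your explicit remark about avoiding circularity with \autoref{LocallyHausdorffprp} is a nice touch the paper leaves implicit.
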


\begin{proof}
First note that the $\Rightarrow$ part of \eqref{HausdorffJoins}, even just in the $a=b$ case, is equivalent to \eqref{succRound}.  Thus whenever $a,b\leq c$, we have some $c'\succ c$ which means $a\prec c'\U c'\succ b$ and hence $a\vee b$ exists, by the $\Leftarrow$ part of \eqref{HausdorffJoins}.  Thus $S$ is also a conditional $\vee$-semilattice and hence a basic poset.
\end{proof}

\subsection{Basic Properties}\label{subsecBasicProperties}

\begin{prp}\label{aveebllc}
If $S$ is a basic poset then
\[\label{Predomain}\tag{Predomain}a,b\prec c\qquad\Rightarrow\qquad a\vee b\prec c.\]
\end{prp}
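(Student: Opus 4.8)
The plan is to verify the defining condition \eqref{RatherBelow} for $a\vee b\prec c$ directly. First I would dispose of the order part: since $\prec\ \subseteq\ \leq$ we have $a,b\leq c$, so $a\vee b$ exists (as $S$ is a conditional $\vee$-semilattice) and $a\vee b\leq c$. It then remains to show that for every $d\geq c$ there is a single $e\perp(a\vee b)$ with $d\leq e\vee c$.

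So fix $d\geq c$. Applying \eqref{RatherBelow} to $a\prec c$ and then to $b\prec c$, each with the upper bound $d$, produces $a'\perp a$ and $b'\perp b$ with $d\leq a'\vee c$ and $d\leq b'\vee c$. The element I am really after behaves like the meet $a'\wedge b'$: it should lie below both $a'$ and $b'$, so that it is disjoint from both $a$ and $b$, while still covering $d$ modulo $c$. Since $S$ need not have meets, I would synthesise it from $\prec$-distributivity. Concretely, \autoref{precabcd} applied to $d\leq c\vee a'$ and $d\leq c\vee b'$ yields, for each $x\prec d$, an $e_x\prec a',b'$ with $x\prec c\vee e_x$. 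As $\prec\ \subseteq\ \leq$, each such $e_x$ satisfies $e_x\leq a'$ and $e_x\leq b'$, whence $e_x\perp a$ and $e_x\perp b$ by \eqref{perpAuxiliarity}, and therefore $e_x\perp(a\vee b)$ by \eqref{veePreservation} (which is available here because \autoref{Distributivity=>Decomposition} supplies \eqref{leqDecomposition}).

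The main obstacle is the globalisation: \autoref{precabcd} only delivers the disjoint approximant $e_x$ one $x\prec d$ at a time, whereas \eqref{RatherBelow} demands a single complement $e$ that works for all of $d$ at once. This is exactly the step that simulates the missing meet $a'\wedge b'$, and it is where I expect the real work to lie. Since all the $e_x$ lie below $a'$, bounded finite joins among them are available, and I would use the relatively compact character encoded in $\prec$ (reflected in \eqref{succRound} and \eqref{Interpolation}) to replace the infinite family by a single bounded join $e$. A final application of \eqref{veePreservation} keeps $e\perp(a\vee b)$, while $x\prec c\vee e_x\leq c\vee e$ for every $x\prec d$ forces $d\leq e\vee c$ by \eqref{Auxiliarity} and Approximation, giving $a\vee b\prec c$.
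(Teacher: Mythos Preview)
Your overall strategy is right, and you correctly isolate the real difficulty: after obtaining $a'\perp a$ and $b'\perp b$ with $d\leq a'\vee c$ and $d\leq b'\vee c$, one needs a \emph{single} element playing the role of $a'\wedge b'$. But the ``globalisation'' step you propose is a genuine gap. You obtain, for each $x\prec d$, an approximant $e_x\prec a',b'$, and then appeal to ``the relatively compact character encoded in $\prec$'' to pass to a finite join. There is no such finite-subcover principle available in an abstract basic poset: \eqref{succRound} and \eqref{Interpolation} only interpolate between individual elements and do not let you extract finitely many $e_x$'s whose join still covers $d$ modulo $c$. So as written the argument does not close.

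The paper fills exactly this gap, and the mechanism is the Hausdorff relation $\U$ rather than any compactness. One first uses \eqref{succRound} to pick $e\succ d$ and applies \eqref{RatherBelow} at $e$ (not at $d$); \eqref{Shrinking} then upgrades the complements to $a''\prec a'$ and $b''\prec b'$ with $d\leq a''\vee c$ and $d\leq b''\vee c$, and a judicious choice of $b'$ (applying \eqref{RatherBelow} to $b\prec c$ with the bound $a'\vee c$) makes $a',b'$ \emph{bounded} by $f=b'\vee c$. Now \eqref{LocallyHausdorff} gives $a'\U b'$, and this is precisely the meet-substitute you were seeking: by definition of $\U$ (in the form \eqref{UEquivalent}) there is a single $c'\prec a',b'$ dominating every $c''\leq a'',b''$. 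An application of \autoref{veeoverwedge} (in its $\leq$-form, via \eqref{leqDecomposition}) then yields $d\leq c\vee c'$, and $c'\perp a\vee b$ follows as you argued. The moral is that the ``single complement'' comes from $\U$, not from a covering argument.
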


\begin{proof}
Take $a,b\prec c$.  As $S$ is $\succ$-round, for any $d\geq c$, we have $e\succ d$.  Then we can take $a'\perp a$ with $e\leq a'\vee c$.  As $d\prec e\leq a'\vee c$, \eqref{Shrinking} yields $a''\prec a'$ with $d\leq a''\vee c$.  Likewise, we can take $b'\perp b$ with $d\prec e\leq a'\vee c\leq b'\vee c=f$ so \eqref{Shrinking} again yields $b''\prec b'$ with $d\leq b''\vee c$.  As $a',b'\leq f$, \eqref{LocallyHausdorff} implies $a''\prec a'\U b'\succ b''$ which yields $c'\prec a',b'$ with $c'\succ c''$, for all $c''\leq a'',b''$.  As $c'\prec a'\perp a$ and $c'\prec b'\perp b$, \eqref{veePreservation} yields $c'\perp a\vee b$.  As $d\leq a''\vee c,b''\vee c$, \eqref{leqDecomposition} and \autoref{veeoverwedge} yields
\[d=\bigvee\{c''\leq d:c''\leq c\text{ or }c''\leq a'',b''\}\leq c\vee c'.\]
As $d$ was arbitrary, $a\vee b\prec c$.
\end{proof}

In fact, as basic posets satisfy \eqref{precDistributivity} and hence \eqref{Interpolation},
\[a,b\prec c\qquad\Rightarrow\qquad\exists d\ (a,b\prec d\prec c).\]
This means basic posets are `(stratified) predomains' in the sense of \cite{Keimel2016} or `abstract bases' in the sense of \cite[Lemma 5.1.32]{Goubault2013}.

Generally, to verify $a\prec b$ we need to consider all $c\geq b$.  However, for basic posets, it suffices to consider just one $c\succ a$.

\begin{prp}\label{llexistsUSS}
If $S$ is a basic poset then
\[a\prec b\qquad\Leftrightarrow\qquad\exists c\succ a\ \exists a'\perp a\ (c\leq a'\vee b).\]
\end{prp}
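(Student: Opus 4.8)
The statement asks to verify that, in a basic poset, the universally quantified condition on all $c \geq b$ appearing in \eqref{RatherBelow} can be weakened to the existence of a \emph{single} witness $c \succ a$ together with some $a' \perp a$ satisfying $c \leq a' \vee b$. The plan is to prove the two implications separately. The forward direction $\Rightarrow$ should be essentially immediate: assuming $a \prec b$, since $S$ is $\succ$-round we obtain some $c \succ a$, and then since $c \geq a$ and $a \prec b$ give $c \geq b$ is \emph{not} automatic, I will instead need to feed a suitable upper bound of $b$ into \eqref{RatherBelow}. More carefully, from $a\prec b$ and $\succ$-roundness applied to $a$ I get $c\succ a$; I then apply \eqref{RatherBelow} to some upper bound of $b$ to extract the required $a'\perp a$. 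The subtlety is matching the witness $c$ against the quantifier in \eqref{RatherBelow}, so I expect to produce the $c$ on the right-hand side by combining $\succ$-roundness with \autoref{aveebllc} (\eqref{Predomain}) to ensure $c$ dominates $b$ appropriately.

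For the reverse direction $\Leftarrow$, I am given a \emph{single} $c \succ a$ and an $a' \perp a$ with $c \leq a' \vee b$, and I must recover the full condition of \eqref{RatherBelow}, namely that for \emph{every} $d \geq b$ there exists $a'' \perp a$ with $d \leq a'' \vee b$. First I would use \eqref{perpDecomposition} (available since basic posets satisfy \eqref{precDistributivity}, hence \eqref{leqDecomposition} via \autoref{Distributivity=>Decomposition}, hence \eqref{perpDecomposition}) applied to $a \leq c \leq a' \vee b$ with $a \perp a'$ to conclude $a \leq b$, establishing the order part of \eqref{RatherBelow}. The heart of the matter is then the following: given an arbitrary $d \geq b$, I must manufacture the disjoint complement. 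Here the key leverage is that $c \succ a$, i.e. $a \prec c$, so the full rather-below property of $c$ over $a$ is at my disposal, and $c \leq a' \vee b$ lets me transfer an upper bound of $b$ into an upper bound related to $c$.

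The main obstacle will be the $\Leftarrow$ direction: I have rather-below information only for the pair $(a,c)$, whereas I need a disjoint complement tailored to each $d \geq b$. The strategy is to take an arbitrary $d \geq b$, form a common upper bound $e \geq d, c$ (which exists as a conditional $\vee$-semilattice gives $d \vee c$ once $d,c$ are bounded — and indeed $d,c \leq a' \vee d$ since $c \leq a'\vee b \leq a'\vee d$), and apply the rather-below relation $a \prec c$ to this $e$ to obtain some $a'' \perp a$ with $e \leq a'' \vee c$. Then I would combine $e \leq a'' \vee c$ with $c \leq a' \vee b$ to get $e \leq a'' \vee a' \vee b$, set $a''' = a'' \vee a'$, note $a''' \perp a$ by \eqref{veePreservation} (since both $a''$ and $a'$ are $\perp a$), and conclude $d \leq e \leq a''' \vee b$ with $a''' \perp a$, exactly the witness required by \eqref{RatherBelow}. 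The delicate points to check are that $a''\vee a'$ is a legitimate join (guaranteed by boundedness) and that \eqref{veePreservation} indeed applies, both of which follow from the basic poset axioms assembled above.
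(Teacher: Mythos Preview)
Your $\Leftarrow$ direction has a genuine gap. You want to verify \eqref{RatherBelow} directly: given arbitrary $d \geq b$, you seek a common upper bound $e$ of $c$ and $d$, and you claim $a' \vee d$ serves as such a bound. But in a conditional $\vee$-semilattice, $a' \vee d$ exists only when $a'$ and $d$ have a common upper bound, and nothing in the hypotheses provides one: you know $a' \vee b$ exists and $d \geq b$, but this gives no bound for the pair $a', d$. The same problem recurs when you form $a'' \vee a'$ and $a'' \vee a' \vee b$; your parenthetical ``guaranteed by boundedness'' is not actually guaranteed by anything you have established.

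The paper's argument avoids all of this by invoking \eqref{perpDistributivity} (which it has already derived from \eqref{precDistributivity}) in one line: from $a \prec c \leq a' \vee b$ auxiliarity gives $a \prec a' \vee b$, and since $a \perp a'$, \eqref{perpDistributivity} yields $a \prec b$ immediately. You cite \eqref{perpDecomposition} to extract $a \leq b$, but the stronger \eqref{perpDistributivity} is exactly the tool that finishes the job without any further quantifier-chasing or join-existence worries. For the $\Rightarrow$ direction, you are also working harder than necessary: applying $\succ$-roundness to $b$ (not $a$) gives $c \succ b \geq a$, and then the definition of $a \prec b$ applied at this $c \geq b$ produces the required $a'$ directly; no appeal to \eqref{Predomain} is needed.
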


\begin{proof}
The $\Rightarrow$ part is immediate from \eqref{succRound}.  Conversely, if $a'\perp a\prec c\leq a'\vee b$ then \eqref{perpDistributivity} yields $a\prec b$.
\end{proof}

This is possibly a good point to summarize some of the most important properties we have considered so far.  Specifically, recall that any basic poset satisfies
\begin{align}
\tag{\ref{Auxiliarity}}a\leq b\prec c\leq d\qquad&\Rightarrow\qquad a\prec d.\\
\tag{\ref{Approximation}}\forall c\prec a\ (c\leq b)\qquad&\Rightarrow\qquad a\leq b.\\
\tag{\ref{UAuxiliarity}}a\leq a'\U b'\geq b\qquad&\Rightarrow\qquad a\U b.\\
\tag{\ref{UInterpolation}}c\prec a,b\text{ and }a\U b\qquad&\Rightarrow\qquad\exists d\ (c\prec d\prec a,b).\\
\tag{\ref{Complements}}a\prec b\leq c\prec d\qquad&\Rightarrow\qquad\ \exists a'\perp a\ (c\prec a'\vee b\prec d).\\
\tag{\ref{Predomain}}a,b\prec c\qquad&\Rightarrow\qquad a\vee b\prec c.
\end{align}

\subsection{Boolean Examples}\label{subsecBooleanExamples}

The construction of locally compact locally Hausdorff topological spaces from basic posets will follow the construction of Stone spaces from Boolean algebras.  This will come in the next section on filters, although the potential for doing this is already hinted at by the following.

\begin{prp}
Boolean algebras are just bounded basic posets with $\prec\ =\ \leq$.
\end{prp}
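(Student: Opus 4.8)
The plan is to prove the equivalence in both directions, showing that a Boolean algebra satisfies the three conditions defining a basic poset together with boundedness and $\prec\ =\ \leq$, and conversely that a bounded basic poset with $\prec\ =\ \leq$ is a Boolean algebra. First I would recall that a Boolean algebra is a bounded distributive lattice in which every element $a$ has a complement $a^c$ with $a\wedge a^c=0$ and $a\vee a^c=1$. The cleanest strategy is to identify exactly what the rather below relation $\prec$ from \autoref{RatherBelowdfn} collapses to in a Boolean algebra, and to show it is precisely $\leq$; this is the conceptual heart of the statement.

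For the forward direction, suppose $S$ is a Boolean algebra. Since $S$ is a bounded lattice it is certainly a conditional $\vee$-semilattice, so condition (3) holds. To handle the relation, I would show $a\prec a$ for every $a$, using the complement: given any $c\geq a$, take $a'=a^c$, so that $a'\perp a$ (as $a\wedge a^c=0$) and $c\leq 1=a'\vee a=a^c\vee a$. By \eqref{RatherBelow} this gives $a\prec a$, and then \eqref{llAuxiliarity} upgrades this to $a\prec b$ whenever $a\leq b$; combined with $\prec\ \subseteq\ \leq$ this yields $\prec\ =\ \leq$. With $\prec\ =\ \leq$ in hand, $\succ$-roundedness is immediate (each $a$ satisfies $a\prec a$), and $\prec$-distributivity reduces to \eqref{leqDistributivity}, the usual distributivity for $\vee$-semilattices, which holds in any distributive lattice. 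Thus $S$ is a bounded basic poset with $\prec\ =\ \leq$.

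For the converse, suppose $S$ is a bounded basic poset with $\prec\ =\ \leq$. By \autoref{precDistributivity=>leqDistributivity} (or directly, since $\prec$-distributivity becomes \eqref{leqDistributivity}), $S$ is a distributive $\vee$-semilattice; boundedness together with the conditional $\vee$-semilattice property makes $S$ a bounded lattice, since any two elements are bounded above by the maximum. It then remains to produce complements. Here I would exploit the relation $a\prec 1$, which holds since $\prec\ =\ \leq$: by \eqref{RatherBelow} applied with $c=1\geq 1$, there exists $a'\perp a$ with $1\leq a'\vee a$, i.e.\ $a\wedge a'=0$ and $a\vee a'=1$, so $a'$ is a complement of $a$. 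A bounded distributive lattice in which every element has a complement is exactly a Boolean algebra, completing the proof.

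The main obstacle I anticipate is verifying that $\prec\ =\ \leq$ genuinely holds in a Boolean algebra rather than merely on bounded pairs, and dually making sure the converse extraction of complements is valid for every element (not just those below some fixed bound). Both hinge on having a top element $1$ available, which is where boundedness is essential; the argument is clean precisely because $1\geq b$ for all $b$ lets the universal quantifier over $c\geq b$ in \eqref{RatherBelow} be witnessed uniformly by complements. I would take care to note that in the converse direction meets exist automatically in a bounded distributive lattice, so the lattice structure needed to even speak of complements is guaranteed.
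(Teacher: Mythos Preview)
Your proposal is correct and follows essentially the same route as the paper. One small point: in the converse direction you only have a bounded $\leq$-distributive $\vee$-semilattice at the stage where you extract complements, so meets should be obtained via $a\wedge b=(a^c\vee b^c)^c$ (as the paper does) rather than assumed from a ``bounded distributive lattice'' that has not yet been established.
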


\begin{proof}
If $S$ is a Boolean algebra then by definition $S$ is a distributive complemented lattice.  In particular, $S$ is bounded, i.e. $S$ has a maximum (for complements to even be defined) and the existence of complements implies that $\prec$ is $\leq$, so $S$ is certainly $\prec$-round, i.e. $S$ is a bounded basic poset.

Conversely, if $S$ is a bounded basic poset with $\prec\ =\ \leq$ then, in particular, $S$ is not just a conditional $\vee$-semilattice but a true $\vee$-semilattice.  Also $\prec\ =\ \leq$ implies $\prec$ is reflexive so every element has a complement and hence $S$ is also a $\wedge$-semilattice, as $a\wedge b=(a^c\vee b^c)^c$.  Also $\prec$-distributivity becomes $\leq$-distributivity so $S$ is a distributive complemented lattice, i.e. a Boolean algebra.
\end{proof}

\begin{prp}
Generalized Boolean algebras are just $\U$-basic posets with
\[\prec\ =\ \leq\quad\text{and}\quad\U\ =S\times S.\]
\end{prp}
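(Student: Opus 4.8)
The plan is to prove the two inclusions separately, recalling that a generalized Boolean algebra is a distributive lattice with least element $0$ in which every principal ideal $[0,c]$ is complemented (equivalently, relatively complemented).

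First I would show that a generalized Boolean algebra $S$ satisfies the stated conditions. Since $S$ is a distributive lattice it is in particular a $\wedge$-semilattice, so $\perp$ and $\prec$ are well defined. To see $\prec\ =\ \leq$, note that $\prec\ \subseteq\ \leq$ is automatic from \eqref{llAuxiliarity}; for the reverse, given $a\leq b$ and any $c\geq b$, take the complement $a'$ of $a$ in the Boolean algebra $[0,c]$, so $a'\perp a$ and $c=a\vee a'\leq b\vee a'$, which is exactly the condition in \eqref{RatherBelow} witnessing $a\prec b$. With $\prec\ =\ \leq$ in hand, \autoref{Uwedge} gives $a\U b$ precisely when $a\wedge b$ exists, and since $S$ is a lattice this is always, so $\U\ =S\times S$. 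Then $\prec$-distributivity reduces to ordinary lattice distributivity \eqref{leqDistributivity} (take $a'=a$, and conversely), and \eqref{HausdorffJoins} holds trivially because both sides always hold: $a\vee b$ exists, and $a\prec a\U b\succ b$ witnesses the right-hand side.

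Conversely, suppose $S$ is a $\U$-basic poset with $\prec\ =\ \leq$ and $\U\ =S\times S$. By \autoref{Ubasic=>basic} it is a basic poset. Since $\U\ =S\times S$, the $\Leftarrow$ part of \eqref{HausdorffJoins} (with $a'=a$ and $b'=b$) shows every pair has a join, so $S$ is a $\vee$-semilattice; and since $\prec\ =\ \leq$, \autoref{Uwedge} together with $\U\ =S\times S$ shows every pair has a meet, so $S$ is a lattice with $0$. Distributivity is again immediate, as $\prec$-distributivity specialises to \eqref{leqDistributivity}. The remaining point is sectional complementation: reflexivity $a\prec a$ applied through \eqref{RatherBelow} with $c=b$ yields, for each $a\leq b$, some $a'\perp a$ with $b\leq a'\vee a$; setting $a''=a'\wedge b$ and using distributivity gives $a\wedge a''=0$ and $a\vee a''=(a\vee a')\wedge b=b$, so $a''$ complements $a$ inside $[0,b]$. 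Hence $S$ is a distributive lattice with $0$ in which every interval $[0,b]$ is complemented, i.e. a generalized Boolean algebra.

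The only genuinely delicate step is translating the reflexivity of the rather below relation into (sectional) complementation and back: in both directions one must pass from the mere inequality $c\leq a'\vee b$ appearing in \eqref{RatherBelow} to an honest relative complement lying inside the relevant interval, which is precisely where distributivity and the existence of meets are used to trim $a'$ down to $a'\wedge b$ (respectively $a'\wedge c$). Everything else is a routine unwinding of the definitions through the cited results.
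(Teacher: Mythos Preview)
Your proof is correct and follows essentially the same strategy as the paper's. The one minor difference is in the converse direction: you first obtain meets directly from \autoref{Uwedge} and $\U=S\times S$, and only then extract relative complements from \eqref{RatherBelow} by trimming $a'$ down to $a'\wedge b$; the paper instead applies \eqref{Complements} to $a\prec a\leq b\prec b$ to get a relative complement outright (with $b=a'\vee a$ rather than merely $b\leq a'\vee a$), and afterwards derives meets from these complements via the formula $a\wedge b=a\vee b\setminus((a\vee b\setminus a)\vee(a\vee b\setminus b))$. Both routes are short and valid; yours avoids invoking \eqref{Complements}, while the paper's avoids the trimming step.
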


\begin{proof}
If $S$ is a $\U$-basic poset with $\U\ =S\times S$ then, for any $a,b\in S$, $\succ$-roundness yields $a',b'\in S$ such that $a\prec a'\U b'\succ b$.  So $a\vee b$ exists, by \eqref{HausdorffJoins}, showing that $S$ is a true $\vee$-semilattice.  If $\prec\ =\ \leq$ too then $\prec$ is reflexive so, whenever $a\leq b$, we have $a\prec a\leq b\prec b$.  Then \eqref{Complements} yields $a'\perp a$ with $b=a'\vee a$, i.e. $a'=b\setminus a$ is a complement of $a$ relative to $b$.  It follows that $S$ is also a $\wedge$-semilattice, as $a\wedge b=a\vee b\setminus((a\vee b\setminus a)\vee(a\vee b\setminus b))$.  Thus $S$ is a distributive relatively complemented lattice, i.e. a generalized Boolean algebra.

Conversely, if $S$ is a generalized Boolean algebra then the existence of relative complements yields $\prec\ =\ \leq$.  Thus $S$ is $\prec$-round and $\prec$-distributive.  Also $\prec\ =\ \leq$ means that $a\U b$ is just saying $a$ and $b$ have a meet, by \autoref{Uwedge}.  As $S$ is a lattice this is always true so $\U\ = S\times S$.
\end{proof}

Dropping the $\U\ =S\times S$ condition yields a further `locally Hausdorff' generalization of Boolean algebras.  With this in mind, we make the following definition.

\begin{dfn}
We call $S$ \emph{locally Boolean} or a \emph{local Boolean algebra} if
\[\downarrow a=\{b\in S:b\leq a\}\]
is a Boolean algebra, for all $a\in S$.
\end{dfn}

In particular, a local Boolean algebra has meets whenever it has joins, i.e.
\[a\vee b\text{ exists}\qquad\Rightarrow\qquad a\wedge b\text{ exists}.\]
More interesting is the converse, when $S$ has as many joins as possible.

\begin{prp}
A local Boolean algebra is just a basic poset with $\prec\ =\ \leq$.  In this case, $S$ is a $\U$-basic poset if and only if it has joins whenever it has meets, i.e.
\[\label{wedgeJoins}\tag{$\wedge$-Joins}a\wedge b\text{ exists}\qquad\Rightarrow\qquad a\vee b\text{ exists}.\]
\end{prp}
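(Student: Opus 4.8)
The plan is to prove the two assertions in turn: first the identification of local Boolean algebras with basic posets satisfying $\prec\ =\ \leq$, and then, under that identification, the equivalence of being $\U$-basic with \eqref{wedgeJoins}. I would start with the easier converse of the first assertion, that every basic poset $S$ with $\prec\ =\ \leq$ is a local Boolean algebra. Fix $a\in S$ and consider $\downarrow a$. Since $S$ is a conditional $\vee$-semilattice and any $b,c\in\downarrow a$ are bounded by $a$, their join $b\vee c$ exists and lies in $\downarrow a$, so $\downarrow a$ is a bounded $\vee$-semilattice with top $a$ and bottom $0$. As $\prec\ =\ \leq$ is reflexive, for $b\leq a$ we have $b\prec b\leq a\prec a$, so \eqref{Complements} yields $b'\perp b$ with $a=b'\vee b$; thus $b'\in\downarrow a$ is a relative complement of $b$. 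Finally, with $\prec\ =\ \leq$ the relation \eqref{precDistributivity} reduces to \eqref{leqDistributivity}, i.e. ordinary distributivity, and a complemented distributive lattice is Boolean.

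For the forward direction, suppose $S$ is a local Boolean algebra. The $\succ$-roundness is immediate once $\prec\ =\ \leq$, since then $u\prec u$ for every $u$. To see $\prec\ =\ \leq$, take $a\leq b$ and any $c\geq b$; then $a,b$ lie in the Boolean algebra $\downarrow c$, and I would set $a'$ to be the relative complement of $b$ in $\downarrow c$. Then $a'\perp a$ (since $a\leq b$ forces $a\wedge a'\leq b\wedge a'=0$, the meet being computed already inside $\downarrow c$) and $a'\vee b=c$, so the defining condition in \autoref{RatherBelowdfn} for $a\prec b$ is met; as $\prec\ \subseteq\ \leq$ always, $\prec\ =\ \leq$ follows. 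For $\prec$-distributivity, which with $\prec\ =\ \leq$ is again just \eqref{leqDistributivity}, I would work inside the Boolean algebra $\downarrow(b\vee c)$ whenever $b\vee c$ exists: any $a\leq b\vee c$ satisfies $a=(a\wedge b)\vee(a\wedge c)$ there, which is exactly \eqref{leqDistributivity}.

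The step I expect to be the main obstacle is verifying that a local Boolean algebra really is a conditional $\vee$-semilattice, which the forward direction tacitly relies upon. The argument for $\prec\ =\ \leq$ needs the relative complement $a'$ computed in $\downarrow c$ to be a genuine $\perp$-complement in $S$ with $a'\vee b$ an \emph{absolute} join equal to $c$, and more basically the three defining clauses of a basic poset presuppose that bounded pairs have joins at all. The key point to establish is therefore that whenever $a,b\leq c$ the join $a\vee b$ computed in the Boolean algebra $\downarrow c$ is independent of the bound $c$ and is the least upper bound in all of $S$; granting this coherence, every lower bound and every relevant join in the previous paragraph lives inside the appropriate Boolean downset, the manipulations are legitimate, and $S$ is a basic poset.

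For the final equivalence, assume now $S$ is a basic poset with $\prec\ =\ \leq$, so it is automatically $\prec$-distributive and being $\U$-basic amounts to \eqref{HausdorffJoins}. Since $\prec\ =\ \leq$, \autoref{Uwedge} lets me read $a'\U b'$ as ``$a'\wedge b'$ exists'' and read $a\prec a'$, $b'\succ b$ as $a\leq a'$, $b\leq b'$; thus \eqref{HausdorffJoins} says $a\vee b$ exists precisely when some $a'\geq a$, $b'\geq b$ admit a meet. If this holds then, taking $a'=a$ and $b'=b$, the existence of $a\wedge b$ forces that of $a\vee b$, giving \eqref{wedgeJoins}. Conversely, assuming \eqref{wedgeJoins}, the forward part of \eqref{HausdorffJoins} holds because $a\vee b$ existing yields $a\wedge b$ (a local Boolean algebra has meets whenever joins), witnessed by $a'=a$, $b'=b$; and for the reverse part, if $a'\wedge b'$ exists with $a\leq a'$, $b\leq b'$, then \eqref{wedgeJoins} produces $a'\vee b'$, which bounds both $a$ and $b$, so $a\vee b$ exists as $S$ is a conditional $\vee$-semilattice. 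This yields \eqref{HausdorffJoins} and completes the equivalence.
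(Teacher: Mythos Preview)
Your argument tracks the paper's closely: both use relative complements to obtain $\prec=\leq$ and \eqref{Complements} for the converse direction of the first assertion, and both invoke \autoref{Uwedge} for the second; the only stylistic difference is that where the paper applies \eqref{UAuxiliarity} to pass from $a\prec a'\U b'\succ b$ to $a\U b$, you instead produce the bound $a'\vee b'$ directly and appeal to the conditional $\vee$-semilattice property. Both routes work.

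However, the step you flag as the ``main obstacle'' is a genuine gap, not a routine verification --- and the paper's ``certainly $S$ is a conditional $\vee$-semilattice'' glosses over exactly the same point. In fact the coherence you hope to establish can fail: take $S=\{0,a,b,c,d\}$ ordered by $0<a,b<c,d$ with $a\parallel b$ and $c\parallel d$. Each $\downarrow x$ is a one-, two-, or four-element Boolean algebra, so $S$ is locally Boolean; yet $a$ and $b$ are bounded with no join in $S$, since the local joins $a\vee_{\downarrow c}b=c$ and $a\vee_{\downarrow d}b=d$ disagree. (One also checks directly that $a\not\prec a$ here, so $\prec\neq\leq$ either.) Thus the forward implication, read literally, is false; your proof and the paper's share this gap, and some additional hypothesis --- e.g.\ that bounded pairs already admit joins in $S$, which then makes the rest of your argument go through --- is needed to repair it.
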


\begin{proof}
If $S$ is locally Boolean then certainly $S$ is a conditional $\vee$-semilattice and again the existence of relative complements yields $\prec\ =\ \leq$.  Thus $S$ is $\prec$-round and $\prec$-distributive and hence a basic poset.  The converse follows from \eqref{Complements} as above.  For the last part note again that $\prec\ =\ \leq$ means that $a\U b$ is just saying $a$ and $b$ have a meet, by \autoref{Uwedge}.  So if $S$ is also a $\U$-basic poset and $a\wedge b$ exists then $a\prec a\U b\succ b$ so $a\vee b$ exists, by \eqref{HausdorffJoins}.  Conversely, if joins exist whenever meets do and $a\prec a'\U b'\succ b$ then \eqref{UAuxiliarity} yields $a\U b$, i.e. $a\wedge b$ exists so $a\vee b$ exists, showing that \eqref{HausdorffJoins} holds.
\end{proof}

\begin{xpl}\label{bug-eyedC}
For an example of a local Boolean algebra satisfying \eqref{wedgeJoins} which is not a generalized Boolean algebra, consider the compact open Hausdorff subsets $S$ of the `bug-eyed' Cantor space $C$, i.e. the Cantor space $\{0,1\}^\mathbb{N}$ with an extra copy $x'$ of one point $x$.  More precisely, let $C$ be the quotient space obtained from two copies $\{0,1\}^\mathbb{N}\sqcup\{0,1\}^\mathbb{N}{'}$ of the Cantor space by identifying each $c$ and $c'$ except when $c=x$.  Then $C\setminus\{x\}$ and $C\setminus\{x'\}$ are maximal elements of $S$, necessarily with no meet (which can also be seen directly from the fact their intersection $C\setminus\{x,x'\}$ is not compact).
\end{xpl}

For examples of $\U$-basic posets where $\prec\ \neq\ \leq$, we turn to $\cup$-bases of topological spaces.  But first we need to examine the compact containment relation.

\subsection{Compact Containment}\label{subsecCompactContainment}

\[\textbf{Throughout this section assume $G$ is a topological space}.\]

While topological spaces are usually denoted by letters like $X$, we use $G$ to emphasize that we are primarily interested in spaces coming from \'{e}tale groupoids \textendash\, see \autoref{Groupoids}.

\begin{dfn}\label{CompactContainment}
The \emph{compact containment} relation $\Subset$ on subsets of $G$ is given by
\[O\Subset N\qquad\Leftrightarrow\qquad\exists\text{ compact }C\ (O\subseteq C\subseteq N).\]
\end{dfn}
We immediately see that compact containment is auxiliary to containment, i.e.
\[O\subseteq O'\Subset N'\subseteq N\qquad\Rightarrow\qquad O\Subset N.\]
Compact containment is also related to closures, particularly for Hausdorff subsets.

\begin{prp}
For any Hausdorff $O,N,M\subseteq G$ with $O\subseteq N\Subset M$,
\begin{equation}\label{OSubNequivs}
O\Subset N\qquad\Leftrightarrow\qquad \overline{O}\cap N\text{ is compact}\qquad\Leftrightarrow\qquad\overline{O}\cap M\subseteq N.
\end{equation}
\end{prp}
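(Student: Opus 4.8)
The plan is to prove the three-way equivalence as a cycle of implications, writing (a) for $O\Subset N$, (b) for ``$\overline{O}\cap N$ is compact'', and (c) for $\overline{O}\cap M\subseteq N$, and establishing (a)$\,\Rightarrow\,$(b)$\,\Rightarrow\,$(c)$\,\Rightarrow\,$(a). The one general fact I would lean on throughout is the subspace-closure identity: since $O\subseteq N\subseteq M$, the closure of $O$ taken within the subspace $N$ is exactly $\overline{O}\cap N$, and its closure taken within $M$ is exactly $\overline{O}\cap M$. Combined with the standard fact that a compact subset of a Hausdorff space is closed there, this lets me move freely between the closure computed in $G$ and the closures computed inside the Hausdorff subspaces $N$ and $M$, which is what makes all three conditions comparable.

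For (a)$\,\Rightarrow\,$(b), I would unpack $O\Subset N$ to obtain a compact $C'$ with $O\subseteq C'\subseteq N$. Since $N$ is Hausdorff, $C'$ is closed in $N$, so the closure of $O$ in $N$, namely $\overline{O}\cap N$, is contained in $C'$; being a closed subset of the compact set $C'$, it is itself compact. For (b)$\,\Rightarrow\,$(c), set $K=\overline{O}\cap N$, which by (b) is compact and hence closed in the Hausdorff subspace $M$. As $O\subseteq K$, the closure of $O$ in $M$ is contained in the closure of $K$ in $M$, which is just $K$ itself; thus $\overline{O}\cap M\subseteq K\subseteq N$, which is (c). I note in passing that this step uses only that $M$ is Hausdorff, and not the hypothesis $N\Subset M$.

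For (c)$\,\Rightarrow\,$(a), I would finally invoke $N\Subset M$ to fix a compact $C$ with $N\subseteq C\subseteq M$. Then $\overline{O}\cap C$ is closed in $C$, hence compact; it contains $O$, and by (c) it satisfies $\overline{O}\cap C\subseteq\overline{O}\cap M\subseteq N$. This exhibits a compact set sandwiched between $O$ and $N$, which is exactly $O\Subset N$.

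No single step is a genuine obstacle, since each reduces to ``closed-in-compact is compact'' or ``compact-in-Hausdorff is closed''. The only thing requiring care is the bookkeeping of which hypotheses are used where: the passage into (c) needs $M$ Hausdorff, the passage out of (a) needs $N$ Hausdorff, and only the final implication consumes $N\Subset M$. The other point to handle cleanly is the repeated, correct use of the subspace-closure identity, so as never to conflate a closure taken in $G$ with one taken inside $N$ or $M$.
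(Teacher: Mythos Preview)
Your proof is correct and uses the same two elementary facts as the paper (compact subsets of a Hausdorff space are closed there; closed subsets of a compact set are compact). The only difference is organizational: you run a clean cycle (a)$\Rightarrow$(b)$\Rightarrow$(c)$\Rightarrow$(a), whereas the paper proves (a)$\Leftrightarrow$(b) directly and then (a)$\Rightarrow$(c)$\Rightarrow$(b); in particular your direct (b)$\Rightarrow$(c) step is not in the paper, but it is immediate and your tracking of which hypotheses are consumed where is accurate.
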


\begin{proof}
Assume $O\Subset N$, i.e. $O\subseteq C\subseteq N$, for some compact $C$.  As $N$ is Hausdorff, $C$ must be relatively closed in $N$ so $\overline{O}\cap N\subseteq C$.  As closed subsets of compact spaces are compact, $\overline{O}\cap N$ is also compact.  Conversely, if $\overline{O}\cap N$ is compact then $O\Subset N$, as $O\subseteq\overline{O}\cap N\subseteq N$.

Again assume $O\Subset N$, i.e. we have compact $C$ with $O\subseteq C\subseteq N\subseteq M$.  As $M$ is Hausdorff, $C$ must be relatively closed in $M$ so $\overline{O}\cap M\subseteq C\subseteq N$.  Conversely, assume $\overline{O}\cap M\subseteq N$.  As $N\Subset M$, we have compact $C$ with $O\subseteq N\subseteq C\subseteq M$.  Again $C$ is relatively closed in $M$, as $M$ is Hausdorff, so $\overline{O}\cap N=\overline{O}\cap M\subseteq C$ is compact, as a closed subset of $C$.
\end{proof}

We call $G$ \emph{locally compact} if every point $g\in G$ has a neighbourhood base of compact subsets.  In other words, $G$ is locally compact if each neighbourhood filter
\[N_g=\{N\subseteq G:g\in N^\circ\}\text{ is $\Subset$-round}.\]

\begin{rmk}
In Hausdorff spaces it is common to use weaker notions of local compactness like in \autoref{LCLHchars} \eqref{L(CH)} below.  However, the stronger notion given here is standard for more general potentially non-Hausdorff spaces \textendash\, see \cite[Definition O-5.9]{GierzHofmannKeimelLawsonMisloveScott2003} or \cite[Definition 4.8.1]{Goubault2013}.
\end{rmk}

\begin{prp}\label{GHausdorffChars}
If $G$ is locally compact and $T_0$ with basis $S$, TFAE.
\begin{enumerate}
\item\label{GHausdorff} $G$ is Hausdorff.
\item\label{Compact=Closed} Every compact subset of $G$ is closed.
\item\label{Subset=>ClosedContainment} For every $O,N\in S$,
\[O\Subset N\qquad\Rightarrow\qquad\overline{O}\subseteq N.\]
\end{enumerate}
\end{prp}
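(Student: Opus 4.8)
The plan is to prove the cycle of implications $\eqref{GHausdorff}\Rightarrow\eqref{Compact=Closed}\Rightarrow\eqref{Subset=>ClosedContainment}\Rightarrow\eqref{GHausdorff}$, using local compactness and the $T_0$ hypothesis only where needed. The first implication is standard point-set topology: in a Hausdorff space every compact subset is closed, so $\eqref{GHausdorff}\Rightarrow\eqref{Compact=Closed}$ requires no special structure. For $\eqref{Compact=Closed}\Rightarrow\eqref{Subset=>ClosedContainment}$, suppose $O\Subset N$ with $O,N\in S$, so there is a compact $C$ with $O\subseteq C\subseteq N$. By \eqref{Compact=Closed} the set $C$ is closed, hence $\overline{O}\subseteq\overline{C}=C\subseteq N$, giving exactly the desired conclusion. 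Note this direction uses nothing about the basis or local compactness.

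The substantive direction is $\eqref{Subset=>ClosedContainment}\Rightarrow\eqref{GHausdorff}$, and I expect this to be the main obstacle. Here is where local compactness, the $T_0$ axiom, and the fact that $S$ is a \emph{basis} all come into play. I would argue by contradiction: suppose $G$ is not Hausdorff, so there are distinct points $g\neq h$ that cannot be separated by disjoint open sets. Since $G$ is $T_0$, we may assume (after possibly swapping the roles of $g$ and $h$) that there is an open set containing $g$ but not $h$; refining through the basis $S$ and using local compactness, I would produce basic sets $O,N\in S$ with $g\in O$, $O\Subset N$, yet $h\in\overline{O}$ while $h\notin N$. This directly contradicts $\eqref{Subset=>ClosedContainment}$, which would force $\overline{O}\subseteq N$ and hence $h\in N$.

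The delicate part is manufacturing the right $O$ and $N$: I need a basic set $N$ avoiding $h$ but whose compact-containment interior still captures $g$ in a way that drags $h$ into the closure $\overline{O}$. The key mechanism is that the failure of Hausdorffness at $(g,h)$ means every neighbourhood of $g$ meets every neighbourhood of $h$, so once I fix a basic $N\ni g$ with $h\notin N$ (using $T_0$ to separate at least one way) and use local compactness to find $O\in S$ with $g\in O$ and $O\Subset N$, the point $h$ must lie in $\overline{O}$: otherwise some neighbourhood of $h$ would be disjoint from $O$, and combined with $N$ this would begin to separate the points. I would assemble these pieces carefully, checking that local compactness delivers the $\Subset$-relation through basic sets (this is exactly what $\Subset$-roundness of the neighbourhood filters provides) and that the $T_0$ condition suffices to pin down the asymmetry. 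The cleanest packaging is likely to exhibit an open neighbourhood of $h$ disjoint from $\overline{O}$ as the separating set, deriving Hausdorffness directly rather than purely by contradiction, but either route turns on the same neighbourhood-base argument.
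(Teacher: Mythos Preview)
Your proposal is correct and follows essentially the same route as the paper: the cycle $(1)\Rightarrow(2)\Rightarrow(3)\Rightarrow(1)$, with the contrapositive argument for $(3)\Rightarrow(1)$ using $T_0$ to find a basic $N$ separating the two non-Hausdorff points one way, local compactness to produce $O\Subset N$ in $S$, and the failure of Hausdorffness to force the other point into $\overline{O}\setminus N$. The paper's version swaps the roles of $g$ and $h$, but the argument is identical; one small clarification is that once you have a neighbourhood of $h$ disjoint from $O$, that neighbourhood together with $O$ itself already separates $g$ and $h$\textemdash no need to ``combine with $N$''.
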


\begin{proof}\
\begin{itemize}
\item[\eqref{GHausdorff}$\Rightarrow$\eqref{Compact=Closed}] Standard, even without local compactness.

\item[\eqref{Compact=Closed}$\Rightarrow$\eqref{Subset=>ClosedContainment}] If $O\subseteq C\subseteq N$, for compact $C$, then \eqref{Compact=Closed} implies $C$ is closed so $\overline{O}\subseteq C\subseteq N$.

\item[\eqref{Subset=>ClosedContainment}$\Rightarrow$\eqref{GHausdorff}] Say \eqref{GHausdorff} fails, i.e. $G$ is not Hausdorff, so we have $g,h\in G$ whose neighbourhoods are never disjoint.  As $G$ is $T_0$, we can assume w.l.o.g. that we have some $N\in S$ with $g\notin N\ni h$.  As $G$ is locally compact, we have $O\in S$ with $h\in O\Subset N$.  By assumption, every neighbourhood of $g$ intersects $O$ so $g\in\overline{O}\setminus N$, i.e. $\overline{O}\not\subseteq N$ so \eqref{Subset=>ClosedContainment} fails.\qedhere
\end{itemize}
\end{proof}

Now we want to relate $\Subset$ with $\prec$ on suitable bases $S$ of $G$, taking inclusion $\subseteq$ as the order relation $\leq$ which in turn defines the rather below relation $\prec$.  We can note immediately that arbitrary bases will not do, e.g. any maximal $O\in S$ will trivially satisfy $O\prec O$, even when $O$ is not compact, i.e. $O\not\Subset O$.  We avoid this problem by dealing with $\Supset$-round bases that are closed under bounded finite unions.

\begin{prp}
If $S$ is a basis of $G$ that is closed under bounded finite unions (i.e. $O\cup N\in S$ whenever $O,N\subseteq M$ and $O,N,M\in S$) then, for any $O,N,M\in S$,
\begin{equation}\label{OprecNSubM}
O\prec N\Subset M\qquad\Rightarrow\qquad O\subseteq N\Subset M\text{ and }\overline{O}\cap M\subseteq N\qquad\Rightarrow\qquad O\Subset N.
\end{equation}
If $G$ is also $\Supset$-round then, for any $O,N\in S$,
\begin{equation}\label{OprecN=>}
O\prec N\qquad\Leftrightarrow\qquad O\subseteq N\text{ and }\bigcup_{N\Subset M\in S}\overline{O}\cap M\subseteq N\qquad\Rightarrow\qquad O\Subset N.
\end{equation}
\end{prp}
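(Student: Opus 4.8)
The plan begins by recording two facts that make the poset operations on $S$ transparent in this topological setting. Since $S$ is a basis with minimum $\emptyset$, the meet of $O,N\in S$ is $\emptyset$ exactly when $O\cap N$ contains no nonempty basis element, i.e.\ $O\perp N\Leftrightarrow O\cap N=\emptyset$; and whenever $O'\vee N$ exists its operands are bounded by some common $M^*\in S$, so closure under bounded finite unions gives $O'\cup N\in S$ and hence $O'\vee N=O'\cup N$. Thus every join appearing below is simply a union. I will freely use \eqref{RatherBelow} in the form $O\prec N\Leftrightarrow O\subseteq N$ and $\forall M\supseteq N\ \exists O'\perp O\ (M\subseteq O'\cup N)$.

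\textbf{First display \eqref{OprecNSubM}.} For the left implication, assume $O\prec N\Subset M$. Then $O\subseteq N$ is built into $\prec$, and $N\subseteq M$ is built into $\Subset$, so I apply the rather below condition with $c=M$ to obtain $O'\perp O$ with $M\subseteq O'\cup N$. The key step is then that any $g\in\overline O\cap M$ lies in $O'\cup N$; if it lay in the open set $O'$, which is disjoint from $O$, it could not lie in $\overline O$, so $g\in N$, giving $\overline O\cap M\subseteq N$. For the right implication, assume $O\subseteq N\Subset M$ and $\overline O\cap M\subseteq N$, and pick a compact $C$ with $N\subseteq C\subseteq M$. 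I would observe that $\overline O\cap C$ is a closed subset of the compact set $C$, hence compact (a general fact needing no Hausdorffness), and that $O\subseteq\overline O\cap C\subseteq\overline O\cap M\subseteq N$, so this compact set witnesses $O\Subset N$.

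\textbf{Second display \eqref{OprecN=>}.} The forward direction of the equivalence is immediate from \eqref{OprecNSubM}: if $O\prec N$ then $O\subseteq N$, and for each $M\in S$ with $N\Subset M$ we have $O\prec N\Subset M$, whence $\overline O\cap M\subseteq N$; taking the union over all such $M$ keeps us inside $N$. The trailing implication to $O\Subset N$ is also quick: $\Supset$-roundness supplies some $M\in S$ with $N\Subset M$, the middle condition then yields $\overline O\cap M\subseteq N$, and the right implication of \eqref{OprecNSubM} delivers $O\Subset N$.

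\textbf{The main obstacle} is the reverse direction of the equivalence, producing $O\prec N$ from the middle condition, since the rather below clause must be verified for \emph{every} $M'\in S$ with $N\subseteq M'$ and must be witnessed by a \emph{single} element $O'\in S$. My plan is to exploit $\Supset$-roundness to bump $M'$ up: choose $M''\in S$ with $M'\Subset M''$, fix a compact $C'$ with $M'\subseteq C'\subseteq M''$, and note $N\subseteq M'\Subset M''$ forces $N\Subset M''$, so the middle condition gives $\overline O\cap M''\subseteq N$. The set $K:=C'\setminus N$ is closed in $C'$, hence compact, and each $g\in K$ satisfies $g\notin\overline O$ (otherwise $g\in\overline O\cap M''\subseteq N$, contradicting $g\notin N$); I therefore pick basis neighbourhoods $V_g\in S$ with $g\in V_g\subseteq M''$ and $V_g\cap O=\emptyset$. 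Compactness of $K$ yields a finite subcover whose union $O'\in S$ (bounded by $M''$, so a legitimate element) is disjoint from $O$ and contains $K\supseteq M'\setminus N$; thus $M'\subseteq O'\cup N=O'\vee N$, verifying the clause for this arbitrary $M'$ and hence $O\prec N$. The crux, and the only genuinely delicate point, is precisely this passage from the open (non-$S$) set $M'\setminus\overline O$ to a single basis witness $O'$, which is what forces the use of $\Supset$-roundness (to gain relative compactness) together with closure under bounded finite unions.
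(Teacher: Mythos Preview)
Your proof is correct and follows essentially the same route as the paper's: the same reduction of $\perp$ to disjointness and $\vee$ to union, the same closure-versus-open argument for the left implication of \eqref{OprecNSubM}, the same closed-in-compact witness $\overline{O}\cap C$ for the right implication, and the same compactness-plus-finite-subcover construction (your $M',M'',C',K,V_g,O'$ playing the roles of the paper's $M,P,C,C\setminus N,O_g,L$) for the reverse direction of \eqref{OprecN=>}. Your explicit treatment of the conventions and of the empty/degenerate cases is, if anything, slightly more careful than the paper's own exposition.
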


\begin{proof}
Assume $O,N,M\in S$ and $O\prec N\Subset M$.  Then certainly $O\leq N$, i.e. $O\subseteq N$, and we must have compact $C$ with $N\subseteq C\subseteq M$ as well as $L\in S$ with $O\cap L=\emptyset$ and $M\subseteq N\cup L$ (as $S$ is closed under bounded finite unions, $\vee$ is $\cup$).  Thus $\overline{O}\cap L=\emptyset$ and hence $O\subseteq\overline{O}\cap C\subseteq\overline{O}\cap M\subseteq N$.  Also, as a closed subset of a compact space, $\overline{O}\cap C$ is compact and hence $O\Subset N$, thus proving \eqref{OprecNSubM}.

Now assume that $G$ is also $\Supset$-round.  The $\Rightarrow$ parts of \eqref{OprecNSubM} follow immediately from \eqref{OprecN=>}.  Conversely, for the first $\Leftarrow$ in \eqref{OprecNSubM}, assume $O\subseteq N$ and 
\begin{equation}
\label{overOcapM}\bigcup_{N\Subset M\in S}\overline{O}\cap M\subseteq N.
\end{equation}
Given $M\in S$ with $N\subseteq M$, we can take $P\in S$ with $M\Subset P$, as $S$ is $\Supset$-round, so we have some compact Hausdorff $C$ with $M\subseteq C\subseteq P$.  By \eqref{overOcapM}, $\overline{O}\cap C\subseteq\overline{O}\cap P\subseteq N$ so $C\setminus N$ is disjoint from $\overline{O}$.  As $C\setminus N$ is relatively closed in $C$ it is also compact.  Thus, for each $g\in C\setminus N\subseteq P$, we have some $O_g\in S$ with $g\in O_g\subseteq P\setminus\overline{O}$.  So the $(O_g)$ cover $C\setminus N$ and we must have some finite subcover $O_{g_1},\ldots,O_{g_n}$.  As $M\subseteq C$, they also cover $M\setminus N$, and as they are all contained in $P$, their union $L$ must be in $S$.  As $L$ covers $M\setminus N$, we have $M\subseteq N\cup L$ as well as $O\cap L=\emptyset$.  As $M$ was arbitrary, this shows that $O\prec N$.
\end{proof}

\begin{xpl}
To see that $\Subset$ can be strictly weaker than $\prec$ above, we can again consider the bug-eyed Cantor space $C$ from \autoref{bug-eyedC}, this time taking $S$ to be the collection of all (even non-Hausdorff) compact open subsets.  Thus $S$ is closed under arbitrary finite unions and is also certainly $\Supset$-round, as $O\Subset O\Subset C\in S$, for all $O\in S$.  In particular, $O\Subset O$ for $O=C\setminus\{x'\}\in S$, even though $\overline{O}\cap C=C\not\subseteq O$ and hence $O\not\prec O$.
\end{xpl}


\subsection{\texorpdfstring{$\cup$-Bases}{U-Bases}}\label{subsecUBases}

\[\textbf{Throughout this section again assume $G$ is a topological space}.\]

\begin{dfn}\label{Ubasisdef}
We call a basis $S\ni\emptyset$ of $G$ a \emph{$\cup$-basis} if, for all $O,N\in S$,
\[\label{UnionClosed}\tag{$\cup$}O\cup N\in S\qquad\Leftrightarrow\qquad O\cup N\subseteq C,\text{ for some compact Hausdorff }C\subseteq G.\]
\end{dfn}

In particular, taking $O=N$ in the $\Rightarrow$ part of \eqref{UnionClosed}, we see that every element of a $\cup$-basis is required to be contained in some compact Hausdorff subset.  Then the $\Leftarrow$ part of \eqref{UnionClosed} ensures that $S$ is a conditional $\vee$-semilattice.

For Hausdorff $G$, $\cup$-bases are just bases which consist of relatively compact open subsets and which are closed under taking finite unions.  Note this union condition is crucial if we hope to recover the space from the order structure of the basis, as arbitrary bases may fail to distinguish spaces like the unit interval from the Cantor space.  Indeed, any second countable compact Hausdorff space has a countable basis of regular open sets.  This generates a countable Boolean algebra of regular open sets, which is atomless as long as the space has no isolated points.  However, all countable atomless Boolean algebras are isomorphic.

We call $G$ \emph{locally Hausdorff} if every point has a Hausdorff neighbourhood.  Note points are, in particular, compact Hausdorff.  In locally Hausdorff spaces, it turns out that all compact Hausdorff subsets have Hausdorff neighbourhoods.  In particular, we can always extend $C$ in \eqref{UnionClosed} to some open Hausdorff subset to obtain
\[\label{UnionClosedEquiv}\tag{$\cup$}O\cup N\in S\qquad\Leftrightarrow\qquad O\cup N\Subset M,\text{ for some open Hausdorff }M\subseteq G.\]

\begin{prp}\label{LHCneighbourhoods}
If $G$ is locally Hausdorff then every compact Hausdorff subset $C\subseteq G$ is contained in some open Hausdorff subset.
\end{prp}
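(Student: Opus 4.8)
The plan is to prove that in a locally Hausdorff space $G$, any compact Hausdorff subset $C$ sits inside an open Hausdorff neighbourhood. The strategy is a standard compactness argument, but with the crucial subtlety that combining local Hausdorff neighbourhoods does not immediately yield a Hausdorff union, so the compactness must be leveraged carefully to handle all pairs of points simultaneously.

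First I would use local Hausdorffness to cover $C$: each point $c\in C$ has an open Hausdorff neighbourhood $U_c$. Since $C$ is compact, finitely many of these, say $U_{c_1},\ldots,U_{c_n}$, cover $C$. The naive hope is that $U=\bigcup_i U_{c_i}$ is the desired Hausdorff neighbourhood, but a finite union of open Hausdorff sets need not be Hausdorff — this is precisely the phenomenon the paper is built around. So this union is only a candidate open neighbourhood, and I will need to shrink it.

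The main obstacle, and the heart of the proof, is separating pairs of points that lie in different pieces of the cover. For points within a single $U_{c_i}$, Hausdorffness is free. The trouble is a point $g\in U_{c_i}\setminus C$ that cannot be separated from some $h\in U_{c_j}$. The key leverage is that $C$ itself is compact Hausdorff, so I would exploit the open subspace structure around $C$ rather than around arbitrary points of the union. Concretely, for each $c\in C$ choose the Hausdorff neighbourhood $U_c$ and, using that $C$ is compact Hausdorff (hence normal as a subspace) together with local compactness-type shrinking available from the Hausdorff charts, pass to smaller open sets whose relevant closures behave well. The idea is to find, for each $c$, an open $V_c$ with $c\in V_c$ and $\overline{V_c}\cap C\subseteq U_c$ computed inside a Hausdorff chart, so that the bad points $g$ lying outside $C$ can be trimmed away by intersecting with complements of the compact sets $\overline{V_{c_i}}\cap C$ coming from the \emph{other} charts.

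After selecting a finite Hausdorff cover $U_{c_1},\ldots,U_{c_n}$ of $C$, I would define the neighbourhood by deleting, from each chart, the points that witness non-Hausdorffness with $C$: roughly, set
\[
M=\bigcup_{i=1}^n\Bigl(U_{c_i}\setminus\bigcup_{j\neq i}K_{ij}\Bigr),
\]
where each $K_{ij}$ is a suitable compact (hence closed-in-the-Hausdorff-chart) piece of $C$ separated from $c_i$'s region. The verification then splits into two cases for a pair of distinct points of $M$: if both lie in a common $U_{c_i}$, separate them there; if they lie in different charts, the deletions ensure they have disjoint neighbourhoods. Finally I would confirm $C\subseteq M$, which holds because the deleted sets $K_{ij}$ were chosen to avoid $C$ on the relevant overlaps, and confirm $M$ is open as a finite union of open sets. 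I expect the genuinely delicate step to be the simultaneous choice of the $K_{ij}$ making the cross-chart separation work for all pairs at once; this is where compactness of $C$ and the Hausdorffness of each chart must be combined, and it is the step I would write out in full detail.
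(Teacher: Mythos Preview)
Your proposal correctly identifies the core difficulty (a finite union of open Hausdorff sets need not be Hausdorff) and the need to shrink the naive cover, but the actual construction is left vague and contains two genuine problems.

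First, you invoke ``local compactness-type shrinking available from the Hausdorff charts'' to pass from $U_c$ to a smaller $V_c$ whose closure behaves well. But the hypothesis is only that $G$ is locally Hausdorff; local compactness is \emph{not} assumed here (that comes later in the paper). There is no reason a point of $C$ should have a neighbourhood with compact closure inside any chart, so this shrinking step has no justification. Second, your $K_{ij}$ are described as compact ``pieces of $C$'' yet you then assert $C\subseteq M$ because the $K_{ij}$ ``avoid $C$ on the relevant overlaps''; if $K_{ij}\subseteq C$ these statements are in tension, and you never specify the $K_{ij}$ precisely enough to resolve it. You acknowledge this is the delicate step, but as written the proposal does not contain an idea that completes it.

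The paper's argument avoids both issues by never shrinking charts via closures. Instead, for each $g\in C$ with Hausdorff neighbourhood $O_g$, it uses only the Hausdorffness of $C$ to separate $g$ from each $h\in C\setminus O_g$ by disjoint opens $N_h\ni g$, $M_h\ni h$ in $G$; compactness of $C\setminus O_g$ yields a finite union $M'_g$ and a finite intersection $N'_g=O_g\cap N_{h_1}\cap\cdots\cap N_{h_k}$ with $N'_g\cap M'_g=\emptyset$. After extracting a finite cover $N'_{g_1},\ldots,N'_{g_j}$ of $C$, the desired set is
\[
O=\Bigl(\bigcup_l N'_{g_l}\Bigr)\cap\bigcap_l\bigl(O_{g_l}\cup M'_{g_l}\bigr).
\]
Given distinct $p,q\in O$, pick $l$ with $p\in N'_{g_l}$; then $q\in O_{g_l}\cup M'_{g_l}$, and either $q\in O_{g_l}$ (separate inside the Hausdorff $O_{g_l}$) or $q\in M'_{g_l}$ (already disjoint from $N'_{g_l}$). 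The key device is thus an \emph{intersection of unions} $O_{g_l}\cup M'_{g_l}$ rather than a union of deletions, and the only separation input is the Hausdorffness of $C$ itself.
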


\begin{proof}
For every $g\in C$, take some Hausdorff open $O_g\ni g$.  As $C$ is Hausdorff, for every $h\in C\setminus O_g$, we have disjoint open $N_h\ni g$ and $M_h\ni h$.  As $C$ is compact and $O_g$ is open, $C\setminus O_g$ is compact so we have some finite subcover $M'_g=M_{h_1}\cup\ldots\cup M_{h_k}$ of $C\setminus O_g$.  Let $N'_g=O_g\cap N_{h_1}\cap\ldots\cap N_{h_k}$.  Again as $C$ is compact, we have some finite subcover $N=N'_{g_1}\cup\ldots\cup N'_{g_j}$ of $C$.  Let
\[O=N\cap(O_{g_1}\cup M'_{g_1})\cap\ldots\cap(O_{g_j}\cap M'_{g_j}).\]
Certainly $O$ is an open subset containing $C$.  To see that $O$ is Hausdorff, take any $g,h\in O$.  As $g\in N$, we have $g\in N'_{g_l}$ for some $l$.  We also have $h\in O_{g_l}\cup M'_{g_l}$.  If $h\in O_{g_l}$ then, as $g\in N'_{g_l}\subseteq O_{g_l}$ too and $O_{g_l}$ is Hausdorff, we can separate $h$ and $g$ with disjoint open sets.  If $h\in M'_{g_l}$ then, as $N'_{g_l}\cap M'_{g_l}=\emptyset$, we already have disjoint open subsets separating $h$ and $g$.  As $g$ and $h$ were arbitrary, we are done.
\end{proof}

\begin{prp}\label{LCLHchars}
The following are equivalent.
\begin{enumerate}
\item\label{Ubasis} $G$ has a $\cup$-basis.
\item\label{LCLH} $G$ is locally compact locally Hausdorff.
\item\label{L(CH)} Every point $g\in G$ has a compact Hausdorff neighbourhood.
\end{enumerate}
\end{prp}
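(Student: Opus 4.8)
The plan is to establish the cycle \eqref{Ubasis} $\Rightarrow$ \eqref{L(CH)} $\Rightarrow$ \eqref{LCLH} $\Rightarrow$ \eqref{Ubasis}, which is natural because the intermediate condition \eqref{L(CH)} packages ``compact Hausdorff neighbourhood'' in exactly the form delivered by the $\Rightarrow$ half of \eqref{UnionClosed} and demanded by its $\Leftarrow$ half. For \eqref{Ubasis} $\Rightarrow$ \eqref{L(CH)}: given $g\in G$, pick a basic $O\in S$ with $g\in O$; since $O=O\cup O\in S$, the $\Rightarrow$ direction of \eqref{UnionClosed} produces a compact Hausdorff $C$ with $O\subseteq C$, so that $C$ is a compact Hausdorff neighbourhood of $g$.

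For \eqref{L(CH)} $\Rightarrow$ \eqref{LCLH}, local Hausdorffness is immediate: if $C$ is a compact Hausdorff neighbourhood of $g$, then its interior $C^\circ$ is an open Hausdorff neighbourhood, being a subspace of the Hausdorff space $C$. For local compactness I would work inside such a fixed $C$. Given any open $U\ni g$, the set $U\cap C^\circ$ is open in the compact Hausdorff \textemdash\ hence regular \textemdash\ space $C$, so regularity yields a $C$-open $V$ with $g\in V$ and $C$-closure $\overline{V}^C\subseteq U\cap C^\circ$. Here $\overline{V}^C$ is compact as a closed subset of $C$, while $V$ is genuinely $G$-open (it has the form $V'\cap C^\circ$ for some $G$-open $V'$, since $V\subseteq C^\circ$), so $\overline{V}^C$ is a compact neighbourhood of $g$ contained in $U$. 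Letting $U$ vary gives the required neighbourhood base of compact subsets.

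For the main step \eqref{LCLH} $\Rightarrow$ \eqref{Ubasis} I would exhibit the $\cup$-basis
\[S=\{O\subseteq G:O\text{ is open and }O\Subset M\text{ for some open Hausdorff }M\subseteq G\},\]
which plainly contains $\emptyset$. That $S$ is a basis follows from local compactness together with local Hausdorffness: inside an open Hausdorff neighbourhood of any given $g$, one extracts a compact neighbourhood $K$ of $g$ refining a prescribed open $U$, whence $K^\circ\in S$ with $g\in K^\circ\subseteq U$. The $\Rightarrow$ half of \eqref{UnionClosedEquiv} is built into the definition of $S$, since a witness $C$ lying inside a Hausdorff $M$ is automatically compact Hausdorff; the $\Leftarrow$ half is where \autoref{LHCneighbourhoods} enters, for if $O,N\in S$ and $O\cup N\subseteq C$ with $C$ compact Hausdorff, then \autoref{LHCneighbourhoods} engulfs $C$ in an open Hausdorff $M$, giving $O\cup N\Subset M$ and hence $O\cup N\in S$.

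I expect the genuine content to be concentrated in \eqref{LCLH} $\Rightarrow$ \eqref{Ubasis}, specifically the $\Leftarrow$ direction of the union condition, since this is precisely where one must upgrade a \emph{compact} Hausdorff container to an \emph{open} Hausdorff one \textemdash\ exactly the role of \autoref{LHCneighbourhoods}, and without local Hausdorffness of the ambient space there is no reason such an open engulfing set should exist. The regularity shrinking argument in \eqref{L(CH)} $\Rightarrow$ \eqref{LCLH} is the secondary point requiring care, as one must track scrupulously which closures and interiors are taken relative to $C$ and which relative to $G$.
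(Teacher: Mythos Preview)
Your proof is correct and follows the same cycle \eqref{Ubasis} $\Rightarrow$ \eqref{L(CH)} $\Rightarrow$ \eqref{LCLH} $\Rightarrow$ \eqref{Ubasis} as the paper, with \eqref{Ubasis} $\Rightarrow$ \eqref{L(CH)} and \eqref{L(CH)} $\Rightarrow$ \eqref{LCLH} handled in essentially the same way (the paper unfolds the regularity argument explicitly rather than citing it, but the content is identical).

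The only notable difference is in \eqref{LCLH} $\Rightarrow$ \eqref{Ubasis}. The paper takes $S$ to be the open sets contained in some \emph{compact Hausdorff} $C$, rather than your open sets compactly contained in some \emph{open Hausdorff} $M$. With the paper's choice, both directions of \eqref{UnionClosed} are tautological: if $O\cup N\in S$ then by definition $O\cup N\subseteq C$ for some compact Hausdorff $C$, and conversely if $O\cup N\subseteq C$ compact Hausdorff then $O\cup N$ is open and in $S$. Thus the paper never needs to invoke \autoref{LHCneighbourhoods} here at all. Your definition is equivalent (via \autoref{LHCneighbourhoods}) and your argument is sound, but you have introduced an extra dependency that the paper's slicker choice of $S$ avoids. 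Since you flagged the $\Leftarrow$ direction as ``the genuine content'', it is worth knowing that a better-chosen $S$ dissolves it entirely.
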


\begin{proof}\
\begin{itemize}
\item[\eqref{Ubasis}$\Rightarrow$\eqref{L(CH)}]  Immediate.

\item[\eqref{L(CH)}$\Rightarrow$\eqref{LCLH}]  Take $g\in G$ with a compact Hausdorff neighbourhood $C$ and an open neighbourhood $O$.  As $C$ is Hausdorff, for each $c\in C\setminus O$, we have disjoint open $O_c\ni g$ and $N_c\ni c$.  As $C\setminus O$ is closed in $C$ and $C$ is compact, $C\setminus O$ is also compact so we have some finite subcover $N_{c_1},\ldots,N_{c_k}$ of $C\setminus O$.  Let $O'=O_{c_1}\cap\ldots\cap O_{c_k}$.  As $C$ is compact, $\overline{O'}\cap C$ is compact.  Also $\overline{O'}\cap(N_{c_1}\cup\ldots\cup N_{c_k})=\emptyset$ so $g\in\overline{O'}\cap C\subseteq O$.  As $O$ was arbitrary, this shows that each $g$ has a neighbourhood base of compact subsets, as required.

\item[\eqref{LCLH}$\Rightarrow$\eqref{Ubasis}]  If $G$ is locally compact locally Hausdorff then the collection of all open subsets of $G$ contained in some compact Hausdorff $C$ forms a $\U$-basis.\qedhere
\end{itemize}
\end{proof}

Note \eqref{L(CH)}$\Rightarrow$\eqref{LCLH} is a minor extension of a standard argument for Hausdorff spaces.  Also note that \eqref{LCLH}$\Leftrightarrow$\eqref{L(CH)} is essentially saying
\[\text{locally compact locally Hausdorff}\quad\Leftrightarrow\quad\text{locally (compact Hausdorff)}.\]
It is also worth noting that in the non-Hausdorff settting, local compactness does not follow from compactness.  In fact, there is even an example in \cite{Scott2013} of a locally Hausdorff space that is compact but not locally compact.

\begin{thm}\label{Ubases}
Let $G$ be a topological space, and let $S$ be a $\cup$-basis $S$ of $G$. Then $S$ is a $\U$-basic poset (taking $\subseteq$ for $\leq$) such that, for any $O,N\in S$,
\begin{align}
\label{OprecN}O\prec N\qquad&\Leftrightarrow\qquad O\Subset N.\\
\label{OUN}O\U N\qquad&\Leftrightarrow\qquad O\cup N\text{ is Hausdorff}.
\end{align}
\end{thm}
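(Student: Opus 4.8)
The plan is to first fix the dictionary between the order structure of $S$ and the topology of $G$, then establish the two displayed equivalences \eqref{OprecN} and \eqref{OUN}, and finally verify the two defining properties of a $\U$-basic poset: $\prec$-distributivity and \eqref{HausdorffJoins}. Throughout I take $\leq$ to be $\subseteq$, so the minimum $0$ is $\emptyset$, and I record that $G$ is locally compact locally Hausdorff by \autoref{LCLHchars}. Taking $O=N$ in the $\Rightarrow$ part of \eqref{UnionClosed} shows every element of $S$ lies in a compact Hausdorff set and is therefore Hausdorff, and that $S$ is closed under bounded finite unions; hence $S$ is a conditional $\vee$-semilattice in which $\vee$ is just $\cup$. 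As $S$ is a basis, $O\perp N$ is equivalent to $O\cap N=\emptyset$. Two preliminary facts will be used repeatedly. First, $S$ is $\Supset$-round, i.e.\ every $O\in S$ satisfies $O\Subset M$ for some $M\in S$: enclose $O$ in a compact Hausdorff $C$, embed $C$ in an open (hence locally compact) Hausdorff neighbourhood via \autoref{LHCneighbourhoods}, and cover $C$ by finitely many basic opens lying inside a relatively compact open subset, whose union lies in $S$ and compactly contains $O$. Second, local compactness gives the \eqref{Approximation} identity $A=\bigcup_{A'\Subset A}A'$ for every $A\in S$.

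With these in hand, \eqref{OprecN} follows from the characterisations of $\prec$ via closures and compact containment in \autoref{subsecCompactContainment}, whose hypotheses (closure under bounded finite unions and $\Supset$-roundness) we have just verified. These give $O\prec N\Rightarrow O\Subset N$ directly; for the converse, if $O\Subset N$ with $O\subseteq C\subseteq N$ and $C$ compact, then for every $M\in S$ with $N\Subset M$ the set $C$ is closed in the Hausdorff set $M$, so $\overline{O}\cap M\subseteq C\subseteq N$, which is exactly the condition forcing $O\prec N$. In particular $\prec$ coincides with $\Subset$ on $S$, so $\Supset$-roundness of $S$ is precisely \eqref{succRound}.

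For \eqref{precDistributivity} the $\Leftarrow$ direction is immediate from \eqref{Approximation}, since each $A'\prec A$ is then contained in $O\cup N$. The $\Rightarrow$ direction is the main obstacle and is where the topology does real work: given $A\subseteq O\cup N$ with $O\vee N$ existing (so $O\cup N\in S$ is Hausdorff) and given $A'\prec A$, I pick compact $C$ with $A'\subseteq C\subseteq A$ and work inside the locally compact Hausdorff space $A$. A shrinking-of-covers argument produces opens $U,V$ with compact closures satisfying $\overline{U}\subseteq O\cap A$, $\overline{V}\subseteq N\cap A$ and $C\subseteq U\cup V$; fattening $\overline{U}$ and $\overline{V}$ to elements $O',N'\in S$ with $O'\Subset O\cap A$ and $N'\Subset N\cap A$ (again by the $\Supset$-round construction) then yields $O'\prec O$, $N'\prec N$ and $A'\prec O'\vee N'\prec A$, as required.

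It remains to prove \eqref{OUN} and \eqref{HausdorffJoins}. For $\Leftarrow$ in \eqref{OUN}, assuming $O\cup N$ Hausdorff and given $O'\prec O$, $N'\prec N$, I choose $P\in S$ with $\overline{O'}\cap\overline{N'}\subseteq P\Subset O\cap N$ (the closures, taken in the Hausdorff $O\cup N$, are compact and contained in $O\cap N$); then any $P'\prec O',N'$ has compact closure contained in $\overline{O'}\cap\overline{N'}\subseteq P$, so $P'\Subset P$, witnessing $O\U N$. The $\Rightarrow$ direction is the other delicate point, argued contrapositively: if $O\cup N$ is not Hausdorff then, as $O$ and $N$ are separately Hausdorff, there are non-separable points $x\in O\setminus N$ and $y\in N\setminus O$; choosing $O'\ni x$ and $N'\ni y$ in $S$ and any candidate $P\prec O,N$, we have $\overline{P}\cap(O\cup N)\subseteq O\cap N$ (since $P\Subset N$ and $O\cup N$ is Hausdorff), so $\overline{P}$ avoids $x$ and $W=O'\setminus\overline{P}$ is a neighbourhood of $x$ which, by non-separability, meets $N'$; any $z\in W\cap N'\subseteq O'\cap N'$ then carries a nonempty $P'\prec O',N'$ with $z\in P'$ but $z\notin P$, so $P'\not\prec P$, and as $P$ was arbitrary this shows $O\U N$ fails. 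Finally, for \eqref{HausdorffJoins}: if $O\vee N$ exists then $O\cup N\in S$ and $\Supset$-roundness gives $W\in S$ with $O\cup N\Subset W$, so $O\prec W\U W\succ N$ (using that $W$ is Hausdorff); conversely, from $O\prec O'\U N'\succ N$ I bound $O\cup N$ by $C_O\cup C_N$, where $O\subseteq C_O\subseteq O'$ and $N\subseteq C_N\subseteq N'$ are compact, which lies in the Hausdorff set $O'\cup N'$ and is therefore compact Hausdorff, so $O\cup N\in S$ and $O\vee N$ exists. This shows $S$ is $\U$-basic.
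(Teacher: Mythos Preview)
Your overall strategy and most of the individual steps are fine and close to the paper's argument, but there is a genuine gap in the $\Rightarrow$ direction of \eqref{OUN}. In your contrapositive, you assume $O\cup N$ is \emph{not} Hausdorff and then, for an arbitrary candidate $P\prec O,N$, assert that $\overline{P}\cap(O\cup N)\subseteq O\cap N$ ``since $P\Subset N$ and $O\cup N$ is Hausdorff''. But $O\cup N$ is precisely what you assumed to be non-Hausdorff, and the inclusion is in fact false: in the bug-eyed line $G=\mathbb{R}\cup\{0'\}$ with $O=(-2,2)$, $N=(-2,0)\cup\{0'\}\cup(0,2)$ and $P=(-1,0)\cup(0,1)$, one has $P\Subset O$ and $P\Subset N$, yet $0\in\overline{P}\cap O$ while $0\notin N$. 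Consequently $x$ can lie in $\overline{P}$, so $W=O'\setminus\overline{P}$ need not be a neighbourhood of $x$, and your construction of $P'$ breaks down. (There is also the minor issue that $\overline{P}$ need not be closed in $G$, so $W$ need not be open even when $x\notin\overline{P}$.)

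The paper fixes this by arguing by contradiction rather than by direct contrapositive: assuming $O\U N$ \emph{and} $O\cup N$ not Hausdorff, one first obtains the witness $M\prec O,N$ from the $\U$ hypothesis and then uses \eqref{UInterpolation} (already available from $\prec$-distributivity) to interpolate $M\Subset M'\Subset O,N$. Now one only needs the Hausdorffness of $O$ itself: from $M\Subset M'\Subset O$ and \eqref{OSubNequivs} one gets $\overline{M}\cap O\subseteq M'\subseteq N$, forcing the inseparable point $g\in O\setminus N$ (which lies in $\overline{O'\cap N'}\subseteq\overline{M}$) into $N$, a contradiction. The interpolated $M'$ is exactly what your argument is missing; without it one cannot pass from $P\Subset N$ to any closure containment inside $O$.
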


\begin{proof}
First we note that $S$ is $\Supset$-round.  Indeed, by the equivalent version of \eqref{UnionClosedEquiv} above, any $O\in S$ must be compactly contained in some open Hausdorff $M$, i.e. $O\subseteq C\subseteq M$, for some compact $C$.  As $S$ is a basis, we can cover $C$ with elements of $S$ compactly contained in $M$.  As $C$ is compact, we can find a finite subcover whose union $F$ is compactly contained in $M$.  Thus $O\Subset F\in S$, by \eqref{UnionClosedEquiv}.

Now we show that $S$ is a $\U$-basic poset satisfying \eqref{OprecN} and \eqref{OUN}.

\begin{itemize}
\item[\eqref{OprecN}]  Take $O,N\in S$.  If $O\prec N$ then $O\Subset N$, by \eqref{OprecN=>}.  Conversely, if $O\Subset N$ then, for every $M\in S$ with $N\Subset M$, we have $\overline{O}\cap M\subseteq N$, by \eqref{OSubNequivs}.  Thus $\bigcup_{N\Subset M\in S}\overline{O}\cap M\subseteq N$ so $O\prec N$, by the other direction of \eqref{OprecN=>}.

\item[\eqref{succRound}]  As we already know $S$ is $\Supset$-round, this is immediate from \eqref{OprecN}.

\item[\eqref{precDistributivity}]  As $S$ is a basis and $G$ is locally compact, every $O\in S$ is the union of those elements of $S$ compactly contained within $O$.  Thus \eqref{Approximation} holds, which yields the $\Leftarrow$ part of \eqref{precDistributivity}, by \eqref{Approx=>}.

Conversely, take $M,N,O,O'\in S$ with $O'\Subset O\subseteq N\cup M\in S$, so we have compact $C$ with $O'\subseteq C\subseteq O$.  As $S$ is a basis, we can cover $C$ with elements of $S$ which are compactly contained within $O$ and either $N$ or $M$.  As $C$ is compact, we have a finite subcover.  As $S$ is a $\cup$-basis, we have $F\Subset N$, $G\Subset M$ and $F,G\in S$ for the unions of those elements in the finite subcover compactly contained within $N$ and $M$ respectively.  Moreover, $O'\Subset F\cup G\Subset N\cup M$ (and $F,G,F\cup G\in S$, as $N\cup M\in S$ and $S$ is a $\cup$-basis).  As $M$, $N$, $O$ and $O'$ were arbitrary, \eqref{precDistributivity} holds.

\item[\eqref{OUN}]  To prove \eqref{OUN}, take $O,N\in S$ and assume $O\U N$ but $O\cup N$ is not Hausdorff.  As $O$ and $N$ are Hausdorff, we must have $g\in O\setminus N$ and $h\in N\setminus O$ with no disjoint neighbourhoods.  Take $O',N'\in S$ with $g\in O'\Subset O$ and $h\in N'\Subset N$ so $O'\cap N'\Subset O,N$.  For every open $M\ni g$, $M\cap O'$ is a neighbourhood of $g$ so $M\cap O'\cap N'\neq\emptyset$, by the choice of $g$ and $h$.  Thus
\[g\in\overline{O'\cap N'}.\]

As $O\U N$ holds, we have $M\in S$ such that $P\Subset M\Subset O,N$, for all $P\in S$ with $P\Subset O',N'$.  As $O'\cap N'$ is the union of such $P$, we must have $O'\cap N'\subseteq M$ and hence $g\in\overline{M}$.  By \eqref{UInterpolation}, we also have $M'\in S$ with $M\Subset M'\Subset O,N$.  Thus $g\in\overline{M}\cap O\subseteq M'$, as $M\Subset M'\Subset O$ (see \eqref{OSubNequivs}), even though $g\notin N\supseteq M'$, a contradiction.

Conversely, take $O,N\in S$ and assume $O\cup N$ is Hausdorff.  Take $O',N'\in S$ with $O'\Subset O$ and $N'\Subset N$.  In particular, $O\cap N\Subset O\cup N$ so
\[C=\overline{O'\cap N'}\cap(O\cup N)\]
is compact (see \eqref{OSubNequivs}).  Moreover, as $O'\cap N'\Subset O,N$ and $O\cup N$ is a Hausdorff extension of both $O$ and $N$, we have $C\subseteq O\cap N$ (see the proof of \eqref{OSubNequivs}).  Thus we can cover $C$ with elements of $S$ compactly contained in $O\cap N$.  As $C$ is compact, we can take a finite subcover whose union $F$ is in $S$, as $S$ is a $\cup$-basis.  Then $O'\cap N'\Subset F\Subset O,N$ and hence, for all $M\in S$ with $M\Subset O',N'$, we have $M\Subset F$.  As $O'$ and $N'$ were arbitrary, this shows that $O\U N$ holds.

\item[\eqref{HausdorffJoins}]  Take $O,O',N,N'\in S$ with $O\Subset O'\U N'\Supset N$.  By what we just proved, $O'\cup N'$ is Hausdorff.  As $O\cup N\Subset O'\cup N'$ and $S$ is a $\cup$-basis, we must have $O\cup N\in S$, as required. \qedhere
\end{itemize}
\end{proof}

Incidentally, to get a $\cup$-basis $S$ of some compact locally Hausdorff $G$, it might be tempting to simply take the collection of all open Hausdorff subsets.  However, unless $G$ itself is Hausdorff, this may fail to yield a $\cup$-basis or a $\U$-basic poset and $\Subset$ may no longer correspond to $\prec$ on $S$ as above.

\begin{xpl}
Consider two copies of $[-1,1]$ and identify the strictly negative points to form a topological space $G=[-1,1]\sqcup[0,1]'$.  It is easy to see that $G$ is compact and locally Hausdorff but not Hausdorff, as the two copies $0$ and $0'$ of the origin can not be separated by open sets.  Let $S$ be the basis of $G$ consisting of all open Hausdorff subsets.  Then $O=G\setminus\{0'\}\in S$ even though the only compact subset containing $O$ is $G$, which is not Hausdorff.  Thus $S$ is not a $\cup$-basis.  Moreover, $O$ is a maximal element of $S$ and hence, trivially, $O\prec O$ in $S$, even though $O$ is not compact.  Also $O\prec O=[-1,1]\cup(0,1]'$ even though any $N\prec(0,1]'$ would have to avoid a neighbourhood of $0'$ (as $(0,1]'\subseteq[-1,0)\cup[0,1]'\in S$ so there needs to be some $N^\perp$ with $N^\perp\cap N=\emptyset$ and $[-1,0)\cup[0,1]'\subseteq N^\perp\cup(0,1]'$, which implies $0'\in N^\perp$).  In particular, we can not have both $N\prec(0,1]'$ and $O\subseteq[-1,1]\cup N$, i.e. $S$ fails to satisfy \eqref{precDistributivity} or even \eqref{Shrinking}.
\end{xpl}

\section{Filters}\label{secFilters}

\begin{dfn}
For any transitive relation $<$ on $S$, we call $U\subseteq S$ a \emph{$<$-filter} if
\begin{align}
\label{succClosed}\tag{$>$-Closed}v>u\in U\qquad&\Rightarrow\qquad v\in U\\
\label{precDirected}\tag{$<$-Directed}u,v\in U\qquad&\Rightarrow\qquad \exists w\in U(w<u,v).
\end{align}
\end{dfn}

Note we can extend any $U\subseteq S$ to a $>$-closed subset by taking the union with
\[U^<=\{a>u:u\in U\}.\]
Also denote the initial segment of $U$ defined by any $a\in S$ by
\[U^a=U\cap a^>=\{u\in U:u<a\}.\]
Initial segments will play an important role in \autoref{ssStoneSpaces}.  For the moment we just note that filters are determined by their initial segments.

\begin{prp}\label{InitialSegment}
For any $U\subseteq S$ and any transitive relation $<$ on $S$,
\[U\text{ is a $<$-filter}\qquad\Leftrightarrow\qquad\forall a\in U\ (U=U^{a<}).\]
\end{prp}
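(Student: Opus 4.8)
The plan is to unravel the notation and prove the two implications directly from the definitions of \eqref{succClosed} and \eqref{precDirected}. Recall that $U^a=\{u\in U:u<a\}$ and that, for any $V\subseteq S$, the operation $V^<=\{x\in S:\exists v\in V\ (v<x)\}$ is upward closure under $<$, so that $U^{a<}=(U^a)^<=\{x\in S:\exists u\in U\ (u<a\text{ and }u<x)\}$. The whole proof is then a matter of matching these set-theoretic descriptions against the two filter axioms.

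For the forward implication I would fix $a\in U$ and prove the two inclusions making up $U=U^{a<}$. The inclusion $U^{a<}\subseteq U$ is immediate from \eqref{succClosed}: any $x\in U^{a<}$ lies above some $u\in U^a\subseteq U$, i.e. $x>u\in U$, whence $x\in U$. For $U\subseteq U^{a<}$, I would take $x\in U$ and apply \eqref{precDirected} to the pair $a,x\in U$, obtaining $w\in U$ with $w<a$ and $w<x$; then $w\in U^a$ witnesses $x\in U^{a<}$.

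For the converse, the key observation is that $U^{a<}=(U^a)^<$ is automatically $>$-closed purely by transitivity of $<$: if $v\in U^a$ and $v<x<y$ then $v<y$, so $y\in (U^a)^<$. Hence, assuming $U=U^{a<}$ for every $a\in U$, the set $U$ is $>$-closed for free — when $U\neq\emptyset$ pick any $a\in U$ and write $U=(U^a)^<$, and when $U=\emptyset$ the axiom holds vacuously. Directedness then reads off a single membership: given $u,v\in U$, apply the hypothesis with this $v$, i.e. $U=U^{v<}$, to the element $u\in U$; this yields $w\in U^v$ (so $w\in U$ and $w<v$) with $w<u$, and this $w$ verifies \eqref{precDirected}.

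The only real subtlety — and the step I would flag as the main obstacle — is correctly reading $U^{a<}$ as $(U^a)^<$ and noticing that this $(\cdot)^<$ closure is $>$-closed by transitivity, which is exactly what lets me extract \eqref{succClosed} from the equality $U=U^{a<}$ without extra work, after which \eqref{precDirected} follows just by unpacking one membership. Everything else is routine bookkeeping with the inclusions.
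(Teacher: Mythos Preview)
Your proof is correct and follows essentially the same approach as the paper's own proof: both directions are handled by directly matching the two filter axioms against the inclusions $U\subseteq U^{a<}$ and $U^{a<}\subseteq U$. You are slightly more explicit than the paper in spelling out why $(U^a)^<$ is automatically $>$-closed (via transitivity) and in handling the empty case, but these are elaborations rather than departures from the paper's argument.
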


\begin{proof}
Say $U$ is a $<$-filter and $a\in U$.  Then $U$ is $<$-directed so, for every $u\in U$, we have $v\in U$ with $v<a,u$ which means $v\in U^a$ and hence $u\in U^{a<}$, i.e. $U\subseteq U^{a<}$.  On the other hand, $U^a\subseteq U$ and hence $U^{a<}\subseteq U$, as $U$ is $>$-closed, i.e. $U=U^{a<}$.

Conversely, assume $U$ satisfies the right side.  For any $a,b\in U$, we have $b\in U^{a<}$ so there must be some $u\in U^a$ with $u<b$.  But this means $u\in U$ and $u<a$, so $U$ is $<$-directed.  As $U=U^{a<}$, $U$ is also $>$-closed and hence a $<$-filter.
\end{proof}

\subsection{Ultrafilters}\label{subsecUltrafilters}

\begin{dfn}
A \emph{$<$-ultrafilter} is a maximal proper $<$-filter.
\end{dfn}

Ultrafilters can be used to recover the points of a space from a basis, as shown by the following generalization of \cite[Proposition 3.2]{BiceStarling2016}.

\begin{prp}\label{SubsetUltrafilters}
If $S\ni\emptyset$ is a Hausdorff basis of some locally compact $G$ then
\[g\mapsto U_g=\{O\in S:g\in O\}\]
is a bijection from $G$ to $\Subset$-ultrafilters of $S$.
\end{prp}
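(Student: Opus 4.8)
The plan is to check three things separately: that each $U_g$ is a proper $\Subset$-ultrafilter, that $g\mapsto U_g$ is injective, and that it is onto; the unifying device is to carry out the topological work inside a single Hausdorff basis element, where the ambient non-Hausdorffness of $G$ does no harm. First I would verify that $U_g$ is a proper $\Subset$-filter. It is $\Supset$-closed because $O\in U_g$ and $O\Subset N$ force $g\in O\subseteq N$, and it is proper since $g\notin\emptyset$. For $\Subset$-directedness, given $O,N\in U_g$ the set $O\cap N$ is an open neighbourhood of $g$, so local compactness of $G$ produces a compact $C$ with $g\in C^\circ\subseteq C\subseteq O\cap N$; choosing basic $P\in S$ with $g\in P\subseteq C^\circ$ gives $P\Subset O,N$ and $P\in U_g$. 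Injectivity is then immediate from the Hausdorff hypothesis: if $g\neq h$, pick a basic $O\ni g$; if $h\notin O$ we are done, and otherwise $O$ is Hausdorff, so $g,h$ are separated inside $O$ and a smaller basic $O'\ni g$ omits $h$, giving $O'\in U_g\setminus U_h$.

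The substance is surjectivity. Given a $\Subset$-ultrafilter $U$, I would fix some $N_0\in U$ (Hausdorff, as an element of $S$) and, by directedness, some $P_0\in U$ with $P_0\Subset N_0$. Set $U'=\{Q\in U:Q\Subset N_0\}$; this is cofinal in $U$, since directedness pairs any $M\in U$ with $N_0$ to give a member of $U'$ below $M$. Working inside the Hausdorff space $N_0$, for each $Q\in U'$ the relative closure $\overline{Q}\cap N_0$ is compact, as it sits inside a compact witness of $Q\Subset N_0$ that is closed in $N_0$. These sets are nonempty and, by directedness, have the finite intersection property, so compactness yields a point $g\in\bigcap_{Q\in U'}(\overline{Q}\cap N_0)$.

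The key step is to upgrade ``$g$ lies in every relative closure'' to ``$g\in N$ for every $N\in U$''. Given $N\in U$, directedness yields $Q_1\in U'$ with $Q_1\Subset N$ and then $Q_2\in U'$ with $Q_2\Subset Q_1$; writing $Q_2\subseteq C\subseteq Q_1\subseteq N_0$ for a compact witness $C$, the inclusion $C\subseteq N_0$ into a Hausdorff set makes $C$ closed in $N_0$, whence $\overline{Q_2}\cap N_0\subseteq C\subseteq Q_1\subseteq N$. Since $g\in\overline{Q_2}\cap N_0$, this gives $g\in N$, so $U\subseteq U_g$; as $U$ is a maximal proper filter and $U_g$ is proper, $U=U_g$. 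The same staging argument, run with a Hausdorff $O_0\in U_g$ in place of $N_0$, shows each $U_g$ is itself maximal: were $V\supseteq U_g$ a proper filter with some $N\in V$, $g\notin N$, then choosing $P\Subset N,O_0$ and $P'\Subset P$ in $V$ gives $\overline{P'}\cap O_0\subseteq N$, so $g\notin\overline{P'}\cap O_0$; a basic $O_1\ni g$ inside $O_0\setminus(\overline{P'}\cap O_0)$ is disjoint from $P'$, and directedness of $V$ would then force $\emptyset\in V$, contradicting properness. Thus $g\mapsto U_g$ is a well-defined injection onto the $\Subset$-ultrafilters.

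I expect the main obstacle to be exactly this last upgrade. In the non-Hausdorff ambient $G$ the closures $\overline{Q}$ acquire spurious limit points (the two origins in the doubled-interval example), so a point chosen merely in the intersection of closures need not lie in the sets $N\in U$ themselves. What rescues the argument is confining everything to one Hausdorff basis element, where compact witnesses are genuinely closed and the double refinement $Q_2\Subset Q_1$ traps the relative closure of $Q_2$ inside $Q_1\subseteq N$; the arbitrarily small members of the ultrafilter are precisely what kill the spurious limits.
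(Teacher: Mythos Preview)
Your proof is correct and follows essentially the same route as the paper: both confine the topological work to a single Hausdorff basis element $N_0$, take the intersection of relative closures $\overline{Q}\cap N_0$ via the finite intersection property, and then use a double refinement $Q_2\Subset Q_1\Subset N$ inside $N_0$ to upgrade ``$g$ lies in every relative closure'' to ``$g$ lies in every $N\in U$'', exactly as in \eqref{OSubNequivs}. The one genuine difference is in establishing maximality of $U_g$: the paper extends $U_g$ to a maximal filter $U'$ via Kuratowski--Zorn, invokes the surjectivity argument to write $U'=U_{g'}$, and then uses $T_1$ to conclude $g=g'$; you instead give a direct contradiction argument, producing a basic $O_1\ni g$ disjoint from some $P'\in V$ and forcing $\emptyset\in V$. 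Your version is slightly more self-contained (no appeal to Zorn), at the cost of repeating the double-refinement trick a second time.
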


\begin{proof}
If $U$ is a $\Subset$-ultrafilter of $S$ then take $M\in U$.  By \autoref{InitialSegment} above, for $U^M=U\cap M^\Supset=\{O\in U:O\Subset M\}$, we have
\[\bigcap U=\bigcap U^M\subseteq\bigcap_{O\in U^M}\overline{O}\cap M=\bigcap_{\substack{O\in U^M\\ O\Subset N\in U^O}}\overline{O}\cap M\subseteq\bigcap_{N\in U^M}N=\bigcap U^M,\]
where the last $\subseteq$ follows form \eqref{OSubNequivs}.  As an intersection of non-empty compact closed subsets of $O$ with the finite intersection property, $\bigcap U$ must be non-empty.  For any $g\in\bigcap U$, we have $U\subseteq U_g$ and hence, by maximality, $U=U_g$.

Conversely, as $G$ is locally compact, each $U_g$ is a $\Subset$-filter and hence has some maximal extension $U'\supseteq U_g$ not containing $\emptyset$, by the Kuratowski-Zorn lemma.  By what we just proved, $U'=U_{g'}$ for some $g'\in G$.  But as $G$ is locally Hausdorff and hence $T_1$, $U_g\subseteq U_{g'}$ implies $g=g'$.  Thus $U_g$ was already a $\Subset$-ultrafilter.
\end{proof}

Now take a poset $S$.  As the rather below relation $\prec$ is auxiliary to $\leq$,
\[U\text{ is a $\prec$-filter}\qquad\Leftrightarrow\qquad U\text{ is a $\prec$-round $\leq$-filter}.\]
As $0\prec0$, the proper $\prec$-filters are precisely those that do not contain $0$.  Thus every $\prec$-filter not containing $0$ is contained in a $\prec$-ultrafilter, again by the Kuratowski-Zorn lemma.  In fact, the same applies to $\prec$-directed subsets, as the upwards $\prec$-closure of a $\prec$-directed subset is a $\prec$-filter.

It is well-known that proper filters in Boolean algebras are maximal (i.e. ultrafilters) iff they are prime iff they intersect every complementary pair.  This generalizes to basic posets and even $\prec$-distributive conditional $\vee$-semilattices as follows.

\begin{thm}\label{ultrachars}
If $U$ is a proper $\prec$-filter in a $\prec$-distributive conditional $\vee$-semilattice $S$ then $U$ being maximal is equivalent to each of the following conditions.
\begin{align}
\label{Complementary}\tag{Complementary}a\prec b\notin U\qquad&\Rightarrow\qquad\exists c\in U\ (a\perp c)\quad\text{or}\quad\forall c\in U\ (b\not\smallsmile c).\\
\label{Prime}\tag{Prime}a\vee b\in U\qquad&\Rightarrow\qquad a\in U\text{ or }b\in U.
\end{align}
\end{thm}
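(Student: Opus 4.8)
The plan is to prove the cycle of implications $\text{maximal}\Rightarrow\eqref{Complementary}\Rightarrow\eqref{Prime}\Rightarrow\text{maximal}$. Throughout I will use that, since $S$ is $\prec$-distributive, it satisfies \eqref{Approximation}, \eqref{Shrinking}, \eqref{perpDecomposition}, \eqref{perpEquivalent}, \eqref{Complements}, \eqref{UInterpolation} and \eqref{LocallyHausdorff} (the last by \autoref{LocallyHausdorffprp}), and that a proper $\prec$-filter is exactly a nonempty $\prec$-round $\leq$-filter omitting $0$. Two trivial remarks will recur: if $a'\leq a\notin U$ then $a'\notin U$, by upward $\leq$-closure; and if $w\neq0$ with $w\prec a,c$ then $a\not\perp c$, by \eqref{perpEquivalent}.

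For $\eqref{Complementary}\Rightarrow\eqref{Prime}$, suppose $a\vee b\in U$ but $a,b\notin U$. Pick $d\in U$ with $d\prec a\vee b$ ($\prec$-roundness) and apply \eqref{Shrinking} to obtain $a'\prec a$ and $b'\prec b$ with $d\leq a'\vee b'\in U$; note $a',b'\notin U$. Since $a,\,a'\vee b'\leq a\vee b$, \eqref{LocallyHausdorff} gives $a\U(a'\vee b')$ with $a'\vee b'\in U$, so the second alternative of \eqref{Complementary} fails for $a'\prec a$; hence the first holds, giving $c_1\in U$ with $a'\perp c_1$, and symmetrically $c_2\in U$ with $b'\perp c_2$. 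Taking $c\in U$ with $c\prec c_1,c_2$ and then $e\in U$ with $e\prec c,\,a'\vee b'$, \eqref{perpAuxiliarity} yields $e\perp a'$ and $e\perp b'$, while $e\leq a'\vee b'$ together with $e\perp a'$ forces $e\leq b'$ by \eqref{perpDecomposition}. But then any nonzero $e'\prec e$ (possible as $e\neq0$, by \eqref{Approximation}) satisfies $e'\prec e,b'$, so $e\not\perp b'$ by \eqref{perpEquivalent}, contradicting $e\perp b'$.

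For $\text{maximal}\Rightarrow\eqref{Complementary}$ I argue contrapositively: if \eqref{Complementary} fails at some $a\prec b\notin U$, then $a\not\perp c$ for every $c\in U$ while $b\U c_0$ for some $c_0\in U$, and I build a proper $\prec$-filter $V\supsetneq U$ with $b\in V$. The idea is to adjoin the ``relative meets'' of $a$ with the elements of $U$, made precise through \eqref{UInterpolation} so that each pair $a,c$ (with $c\in U$) acquires a genuine common $\prec$-lower bound; this collection is $\prec$-directed because $U$ is, and its upward $\prec$-closure is a $\prec$-filter containing $U$ and $a$, hence $b\geq a$. The hypothesis $a\not\perp c$ keeps these meets nonzero, and $b\U c_0$ is precisely what makes the construction cohere without collapsing to $0$; as $b\notin U$, the resulting filter is proper and strictly larger, contradicting maximality.

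For $\eqref{Prime}\Rightarrow\text{maximal}$ I again argue contrapositively. Given a proper $V\supsetneq U$ and $p\in V\setminus U$, choose $u\in U$ and $r\in V$ with $r\prec p,u$ and $r\neq0$; then $r\notin U$ (as $r\leq p$) while $r\leq u$. Suppose one can find $r'$ with $r\prec r'\prec u$ and $r'\notin U$. Applying \eqref{Complements} along $r\prec r'\leq u\prec u^+$, with $u\prec u^+$ by $\succ$-roundness (\eqref{succRound}), produces $f\perp r$ with $u\prec f\vee r'$, so $f\vee r'\geq u\in U$ lies in $U$; by \eqref{Prime} and $r'\notin U$ we get $f\in U$, and then $f\perp r$ with $f,r\in V$ contradicts properness of $V$ by \eqref{perpEquivalent}. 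The whole argument therefore rests on the relative-complementation step of producing a strict witness $r'$ with $r\prec r'\prec u$ and $r'\notin U$, and this is the step I expect to be the main obstacle. In a (generalized) Boolean algebra it is free, since there $\prec\,=\,\leq$ is reflexive and one may simply take $r'=r$; in the genuinely non-Hausdorff case $\prec\neq\leq$ one must instead extract $r'$ from \eqref{Complements}, \eqref{UInterpolation} and \eqref{LocallyHausdorff}, keeping it below $p\in V\setminus U$ so that it provably stays outside $U$. This is exactly the difficulty that also underlies the properness of the extension in the previous paragraph, and it is where the $\U$-relation earns its keep.
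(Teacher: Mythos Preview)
Your implication \eqref{Complementary}$\Rightarrow$\eqref{Prime} is fine and matches the paper's argument. Your sketch for maximal $\Rightarrow$ \eqref{Complementary} has the right shape, though the paper's construction is more delicate than ``adjoin relative meets of $a$ with elements of $U$'': one actually needs witnesses $d$ with $a\prec d\prec b$, and the $\prec$-directedness check uses \eqref{UAuxiliarity} applied to $d''\leq b\U c\geq u''$ (where $c\in U$ is the fixed element with $b\U c$) to transport the Hausdorff relation down to the level where the meets are being taken. Your description glosses over this, but it can be made to work along the lines you indicate.

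The genuine gap is in \eqref{Prime}$\Rightarrow$\,maximal, and you correctly flag it yourself. The step ``find $r'$ with $r\prec r'\prec u$ and $r'\notin U$'' cannot be completed with the tools you list: to force $r'\notin U$ you want $r'\leq p$, which via \eqref{UInterpolation} would need $p\U u$, but nothing gives you that $p$ and $u$ are bounded or otherwise $\U$-related. (Separately, your appeal to \eqref{succRound} to get $u\prec u^+$ is unjustified here, since the hypothesis is only that $S$ is a $\prec$-distributive conditional $\vee$-semilattice, not a basic poset; however this particular issue is fixable by applying \eqref{RatherBelow} directly to $r\prec r'$ with $c=u$.)

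The paper sidesteps the obstacle entirely by proving \eqref{Prime}$\Rightarrow$\eqref{Complementary} instead of \eqref{Prime}$\Rightarrow$\,maximal. The point is that the contrapositive of \eqref{Complementary} \emph{hands you} the Hausdorff relation you are missing: given $a\prec b\notin U$ with $b\U u$ for some $u\in U$, the definition of $\U$ produces $c'\prec b,u$ dominating everything $\prec a,u'$ (for $u'\prec u$ in $U$); interpolating to $c'\prec c\prec b,u$ and using \eqref{RatherBelow} on $c'\prec c\leq u$ yields $d\perp c'$ with $u\leq c\vee d$, and since $c\leq b\notin U$ primeness forces $d\in U$; a further $v\in U$ with $v\prec d,u'$ then satisfies $a\perp v$. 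So the cycle the paper actually closes is maximal $\Leftrightarrow$ \eqref{Complementary} and \eqref{Complementary} $\Leftrightarrow$ \eqref{Prime}, not your three-cycle. Your instinct that ``the $\U$-relation earns its keep'' is exactly right, but it enters through the \eqref{Complementary} hypothesis, not through an unbounded pair $p,u$ in a larger filter.
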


\begin{proof}
Assume that \eqref{Complementary} holds but $U$ is not maximal, so we have a proper $\prec$-filter $V$ properly containing $U$.  Take $v\in V\setminus U$ and $u\in U$.  Then we have $w\in V$ with $w\prec v,u$.  In particular, $w\prec v\notin U$.  Having $c\in U\subseteq V$ with $w\perp c$ would contradict the fact $V$ is $\prec$-filter, so \eqref{Complementary} implies that $w\not\smallsmile c$, for all $c\in U$.  In particular, $w\not\smallsmile u$, even though $w\leq u$, contradicting \eqref{leqU}.  Thus \eqref{Complementary} implies maximality.

Conversely, assume that $U$ is a $\prec$-filter failing \eqref{Complementary}, so we have $a,b,c\in S$ with $a\prec b\notin U$, $b\smallsmile c\in U$ and $a\not\perp u$, for all $u\in U$.  To see that $U$ is not maximal, let
\[V=\{v\in S:\exists d\ (a\prec d\prec b\text{ and }\exists u\in U\ \forall e\prec d,u\ (e\prec v))\}.\]
We immediately see that $V$ is $\succ$-closed.  Also $0\notin V$, otherwise we would have $d$ with $a\prec d\prec b$ and $u\in U$ such that $e\prec d,u$ is only possible for $e=0$.  By \eqref{perpEquivalent}, this would mean $d\perp u$ and hence $a\perp u$, a contradiction.  Next we claim that $V$ is $\prec$-directed.  To see this, take $v,v'\in V$ and corresponding $d,d'\in S$ with $a\prec d,d'\prec b$ and $u,u'\in U$ with $e\prec v$, for all $e\prec d,u$, and $e\prec v'$, for all $e\prec d',u'$.  As $d,d'\prec b$, \eqref{LocallyHausdorff} from \autoref{LocallyHausdorffprp} yields $d\U d'$ and then \eqref{UInterpolation} yields $d'',d'''\in S$ with $a\prec d'''\prec d''\prec d,d'$.  As $U$ is a $\prec$-filter, we also have $u''',u''\in U$ with $u'''\prec u''\prec u,u',c$.  As $d''\prec b\U c\succ u''$, \eqref{UAuxiliarity} implies $d'''\prec d''\U u''\succ u'''$, so we have $v''\prec d'',u''$ with $e\prec v''$, for all $e\prec d''',u'''$.  Thus $v''\prec d''\prec d',d$ and $v''\prec u''\prec u',u$ so $v''\prec v',v$.  Also $d'''$ and $u'''$ witness $v''\in V$ so the claim is proved and we have shown that $V$ is a proper $\prec$-filter.  Moreover, $U\subseteq V$ and $b\in V\setminus U$, so $V$ is a proper extension of $U$, i.e. $U$ is not maximal.  In other words, maximality implies \eqref{Complementary}.

\begin{itemize}
\item[\eqref{Prime}$\Rightarrow$\eqref{Complementary}]  Assume \eqref{Prime}, $a\prec b\notin U$ and $b\U u\in U$.  We have to find $v\in U$ with $a\perp v$.  Take $u'\in U$ with $u'\prec u$.  As $a\prec b\U u\succ u'$, we have $c'\prec b,u$ with $e\prec c'$, for all $e\prec a,u'$.  By \eqref{UInterpolation}, we have $c$ with $c'\prec c\prec b,u$.  As $c'\prec c\leq u$, \eqref{RatherBelow} yields $d\perp c'$ such that $c\vee d\geq u\in U$.  As $c\leq b\notin U$, we must have $d\in U$, by \eqref{Prime}.  As $U$ is a $\prec$-filter, we have $v\in U$ with $v\prec d,u'$.  Then, for any $e\prec a,v$, we have $e\prec a,u'$ and hence $e\prec c'\perp d\succ v\succ e$ so $e=0$.  By \eqref{perpEquivalent}, $a\perp v\in U$ as required.

\item[\eqref{Complementary}$\Rightarrow$\eqref{Prime}]  Assume \eqref{Complementary} and $a\vee b\in U$ but $a,b\notin U$, looking for a contradiction.  Take $c\in U$ with $c\prec a\vee b$.  By \eqref{Shrinking}, we have $a'\prec a$ and $b'\prec b$ with $c\leq a'\vee b'$ and hence $a'\vee b'\in U$.  By \eqref{leqU}, $a'\U a\vee b\in U$ so, as $a'\prec a\notin U$, \eqref{Complementary} yields $v\in U$ with $a'\perp v$.  Likewise, we have $w\in U$ with $b'\perp w$.  As $U$ is a $\prec$-filter, we have $x\in U$ with $x\prec v,w$ and hence $x\perp a'\vee b'$, by \eqref{veePreservation}, contradicting $a'\vee b'\in U$.\qedhere
\end{itemize}
\end{proof}

\subsection{Ultrafilter Spaces}\label{subsecStoneSpaces}

\begin{center}
\textbf{Let $G$ be the set of $\prec$-ultrafilters of a basic poset $S$.}
\end{center}

We consider $G$ with the topology generated by $(O_a)_{a\in S}$ where
\[O_a=\{U\in G:a\in U\}.\]
Note this topology makes the map $g\mapsto U_g$ in \autoref{SubsetUltrafilters} a homeomorphism (as the map then takes sets in the given basis to the subbasis $(O_a)_{a\in S}$).

\begin{dfn}
We call $G$ the \emph{$\prec$-ultrafilter space} of $S$.
\end{dfn}

Note that when $S$ is a Boolean algebra, $G$ is just the Stone space of $S$.  Our goal is to show that $\prec$-ultrafilter space $G$ is still locally compact and locally Hausdorff for general basic poset $S$ (although, in contrast to Stone spaces, $G$ may not be $0$-dimensional).  First we note some basic facts about $G$.

\begin{prp}\label{basicGfacts}
The sets $(O_a)_{a\in S}$ form a basis for $G$ and, for all $a,b\in S$,
\begin{align}
\label{veecup}O_{a\vee b}&=O_a\cup O_b.\\
\label{perpperp}a\perp b\qquad&\Leftrightarrow\qquad O_a\cap O_b=\emptyset.
\end{align}
\end{prp}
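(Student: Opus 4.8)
The plan is to prove the three assertions separately, leaning throughout on two structural facts about $\prec$-ultrafilters recorded earlier. First, a $\prec$-filter is precisely a $\prec$-round $\leq$-filter, so that any $U\in G$ is upward closed under $\leq$ and, for each $u\in U$, contains some $u'\prec u$ (and is $\prec$-directed). Second, maximality of $U$ is equivalent to being \eqref{Prime}, by \autoref{ultrachars}. I would also use \eqref{perpEquivalent}, which applies since basic posets satisfy \eqref{Approximation}.

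For the basis claim I would verify the intersection property. Given $U\in O_a\cap O_b$, i.e.\ $a,b\in U$, the $\prec$-directedness of $U$ yields $c\in U$ with $c\prec a,b$, so $U\in O_c$; and for any $V\in O_c$ upward $\prec$-closure forces $a,b\in V$, whence $O_c\subseteq O_a\cap O_b$. Since each $\prec$-ultrafilter is nonempty (once $S\neq\{0\}$ the empty filter fails to be maximal, as $S\setminus\{0\}$ is $\prec$-round by \autoref{Snot0}), the $O_a$ also cover $G$, so they indeed form a basis.

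For \eqref{veecup}, understood whenever $a\vee b$ exists, the inclusion $O_{a\vee b}\subseteq O_a\cup O_b$ is exactly \eqref{Prime} applied to the ultrafilter, while the reverse inclusion is immediate from $\leq$-upward closure, since $a,b\leq a\vee b$.

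For \eqref{perpperp}: if $a\perp b$ but some $U\in O_a\cap O_b$, then $\prec$-directedness gives $c\in U$ with $c\prec a,b$, and $c\neq0$ as $U$ is proper, contradicting \eqref{perpEquivalent}. Conversely, arguing contrapositively, if $a\not\perp b$ then \eqref{perpEquivalent} supplies a nonzero $c\prec a,b$, and I would produce a $\prec$-ultrafilter through $c$. This existence step is the main obstacle: using that $S\setminus\{0\}$ is $\prec$-round (\autoref{Snot0}), recursively choose nonzero $c=c_0\succ c_1\succ c_2\succ\cdots$; this descending $\prec$-chain is $\prec$-directed, and its upward $\prec$-closure is a proper $\prec$-filter (it omits $0$, since $c_n\prec0$ would force $c_n=0$) containing $c$ and hence $a,b$. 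Extending it to a $\prec$-ultrafilter by Kuratowski--Zorn gives a point of $O_a\cap O_b$, so $O_a\cap O_b\neq\emptyset$.
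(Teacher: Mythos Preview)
Your proof is correct and follows essentially the same route as the paper: the basis property via $\prec$-directedness, \eqref{veecup} via \eqref{Prime} plus upward closure, and \eqref{perpperp} via \eqref{perpEquivalent} together with \autoref{Snot0} to build a $\prec$-decreasing chain extending to an ultrafilter. The paper is terser (it dispatches the forward direction of \eqref{perpperp} with ``certainly'', using that $a\wedge b=0$ forces $0$ into any $\leq$-filter containing both), but the substance is the same.
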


\begin{proof}
If $U\in O_a\cap O_b$ then $a,b\in U$ so, as $U$ is a $\prec$-filter, we have $c\in U$ with $c\prec a,b$ and hence $U\in O_c\subseteq O_a\cap O_b$.  Thus $(O_a)_{a\in S}$ form a basis.
\begin{itemize}
\item[\eqref{veecup}]  Immediate from \eqref{Prime}.
\item[\eqref{perpperp}]  If $a\perp b$ then certainly $O_a\cap O_b=\emptyset$.  Conversely, if $a\not\perp b$ then \eqref{perpEquivalent} and \autoref{Snot0} yields a $\prec$-decreasing sequence $a,b\succ c_1\succ c_2\succ\ldots$ of non-zero elements, which extends to a $\prec$-ultrafilter in $O_a\cap O_b$.\qedhere
\end{itemize}
\end{proof}

Before proving our main theorem we need the following result.

\begin{lem}\label{abC}
Assume $a\prec b$ and $C\subseteq S$.  If $a\not\prec\bigvee F$, for any finite $F\subseteq C$, then
\[\overline{O_a}\cap O_b\not\subseteq\bigcup_{c\in C}O_c.\]
\end{lem}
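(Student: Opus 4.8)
The plan is to produce a single $\prec$-ultrafilter $U$ lying in $\overline{O_a}\cap O_b$ but in no $O_c$, which by \autoref{basicGfacts} is exactly the required non-containment. Since $(O_d)_{d\in S}$ is a basis and $d\perp a\Leftrightarrow O_d\cap O_a=\emptyset$, an ultrafilter $U$ satisfies $U\in\overline{O_a}$ iff $a\not\perp u$ for every $u\in U$; I will call such $U$ \emph{$a$-compatible}. Likewise $b\in U$ gives $U\in O_b$, and, $U$ being prime by \autoref{ultrachars}, $U\notin\bigcup_{c\in C}O_c$ precisely when $U$ contains no $c\in C$, equivalently when $U$ contains no finite join $\bigvee F$ with $F\subseteq C$. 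First I would note $a\neq 0$: taking $F=\emptyset$ the hypothesis gives $a\not\prec 0$, so $a\neq 0$.

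The key device is the $\vee$-ideal $\mathcal J$ generated by $a^{\perp}=\{x\in S:x\perp a\}$ together with all finite joins of members of $C$, namely $\mathcal J=\{d\in S:d\leq p\vee\bigvee F\text{ for some }p\perp a\text{ and finite }F\subseteq C\}$. That $\mathcal J$ is downward closed and closed under bounded joins follows from \eqref{perpAuxiliarity} and \eqref{veePreservation}. The role of the hypothesis is that $\mathcal J$ misses everything rather above $a$:
\[a\prec x\qquad\Rightarrow\qquad x\notin\mathcal J.\]
Indeed, if $a\prec x\leq p\vee\bigvee F$ with $p\perp a$ then $a\prec p\vee\bigvee F$ by \eqref{llAuxiliarity}, whence $a\prec\bigvee F$ by \eqref{perpDistributivity}, contradicting the hypothesis. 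Using \eqref{Interpolation} repeatedly, I would build a $\prec$-decreasing chain $b=b_0\succ b_1\succ b_2\succ\cdots$ with $a\prec b_n$ for all $n$; its upward $\prec$-closure $V=\{x:b_n\prec x\text{ for some }n\}$ is a proper $\prec$-filter containing $b$, and every $x\in V$ satisfies $a\prec x$ and so lies outside $\mathcal J$. Thus $V\cap\mathcal J=\emptyset$.

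Next I would enlarge $V$ to a $\prec$-ultrafilter still disjoint from $\mathcal J$. By the Kuratowski--Zorn lemma, choose a $\prec$-filter $U\supseteq V$ maximal with respect to $U\cap\mathcal J=\emptyset$. The crux is to show $U$ is \eqref{Prime}, for then \autoref{ultrachars} guarantees that $U$ is a genuine $\prec$-ultrafilter. Suppose $x\vee y\in U$ but $x,y\notin U$. Maximality forces the $\prec$-filter generated by $U\cup\{x\}$ to meet $\mathcal J$, and likewise the one generated by $U\cup\{y\}$; concretely there are $u\in U$ and $i,j\in\mathcal J$ with a common lower bound of $u$ and $x$ below $i$ and a common lower bound of $u$ and $y$ below $j$ (after passing to a single $u\in U$ with $u\prec x\vee y$, possible since $U$ is $\prec$-directed and $x\vee y\in U$). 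Now $u,x,y$ are all bounded by $x\vee y$, so \eqref{LocallyHausdorff} and \eqref{UInterpolation} supply the relevant relative meets, and then $\prec$-distributivity in the form \eqref{precDecomposition} together with \eqref{veePreservation} decomposes $u$ below $i\vee j\in\mathcal J$, placing an element of $U$ into $\mathcal J$ --- the desired contradiction.

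Finally, the resulting $\prec$-ultrafilter $U$ contains $b$ and, since $a^{\perp}\subseteq\mathcal J$, is $a$-compatible, so $U\in\overline{O_a}\cap O_b$; and since every finite join from $C$ lies in $\mathcal J$ while $U$ is prime, $U$ contains no $c\in C$, i.e.\ $U\notin\bigcup_{c\in C}O_c$. This exhibits the required point of $\overline{O_a}\cap O_b$ outside $\bigcup_{c\in C}O_c$. I expect the main obstacle to be the primeness step of the extension: in a mere conditional $\vee$-semilattice there are no global meets, so the classical Birkhoff prime-filter argument must be rerun using only the relative meets supplied by \eqref{LocallyHausdorff} and \eqref{UInterpolation} and the $\prec$-distributive decomposition, which is where the bulk of the care lies.
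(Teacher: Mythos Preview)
Your approach is genuinely different from the paper's: the paper directly builds a $\prec$-directed set
\[D=\{d\leq b:\text{$F\subseteq C$ finite and }a\prec d\vee\textstyle\bigvee F\}\]
(after first reducing to $C\subseteq b^\succ$), extends it to a $\prec$-ultrafilter, and then checks the three required properties by hand. You instead attempt a Birkhoff-style separation argument, dualizing to an ideal $\mathcal J$ and invoking Zorn to get a maximal $\prec$-filter avoiding $\mathcal J$, then proving primeness.

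There is however a genuine gap in the definition and properties of $\mathcal J$. You set
\[\mathcal J=\{d\in S:d\leq p\vee\textstyle\bigvee F\text{ for some }p\perp a\text{ and finite }F\subseteq C\},\]
but $S$ is only a \emph{conditional} $\vee$-semilattice, so neither $\bigvee F$ nor $p\vee\bigvee F$ need exist; and even if one interprets the definition as ``such a join exists and $d$ is below it'', the claimed closure under bounded joins fails. Given $d_1\leq p_1\vee\bigvee F_1$ and $d_2\leq p_2\vee\bigvee F_2$ with $d_1\vee d_2$ existing, you would need $p_1\vee p_2$ (and then $(p_1\vee p_2)\vee\bigvee(F_1\cup F_2)$) to exist, and nothing guarantees $p_1,p_2\in a^\perp$ have a common upper bound. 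The paper's reduction to $C\subseteq b^\succ$ tames the $\bigvee F$ part, but there is no analogous control over the $p$'s. Without join-closure of $\mathcal J$, the primeness argument---which hinges on combining $i,j\in\mathcal J$ into $i\vee j\in\mathcal J$---collapses. The paper's set $D$ sidesteps this entirely because its elements are all bounded by $b$, so every join that appears is automatically defined.

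A secondary issue, which you flag yourself, is that the ``$\prec$-filter generated by $U\cup\{x\}$'' is not a priori well-defined in this setting (no meets, no unconditional joins); the analogous construction in the paper's proof of \autoref{ultrachars} takes real work, and your sketch does not supply the analogue here.
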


\begin{proof}
First note that we can replace $C$ with $C'=C^\succ\cap b^\succ=\{d\prec b,c:c\in C\}$.  Indeed, if we had finite $F'\subseteq C'$ with $a\prec\bigvee F'$ then we would have finite $F\subseteq C$ with $F'\subseteq F^\succ$ and hence $a\prec\bigvee F$, a contradiction.  Moreover,
\[O_b\cap\bigcup_{c\in C}O_c=O_b\cap\bigcup_{c'\in C'}O_{c'},\]
as any $\prec$-filter containing $b$ and $c\in C$ must also contain some $c'\prec b,c$, and hence
\[\overline{O_a}\cap O_b\setminus\bigcup_{c\in C}O_c=\overline{O_a}\cap O_b\setminus\bigcup_{c'\in C'}O_{c'}.\]
In particular, if the right side is non-empty then so is the left side.

Thus we may assume $C$ is actually bounded by $b$, i.e. $C\subseteq b^\succ$.  Now consider
\[D=\{d\leq b:F\subseteq C\text{ is finite and }a\prec d\vee\bigvee F\}.\]
First we see that $0\notin D$, as $a\not\prec\bigvee F$, for any finite $F\subseteq C$.  Next we claim that $D$ is $\prec$-directed.  To see this, take $x,y\in D$, so we have finite $F,G\subseteq C$ with $a\prec x\vee\bigvee F,y\vee\bigvee G\leq b$.  By \eqref{LocallyHausdorff} from \autoref{LocallyHausdorffprp}, $x\vee\bigvee F\U y\vee\bigvee G$ so \eqref{UInterpolation} yields $z$ with $a\prec z\prec x\vee\bigvee F,y\vee\bigvee G$.  By \autoref{precabcd}, we have $z'\prec x,y$ with $a\prec z'\vee\bigvee(F\cup G)\leq b$.  This proves the claim, so $D$ can be extended to a $\prec$-ultrafilter $U$.  As $b\in D\subseteq U$ (e.g. taking $F=\emptyset$), we have $U\in O_b$.

Looking for a contradiction, assume that $U\notin\overline{O_a}$.  Thus we have $u\in U$ with $O_a\cap O_u=\emptyset$ and hence $a\perp u$, by \eqref{perpperp}.  As $b\in U$, by lowering $u$ if necessary, we may assume that $u\leq b$.  Take $w\in U$ with $w\prec u$.  By \eqref{RatherBelow}, we have $x\perp w$ with $b\leq x\vee u$.  As $a\prec b$, \eqref{precDistributivity} yields $x'\prec x$ and $u'\prec u$ with $a\prec x'\vee u'\prec b$.  As $a\perp u\succ u'$, \eqref{perpDistributivity} implies $a\prec x'$ and hence $x'\in D\subseteq U$ (witnessed by $F=\emptyset$).  But this contradicts $x'\prec x\perp w\in U$.

Lastly, take any $x\prec c\in C$.  By \eqref{RatherBelow}, we have $d\perp x$ with $a\prec b\leq c\vee d$ so \eqref{precDistributivity} yields $c'\prec c$ and $d'\prec d$ with $a\prec c'\vee d'\prec b$.  Thus $a\prec c\vee d'\leq b$ so $d'\in D\subseteq U$ and hence $d\in U$, which in turn yields $x\notin U$.  As $x$ was arbitrary, $c\notin U$, i.e. $U\notin O_c$.  Putting this all together, we see that
\[U\in\overline{O_a}\cap O_b\setminus\bigcup_{c\in C}O_c.\qedhere\]
\end{proof}

\begin{thm}\label{SpacesFromPosets}
Let $S$ be a basic poset and let $G$ be the set of $\prec$-ultrafilters of $S$. Then $G$ is a locally compact locally Hausdorff space and
\begin{align}
\label{inclusion}a\leq b\qquad  &\Leftrightarrow\qquad O_a\subseteq O_b.\\
\label{Hunion}a\smallsmile b\qquad  &\Leftrightarrow\qquad O_a\cup O_b\text{ is Hausdorff}.\\
\label{compcont}a\prec b\qquad  &\Leftrightarrow\qquad O_a\Subset O_b.
\end{align}
Moreover, if $S$ is a $\U$-basic poset then $(O_a)_{a\in S}$ is a $\cup$-basis for $G$.
\end{thm}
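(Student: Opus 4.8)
The plan is to establish the three equivalences in the order \eqref{inclusion}, \eqref{compcont}, \eqref{Hunion}, extracting local compactness and local Hausdorffness as by-products, and only then to treat the $\cup$-basis claim. Throughout I would use that $(O_a)_{a\in S}$ is a basis with $O_{a\vee b}=O_a\cup O_b$ and $a\perp b\Leftrightarrow O_a\cap O_b=\emptyset$ from \autoref{basicGfacts}, and that each $\prec$-ultrafilter is a $\prec$-round, $\leq$-upwards closed set. The relation \eqref{inclusion} is the foundation: $\Rightarrow$ is upwards closure, while for $\Leftarrow$ I argue contrapositively, taking $c\prec a$ with $c\nleq b$ from \eqref{Approximation} and feeding $C=\{b\}$ into \autoref{abC} (noting $c\neq 0$ so $c\not\prec 0$, and $c\not\prec b$ since $\prec\ \subseteq\ \leq$) to produce a point of $\overline{O_c}\cap O_a\subseteq O_a$ lying outside $O_b$.

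For \eqref{compcont} the forward direction carries the compactness analysis. Given $a\prec b$ I would show $K:=\overline{O_a}\cap O_b$ is a compact set between $O_a$ and $O_b$. The key sublemma is that $a\prec e\prec b$ forces $\overline{O_a}\cap O_b\subseteq O_e$: for any ultrafilter $U$ with $b\in U$ and $e\notin U$, \eqref{Complementary} applied to $a\prec e\notin U$ yields either some $c\in U$ with $a\perp c$, whence $U\notin\overline{O_a}$ by \eqref{perpperp}, or $e\not\U c$ for all $c\in U$, contradicting $e\leq b\in U$ via \eqref{leqU}. Compactness of $K$ then follows, since a basic cover gives, through \autoref{abC} (after its reduction to elements below $b$), a finite subfamily with $a\prec\bigvee F=:e\prec b$ by \eqref{Predomain}, and the sublemma yields $K\subseteq O_e$, a finite subcover. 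The converse is softer: writing $O_b=\bigcup_{c\prec b}O_c$ by roundness, a compact set between $O_a$ and $O_b$ is covered by finitely many such $O_c$, whose join $e\prec b$ satisfies $O_a\subseteq O_e$, so $a\leq e\prec b$ by \eqref{inclusion}. Local compactness of $G$ is now immediate, as $U\in O_{a'}\subseteq\overline{O_{a'}}\cap O_a\subseteq O_a$ whenever $a'\in U$ with $a'\prec a$.

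Next I would prove \eqref{Hunion}. As $\U$ is reflexive and symmetric, it suffices, choosing for each of two distinct points an element of $\{a,b\}$ it contains, to show: if $p\U q$, $U\in O_p$, $V\in O_q$ and $U\neq V$, then $U,V$ are separated. Picking $w\in U\setminus V$ and then $w''\prec w'\prec w,p$ in $U$ by roundness, \eqref{Complementary} applied to $w''\prec w'\notin V$ gives either $c\in V$ with $w''\perp c$, separating $U$ from $V$ by \eqref{perpperp}, or $w'\not\U c$ for all $c\in V$; but $q\in V$ and $w'\leq p\U q$ force $w'\U q$ by \eqref{UAuxiliarity}, a contradiction. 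For the converse I work inside the open Hausdorff set $H=O_a\cup O_b$ (open in the locally compact $G$, hence locally compact Hausdorff): given $a'\prec a$ and $b'\prec b$, the set $K=\overline{O_{a'}}^{H}\cap\overline{O_{b'}}^{H}$ is compact and contained in $O_a\cap O_b$, so a finite cover by basic sets rather below both $a$ and $b$ gives, via \eqref{Predomain}, a single $c\prec a,b$ with $K\subseteq O_c$; any $c'\prec a',b'$ then has $O_{c'}\subseteq K\subseteq O_c$ with $K$ compact, so $c'\prec c$ by \eqref{compcont}, verifying $a\U b$. Taking $b=a$ shows each $O_a$ is Hausdorff, so $G$ is locally Hausdorff.

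Finally, for the $\cup$-basis claim I note $\emptyset=O_0$ and that $O_a\cup O_b=O_{a\vee b}$ lies in $(O_a)$ exactly when $a\vee b$ exists (if $O_a\cup O_b=O_c$ then $a,b\leq c$ by \eqref{inclusion}, and the bounded pair has a join). By \eqref{HausdorffJoins} together with \eqref{compcont} and \eqref{Hunion}, $a\vee b$ exists iff there are $a',b'$ with $O_a\Subset O_{a'}$, $O_b\Subset O_{b'}$ and $O_{a'}\cup O_{b'}$ Hausdorff; the forward implication of \eqref{UnionClosed} is then clear, since $O_a\cup O_b$ is compactly contained in the Hausdorff set $O_{a'}\cup O_{b'}$. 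The reverse implication is the main obstacle: from a compact Hausdorff $C\supseteq O_a\cup O_b$ one must recover such witnesses in $S$. I would first enlarge $C$ to an open Hausdorff $M$ by \autoref{LHCneighbourhoods}, so that $O_a,O_b\Subset M$; the remaining task is to produce a single basic thickening $a'\succ a$ with $O_{a'}\subseteq M$ (and likewise for $b$). The delicate point is that $(O_a)$ need not be closed under the finite unions covering a compact subset of $M$, so the thickening must be assembled: cover the compact set $\overline{O_a}^{M}\cup\overline{O_b}^{M}$ by basic sets relatively compact in $M$, which are pairwise $\U$ because their unions lie in $M$, and build their join by iterating \eqref{HausdorffJoins} and \eqref{Predomain} while carrying along thickenings inside $M$ at each stage so the hypothesis of \eqref{HausdorffJoins} stays available. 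This bookkeeping of nested Hausdorff thickenings, ensuring the forced finite joins genuinely exist in $S$, is where I expect the argument to be most delicate.
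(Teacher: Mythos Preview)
Your proposal is essentially correct and tracks the paper's proof closely: the same use of \autoref{abC} for \eqref{inclusion}, the same sublemma $a\prec e\Rightarrow\overline{O_a}\cap O_b\subseteq O_e$ via \eqref{Complementary} for the forward direction of \eqref{compcont}, the same separation argument via \eqref{Complementary} and \eqref{UAuxiliarity} for the forward direction of \eqref{Hunion}, and the same iterative join-building via \eqref{HausdorffJoins} and \eqref{Predomain} for the $\cup$-basis claim.

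There is one genuine difference worth noting. For the \emph{converse} of \eqref{compcont} the paper argues contrapositively: assuming $a\leq b$ but $a\not\prec b$, it picks $c\succ b$ and applies \autoref{abC} to get $\overline{O_a}\cap O_c\nsubseteq O_b$, then invokes the already-established Hausdorffness of $O_c$ to conclude that no compact witness exists. Your direct argument---write $O_b=\bigcup_{c\prec b}O_c$ by roundness, extract a finite subcover of the compact witness, take the join $e\prec b$ and apply \eqref{inclusion}---is cleaner and, in particular, decouples \eqref{compcont} from \eqref{Hunion}, whereas the paper's order of proof interleaves them. One small imprecision: in your sublemma and in the compactness step you write $e\prec b$ and invoke \eqref{Predomain}, but after reducing to a cover by elements \emph{below} $b$ you only get $\bigvee F\leq b$, not $\prec b$; fortunately your own proof of the sublemma only uses $e\leq b$ (to obtain $e\U b$ via \eqref{leqU}), so the argument goes through once you relax the stated hypothesis accordingly.
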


\begin{proof}\
\begin{itemize}
\item[\eqref{inclusion}]  If $a\leq b$ then certainly $O_a\subseteq O_b$.  Conversely, if $a\nleq b$ then we have $a'\prec a$ with $a'\not\prec b$, by \eqref{LowerOrder}.  By \autoref{abC}, with $a'$, $a$ and $b$ taking the place $a$, $b$ and $C$ respectively, $\overline{O_{a'}}\cap O_a\nsubseteq O_b$ and hence $O_a\nsubseteq O_b$.

\item[$\Rightarrow$ in \eqref{Hunion}]  Assume $a\smallsmile b$ and take any distinct $U\in O_a$ and $V\in O_b$.  By maximality, $V\nsubseteq U$ so we can take $v\in V\setminus U$.  Further take $v',v''\in V$ with $v''\prec v'\prec v,b$ so $v''\prec v'\notin U$ and $v'\prec b\U a$.  By \eqref{UAuxiliarity}, $v'\U a\in U$ so \eqref{Complementary} yields $c\in U$ with $v''\perp c$.  Thus $O_c$ and $O_{v''}$ are disjoint neighbourhoods of $U$ and $V$ respectively.  As $a\U a$ and $b\U b$, the same argument would apply to distinct filters both in $O_a$ or $O_b$.

\item[\eqref{compcont}]  Take $a\prec b$.  First note that, for any $c\leq b$ or even $c\U b$,
\begin{equation}\label{ullc}
a\prec c\qquad\Rightarrow\qquad\overline{O_a}\cap O_b\subseteq O_c.
\end{equation}
Indeed, $U\in\overline{O_a}$ means that, for every $d\in U$, $O_d\cap O_a\neq\emptyset$ so $d\not\perp a$, by \eqref{perpperp}.  If we also had $U\in O_b\setminus O_c$, i.e. $a\prec c\notin U$ but $c\smallsmile b\in U$, then \eqref{Complementary} would yield $d\in U$ with $d\perp a$, a contradiction.  

Now we want to show that $\overline{O_a}\cap O_b$ is compact and hence $O_a\Subset O_b$.  For this, we need to show that if $(O_c)_{c\in C}$ has no finite subcover of $\overline{O_a}\cap O_b$ then the entirety of $(O_c)_{c\in C}$ does not cover $\overline{O_a}\cap O_b$ either.  In fact it suffices to consider $C\subseteq b^\geq$, as $(O_c)_{c\leq b}$ forms a basis for the subspace topology on $O_b$ (as each $U\in O_b$ is $\prec$-directed).  But then having no finite subcover implies that $a\not\prec\bigvee F$, for any finite $F\subseteq C$.  Indeed, if we had $a\prec\bigvee F$, for some finite $F\subseteq C\subseteq b^\geq$, then \eqref{veecup} and \eqref{ullc} would yield
\[\overline{O_a}\cap O_b\subseteq O_{\bigvee F}=\bigcup_{f\in F}O_f,\]
a contradiction.  Thus the entirety of $(O_c)_{c\in C}$ does not cover $\overline{O_a}\cap O_b$ either, by \autoref{abC}, showing that $\overline{O_a}\cap O_b$ is indeed compact.

Conversely, say $a\not\prec b$.  If $a\nleq b$ then \eqref{inclusion} yields $O_a\nsubseteq O_b$ and so certainly $O_a\not\Subset O_b$.  If $a\leq b$ then, by \eqref{succRound}, we can take $c\succ b$, which then implies $a\prec c$ and hence $O_a\Subset O_c$.  By \autoref{abC}, with $c$ and $b$ taking the place of $b$ and $C$ respectively, $\overline{O_a}\cap O_c\nsubseteq O_b$.  As $c\U c$, the $\Rightarrow$ part of \eqref{Hunion} implies that $O_c$ is Hausdorff so compact subsets are closed.  Thus $\overline{O_a}\cap O_c$ is the smallest compact subset of $O_c$ containing $O_a$ so $O_a\not\Subset O_b$.

\item[$\Leftarrow$ in \eqref{Hunion}]  Now we know that $\prec$ corresponds to $\Subset$, we can argue as in the proof of \eqref{OUN} from \autoref{Ubases}.
\end{itemize}

By \eqref{compcont} $G$ is locally compact.  Indeed, if $U\in O_a$ then we have $b\in U$ with $b\prec a$ so $U\in O_b\Subset O_a$.  Likewise, by \eqref{Hunion} and the reflexivity of $\U$, $G$ is locally Hausdorff.

Now assume $S$ is a $\U$-basic poset.  To see that $(O_a)_{a\in S}$ is a $\cup$-basis, we need to verify \eqref{UnionClosedEquiv}.  For the $\Rightarrow$ part, say $O_a\cup O_b\subseteq O_c$, for some $a,b,c\in S$.  By \eqref{succRound}, we have $d\succ c$ and hence $O_a\cup O_b\subseteq O_c\Subset O_d$, as required.

Conversely, to verify the $\Leftarrow$ part of \eqref{UnionClosedEquiv}, take $a,b\in S$ such that $O_a\cup O_b\Subset M$, for some open Hausdorff $M$.  So we have compact $C$ with $O_a\cup O_b\subseteq C\subseteq M$.  For each $U\in C$ we have $c\in U$ with $O_c\subseteq M$.  Then we can take $c'\in U$ with $c'\prec c$.  As $C$ is compact, we can cover $C$ with finitely many such subsets $O_{c'}$.

Say $C$ is covered by just pair of subsets $O_{c'}$ and $O_{d'}$.  Then $c'\prec c\U d\succ d'$, as $O_c\cup O_d\subseteq M$ is Hausdorff, and hence $c'\vee d'$ exists in $S$, by \eqref{HausdorffJoins}.  As $O_a,O_b\subseteq C\subseteq O_{c'\vee d'}$, we have $a,b\leq c'\vee d'$ and hence $a\vee b$ exists too, as $S$ is a conditional $\vee$-semilattice.

Now say $C$ is covered by $3$ such subsets, $O_{c_1'}$, $O_{c_2'}$ and $O_{c_3'}$.  By \eqref{Interpolation}, we have $x_1,x_2\in S$ with $c_1'\prec x_1\prec c_1$ and $c_2'\prec x_2\prec c_2$.  Again by \eqref{HausdorffJoins}, $c_1'\vee c_2'$ and $x_1\vee x_2$ exist and, by \autoref{aveebllc}, $c_1'\vee c_2'\prec x_1\vee x_2$.  As $O_{x_1\vee x_2}\cup O_{c_3}$ is Hausdorff, still being contained in $M$, we can apply \eqref{HausdorffJoins} again to show that $c_1'\vee c_2'\vee c_3'$ exists in $S$.  Again $O_a,O_b\subseteq C\subseteq O_{c_1'\vee c_2'\vee c_3'}$ shows that $a,b\leq c_1'\vee c_2'\vee c_3'$ so $a\vee b$ exists, as $S$ is a conditional $\vee$-semilattice.  Extending by induction shows that, regardless of how many subsets are needed to cover $C$, $O_a\cup O_b=O_{a\vee b}$ is always in the basis $(O_a)_{a\in S}$, as required.
\end{proof}

%
%

\section{Functoriality}\label{secFunctoriality}

Up till now we have focused on certain structures rather than the maps between them.  However, the classic Stone duality is also functorial with respect to the appropriate morphisms.  Specifically, any continuous map $F:G\rightarrow H$, for zero dimensional compact Hausdorff spaces $G$ and $H$, yields a Boolean homomorphism $\pi:\mathrm{Clopen}(H)\rightarrow\mathrm{Clopen}(G)$ defined by
\[\pi(O)=F^{-1}[O].\]

However, this does not extend to general bases $S$ and $T$ of $G$ and $H$ for the simple reason that $F^{-1}$ may not take elements of $T$ to elements of $S$.  To avoid this problem, we instead consider the relation $\sqsubset$ between $S$ and $T$ defined by
\[O\sqsubset N\qquad\Leftrightarrow\qquad O\subseteq F^{-1}[N].\]
Fortunately, such relations admit a first order characterisation.

\subsection{Basic Morphisms}\label{subsecBasicMorphisms}


\begin{dfn}\label{BasicMorph}
For posets $S$ and $S'$, we call $\sqsubset\ \subseteq S\times S'$ a \emph{basic morphism} if
\begin{align}
\label{Faithful}\tag{Faithful}a\sqsubset 0&\qquad\Rightarrow\qquad a=0.\\
\label{2Auxiliarity}\tag{Auxiliarity}a\leq b\sqsubset b'\leq a'&\qquad\Rightarrow\qquad a\sqsubset a'.\\
\label{Pushforward}\tag{Pushforward}a\prec b\sqsubset c',d'&\qquad\Rightarrow\qquad\exists b'\ (a\sqsubset b'\prec c',d').\\
\label{veePullback}\tag{$\vee$-Pullback}a\prec b\sqsubset c'\vee d'&\qquad\Rightarrow\qquad\exists c\sqsubset c'\ \exists d\sqsubset d'\ (a\prec c\vee d).\\
\intertext{We call $\sqsubset$ a \emph{basic $\vee$-morphism} if, moreover, whenever the given joins exist,}
\label{veePreserving}\tag{$\vee$-Preserving}a\sqsubset a'\quad\text{and}\quad b\sqsubset b'&\qquad\Rightarrow\qquad a\vee b\sqsubset a'\vee b'.\\
\label{LowerRelation}\tag{Lower Relation}\forall b\prec a\ (b\sqsubset a')&\qquad\Rightarrow\qquad a\sqsubset a'.\\
\label{Bottom}\tag{Bottom}a'\in S'&\qquad\Rightarrow\qquad 0\sqsubset a'.
\end{align}
\end{dfn}

As in the previous section, for any $U\subseteq S$, let $U^\sqsubset=\{a'\sqsupset a:a\in U\}$.

\begin{thm}\label{ctsphi}
Let $G$ and $G'$ be topological spaces, and suppose $S$ and $S'$ are $\cup$-bases of $G$ and $G'$ respectively.  Then any continuous $\phi:D\rightarrow G'$, for $D\subseteq G$, defines a basic $\vee$-morphism $\sqsubset\ \subseteq S\times S'$ by
\begin{equation}\label{sqsubsetdef}
a\sqsubset a'\qquad\Leftrightarrow\qquad a\subseteq\phi^{-1}[a'].
\end{equation}
Moreover, for every $\prec$-ultrafilter $U\subseteq S$,
\begin{align}
\label{DInterior}\bigcap U\in D^\circ\qquad&\Leftrightarrow\qquad U^\sqsubset\neq\emptyset,\\
\label{FRecovery}\text{in which case}\qquad \phi(\bigcap U)&=\bigcap U^\sqsubset.
\end{align}
\end{thm}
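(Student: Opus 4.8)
The plan is to verify the four basic-morphism axioms, then the three $\vee$-morphism axioms, and finally establish the two ultrafilter formulas \eqref{DInterior} and \eqref{FRecovery}. Throughout, the relation is $a\sqsubset a'\Leftrightarrow a\subseteq\phi^{-1}[a']$, so most axioms reduce to routine set-theoretic manipulations with preimages, using continuity of $\phi$ and the fact that $S,S'$ are $\cup$-bases (so by \autoref{Ubases} the relation $\prec$ corresponds to compact containment $\Subset$, via \eqref{OprecN}). First I would check \eqref{Faithful}: if $a\subseteq\phi^{-1}[\emptyset]=\emptyset$ then $a=\emptyset=0$. Then \eqref{2Auxiliarity} is immediate from monotonicity of preimage: $a\subseteq b\subseteq\phi^{-1}[b']\subseteq\phi^{-1}[a']$. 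For \eqref{Bottom} and \eqref{veePreserving}, the former is $\emptyset\subseteq\phi^{-1}[a']$ trivially, and the latter uses $\phi^{-1}[a'\cup b']=\phi^{-1}[a']\cup\phi^{-1}[b']$ together with $a\cup b\subseteq\phi^{-1}[a']\cup\phi^{-1}[b']$.

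The substantive axioms are those involving $\prec$, i.e.\ compact containment. For \eqref{Pushforward}, suppose $a\prec b$ (so $a\Subset b$) and $b\subseteq\phi^{-1}[c']\cap\phi^{-1}[d']=\phi^{-1}[c'\cap d']$. The idea is to use continuity to push the compact set witnessing $a\Subset b$ forward: $\phi[\overline{a}\cap b]$ (or the image of a suitable compact set between $a$ and $b$) is a compact subset of $c'\cap d'$ inside $G'$, and since $S'$ is a basis one can find $b'\in S'$ with $\phi[\,\cdot\,]\subseteq b'\prec c',d'$ while still $a\subseteq\phi^{-1}[b']$. The delicate point is producing $b'$ in the basis $S'$ that is simultaneously rather below both $c'$ and $d'$; here one covers the compact image by basic sets compactly contained in $c'\cap d'$ (using that $c'\cap d'$ sits inside a Hausdorff member of $S'$ so intersections behave well) and takes a finite union, which lies in $S'$ by the $\cup$-basis property. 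For \eqref{veePullback}, where $a\prec b\subseteq\phi^{-1}[c'\cup d']$, the plan is to split the compact set $K$ witnessing $a\Subset b$ according to the open cover $\{\phi^{-1}[c'],\phi^{-1}[d']\}$: write $K=K_1\cup K_2$ with $K_i$ compact, $K_1\subseteq\phi^{-1}[c']$, $K_2\subseteq\phi^{-1}[d']$, then cover each $K_i$ by basic elements of $S$ compactly contained in the corresponding preimage and inside $b$, take finite subcovers with unions $c,d\in S$, so that $c\sqsubset c'$, $d\sqsubset d'$ and $a\subseteq K\subseteq c\cup d$, giving $a\prec c\vee d$. I expect \eqref{veePullback} to be the main obstacle, since compact sets need not be Hausdorff and the splitting $K=K_1\cup K_2$ into compact pieces requires care (one works inside a Hausdorff neighbourhood of $b$, available since $b$ is a $\cup$-basis element, so that the relevant relatively closed subsets of $K$ are compact).

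For \eqref{LowerRelation}, suppose $b\sqsubset a'$ for all $b\prec a$, i.e.\ every $b\Subset a$ satisfies $b\subseteq\phi^{-1}[a']$. Since $S$ is a $\cup$-basis of a locally compact space, $a=\bigcup_{b\prec a}b$ by \eqref{Approximation}, so $a\subseteq\phi^{-1}[a']$, giving $a\sqsubset a'$. Finally, for the ultrafilter formulas, let $U$ be a $\prec$-ultrafilter, which by \autoref{SubsetUltrafilters} corresponds to a point $g=\bigcap U\in G$. For \eqref{DInterior}: if $g\in D^\circ$ then $\phi(g)\in G'$ lies in some $a'\in S'$, and by continuity of $\phi$ and local compactness there is a basic $a\in S$ with $g\in a$, $a\subseteq D$ and $a\subseteq\phi^{-1}[a']$; then $a\in U$ and $a\sqsubset a'$, so $a'\in U^\sqsubset$, showing $U^\sqsubset\neq\emptyset$. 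Conversely, if some $a\sqsubset a'$ with $a\in U$, then $g\in a\subseteq\phi^{-1}[a']\subseteq D$, and since $a$ is open $g\in D^\circ$. For \eqref{FRecovery}, assuming $g\in D^\circ$: the set $U^\sqsubset=\{a'\in S':\exists a\in U,\ a\subseteq\phi^{-1}[a']\}$ consists of basic neighbourhoods of $\phi(g)$ (for each such $a'$, $g\in a\subseteq\phi^{-1}[a']$ gives $\phi(g)\in a'$), so $\phi(g)\in\bigcap U^\sqsubset$; and since $S'$ is a basis separating points of the locally Hausdorff space $G'$ (it is $T_1$), no other point lies in every such $a'$, whence $\bigcap U^\sqsubset=\{\phi(g)\}=\phi(\bigcap U)$, interpreting the singleton as the point. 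I would note that the reverse inclusion $\bigcap U^\sqsubset\subseteq\{\phi(g)\}$ uses that $\phi$ is continuous at $g\in D^\circ$ so that the preimages of a neighbourhood base at $\phi(g)$ pull back into $U$, ensuring every basic neighbourhood of $\phi(g)$ appears in $U^\sqsubset$.
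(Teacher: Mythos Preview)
Your proposal is correct and follows essentially the same approach as the paper's proof: the easy axioms \eqref{Faithful}, \eqref{2Auxiliarity}, \eqref{veePreserving}, \eqref{LowerRelation}, \eqref{Bottom} are handled identically, and for \eqref{Pushforward} and \eqref{veePullback} you push forward (resp.\ cover) a compact witness and take a finite union of basis elements, exactly as the paper does (the paper simply refers back to the argument for \eqref{precDistributivity} in \autoref{Ubases} for \eqref{veePullback}, covering the compact set directly by basic elements rather than first splitting $K=K_1\cup K_2$, but this is a cosmetic difference). Your treatment of \eqref{DInterior} and \eqref{FRecovery} via \autoref{SubsetUltrafilters} and the $T_1$ property of $G'$ likewise matches the paper's.
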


\begin{proof} First we verify the required properties of $\sqsubset$.
\begin{itemize}
\item[\eqref{Faithful}]  If $a\subseteq\phi^{-1}[\emptyset]=\emptyset$ then $a=\emptyset$.

\item[\eqref{2Auxiliarity}]  If $a\leq b\sqsubset b'\leq a'$ then $a\subseteq b\subseteq\phi^{-1}[b']\subseteq\phi^{-1}[a']$ and hence $a\subseteq\phi^{-1}[a']$.

\item[\eqref{Pushforward}]  If $a\Subset b\subseteq \phi^{-1}[c']\cap \phi^{-1}[d']$ then we have compact $C\subseteq G$ with $a\subseteq C\subseteq b$.  As $\phi$ is continuous, $\phi[C]$ is also compact and $\phi[C]\subseteq\phi[b]\subseteq c'\cap d'$.  Thus we can cover $\phi[C]$ by a finite collection of elements in the basis $S'$ which are compactly contained in $c'\cap d'$.  Taking their union, we obtain $b'\in S'$ with $\phi[a]\subseteq\phi[C]\subseteq b'\Subset c'\cap d'$ and hence $a\sqsubset b'\prec c',d'$.

\item[\eqref{veePullback}]  If $a\Subset b\subseteq \phi^{-1}[c'\cup d']=\phi^{-1}[c']\cup\phi^{-1}[d']$ then we can argue as in the proof of \eqref{precDistributivity} in \autoref{Ubases} to obtain $c\Subset \phi^{-1}[c']$ and $d\Subset \phi^{-1}[d']$ within $b$ with $a\Subset c\cup d$ and hence $a\prec c\vee d$.

\item[\eqref{veePreservation}]  If $a\subseteq \phi^{-1}[a']$ and $b\subseteq \phi^{-1}[b']$ then $a\cup b\subseteq \phi^{-1}[a']\cup \phi^{-1}[b']=\phi^{-1}[a'\cup b']$.

\item[\eqref{LowerRelation}]  If $b\subseteq \phi^{-1}[a']$, for all $b\Subset a$, then $a=\bigcup_{b\Subset a}b\subseteq \phi^{-1}[a']$.

\item[\eqref{DInterior} and \eqref{FRecovery}]  If $\bigcap U\notin D^\circ$ then no $u\in U$ is contained in $D$ and so we can not have $u\subseteq \phi^{-1}[a'](\subseteq D)$ for any $a'\in S'$, i.e. $U^\sqsubset=\emptyset$.  While if $g\in\bigcap U\in D^\circ$ then $\phi^{-1}[a']$ is a neighbourhood of $g$ whenever $\phi(g)\in a'\in S'$.  By \autoref{SubsetUltrafilters}, $U=U_g=$ all those neighbourhoods of $g$ in $S$ so we must have some $u\in U$ with $g\in u\subseteq \phi^{-1}[a']$.  Thus $U^\sqsubset\neq\emptyset$ and $\phi(\bigcap U)=\phi(g)=\bigcap U^\sqsubset$, as $a'$ was arbitrary and $G'$ is locally Hausdorff and hence $T_1$.\qedhere

\item[\eqref{Bottom}]  Just note $\emptyset\subseteq\phi^{-1}[a']$, for any $a'\in S'$.
\end{itemize}
\end{proof}

Any relation $\sqsubset\ \subseteq S\times S'$ between posets $S$ and $S'$ has an extension $\sqsubseteq$ defined by
\[a\sqsubseteq a'\qquad\Leftrightarrow\qquad\forall b\prec a\ \exists\text{ finite }F\sqsubset a'\ (b\prec\bigvee F),\]
where $F\sqsubset a'$ means $a\sqsubset a'$, for all $a\in F$, i.e. $F\subseteq a'^\sqsupset$.

\begin{prp}
If $\sqsubset$ is a basic $\vee$-morphism between basic posets $S$ and $S'$ then
\[\sqsubseteq\ =\ \sqsubset.\]
\end{prp}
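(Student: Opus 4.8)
The plan is to prove the two inclusions $\sqsubset\ \subseteq\ \sqsubseteq$ and $\sqsubseteq\ \subseteq\ \sqsubset$ separately. I expect the first to be essentially immediate and the second to be the substantive half, resting on collapsing a finite family via the join-preserving property.

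For $\sqsubset\ \subseteq\ \sqsubseteq$, suppose $a\sqsubset a'$. Given any $b\prec a$, I would simply take the singleton $F=\{a\}$: then $F\sqsubset a'$ holds by hypothesis, $\bigvee F=a$ exists, and $b\prec\bigvee F$. Hence $a\sqsubseteq a'$. This direction uses nothing beyond the definition of $\sqsubseteq$.

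For $\sqsubseteq\ \subseteq\ \sqsubset$, suppose $a\sqsubseteq a'$. The strategy is to verify $b\sqsubset a'$ for every $b\prec a$ and then invoke \eqref{LowerRelation} to conclude $a\sqsubset a'$. So fix $b\prec a$; by definition of $\sqsubseteq$ we obtain a finite $F\sqsubset a'$ with $b\prec\bigvee F$. The key auxiliary step is a short induction on $|F|$ establishing $\bigvee F\sqsubset a'$: the empty case is exactly $0\sqsubset a'$, which is \eqref{Bottom}, and the inductive step combines $\bigvee F_0\sqsubset a'$ with $f\sqsubset a'$ through \eqref{veePreserving}, using $a'\vee a'=a'$, to obtain $\bigvee F_0\vee f\sqsubset a'$. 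With $\bigvee F\sqsubset a'$ in hand, $b\prec\bigvee F$ yields $b\leq\bigvee F$ since $\prec\ \subseteq\ \leq$, and then \eqref{2Auxiliarity} applied to $b\leq\bigvee F\sqsubset a'\leq a'$ gives $b\sqsubset a'$, as required.

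The main point requiring care is the bookkeeping in the induction: one must know that every partial join $\bigvee F_0$ appearing along the way actually exists, which is where I would use that $S$ is a conditional $\vee$-semilattice — since $\bigvee F$ exists, every subfamily of $F$ is bounded by it and hence has a join — together with the empty-join base case handled by \eqref{Bottom}. Beyond this, the argument is a direct chaining of \eqref{veePreserving}, \eqref{2Auxiliarity} and \eqref{LowerRelation}; notably the \eqref{Faithful}, \eqref{Pushforward} and \eqref{veePullback} axioms are not needed.
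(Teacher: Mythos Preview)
Your proposal is correct and follows essentially the same approach as the paper's proof: both directions are handled identically, with the second inclusion obtained by fixing $b\prec a$, collapsing the finite family $F$ to a single element via \eqref{veePreserving} (with the empty case handled by \eqref{Bottom}), applying \eqref{2Auxiliarity}, and then invoking \eqref{LowerRelation}. Your explicit attention to the existence of the intermediate partial joins (via the conditional $\vee$-semilattice property of basic posets) is a detail the paper leaves implicit.
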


\begin{proof}
Taking $a$ for $F$ shows that $\sqsubset\ \subseteq\ \sqsubseteq$.  Now assume $\sqsubset$ is a basic $\vee$-morphism and $a\sqsubseteq a'$ so, for every $b\prec a$, we have finite $F\sqsubset a'$ with $b\prec\bigvee F$.  If $F=\emptyset$ then $b\prec\bigvee\emptyset=0$ so $b=0\sqsubset a'$, by \eqref{Bottom}.  If $F\neq\emptyset$ then $\bigvee F\sqsubset a'$, by \eqref{veePreserving}, so again $b\sqsubset a'$, by \eqref{2Auxiliarity}.  Thus $a\sqsubset a'$, by \eqref{LowerRelation}.
\end{proof}

In fact, even when $\sqsubset$ is only a basic morphism between basic posets, $\sqsubseteq$ will always be a basic $\vee$-morphism extension.  To prove this, we could work directly with the posets $S$ and $S'$.  Alternatively, one can go through the $\prec$-ultrafilter spaces and note that this follows from \autoref{ctsphi} above and \eqref{Sqsubseteq} below.

\begin{thm}\label{Stonects}
Let $S$ and $S'$ be basic posets, and let $G$ and $G'$ be their $\prec$-ultrafilter spaces.  Then any basic morphism $\sqsubset$ defines a continuous map $\phi$ from the open subset $D=\{U\in G:U^\sqsubset\neq\emptyset\}$ to $G'$ by
\begin{equation}\label{phidef}
\phi(U)=U^\sqsubset.
\end{equation}
We can characterize when $\phi$ is defined everywhere (i.e. $D=G$) as follows.
\begin{equation}\label{Cofinality}
\forall U\in G\ (U^\sqsubset\neq\emptyset)\qquad\Leftrightarrow\qquad\forall a\in S\ \exists\text{ finite }F\subseteq S'^\sqsupset\ (a\prec\bigvee F).
\end{equation}
Moreover, for any $a\in S$ and $a'\in S'$,
\begin{equation}\label{Sqsubseteq}
a\sqsubseteq a'\qquad\Leftrightarrow\qquad O_a\subseteq\phi^{-1}[O_{a'}].
\end{equation}
\end{thm}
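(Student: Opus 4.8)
The plan is to first establish that $\phi$ is well defined, i.e.\ that $U^\sqsubset$ is a $\prec$-ultrafilter of $S'$ whenever $U\in D$, and then to read off openness of $D$, continuity of $\phi$, and the two displayed equivalences. To see $U^\sqsubset$ is a $\prec$-filter I would check upward $\leq$-closure directly from \eqref{2Auxiliarity}, and obtain $\prec$-directedness together with $\prec$-roundness from \eqref{Pushforward}: given $a',b'\in U^\sqsubset$ witnessed by $a,b\in U$, pick $c\in U$ with $c\prec a,b$ (so $c\sqsubset a',b'$ by \eqref{2Auxiliarity}) and then $d\in U$ with $d\prec c$, and apply \eqref{Pushforward} to $d\prec c\sqsubset a',b'$ to produce $c'\in U^\sqsubset$ with $c'\prec a',b'$. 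Properness ($0\notin U^\sqsubset$) is immediate from \eqref{Faithful} and $0\notin U$. For maximality I would verify condition \eqref{Prime} of \autoref{ultrachars}: if $a'\vee b'\in U^\sqsubset$, choose $a\in U$ with $a\sqsubset a'\vee b'$ and $c\in U$ with $c\prec a$, so that \eqref{veePullback} yields $c_0\sqsubset a'$ and $d_0\sqsubset b'$ with $c\prec c_0\vee d_0$; then $c_0\vee d_0\in U$, and since $U$ itself is prime, $c_0\in U$ or $d_0\in U$, giving $a'\in U^\sqsubset$ or $b'\in U^\sqsubset$.

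The organising computation for the remaining points is that, for every $a'\in S'$,
\[\phi^{-1}[O_{a'}]=\bigcup_{a\sqsubset a'}O_a,\]
since $a'\in\phi(U)=U^\sqsubset$ precisely when some $a\in U$ has $a\sqsubset a'$, and any such $U$ automatically lies in $D$. This set is visibly open, and as the $O_{a'}$ form a basis of $G'$ this gives continuity of $\phi$; the same bookkeeping shows $D=\bigcup_{a\in S'^\sqsupset}O_a$ is open.

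For \eqref{Sqsubseteq} the forward direction is the easy one: assuming $a\sqsubseteq a'$ and $U\in O_a$, I pick $b\in U$ with $b\prec a$, obtain finite $F\sqsubset a'$ with $b\prec\bigvee F$, note $\bigvee F\in U$, and apply \eqref{Prime} iterated over the finite join to extract some $c\in F\cap U$; since $c\sqsubset a'$, the displayed formula gives $U\in\phi^{-1}[O_{a'}]$. The converse is where the real work lies. Assuming $O_a\subseteq\bigcup_{c\sqsubset a'}O_c$ and taking any $b\prec a$, I use \eqref{compcont} to fix a compact $C$ with $O_b\subseteq C\subseteq O_a\subseteq\bigcup_{c\sqsubset a'}O_c$; the difficulty is to convert this open cover into a single element of $S$ that $\prec$-dominates $b$ and still maps into $a'$. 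I would do this by refining: each $U\in C$ lies in $O_a$ and in some $O_c$ with $c\sqsubset a'$, so $\prec$-directedness of $U$ yields $d_U\in U$ with $d_U\prec a,c$, and $d_U\sqsubset a'$ by \eqref{2Auxiliarity}; extracting a finite subcover $C\subseteq\bigcup_j O_{d_j}$ and setting $F=\{d_j\}$, the \eqref{Predomain} property gives $\bigvee F\prec a$, while $O_b\subseteq C\subseteq O_{\bigvee F}$ (using \eqref{veecup}) together with compactness of $C$ and \eqref{compcont} gives exactly $b\prec\bigvee F$ with $F\sqsubset a'$, i.e.\ $a\sqsubseteq a'$.

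Finally, \eqref{Cofinality} follows from the same two techniques with $a'$ ranging over all of $S'$. For $\Leftarrow$, given $U\in G$ and any $a\in U$, a finite $F\subseteq S'^\sqsupset$ with $a\prec\bigvee F$ forces (by \eqref{Prime}) some element of $F$ into $U$, so $U^\sqsubset\neq\emptyset$; for $\Rightarrow$ I repeat the compactness refinement above with the cover $O_a\subseteq G=\bigcup_{c\in S'^\sqsupset}O_c$ replacing the cover by $\{O_c:c\sqsubset a'\}$. The main obstacle throughout is this compactness-to-join passage, and specifically arranging that the covering elements can be replaced by members of $S$ that are simultaneously $\prec$-below the bounding element (so that their join exists and compactly contains $O_b$ via \eqref{Predomain} and \eqref{compcont}) and $\sqsubset$-below $a'$; once the refinement trick using $\prec$-directedness of ultrafilters is in place, the remaining checks are routine. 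The degenerate case $F=\varnothing$, occurring when the relevant $O_b$ is empty and hence $b=0$, is handled separately using $0\prec 0$.
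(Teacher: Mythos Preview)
Your argument is correct.  The verification that $U^\sqsubset$ is a proper prime $\prec$-filter, the formula $\phi^{-1}[O_{a'}]=\bigcup_{a\sqsubset a'}O_a$, the openness of $D$, the continuity of $\phi$, and the easy directions of \eqref{Cofinality} and \eqref{Sqsubseteq} all match the paper's proof essentially verbatim.

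Where you diverge is in the hard directions of \eqref{Cofinality} and \eqref{Sqsubseteq}.  The paper handles both contrapositively by a direct appeal to \autoref{abC}: if $a\not\sqsubseteq a'$, one has $b\prec a$ with $b\not\prec\bigvee F$ for every finite $F\sqsubset a'$, and \autoref{abC} with $C=a'^{\sqsupset}$ produces an ultrafilter in $\overline{O_b}\cap O_a\setminus\bigcup_{c\sqsubset a'}O_c$; similarly for \eqref{Cofinality} with $C=S'^{\sqsupset}$.  You instead pass through the already-established topological content of \autoref{SpacesFromPosets}, covering a compact $C$ with $O_b\subseteq C\subseteq O_a$ by refined basic opens $O_{d_j}$ with $d_j\prec a$ and $d_j\sqsubset a'$, and then reading $b\prec\bigvee F$ off from \eqref{compcont} and \eqref{veecup}.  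This is a genuine alternative: it is arguably more transparent once \autoref{SpacesFromPosets} is in hand, while the paper's route is more self-contained at the level of the order theory and avoids re-deriving compactness facts that ultimately rest on \autoref{abC} anyway.  Both approaches are sound, and your refinement trick (forcing $d_U\prec a$ so that \eqref{Predomain} guarantees the join exists in the conditional $\vee$-semilattice) is exactly the right move.
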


\begin{proof}
First note that $U^\sqsubset\neq\emptyset$ means $a\sqsubset a'$, for some $a\in U$ and $a'\in S'$.  Thus $U^\sqsubset\neq\emptyset$ for any $U\in O_a$, so $D$ is indeed open.

Next we show that $U^\sqsubset\in G'$, for any $U\in D$.  By \eqref{Faithful}, $0\notin U^\sqsubset$, as $0\notin U$.  If $a',b'\in U^\sqsubset$ then we have $a,b\in U$ with $a\sqsubset a'$ and $b\sqsubset b'$.  Then we have $c,d\in U$ with $c\prec d\prec a,b$.  By \eqref{2Auxiliarity}, $c\prec d\sqsubset a',b'$ so we have $d'$ with $c\sqsubset d'\prec a',b'$, by \eqref{Pushforward}, and hence $d'\in U^\sqsubset$.  Thus $U^\sqsubset$ is $\prec$-directed and also $\succ$-closed, again by \eqref{2Auxiliarity}.  Lastly, if $a'\vee b'\in U^\sqsubset$ then we have $c,d\in U$ with $c\prec d\sqsubset a'\vee b'$.  By \eqref{veePullback}, we have $a\sqsubset a'$ and $b\sqsubset b'$ with $c\prec a\vee b$.  By \eqref{Prime}, either $a\in U$ or $b\in U$ and hence $a'\in U^\sqsubset$ or $b'\in U^\sqsubset$.  Thus $U^\sqsubset$ also satisfies \eqref{Prime} and hence $U^\sqsubset$ is a $\prec$-ultrafilter, i.e $U^\sqsubset\in G'$.

To see that $\phi$ is continuous note that, for any $a'\in S'$,
\[\phi^{-1}[O_{a'}]=\bigcup_{a\sqsubset a'}O_a.\]
In particular, $\phi^{-1}[O_{a'}]$ is always open.

\begin{itemize}
\item[\eqref{Cofinality}]  If the right side of \eqref{Cofinality} holds then, whenever $a\in U\in G$, we can take finite $F\subseteq S'^\sqsupset$ with $a\prec\bigvee F$.  By \eqref{Prime}, we have some $b\in F\cap U$ and then we have some $a'\in S'$ with $b\sqsubset a'$.  Thus $a'\in U^\sqsubset\neq\emptyset$.  Conversely, if the right side of \eqref{Cofinality} fails then we have some $a\in S$ such that $a\not\prec\bigvee F$, for all finite $F\subseteq S'^\sqsupset$.  By \eqref{succRound}, we have some $b\succ a$.  By \autoref{abC} with $C=S'^\sqsupset$, we have some
\[U\in\overline{O_a}\cap O_b\setminus\bigcup_{c\in C}O_c\subseteq G\setminus\bigcup_{c\sqsubset a'\in S'}O_c.\]
Thus $U^\sqsubset=\emptyset$.

\item[\eqref{Sqsubseteq}]  If $a\sqsubseteq a'$ and $U\in O_a$ then we have $b\in U$ with $b\prec a$.  Thus we have finite $F\sqsubset a'$ with $b\prec\bigvee F$ and hence we have some $c\in F\cap U$.  This means $a'\in c^\sqsubset\subseteq U^\sqsubset=\phi(U)$, i.e. $\phi(U)\in O_{a'}$ so $U\in\phi^{-1}[O_{a'}]$.  Conversely, if $a\not\sqsubseteq a'$ then we have some $b\prec a$ such that $b\not\prec\bigvee F$, for any finite $F\sqsubset a'$.  By \autoref{abC} with $a$ and $b$ switched and $C=a'^\sqsupset$,
\[O_a\supseteq\overline{O_b}\cap O_a\nsubseteq\bigcup_{c\in C}O_c=\bigcup_{c\sqsubset a'}O_c=\phi^{-1}[O_{a'}].\]
Thus $O_a\nsubseteq\phi^{-1}[O_{a'}]$.\qedhere
\end{itemize}
\end{proof}

\subsection{Composition}\label{subsecComposition}


\begin{dfn}
For any relations $\sqsubset\ \subseteq S\times S'$ and $\sqsubset'\ \subseteq S'\times S''$, define
\[a(\sqsubset\circ\sqsubset')a''\qquad\Leftrightarrow\qquad\exists a'\in S'\ (a\sqsubset a'\sqsubset' a'').\]
\end{dfn}

Note this relation $\sqsubset\circ\sqsubset'\ \subseteq  S\times S''$ is the standard composition of the relations $\sqsubset$ and $\sqsubset'$, which extends the usual notion of function composition.

\begin{prp}
If $\sqsubset\ \subseteq S\times S'$ and $\sqsubset'\ \subseteq S'\times S''$ are basic morphisms between basic posets $S$, $S'$ and $S''$ then their composition $\sqsubset\circ\sqsubset'$ is also a basic morphism.
\end{prp}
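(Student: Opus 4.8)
The plan is to verify directly that $\sqsubset\circ\sqsubset'$ satisfies the four defining clauses of a basic morphism, namely \eqref{Faithful}, \eqref{2Auxiliarity}, \eqref{Pushforward} and \eqref{veePullback}, in each case by threading the corresponding clauses for $\sqsubset$ and $\sqsubset'$ through an intermediate witness $a'\in S'$. Recall that, since $S$, $S'$ and $S''$ are basic posets, they are $\prec$-distributive and hence satisfy \eqref{Interpolation}; this will be the crucial extra ingredient.

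The first two clauses are immediate chaining arguments. For \eqref{Faithful}: if $a(\sqsubset\circ\sqsubset')0$ then $a\sqsubset a'\sqsubset'0$ for some $a'$, so $a'=0$ by \eqref{Faithful} for $\sqsubset'$ and then $a=0$ by \eqref{Faithful} for $\sqsubset$. For \eqref{2Auxiliarity}: given $a\leq b\sqsubset b'\sqsubset'b''\leq a''$, one application of \eqref{2Auxiliarity} for $\sqsubset$ gives $a\sqsubset b'$, and one application of \eqref{2Auxiliarity} for $\sqsubset'$ gives $b'\sqsubset'a''$, whence $a(\sqsubset\circ\sqsubset')a''$.

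The substance is in \eqref{Pushforward} and \eqref{veePullback}, and the main obstacle is a structural one: both axioms carry a $\prec$ on the left which is ``used up'' (degraded to $\sqsubset$) after a single application, yet to invoke a second pushforward or pullback across the other morphism I again need a $\prec$ on the left. The remedy is \eqref{Interpolation}, which lets me split $a\prec b$ as $a\prec a_1\prec b$ and keep the extra relation $a\prec a_1$ in reserve for the final step. Concretely, for \eqref{Pushforward} I start from $a\prec b$ with $b\sqsubset c'\sqsubset'c''$ and $b\sqsubset d'\sqsubset'd''$; interpolating $a\prec a_1\prec b$ and pushing $a_1\prec b\sqsubset c',d'$ across $\sqsubset$ yields $a_1\sqsubset b'\prec c',d'$. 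Since $b'\prec c'\sqsubset'c''$ and $b'\prec d'\sqsubset'd''$, \eqref{2Auxiliarity} for $\sqsubset'$ upgrades these to $b'\sqsubset'c''$ and $b'\sqsubset'd''$. The reserved $a\prec a_1$ now lets me push $a\prec a_1\sqsubset b'$ across $\sqsubset$ to obtain $a\sqsubset m\prec b'$, and a final pushforward of $m\prec b'\sqsubset'c'',d''$ across $\sqsubset'$ produces $n'$ with $m\sqsubset'n'\prec c'',d''$. Then $a\sqsubset m\sqsubset'n'$ witnesses $a(\sqsubset\circ\sqsubset')n'\prec c'',d''$, as required.

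The argument for \eqref{veePullback} follows the same template but ends with a pullback. Starting from $a\prec b$ with $b\sqsubset e'\sqsubset'c''\vee d''$, I interpolate $a\prec a_1\prec b$ and push $a_1\prec b\sqsubset e'$ across $\sqsubset$ to land on $a_1\sqsubset f\prec e'$. Applying \eqref{veePullback} for $\sqsubset'$ to $f\prec e'\sqsubset'c''\vee d''$ gives $c'\sqsubset'c''$ and $d'\sqsubset'd''$ with $f\prec c'\vee d'$; then \eqref{2Auxiliarity} for $\sqsubset$ (using $f\leq c'\vee d'$) promotes $a_1\sqsubset f$ to $a_1\sqsubset c'\vee d'$. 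The reserved $a\prec a_1$ now feeds \eqref{veePullback} for $\sqsubset$ applied to $a\prec a_1\sqsubset c'\vee d'$, producing $c\sqsubset c'$ and $d\sqsubset d'$ with $a\prec c\vee d$. Composing, $c\sqsubset c'\sqsubset'c''$ and $d\sqsubset d'\sqsubset'd''$ give $c(\sqsubset\circ\sqsubset')c''$ and $d(\sqsubset\circ\sqsubset')d''$ with $a\prec c\vee d$, completing the clause. Throughout, the only genuine care needed is bookkeeping of witnesses and repeatedly checking that \eqref{2Auxiliarity} legitimately bridges a $\prec$ into $\sqsubset$ (resp.\ $\sqsubset'$) at each juncture; no new idea beyond the interpolation device is required.
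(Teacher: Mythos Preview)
Your proof is correct and follows the same template as the paper: verify each clause directly, using \eqref{Interpolation} to manufacture the extra $\prec$ needed to chain the two morphisms. For \eqref{veePullback} your argument and the paper's are essentially identical.

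For \eqref{Pushforward} you are actually \emph{more} careful than the paper. The paper writes ``$a\prec b\sqsubset b'\sqsubset'c'',d''$'', tacitly assuming a single intermediate $b'\in S'$ that works for both $c''$ and $d''$; but the hypothesis $b(\sqsubset\circ\sqsubset')c'',d''$ only gives possibly distinct intermediates $c',d'$ with $b\sqsubset c'\sqsubset'c''$ and $b\sqsubset d'\sqsubset'd''$. Your interpolation $a\prec a_1\prec b$ followed by \eqref{Pushforward} for $\sqsubset$ on $a_1\prec b\sqsubset c',d'$ is exactly what is needed to merge these into a single $b'\prec c',d'$ (hence $b'\sqsubset'c'',d''$ by \eqref{2Auxiliarity}), after which the argument proceeds as the paper intends. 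So your extra step is not gratuitous: it fills a small gap the paper glossed over.
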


\begin{proof}  We verify the required properties of $\sqsubset\circ\sqsubset'$ as follows.
\begin{itemize}
\item[\eqref{Faithful}]  If $a\sqsubset a'\sqsubset'0$ then $a'=0$ and hence $a=0$, by \eqref{Faithful} for $\sqsubset$ and $\sqsubset'$, so \eqref{Faithful} also holds for $\sqsubset\circ\sqsubset'$.

\item[\eqref{Auxiliarity}]  If $a\leq b\sqsubset b'\sqsubset'b''\leq a''$ then $a\sqsubset b'\sqsubset'a''$, by \eqref{Auxiliarity} for $\sqsubset$ and $\sqsubset'$, so \eqref{Auxiliarity} also holds for $\sqsubset\circ\sqsubset'$.

\item[\eqref{Pushforward}]  If $a\prec b\sqsubset b'\sqsubset'c'',d''$ then \eqref{Pushforward} for $\sqsubset$ yields $a'$ with $a\sqsubset a'\prec b'\sqsubset'c'',d''$.  Then \eqref{Pushforward} for $\sqsubset'$ yields $b''$ with $a\sqsubset a'\sqsubset b''\prec c'',d''$, so \eqref{Pushforward} also holds for $\sqsubset\circ\sqsubset'$.

\item[\eqref{veePullback}]  Assume $a\prec b \sqsubset c'\sqsubset'd''\vee e''$.  By \eqref{Interpolation}, we can take $f$ with $a\prec f\prec b\sqsubset c'$.  By \eqref{Pushforward} for $\sqsubset$, we have $b'$ with $f\sqsubset b'\prec c'\sqsubset d''\vee e''$.  By \eqref{veePullback} for $\sqsubset'$, we have $d'\sqsubset'd''$ and $e'\sqsubset'e''$ with $f\sqsubset b'\prec d'\vee e'$.  By \eqref{2Auxiliarity} for $\sqsubset$, $a\prec f\sqsubset d'\vee e'$.  By \eqref{veePullback} for $\sqsubset$, we have $d\sqsubset d'$ and $e\sqsubset e'$ with $a\prec d\vee e$.  In other words, given $a\prec b\sqsubset\circ\sqsubset'd''\vee e''$, we have found $d\sqsubset\circ\sqsubset'd''$ and $e\sqsubset\circ\sqsubset'e''$ with $a\prec d\vee e$, as required.\qedhere
\end{itemize}
\end{proof}

Unfortunately, this does not extend to basic $\vee$-morphisms, as \eqref{veePreserving} and \eqref{LowerRelation} are not always preserved under composition.

\begin{xpl}
To see that this can fail for \eqref{LowerRelation}, let $G=G'=G''$ be the Cantor space $C$, let $S=S''$ be the $\U$-basic poset of all open subsets, let $S'$ be the Boolean algebra of just the clopen subsets, let $\sqsubset$ and $\sqsubset'$ be the restrictions of inclusion $\subseteq$ to $S\times S'$ and $S'\times S''$, and take any non-closed open $O$.  For every open $N\Subset O$, we can find clopen $O'$ with $N\subseteq O'\subseteq O$, even though there is no single clopen $O'$ with $O\subseteq O'\subseteq O$.  Thus $\sqsubset\circ\sqsubset'$ fails to satisfy \eqref{LowerRelation}.
\end{xpl}

\begin{xpl}
For a (necessarily non-Hausdorff) example of a composition failing \eqref{veePreserving}, consider the `bug-eyed interval' $G'=\{0^*\}\cup[0,1]$ where $0^*$ is an extra copy of $0$, i.e. $\{0^*\}\cup(0,1]$ is homoemorphic to $[0,1]$ via the map taking $0^*$ to $0$.  Let $\phi$ be the continuous map from $G=[0,1]$ to $G'$ which maps $[0,\frac{1}{2}]$ to $[0,1]$ and $[\frac{1}{2},1]$ to $\{0^*\}\cup(0,1]$ by going back and forth one time in the obvious way.  In particular,
\[\phi(0)=0\qquad\text{and}\qquad\phi(1)=0^*.\]
Let $\phi'$ be the countinuous map from $G'$ to $G''=[0,1]$ which is the identity on $[0,1]$ and takes $0^*$ to $0$.  Let $S$, $S'$ and $S''$ be the corresponding $\U$-basic posets of all Hausdorff open sets and define basic $\vee$-morphisms $\sqsubset$ and $\sqsubset'$ form $\phi$ and $\phi'$ as in \eqref{sqsubsetdef}.  Then
\[[0,\tfrac{2}{3})\sqsubset[0,1]\sqsubset'[0,1]\qquad\text{and}\qquad(\tfrac{1}{3},1]\sqsubset\{0^*\}\cup(0,1]\sqsubset'[0,1],\]
even though there is no Hausdorff open $O'\subseteq G'$ with $[0,1]\sqsubset O'\sqsubset'[0,1]$.  Thus $\sqsubset\circ\sqsubset'$ fails to satisfy \eqref{veePreserving}.
\end{xpl}

This leaves us with two options:
\begin{enumerate}
\item Just accept that the same continuous map could arise from different basic morphisms.  Given that we already have to accept that the same topology can arise from different bases, this is perhaps not a serious drawback.

\item Take $\underline{\sqsubset\circ\sqsubset'}$ as the product in the category of $\U$-basic posets.  The drawback here is that the definition of $\sqsubseteq$ from $\sqsubset$ is not elementary, as it involves arbitrarily large finite subsets (although it is still first order in the weak sense of being defined by `omitting types' \textendash\, see \cite{Marker2002} Ch 4).
\end{enumerate}

In any case, we should still note that composition of basic morphisms corresponds to composition of the resulting continuous maps.  By \eqref{phidef}, this amounts to
\[U^{\sqsubset\circ\sqsubset'}=(U^\sqsubset)^{\sqsubset'},\]
for all $\prec$-ultrafilters $U\subseteq S$, which is immediate from the definitions.

\section{Inverse Semigroups}\label{InverseSemigroups}

So far we have exhibited a duality between certain posets and topological spaces.  Now we wish to obtain a non-commutative extension to certain inverse semigroups and topological groupoids.\footnote{Strictly speaking, `extension' may not be the right word, as we did not require our posets to have meets, and only the $\wedge$-semilattices can be considered as (commutative) inverse semigroups.}

So let us now assume
\begin{center}
\textbf{$S$ is an inverse semigroup.  We denote the idempotents of $S$ by $E$}.
\end{center}
We always consider $S$ as a poset with respect to the canonical ordering given by
\[a\leq b\qquad\Leftrightarrow\qquad a\in Eb.\]
This allows us to apply all the order theory developed so far.  Indeed, most of the work has already been done, the only thing left to do is investigate how our previous duality interacts with the multiplicative and inverse structure of $S$.

We take \cite{Lawson1998} as our standard reference for inverse semigroups.  A standard example is the symmetric inverse monoid $I(X)$ of all partial bijections on some set $X$ (indeed every inverse semigroup can be represented as a subsemigroup of a symmetric inverse monoid, by the Wagner-Preston theorem \textendash\, see \cite[Theorem 1.5.1]{Lawson1998}).  As far as the order structure is concerned, $I(X)$ is a local Boolean algebra and $\wedge$-semilattice, so $\prec\ =\ \leq$ and $\U\ =\ I(X)\times I(X)$, by \autoref{Uwedge}.

\subsection{Distributivity}\label{ssDistributivity}

\begin{dfn}
We call an inverse semigroup $S$ \emph{distributive} if the product preserves existing joins, i.e. if $a\vee b$ exists then so does $ac\vee bc$ and $ca\vee cb$ and
\[\label{Distributivity}\tag{Distributivity}(a\vee b)c=ac\vee bc\qquad\text{and}\qquad c(a\vee b)=ca\vee cb.\]
\end{dfn}

Actually, one-sided distributivity suffices, as $a\mapsto a^{-1}$ is an automorphism from $S$ to $S^{op}$.  Also, taking $c=(a\vee b)^{-1}$ in \eqref{Distributivity} yields
\begin{equation}\label{aveeb}
(a\vee b)^{-1}(a\vee b)=(a^{-1}a)\vee(b^{-1}b),
\end{equation}
which actually holds even without distributivity \textendash\, see \cite[\S1.4 Proposition 17]{Lawson1998}.  Thus any join of idempotents is idempotent, as noted in \cite[\S1.4 Lemma 14]{Lawson1998}.

However, one thing that appears to have been overlooked in \cite{Lawson1998} is that joins in $E$ are not always joins in $S$.

\begin{xpl}\label{EvsSjoins}
Let $S=\{0,1,e_1,e_2,e_3,s\}$ be the inverse semigroup of partial bijections on $X=\{1,2,3,4,5\}$ where $0$ is the empty function, $1$ is the identity, $e_k$ is the identity restricted to $\{k\}$, and $s$ is the total bijection leaving $1$, $2$ and $3$ fixed but switching $4$ and $5$.  Then $e_1\vee e_2=1$ in $E$ but not in $S$, as $e_1,e_2\leq s\nleq 1$.  Indeed, there are no non-trival joins in $S$ (i.e. $a\vee b$ exists iff $a\leq b$ or $b\leq a$) so $S$ is trivially distributive, even though $E=\{0,1,e_1,e_2,e_3\}$ is the diamond lattice, which is not distributive.
\end{xpl}

Thus the (1)$\Rightarrow$(2) part of \cite[\S1.4 Proposition 20]{Lawson1998} does not actually hold in general.  But it does hold as long as $S$ is a conditional $\vee$-semilattice.

\begin{prp}\label{distributiveidempotents}
If $S$ is a conditional $\vee$-semilattice, the following are equivalent.
\begin{enumerate}
\item\label{Edistributive} $E$ is distributive.
\item\label{Sdistributive} $S$ is distributive.
\item\label{Sleqdistributive} $S$ satisfies \eqref{leqDistributivity}.
\item\label{Sdecomposition} $S$ satisfies \eqref{leqDecomposition}.
\end{enumerate}
\end{prp}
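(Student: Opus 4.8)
My plan is to prove the four conditions equivalent cyclically: $E$ distributive $\Rightarrow$ $S$ distributive $\Rightarrow$ \eqref{leqDistributivity} $\Rightarrow$ \eqref{leqDecomposition} $\Rightarrow$ $E$ distributive. Before starting I would isolate two structural facts that let the conditional $\vee$-semilattice hypothesis do its work. First, for idempotents $e,f$ the join $e\vee f$ exists in $S$ if and only if it exists in $E$, and then the two agree: any $E$-join is an idempotent upper bound of $e,f$, so the $S$-join exists by conditionality, while conversely \eqref{aveeb} shows every existing $S$-join of idempotents is idempotent, hence is also the least idempotent upper bound. (This is exactly what fails for the non-conditional semigroup of \autoref{EvsSjoins}, where $e_1\vee e_2$ exists in $E$ but not in $S$.) Second, for each $d\in S$ the map $e\mapsto ed$ is an order isomorphism of $\downarrow(dd^{-1})$ onto $\downarrow d$, with inverse $x\mapsto xx^{-1}$; being an order isomorphism it preserves all existing joins, so from $a\vee b=d$ I obtain $aa^{-1}\vee bb^{-1}=dd^{-1}$ in $E$.

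The implications $S$ distributive $\Rightarrow$ \eqref{leqDistributivity} $\Rightarrow$ \eqref{leqDecomposition} are routine. For the first, given $a\leq b\vee c$ I write $a=aa^{-1}(b\vee c)=aa^{-1}b\vee aa^{-1}c$ using \eqref{Distributivity}, so $b'=aa^{-1}b\leq b$ and $c'=aa^{-1}c\leq c$ witness \eqref{leqDistributivity}. For the second, if $a=b'\vee c'$ with $b'\leq b$ and $c'\leq c$, then $b',c'$ both lie in the set $\{a'\leq a:a'\leq b\text{ or }a'\leq c\}$, so this set has $a$ as an upper bound and any other upper bound dominates $b'\vee c'=a$; hence $a$ is its join, which is \eqref{leqDecomposition}.

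To recover $E$ distributive from \eqref{leqDecomposition}, I take idempotents $g,e,f$ with $e\vee f$ existing and set $a=g(e\vee f)=g\wedge(e\vee f)\leq e\vee f$. The products $ge=g\wedge e$ and $gf=g\wedge f$ satisfy $ge,gf\leq a$, so $ge\vee gf$ exists by conditionality. Every element $a'$ of the set $\{a'\leq a:a'\leq e\text{ or }a'\leq f\}$ satisfies $a'\leq a\leq g$, hence $a'\leq ge$ or $a'\leq gf$, so $ge\vee gf$ is an upper bound of this set; as \eqref{leqDecomposition} makes $a$ its join, $a\leq ge\vee gf\leq a$. Thus $g\wedge(e\vee f)=(g\wedge e)\vee(g\wedge f)$, which by the first structural fact is exactly distributivity of $E$.

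The substantive step, where I expect the real work, is $E$ distributive $\Rightarrow$ $S$ distributive. It suffices to prove $c(a\vee b)=ca\vee cb$ whenever $a\vee b$ exists, the other identity following from the anti-automorphism $a\mapsto a^{-1}$. Writing $d=a\vee b$, the join $ca\vee cb$ exists since $ca,cb\leq cd$, and $ca\vee cb\leq cd$; the problem is the reverse inequality. I would compare range idempotents: the range version of \eqref{aveeb} (obtained via $x\mapsto x^{-1}$) gives $(ca\vee cb)(ca\vee cb)^{-1}=caa^{-1}c^{-1}\vee cbb^{-1}c^{-1}$, and the crux is to show the conjugation $e\mapsto cec^{-1}$ carries $aa^{-1}\vee bb^{-1}=dd^{-1}$ to $cdd^{-1}c^{-1}$. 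The obstacle is that this conjugation need not be injective; I would handle it by noting $cec^{-1}=c(e\wedge c^{-1}c)c^{-1}$ and that $c$ restricts to a semilattice isomorphism of $\downarrow(c^{-1}c)$ onto $\downarrow(cc^{-1})$, reducing the claim to $(c^{-1}c\wedge aa^{-1})\vee(c^{-1}c\wedge bb^{-1})=c^{-1}c\wedge(aa^{-1}\vee bb^{-1})$, which is precisely an instance of distributivity of $E$. It then follows that $s=ca\vee cb$ and $cd$ share the range idempotent $ss^{-1}=(cd)(cd)^{-1}$, and since $s\leq cd$ gives $s=ss^{-1}cd=(cd)(cd)^{-1}cd=cd$, the cycle closes.
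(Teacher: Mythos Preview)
Your proof is correct and follows the same cyclic scheme $(1)\Rightarrow(2)\Rightarrow(3)\Rightarrow(4)\Rightarrow(1)$ as the paper. The structural fact you isolate about $E$-joins and $S$-joins of idempotents coinciding is precisely the point where the conditional $\vee$-semilattice hypothesis enters in the paper as well (it appears in the paper's $(4)\Rightarrow(1)$ step), and your arguments for $(2)\Rightarrow(3)$, $(3)\Rightarrow(4)$ and $(4)\Rightarrow(1)$ are essentially the paper's, written out in more detail.

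The one genuine divergence is $(1)\Rightarrow(2)$. You argue by comparing range idempotents: with $d=a\vee b$ and $s=ca\vee cb\leq cd$, you factor the conjugation $e\mapsto cec^{-1}$ through the order isomorphism $\downarrow(c^{-1}c)\to\downarrow(cc^{-1})$ to reduce $ss^{-1}=(cd)(cd)^{-1}$ to the single $E$-distributive identity $(c^{-1}c\wedge aa^{-1})\vee(c^{-1}c\wedge bb^{-1})=c^{-1}c\wedge dd^{-1}$, and then conclude $s=ss^{-1}cd=cd$. The paper instead takes an arbitrary upper bound $d'\geq ac,bc$, observes $(a\vee b)^{-1}d'c^{-1}\geq a^{-1}acc^{-1},\,b^{-1}bcc^{-1}$, and applies $E$-distributivity to $(a^{-1}a\vee b^{-1}b)cc^{-1}$ to obtain $(a\vee b)c\leq d'$. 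Your route is more structural and pinpoints exactly where the order isomorphisms and the single use of $E$-distributivity sit; the paper's is shorter and avoids naming the conjugation map explicitly. Both ultimately rest on the same distributive law in $E$, applied to the join $aa^{-1}\vee bb^{-1}=dd^{-1}$ (which, as you note, is also an $E$-join by \eqref{aveeb}). Incidentally, your second structural fact is correct but slightly more than you need: the conclusion $aa^{-1}\vee bb^{-1}=dd^{-1}$ already follows directly from the range version of \eqref{aveeb}.
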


\begin{proof}\
\begin{itemize}
\item[\eqref{Edistributive}$\Rightarrow$\eqref{Sdistributive}]
Take $a,b\in S$ with a join $a\vee b$ (in $S$).  For any $c\in S$, we immediately have $ac,bc\leq(a\vee b)c$.  On the other hand, for any $d\geq ac,bc$,
\begin{align*}
(a\vee b)^{-1}dc^{-1}&\geq(a\vee b)^{-1}acc^{-1}=a^{-1}acc^{-1}.\\
(a\vee b)^{-1}dc^{-1}&\geq(a\vee b)^{-1}bcc^{-1}=b^{-1}bcc^{-1}.
\end{align*}
So by \eqref{aveeb} and \eqref{Distributivity} in $E$,
\[(a\vee b)^{-1}(a\vee b)cc^{-1}=(a^{-1}a\vee b^{-1}b)cc^{-1}=a^{-1}acc^{-1}\vee b^{-1}bcc^{-1}\leq(a\vee b)^{-1}dc^{-1}.\]
Multiplying on the left by $(a\vee b)$ and on the right by $c$ we obtain
\[(a\vee b)c\leq(a\vee b)(a\vee b)^{-1}dc^{-1}c\leq d.\]
As $d$ was arbitrary, $(a\vee b)c$ is a join of $ac$ and $bc$.

\item[\eqref{Sdistributive}$\Rightarrow$\eqref{Sleqdistributive}]  Say $a\leq b\vee c$.  By distributivity, $a=aa^{-1}a=aa^{-1}b\vee aa^{-1}c$.  As $aa^{-1}b=a\wedge b$ and $aa^{-1}c=a\wedge c$, by \cite[\S1.4 Lemma 12]{Lawson1998}, we are done.

\item[\eqref{Sleqdistributive}$\Rightarrow$\eqref{Sdecomposition}]  See \autoref{Distributivity=>Decomposition}.

\item[\eqref{Sdecomposition}$\Rightarrow$\eqref{Edistributive}]  Note it suffices to show that $E$ satisfies \eqref{leqDecomposition'}, as $E$ is a $\wedge$-semilattice in which products are meets.  As $S$ satisfies \eqref{leqDecomposition}, for this it suffices to show that any join in $E$ is a join in $S$.  But if $e\vee f=g$ in $E$ then, in particular, $e$ and $f$ are bounded so we have some $a\in S$ with $e\vee f=a$ in $S$ (note this is where we need the conditional $\vee$-semilattice assumption).  By \eqref{aveeb}, $a\in E$ and hence $g\leq a$.  Conversely, as $E\subseteq S$, $a\leq g$ and hence $a=g$, as required. \qedhere
\end{itemize}
\end{proof}

%
%
%

\subsection{Rather Below}\label{ssRatherBelow}

Now assume
\[S\textbf{ is an inverse semigroup with zero}.\]
By definition, this means we have $0\in S$ with $0a=0$, for all $a\in S$.  Thus $0$ is the minimum of $S$ in the canonical ordering.  In particular, we can again define the rather below relation $\prec$ as in \autoref{RatherBelowdfn}.  Although, as before, we will usually assume $S$ is a conditional $\vee$-semilattice and then use the equivalent form in \eqref{RatherBelow}.

In inverse semigroups, $\prec$ is determined, at least partially, by its restriction to $E$.

\begin{prp}\label{Edetermined}
If $S$ is a distributive conditional $\vee$-semilattice and $a\leq b$ then
\[a^{-1}a\prec b^{-1}b\qquad\Rightarrow\qquad a\prec b.\]
\end{prp}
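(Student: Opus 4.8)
The plan is to verify the \eqref{RatherBelow} characterisation of $a\prec b$ directly, by pushing the hypothesis $a^{-1}a\prec b^{-1}b$ down to the idempotents and then lifting it back up by left multiplication. Write $e=a^{-1}a$ and $f=b^{-1}b$, so that the hypothesis reads exactly $e\prec f$. Since $a\leq b$ is already given, \eqref{RatherBelow} reduces to producing, for each $c\geq b$, some $a'\perp a$ with $c\leq a'\vee b$.

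So fix $c\geq b$. First I would note that $b\leq c$ forces $f=b^{-1}b\leq c^{-1}c$, so $c^{-1}c$ is a legitimate test element for $e\prec f$. Applying \eqref{RatherBelow} to $e\prec f$ at $c^{-1}c\geq f$ yields an idempotent $e'\perp e$ with $c^{-1}c\leq e'\vee f$ (in particular $e'\vee f$ exists). Now left-multiply by $c$: left multiplication is order preserving and $c(c^{-1}c)=c$, so \eqref{Distributivity} gives
\[
c=c(c^{-1}c)\leq c(e'\vee f)=ce'\vee cf.
\]
The key simplification is $cf=cb^{-1}b=b$, which holds because $b\leq c$ gives the identity $b=cb^{-1}b$. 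Hence $c\leq ce'\vee b$, and the candidate complement is $a':=ce'$.

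It then remains to check $a'=ce'\perp a$, i.e. $ce'\wedge a=0$, which I would argue at the level of domains. If $d\leq ce'$ and $d\leq a$, then $d^{-1}d\leq (ce')^{-1}(ce')=e'(c^{-1}c)e'\leq e'$ and $d^{-1}d\leq a^{-1}a=e$. Setting $p=d^{-1}d$, both $p=pe$ and $p=pe'$ hold; since idempotents commute and $e\perp e'$ means $ee'=0$, we get $p=pe=(pe')e=p(e'e)=0$, whence $d=dd^{-1}d=0$. Thus $a'\perp a$ and $c\leq a'\vee b$; as $c\geq b$ was arbitrary, \eqref{RatherBelow} delivers $a\prec b$.

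The main obstacle — really the one genuinely clever move — is recognising that one should feed the domain idempotent $c^{-1}c$ of the test element $c$ into the idempotent-level hypothesis $e\prec f$ and then transport the resulting inequality back up by left multiplication, with the identity $cf=b$ being exactly what makes the second coordinate come out as $b$ rather than something larger. Everything else (monotonicity of the product, distributivity, and the short domain computation giving $ce'\perp a$) is routine once idempotent commutativity is invoked.
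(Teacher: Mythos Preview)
Your argument is correct and follows essentially the same route as the paper: feed $c^{-1}c$ into the hypothesis $a^{-1}a\prec b^{-1}b$, left-multiply the resulting inequality by $c$ using distributivity, and use $cb^{-1}b=b$ to land on $c\leq ce'\vee b$. The only cosmetic difference is in verifying $ce'\perp a$: the paper observes $ce'\perp ca^{-1}a\geq a$ and appeals to $\perp$-auxiliarity, whereas you compute directly at the domain level; both are immediate once idempotent commutativity is used.
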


\begin{proof}
Assume $a\leq b$ and $a^{-1}a\prec b^{-1}b$.  If $c\geq b$ then $c^{-1}c\geq b^{-1}b$ so we have $a'\perp a^{-1}a$ with $c^{-1}c\leq a'\vee b^{-1}b$.  Thus $ca'\perp ca^{-1}a\geq ba^{-1}a\geq aa^{-1}a=a$ and $c=cc^{-1}c\leq ca'\vee cb^{-1}b=ca'\vee b$.  As $c$ was arbitrary, $a\prec b$.
\end{proof}

However, the reverse implication is not automatic \textendash\, see \autoref{precvspp}.  For the moment, let us define a stronger relation $\pp$ by
\[a\pp b\qquad\Leftrightarrow\qquad a^{-1}a\prec b^{-1}b\quad\text{and}\quad a\leq b.\]

\subsection{Basic Semigroups}\label{ssBasicSemigroups}

\begin{dfn}
The \emph{compatibility} relation $\sim$ is defined on inverse semigroups by
\[a\sim b\qquad\Leftrightarrow\qquad ab^{-1},a^{-1}b\in E.\]
\end{dfn}

Note $\sim$ is analogous to $\U$, e.g. $\leq\ \subseteq\ \sim$ and, by \cite[\S 1.4 Lemma 13]{Lawson1998},
\[\label{simAuxiliarity}\tag{$\sim$-Auxiliarity}a\leq a'\sim b'\geq b\qquad\Leftrightarrow\qquad a\sim b.\]
Let us denote the combination of $\sim$ and $\U$ by $\simeq\ =\ \sim\cap\U$, i.e.
\[a\simeq b\qquad\Leftrightarrow\qquad a\sim b\quad\text{and}\quad a\U b.\]

\begin{dfn}\label{SimeqBasicSemigroup}
We call $S$ a \emph{$\simeq$-basic semigroup} if $S$ is $\prec$-distributive and
\[\label{CompatibleJoins}\tag{$\simeq$-Joins}a\vee b\text{ exists}\qquad\Leftrightarrow\qquad\exists a',b'\ (a\pp a'\simeq b'\qq b).\]
\end{dfn}

The motivation for \eqref{CompatibleJoins} is that we are imagining $S$ to consist of open Hausdorff bisections that are closed under finite unions `whenever possible', i.e. whenever the union itself is compactly contained in some open Hausdorff bisection \textendash\, see \autoref{EB} \eqref{OcupNEquiv} below.

Again, for the most part, we actually work with slightly more general semigroups.

\begin{dfn}\label{BasicSemigroup}
We call $S$ a \emph{basic semigroup} if
\begin{enumerate}
\item $S$ is $\qq$-round,
\item $S$ is $\prec$-distributive, and
\item $S$ is a conditional $\vee$-semilattice.
\end{enumerate}
\end{dfn}

\begin{prp}
Every $\simeq$-basic semigroup is a basic semigroup.
\end{prp}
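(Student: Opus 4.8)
The plan is to mirror the proof of \autoref{Ubasic=>basic}, replacing the rather below relation $\prec$ by $\pp$ and the Hausdorff relation $\U$ by $\simeq$, and to verify the three clauses of \autoref{BasicSemigroup} in turn. Clause (2), $\prec$-distributivity, is part of the hypothesis that $S$ is a $\simeq$-basic semigroup (\autoref{SimeqBasicSemigroup}), so nothing is needed there.

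For clause (1), I would first note that the $\Rightarrow$ part of \eqref{CompatibleJoins}, specialised to $b=a$, is exactly $\qq$-roundness. Indeed $a\vee a=a$ always exists, so \eqref{CompatibleJoins} supplies $a',b'$ with $a\pp a'\simeq b'\qq a$; in particular $a\pp a'$, and as $a$ was arbitrary this says $\forall a\ \exists a'\ (a\pp a')$, which is precisely that $S$ is $\qq$-round.

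For clause (3), I would show that every bounded pair has a join, using $\qq$-roundness (just established) together with the $\Leftarrow$ part of \eqref{CompatibleJoins}, again paralleling \autoref{Ubasic=>basic}. Given $a,b\leq c$, $\qq$-roundness yields $c'$ with $c\pp c'$, so $c\leq c'$ and $c^{-1}c\prec c'^{-1}c'$. The one computation to record is that $a\pp c'$ and $b\pp c'$: from $a\leq c\leq c'$ we get $a\leq c'$, while $a^{-1}a\leq c^{-1}c\prec c'^{-1}c'$ gives $a^{-1}a\prec c'^{-1}c'$ by \eqref{Auxiliarity}, and symmetrically for $b$. Since $\simeq\ =\ \sim\cap\U$ and both $\sim$ and $\U$ are reflexive, we also have $c'\simeq c'$. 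Hence $a\pp c'\simeq c'\qq b$, so the $\Leftarrow$ direction of \eqref{CompatibleJoins} produces $a\vee b$. Thus $S$ is a conditional $\vee$-semilattice, completing the verification that $S$ is a basic semigroup.

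The argument is essentially a line-by-line translation of the order-theoretic proof, so I do not expect a genuine obstacle; the only point needing the inverse-semigroup structure is the transfer $a\leq c\pp c'\Rightarrow a\pp c'$, which rests on the monotonicity $a\leq c\Rightarrow a^{-1}a\leq c^{-1}c$ of the idempotent support together with the auxiliarity of $\prec$.
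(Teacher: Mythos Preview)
Your proof is correct and follows essentially the same approach as the paper, which itself simply refers the reader back to the proof of \autoref{Ubasic=>basic}. You have in fact supplied more detail than the paper does, correctly identifying the one extra inverse-semigroup ingredient needed, namely the monotonicity $a\leq c\Rightarrow a^{-1}a\leq c^{-1}c$ combined with \eqref{Auxiliarity} to pass from $a\leq c\pp c'$ to $a\pp c'$.
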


\begin{proof}
See the proof of \autoref{Ubasic=>basic}.
\end{proof}

In basic semigroups we can forget about the distinction between $\prec$ and $\pp$.

\begin{prp}\label{precE}
If $S$ is a basic semigroup then $S$ is a basic poset and
\[\label{ESupported}\tag{Bi-Below}a\prec b\qquad\Leftrightarrow\qquad a\pp b.\]
\end{prp}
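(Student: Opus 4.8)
The plan is to treat the two assertions separately: first that $S$ is a basic poset, then the equivalence (Bi-Below). For the former, note that $\prec$-distributivity and the conditional $\vee$-semilattice property are already built into \autoref{BasicSemigroup}, so by \autoref{BasicVeePoset} the only thing left to verify is that $S$ is $\succ$-round. Here I would first record that $S$ is distributive: $\prec$-distributivity yields \eqref{leqDecomposition} by \autoref{Distributivity=>Decomposition}, and \autoref{distributiveidempotents} then upgrades this to \eqref{Distributivity}, since $S$ is a conditional $\vee$-semilattice. With distributivity in hand \autoref{Edetermined} applies, so $a\pp b$ implies $a\prec b$. As $S$ is $\qq$-round, every $a\in S$ admits some $b$ with $a\pp b$ and hence $a\prec b$; this is exactly \eqref{succRound}, so $S$ is a basic poset.

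For (Bi-Below) the $\Leftarrow$ direction is immediate: $a\pp b$ unfolds to $a\leq b$ together with $a^{-1}a\prec b^{-1}b$, and \autoref{Edetermined} converts this into $a\prec b$. The substance is the $\Rightarrow$ direction. Since $\prec\ \subseteq\ \leq$ gives $a\leq b$ for free, everything reduces to the single implication $a\prec b\Rightarrow a^{-1}a\prec b^{-1}b$, the rather below relation being computed in $S$. The inequality $a^{-1}a\leq b^{-1}b$ is the standard consequence of $a\leq b$, so using \eqref{RatherBelow} (legitimate as $S$ is a conditional $\vee$-semilattice) it remains to produce, for each $c\geq b^{-1}b$, some $f\perp a^{-1}a$ with $c\leq f\vee b^{-1}b$.

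I would reduce this to idempotent test elements. Given an arbitrary $c\geq b^{-1}b$, the idempotent $c^{-1}c$ again dominates $b^{-1}b$; supposing we have found an idempotent $f\leq c^{-1}c$ with $f\perp a^{-1}a$ and $c^{-1}c\leq f\vee b^{-1}b$, I would multiply on the left by $c$ and use \eqref{Distributivity} together with $cb^{-1}b=b^{-1}b$ (as $b^{-1}b\leq c$) to get $c=c\,c^{-1}c\leq cf\vee b^{-1}b$. The point is that $cf\perp a^{-1}a$: any nonzero $z\leq cf,a^{-1}a$ would satisfy $z^{-1}z\leq f\,c^{-1}c\,f\leq f$ and $z^{-1}z\leq a^{-1}a$, contradicting $f\perp a^{-1}a$; so $cf$ is a legitimate witness. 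This leaves the purely idempotent statement: for every idempotent $c\geq b^{-1}b$ there is an idempotent $f\perp a^{-1}a$ below $c$ with $c\leq f\vee b^{-1}b$, i.e. a relative complement of $b^{-1}b$ in $c$ that avoids $a^{-1}a$.

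The hard part is exactly this last construction, and it is where $a\prec b$ must be spent rather than merely $a\leq b$. The difficulty is structural: the ``excess'' of $c$ over $b^{-1}b$ lives over base points that $b$ cannot reach, so multiplying the element-level data of $a\prec b$ through $b$ collapses (one has $bcb^{-1}=bb^{-1}$), and one cannot simply lift $c$ to a test element above $b$. My plan is therefore to manufacture the complement abstractly: since $S$ is now a basic poset it satisfies \eqref{Complements}, and $\succ$-roundness provides $b_+$ with $b\prec b_+$, so applying \eqref{Complements} to $a\prec b\leq b\prec b_+$ yields $a'\perp a$ with $b\prec a'\vee b\prec b_+$. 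Passing to idempotents via \eqref{aveeb} gives $(a'\vee b)^{-1}(a'\vee b)=a'^{-1}a'\vee b^{-1}b$, and the task becomes to show that $a'^{-1}a'$ can serve as the required complement, using \eqref{perpDistributivity} and \eqref{veePreservation} to verify $a'^{-1}a'\perp a^{-1}a$ and to match it against an arbitrary idempotent $c\geq b^{-1}b$ by way of \autoref{llexistsUSS}. I expect the genuine obstacle to be precisely establishing $a'^{-1}a'\perp a^{-1}a$ without circularity, that is, extracting the compact-containment slack of $a\prec b$ at the level of domains; the cleanest route may be to carry out the complement construction directly inside the idempotent semilattice $E$, exploiting that $E$ inherits enough distributivity from $S$ via \autoref{distributiveidempotents}.
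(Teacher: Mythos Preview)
Your setup for the first assertion and the $\Leftarrow$ direction is fine and matches the paper. The gap is exactly where you flag it: you never actually establish that element-level disjointness descends to domain idempotents, and your proposed workaround (carry out the construction inside $E$) is circular, since at this stage you do not yet know that the rather below relation in $E$ agrees with that in $S$ or that $E$ is itself a basic poset (that is the content of the \emph{next} proposition, which in turn relies on this one).

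The missing idea is short and resolves the obstacle directly. Do not try to test every $c\geq b^{-1}b$; instead, use $\qq$-roundness once to obtain $c\in S$ with $b\leq c$ and $b^{-1}b\prec c^{-1}c$. Apply $a\prec b$ with this $c$ to get $d\perp a$ with $c\leq b\vee d$. The crucial observation is that both $a$ and $d$ lie below the common element $s=b\vee d$, so $s^{-1}a=a^{-1}a$ and $s^{-1}d=d^{-1}d$; if $a^{-1}a\cdot d^{-1}d\neq0$ then $s\,a^{-1}a\,d^{-1}d$ is a nonzero common lower bound of $a$ and $d$, contradicting $a\perp d$. Hence $a^{-1}a\perp d^{-1}d$. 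Now \eqref{aveeb} gives
\[
a^{-1}a\ \leq\ b^{-1}b\ \prec\ c^{-1}c\ \leq\ (b\vee d)^{-1}(b\vee d)\ =\ b^{-1}b\vee d^{-1}d,
\]
so $a^{-1}a\prec b^{-1}b\vee d^{-1}d$ by \eqref{Auxiliarity}, and \eqref{perpDistributivity} yields $a^{-1}a\prec b^{-1}b$. This is the paper's route; your detour through \eqref{Complements} and arbitrary idempotent $c$ can be dropped entirely.
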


\begin{proof}
Assume $S$ is a basic semigroup.  In particular, as $S$ is $\qq$-round, $S$ is also $\succ$-round, by \autoref{Edetermined}.  Thus $S$ is basic poset.

If $a\prec b$ then we have $c\geq b$ with $c^{-1}c\succ b^{-1}b$, as $S$ is $\qq$-round.  As $a\prec b\leq c$, we have $d\perp a$ with $c\leq b\vee d$ and hence
\[a^{-1}a\leq b^{-1}b\prec c^{-1}c\leq(b\vee d)^{-1}(b\vee d)=b^{-1}b\vee d^{-1}d.\]
As $a,d\leq b\vee d$ and $d\perp a$, we have $d^{-1}d=(b\vee d)^{-1}d\perp(b\vee d)^{-1}a=a^{-1}a$.  Thus as $a^{-1}a\prec b^{-1}b\vee d^{-1}d$, \eqref{perpDistributivity} yields $a^{-1}a\prec b^{-1}b$ so $a\pp b$.
\end{proof}

It also does not matter if we consider $\prec$ defined within $S$ or within $E$.

\begin{prp}\label{Ebasic}
If $S$ is a basic semigroup then $E$ is a basic poset and
\[e\prec f\text{ in }E\qquad\Leftrightarrow\qquad e\prec f\text{ in }S.\]
\end{prp}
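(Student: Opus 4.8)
The plan is to reduce every condition on $E$ to the corresponding condition on $S$ by exploiting two standard facts about idempotents. First, any element below an idempotent is itself idempotent: if $x\leq g\in E$ then $x=kg$ for some idempotent $k$, and since idempotents commute, $x$ is idempotent. Second, any join of idempotents that exists in $S$ is idempotent (by \eqref{aveeb}) and is automatically the join computed in $E$, since it is an idempotent upper bound below every other idempotent upper bound. Combined with \autoref{distributiveidempotents} — applicable because $\prec$-distributivity forces \eqref{leqDecomposition} via \autoref{Distributivity=>Decomposition} — these show at the outset that $E$ is a conditional $\vee$-semilattice on which $\leq$, $\perp$, and binary joins all agree with those of $S$.

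First I would obtain the inclusion ``$e\prec f$ in $E\Rightarrow e\prec f$ in $S$'' for free from \autoref{precMinimal}. Writing $R$ for the restriction to $E$ of the rather below relation of $S$, it suffices to verify that $R$ is an auxiliary relation on the conditional $\vee$-semilattice $E$ satisfying \eqref{succRound} and \eqref{perpDistributivity}; then \autoref{precMinimal} gives that the rather below relation computed inside $E$ is contained in $R$. Auxiliarity is inherited from $S$ because the two orders coincide; \eqref{perpDistributivity} is inherited because $\perp$ and binary joins coincide and $S$, being $\prec$-distributive, satisfies \eqref{perpDistributivity}; and \eqref{succRound} for $R$ follows from \autoref{precE} together with the $\succ$-roundness of $S$ (noted in the proof of \autoref{precE} via \autoref{Edetermined}): given $e\in E$, choose $c$ with $e\prec c$ in $S$, so that $e=e^{-1}e\prec c^{-1}c$ with $c^{-1}c\in E$.

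For the reverse inclusion I would argue directly from \eqref{RatherBelow}. Suppose $e\prec f$ in $S$ and fix any idempotent $g\geq f$; then $e\leq f$ holds automatically and the definition in $S$ yields $a'\perp e$ with $g\leq a'\vee f$. By \eqref{leqDistributivity} in $S$ (available through \autoref{distributiveidempotents}) we may write $g=a''\vee f''$ with $a''\leq a'$ and $f''\leq f$. The crucial point is that $a''\leq g$, so $a''$ is idempotent; moreover $a''\leq a'\perp e$ gives $a''\perp e$ by \eqref{perpAuxiliarity}, and $g=a''\vee f''\leq a''\vee f$. Thus $a''$ is an idempotent witness showing $e\prec f$ in $E$.

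Finally, with $\prec$ now known to be the same relation whether computed in $E$ or in $S$, I would transfer the remaining axioms of a basic poset. The relation $E$ is $\succ$-round by the $\succ$-roundness of $R$ already verified. For $\prec$-distributivity of $E$ I would route both directions of \eqref{precDistributivity} through the statement in $S$: in the forward direction, the intermediate elements $f'$, $h'$, $f'\vee h'$ produced by $\prec$-distributivity of $S$ are automatically idempotent, being below $e\leq f\vee h$ and joins of idempotents; in the backward direction, the quantifier ``$\forall a'\prec e$'' over $S$ ranges only over idempotents (as $e$ is idempotent), hence coincides with the quantifier over $E$, so the $E$-hypothesis supplies exactly the witnesses needed. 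This exhibits $E$ as a $\prec$-distributive, $\succ$-round, conditional $\vee$-semilattice, i.e. a basic poset. The hard part is purely the bookkeeping of passing between witnesses in $S$ and witnesses in $E$; the single idea that makes it all go through is that every element below an idempotent is idempotent, which forces each $S$-witness encountered to land in $E$.
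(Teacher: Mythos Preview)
Your proof is correct but follows a different route from the paper's.

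For the direction $e\prec f$ in $S\Rightarrow e\prec f$ in $E$, the paper applies $\qq$-roundness to obtain $h\in E$ with $g\prec h$ (for an arbitrary idempotent $g\geq f$) and then invokes \eqref{Complements} to produce $e'\perp e$ with $g\prec e'\vee f\prec h$; since $e'\leq h\in E$, the witness $e'$ is automatically idempotent. You instead shrink an arbitrary $S$-witness $a'$ down below $g$ via \eqref{leqDistributivity}, which you make available through \autoref{distributiveidempotents}. For the converse direction, the paper uses $\qq$-roundness once more to pick a single $g\in E$ with $f\prec g$ in $S$, reads off an $E$-witness $e'$ from $e\prec f$ in $E$, and concludes $e\prec f$ in $S$ via \eqref{perpDistributivity}; you instead invoke the abstract minimality statement \autoref{precMinimal}. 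Both approaches are valid: the paper's is slightly more hands-on and leans on $\qq$-roundness together with \eqref{Complements}, while yours shows that the machinery of \autoref{precMinimal} and \autoref{distributiveidempotents} already encodes enough to avoid any ad hoc construction. The transfer of $\prec$-distributivity to $E$ is handled the same way in spirit in both arguments.
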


\begin{proof}
Take $e,f\in E$ with $e\prec f$ (in $S$).  For any $g\in E$ with $g\geq f$, we have $h\in E$ with $h\succ g$, as $S$ is $\qq$-round.  Then \eqref{Complements} yields $e'\perp e$ with $g\prec e'\vee f\prec h$.  Thus $f\in E$, as $f\leq h\in E$.  As $g$ was arbitrary, this shows $e\prec f$ also holds in $E$.

Thus $E$ is $\succ$-round, as $S$ is $\qq$-round.  Also, $E$ is a conditional $\vee$-semilattice as $S$ is.  In fact, by \eqref{aveeb}, joins in $S$ of elements of $E$ must also lie in $E$.  This means \eqref{precDistributivity} in $S$ implies \eqref{precDistributivity} in $E$.  Thus $E$ is a basic poset.

Conversely, say $e\prec f$ in $E$.  Then we have $g\in E$ with $f\prec g$ in $S$, as $S$ is $\qq$-round.  In particular, $f\leq g$ so we have some $e'\in E$ with $e'\perp e$ and $g\leq e'\vee f$ (noting joins in $E$ are joins in $S$ because $S$ is a conditional $\vee$-semilattice).  Thus in $S$ we have $e\leq f\prec g\leq e'\vee f$ so $e\prec f$, by \eqref{perpDistributivity}.
\end{proof}

\begin{prp}\label{acprecbd}
If $S$ is a basic semigroup then
\[a\prec b,\quad c\prec d\quad\text{and}\quad b^{-1}b\U dd^{-1}\qquad\Rightarrow\qquad ac\prec bd.\]
\end{prp}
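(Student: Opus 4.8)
The plan is to reduce everything to a statement about the idempotent domains and then apply \autoref{Edetermined}. First, $ac\leq bd$ is immediate, since the product is order preserving and $a\leq b$, $c\leq d$. Basic semigroups are distributive (their $\prec$-distributivity yields \eqref{leqDecomposition} by \autoref{Distributivity=>Decomposition}, whence \autoref{distributiveidempotents} applies), so \autoref{Edetermined} reduces the goal to showing $(ac)^{-1}(ac)\prec(bd)^{-1}(bd)$. Writing $e=a^{-1}a$, $f=b^{-1}b$ and using $c=cc^{-1}d$ (valid as $c\leq d$), commutativity of idempotents gives
\[(ac)^{-1}(ac)=c^{-1}ec=d^{-1}(e\wedge cc^{-1})d\qquad\text{and}\qquad(bd)^{-1}(bd)=d^{-1}fd=d^{-1}(f\wedge dd^{-1})d.\]
Thus, setting $x=e\wedge cc^{-1}$ and $y=f\wedge dd^{-1}$ (both idempotents below $dd^{-1}$), it suffices to prove $d^{-1}xd\prec d^{-1}yd$.

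Next I would establish the idempotent inequality $x\prec y$, which is exactly where the hypothesis $b^{-1}b\U dd^{-1}$ is used. On one hand $x\leq e\prec f$ gives $x\prec f$ by auxiliarity. On the other hand, applying \autoref{precE} to $c^{-1}\prec d^{-1}$ (the inverse being an order automorphism of $(S,\leq)$, hence preserving $\prec$) yields $cc^{-1}\prec dd^{-1}$, so $x\leq cc^{-1}\prec dd^{-1}$ gives $x\prec dd^{-1}$. Now $f\U dd^{-1}$ together with \eqref{UInterpolation} interpolates some $w$ with $x\prec w\prec f,dd^{-1}$; then $w\leq f\wedge dd^{-1}=y$, so $x\prec y$ by auxiliarity. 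The Hausdorff hypothesis is precisely what lets the two separate relations $x\prec f$ and $x\prec dd^{-1}$ be combined into the single relation $x\prec f\wedge dd^{-1}$.

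The main obstacle is the final conjugation step: deducing $d^{-1}xd\prec d^{-1}yd$ from $x\prec y$, since $\prec$ is defined relative to the whole poset and need not be transported by the order isomorphism $z\mapsto d^{-1}zd$ for free. I would avoid arguing inside the interval $[0,dd^{-1}]$ and instead pass through $S$. Consider $xd\leq yd$; a direct computation gives $(xd)(xd)^{-1}=x$ and $(yd)(yd)^{-1}=y$ (using $x,y\leq dd^{-1}$), so the range-form of \autoref{Edetermined} — obtained by applying the original to $a^{-1},b^{-1}$, i.e. $a\leq b$ and $aa^{-1}\prec bb^{-1}\Rightarrow a\prec b$ — applies to give $xd\prec yd$ from $x\prec y$. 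Taking domains via \autoref{precE} then yields $d^{-1}xd=(xd)^{-1}(xd)\prec(yd)^{-1}(yd)=d^{-1}yd$. Since $d^{-1}yd=d^{-1}fd=(bd)^{-1}(bd)$, this is exactly $(ac)^{-1}(ac)\prec(bd)^{-1}(bd)$, and \autoref{Edetermined} completes the proof that $ac\prec bd$.
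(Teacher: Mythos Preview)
Your proof is correct and follows essentially the same route as the paper's. Both arguments first establish the idempotent inequality $a^{-1}a\wedge cc^{-1}\prec b^{-1}b\wedge dd^{-1}$ from the hypothesis $b^{-1}b\U dd^{-1}$ (the paper invokes \eqref{wedgePreservation} in $E$ directly, while you unfold this via \eqref{UInterpolation}), and then bootstrap to $ac\prec bd$ by alternating \autoref{Edetermined} and \autoref{precE}. The only cosmetic difference in the bootstrap is the order of multiplication: the paper first applies \autoref{Edetermined} to obtain $acc^{-1}\prec bdd^{-1}$, takes inverses, applies \autoref{precE} to get $acc^{-1}a^{-1}\prec bdd^{-1}b^{-1}$, and finishes with the range form of \autoref{Edetermined}; you instead multiply on the right by $d$ first to get $xd\prec yd$, apply \autoref{precE} for the domains, and finish with \autoref{Edetermined}.
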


\begin{proof}
If $a,b,c,d\in S$ satisfy the given conditions, $a^{-1}a\prec b^{-1}b$ and $cc^{-1}\prec dd^{-1}$, by \eqref{ESupported}.  Applying \eqref{wedgePreservation} to $E$ yields
\[a^{-1}acc^{-1}=a^{-1}a\wedge cc^{-1}\prec b^{-1}b\wedge dd^{-1}=b^{-1}bdd^{-1}.\]
By \autoref{Edetermined}, $acc^{-1}\prec bdd^{-1}$ or, equivalently, $cc^{-1}a^{-1}\prec dd^{-1}b^{-1}$.  Thus $acc^{-1}a^{-1}\prec bdd^{-1}b^{-1}$, by \eqref{ESupported}, so $ac\prec bd$, by \autoref{Edetermined}.
\end{proof}

\begin{cor}
If $S$ is a basic semigroup then
\[\label{Invariance}\tag{Invariance}a\prec b\quad\text{and}\quad b^{-1}b\leq cc^{-1}\qquad\Rightarrow\qquad ac\prec bc.\]
\end{cor}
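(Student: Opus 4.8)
The plan is to reduce everything to the \emph{range} idempotents $aa^{-1}$ and $bb^{-1}$, which — crucially — are left untouched by right multiplication by $c$ under the hypothesis $b^{-1}b\le cc^{-1}$. First I would record a range analog of \eqref{ESupported}. Since inversion $x\mapsto x^{-1}$ is an order automorphism of $(S,\le)$ (as $x\le y\Leftrightarrow x^{-1}\le y^{-1}$), it preserves $\perp$, existing joins and hence the rather below relation $\prec$ defined from them in \eqref{RatherBelow}, so that $a\prec b\Leftrightarrow a^{-1}\prec b^{-1}$. Feeding $a^{-1}\prec b^{-1}$ into \eqref{ESupported} (valid since a basic semigroup is a $\prec$-distributive conditional $\vee$-semilattice, hence distributive by \autoref{Distributivity=>Decomposition} and \autoref{distributiveidempotents}) yields the implication $a\prec b\Rightarrow aa^{-1}\prec bb^{-1}$.

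Next I would compute the range idempotents of $ac$ and $bc$. From $a\prec b$ and \eqref{ESupported} we have $a^{-1}a\prec b^{-1}b\le cc^{-1}$, so both $a^{-1}a\le cc^{-1}$ and $b^{-1}b\le cc^{-1}$. A one-line calculation then gives $acc^{-1}=a$ and $bcc^{-1}=b$, whence
\[(ac)(ac)^{-1}=a\,cc^{-1}a^{-1}=aa^{-1}\qquad\text{and}\qquad(bc)(bc)^{-1}=b\,cc^{-1}b^{-1}=bb^{-1}.\]
Thus passing from $a,b$ to $ac,bc$ does not alter the range projections, even though it does change the domain projections.

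Finally I would invoke the range version of \autoref{Edetermined}, namely that $x\le y$ together with $xx^{-1}\prec yy^{-1}$ implies $x\prec y$; this follows from \autoref{Edetermined} applied to $x^{-1},y^{-1}$ combined with the inversion invariance of $\prec$ noted above. Since $ac\le bc$ (right multiply $a\le b$ by $c$) and, by the previous two paragraphs, $(ac)(ac)^{-1}=aa^{-1}\prec bb^{-1}=(bc)(bc)^{-1}$, this delivers $ac\prec bc$, as required.

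I expect the main obstacle to be conceptual rather than computational. The naive attempt to deduce the corollary directly from \autoref{acprecbd} fails, since matching the conclusion $ac\prec bc$ would force the second factors to satisfy $c\prec c$, which is false in general. The decisive point is that one should \emph{not} try to work with the domain idempotents $a^{-1}a,b^{-1}b$ (whose conjugates $c^{-1}(\cdot)c$ are awkward to control, since right multiplication by $c$ genuinely alters them and any conjugation-invariance argument for $\prec$ stumbles over the elements of $E$ lying above $c^{-1}c$), but instead with the range idempotents, which the hypothesis $b^{-1}b\le cc^{-1}$ holds fixed; once this is seen, the proof is immediate from \eqref{ESupported} and \autoref{Edetermined}.
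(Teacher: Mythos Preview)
Your proof is correct, but it follows a genuinely different path from the paper's. The paper deduces the corollary directly from \autoref{acprecbd} after all: it uses $\qq$-roundness to pick $d\succ c$, notes that $b^{-1}b\le cc^{-1}\le dd^{-1}$ gives $b^{-1}b\U dd^{-1}$ (since $\leq\ \subseteq\ \U$), applies \autoref{acprecbd} to $a\prec b$ and $c\prec d$ to obtain $ac\prec bd$, and finally observes that $bd=bc$ (because $b^{-1}b\le cc^{-1}$ and $c=cc^{-1}d$ force $bd=bb^{-1}bd\le bcc^{-1}d=bc\le bd$). So your remark that ``the naive attempt to deduce the corollary directly from \autoref{acprecbd} fails'' is slightly off the mark: what fails is taking both second factors equal to $c$, but the paper sidesteps this by passing to $d\succ c$.

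Your route --- exploiting inversion invariance of $\prec$ to switch to range idempotents, which are held fixed by right multiplication by $c$ under the hypothesis $b^{-1}b\le cc^{-1}$, and then applying the range form of \autoref{Edetermined} --- is a nice alternative. It bypasses \autoref{acprecbd} (and hence \eqref{wedgePreservation}) entirely, relying only on \eqref{ESupported} and \autoref{Edetermined} together with the observation that inversion is an order automorphism. In that sense it is more self-contained, though the paper's three-line argument is shorter once \autoref{acprecbd} is available.
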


\begin{proof}
Assume $a\prec b$ and $b^{-1}b\leq cc^{-1}$.  By \eqref{succRound}, we have $d\succ c$ and hence $dd^{-1}\geq cc^{-1}\geq b^{-1}b$.  As $\leq\ \subseteq\ \U$, we have $b^{-1}b\smallsmile dd^{-1}$ so $ac\prec bd=bc$, by \autoref{acprecbd}.
\end{proof}

\section{Groupoids}\label{Groupoids}

\subsection{Topology}

\[\textbf{From now on we assume $G$ is both a groupoid and a topological space}.\]

In other words, $G$ is a topological space on which we have a partially defined multiplication $\cdot$ which turns $G$ into a small category in which every point is an isomorphism.  Generally, we also want the topology to behave nicely with respect products and inverses, for which we recall the following standard definitions.

\begin{dfn}
We call the groupoid $G$
\begin{enumerate}
\item \emph{topological} if $g\mapsto g^{-1}$ and $(g,h)\mapsto gh$ are continuous.
\item \emph{\'{e}tale} if, moreover, $g\mapsto g^{-1}g$ is a local homeomorphism.
\end{enumerate}
\end{dfn}

First, we want to extend \autoref{LHCneighbourhoods} to `bi-Hausdorff bisections'.

\begin{dfn}
We denote the unit space of $G$ by $G^0$.  We call $B\subseteq G$
\begin{enumerate}
\item a \emph{bisection} if $B^{-1}B\subseteq G^0$ and $BB^{-1}\subseteq G^0$.
\item \emph{bi-Hausdorff} if $B^{-1}B$ and $BB^{-1}$ are Hausdorff.
\end{enumerate}
\end{dfn}

Equivalently, $B$ is a bisection iff $g\mapsto g^{-1}g$ and $g\mapsto gg^{-1}$ are injective on $B$.  Indeed, if $gh^{-1}\in G^0$ and $g^{-1}g=h^{-1}h$ then $g^{-1}g=g^{-1}gh^{-1}h=g^{-1}h$ so $g=h$.  Note that we are not requiring bisections to be open.

\begin{prp}\label{LHCbisections}
If $G$ is \'{e}tale and locally Hausdorff then any compact bi-Hausdorff bisection $C\subseteq G$ is contained in an open Hausdorff bisection.
\end{prp}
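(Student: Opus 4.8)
The plan is to first reduce to the Hausdorff situation already handled by \autoref{LHCneighbourhoods}. Since $C$ is a bisection, the source map $s\colon g\mapsto g^{-1}g$ is injective on $C$ and $C^{-1}C=s(C)$, so $s$ restricts to a continuous bijection $C\to s(C)$ onto the compact Hausdorff set $C^{-1}C$. As a continuous bijection from a compact space onto a Hausdorff space is a homeomorphism, $C$ is itself compact Hausdorff, and likewise $r\colon g\mapsto gg^{-1}$ maps $C$ homeomorphically onto $CC^{-1}$. Thus \autoref{LHCneighbourhoods} places $C$ inside an open Hausdorff $O\subseteq G$, and since every subset of $O$ is Hausdorff, it now suffices to produce an open bisection $W$ with $C\subseteq W\subseteq O$.

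To build such a $W$ I would first arrange source-injectivity. Write $\sigma=(s|_C)^{-1}\colon s(C)\to C$ for the inverse homeomorphism, a continuous section of $s$ over the compact set $s(C)$, and fix an open Hausdorff $V_0\subseteq G^0$ containing $s(C)$ (\autoref{LHCneighbourhoods} again), so that $s(C)$ is closed in $V_0$. As $G$ is étale, every $g\in C$ lies in an open bisection $B\subseteq O$, and $\tau_B=(s|_B)^{-1}$ is a continuous section of $s$ on the open set $s(B)$; it agrees with $\sigma$ exactly on the relatively open set $s(C\cap B)=\{x\in s(C):\sigma(x)\in B\}$. Covering $s(C)$ by finitely many $s(B_i)\cap V_0$ and shrinking each domain to $\big(s(B_i)\cap V_0\big)\setminus\big(s(C)\setminus s(C\cap B_i)\big)$ — an open set, since $s(C)\setminus s(C\cap B_i)$ is relatively closed in the closed set $s(C)$ — I may assume each resulting section $\tau_i$ agrees with $\sigma$ throughout $s(C)$ meeting its domain. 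On an overlap the agreement locus then equals the open set $s(B_i\cap B_j)$ and is also closed there, because two sections of a local homeomorphism agreeing at a point agree on a neighbourhood while the equaliser of two maps into the Hausdorff space $O$ is closed. Pushing the resulting (open) disagreement loci off the compact set $s(C)$ yields an open $V\supseteq s(C)$ on which the $\tau_i$ restrict to a single continuous section $\tau$, whose image $W_s=\tau(V)$ is open, has $s|_{W_s}$ injective, and satisfies $C=\sigma(s(C))\subseteq W_s\subseteq O$.

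Running the identical construction for the range map $r$ inside the open Hausdorff set $W_s$ produces an open $W\subseteq W_s$ with $C\subseteq W$ and $r|_W$ injective; since $s|_W$ remains injective on the smaller set, $W$ is an open bisection, and $W\subseteq O$ makes it Hausdorff, as required. The reduction of the first paragraph and the duplication for $r$ are routine; the genuine work, and the step I expect to be the main obstacle, is the gluing in the second paragraph — assembling finitely many local sections of $s$ that agree on $s(C)$ into one section over a full open neighbourhood of $s(C)$. The clopenness of the agreement loci (from the local homeomorphism property together with the Hausdorffness of $O$) is what makes this possible, and the delicate point is controlling the sections near the boundaries of their domains, which is precisely why the domains must be shrunk so that agreement with $\sigma$ holds throughout and the cover refined with compactness of $s(C)$ in mind.
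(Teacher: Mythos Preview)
Your approach is correct but takes a genuinely different route from the paper. The paper argues directly, in exact parallel with \autoref{LHCneighbourhoods}: for each $g\in C$ choose an open Hausdorff bisection $O_g\ni g$; for each $h\in C\setminus O_g$ use the bi-Hausdorffness of $C$ (not the Hausdorffness of $C$ itself) to separate $g^{-1}g$ from $h^{-1}h$ and $gg^{-1}$ from $hh^{-1}$, obtaining open Hausdorff bisections $N_h\ni g$ and $M_h\ni h$ with disjoint source and range sets so that $N_h\cup M_h$ is still a bisection; then carry out the same finite-cover-and-intersect construction. You instead first observe that $C$, being continuously bijective onto the compact Hausdorff set $s(C)$, is itself compact Hausdorff, invoke \autoref{LHCneighbourhoods} to get an ambient open Hausdorff $O$, and then isolate the purely \'etale fact that a compact subset on which a local homeomorphism is injective admits an open neighbourhood with the same property, given Hausdorff source and target. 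Your modularisation is attractive and the general principle is independently useful; the paper's direct construction is shorter, needs no section-gluing, and uses the bi-Hausdorff hypothesis exactly as stated rather than via your (correct) deduction that $C$ is Hausdorff.

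One point deserves tightening. Your clopenness argument for the pairwise agreement loci is exactly right, but the phrase ``pushing the disagreement loci off $s(C)$'' hides real work: each $E_{ij}$ is closed only \emph{in} $D_i\cap D_j$, and its closure in $V_0$ could in principle meet $s(C)$ at boundary points of $D_j$ for indices $j$ with $x\notin D_j$. The cleanest way to finish is to bypass the gluing entirely and argue in the fibre product: with $X=\bigl(\bigcup_i B_i\bigr)\cap s^{-1}(V_0)\subseteq O$ (open, Hausdorff) and $Y=V_0$ (Hausdorff), the set $Z=\{(x,y)\in X\times X:x\neq y,\ s(x)=s(y)\}$ is closed in $X\times X$ (because $X\times_Y X$ is closed and the diagonal is open in it by the local-homeomorphism property) and disjoint from the compact set $C\times C$; a standard double tube-lemma argument then produces an open $W\supseteq C$ with $(W\times W)\cap Z=\emptyset$, i.e.\ $s|_W$ injective. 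That replaces your gluing paragraph and makes the argument watertight.
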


\begin{proof}
For every $g\in C$, take some open Hausdorff bisection $O_g\ni g$.  For every $h\in C\setminus O_g$, we have open Hausdorff bisections $N_h\ni g$ and $M_h\ni h$.  As $C$ is a bi-Hausdorff bisection, $g^{-1}g\neq h^{-1}h$ and $gg^{-1}\neq hh^{-1}$ can be separated by open sets.  As $g\mapsto g^{-1}g$ and $g\mapsto gg^{-1}$ are continuous, this means we can make $N_h$ and $M_h$ smaller if necessary to ensure that
\[(N_h^{-1}\cdot N_h)\cap(M_h^{-1}\cdot M_h)=\emptyset=(N_h\cdot N_h^{-1})\cap(M_h\cdot M_h^{-1})\]
so $N_h\cup M_h$ is also an open Hausdorff bisection.  As $C$ is compact and $O_g$ is open, $C\setminus O_g$ is also compact so we have some finite subcover $M'_g=M_{h_1}\cup\ldots\cup M_{h_k}$ of $C\setminus O_g$.  Let $N'_g=O_g\cap N_{h_1}\cap\ldots\cap N_{h_k}$.  Again as $C$ is compact, we have some finite subcover $N=N'_{g_1}\cup\ldots\cup N'_{g_j}$ of $C$.  Let
\[O=N\cap(O_{g_1}\cup M_{g_1})\cap\ldots\cap(O_{g_j}\cap M_{g_j}).\]
Certainly $O$ is an open subset containing $C$.  To see that $O$ is a Hausdorff bisection, take any $g,h\in O$.  As $g\in N$, we have $g\in N'_{g_l}$ for some $l$.  We also have $h\in O_{g_l}\cup M'_{g_l}$.  If $h\in O_{g_l}$ then, as $g\in N'_{g_l}\subseteq O_{g_l}$ too and $O_{g_l}$ is Hausdorff, we can separate $h$ and $g$ with disjoint open sets.  Moreover, as $O_{g_l}$ is a bisection, $h^{-1}h\neq g^{-1}g$ and $hh^{-1}\neq gg^{-1}$.  If $h\in M'_{g_l}$ then $h\in M_{h_j}$ for some $j$ so, as $N'_{g_l}\cup M_{h_j}\subseteq N_{h_j}\cup M_{h_j}$ is a Hausdorff bisection, the same argument applies.  As $g$ and $h$ were arbitrary, we are done.
\end{proof}

In topological groupoids, multiplication preserves compactness as long as $G^0$ is Hausdorff, which will be important later on in \autoref{G0H=>Uetalebasis}.

\begin{prp}\label{CompactProduct}
If $G$ is topological and $G^0$ is Hausdorff, the product $C\cdot D\subseteq G$ of any compact $C,D\subseteq G$ is again compact.
\end{prp}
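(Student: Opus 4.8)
The plan is to realize $C\cdot D$ as the image of a compact set under the (continuous) multiplication map, so that compactness follows from the standard fact that continuous images of compact sets are compact. Write $G^{(2)}=\{(g,h)\in G\times G:g^{-1}g=hh^{-1}\}$ for the set of composable pairs and let $m:G^{(2)}\to G$ denote multiplication, which is continuous since $G$ is topological. Then, directly from the definition of the product of subsets,
\[C\cdot D=m[(C\times D)\cap G^{(2)}].\]
Thus it suffices to show that $(C\times D)\cap G^{(2)}$ is compact, for then its image under $m$ is compact too.

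The key step is to show that $G^{(2)}$ is closed in $G\times G$, and this is exactly where the Hausdorffness of $G^0$ enters. First I would note that the maps $g\mapsto g^{-1}g$ and $h\mapsto hh^{-1}$ are continuous from $G$ into $G^0$ (as already used in the proof of \autoref{LHCbisections}, being composites of inversion with multiplication), so that
\[\psi:G\times G\to G^0\times G^0,\qquad\psi(g,h)=(g^{-1}g,\,hh^{-1})\]
is continuous. Now $G^{(2)}=\psi^{-1}[\Delta]$, where $\Delta=\{(x,x):x\in G^0\}$ is the diagonal of $G^0$. Since $G^0$ is Hausdorff, $\Delta$ is closed in $G^0\times G^0$, and hence $G^{(2)}=\psi^{-1}[\Delta]$ is closed in $G\times G$.

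To conclude, $C\times D$ is compact, being a product of two compact sets, so the closed subset $(C\times D)\cap G^{(2)}$ is again compact; applying the continuous map $m$ then shows that $C\cdot D=m[(C\times D)\cap G^{(2)}]$ is compact, as required. The only real subtlety, and the one place where the hypothesis is genuinely used, is that it is the diagonal of the \emph{unit space} $G^0$ that must be closed: the ambient groupoid $G$ itself need not be Hausdorff, so one must take care to factor $\psi$ through $G^0\times G^0$ rather than through $G\times G$. This is really the whole content of the proposition, the rest being the routine observation that a closed subset of the compact set $C\times D$ maps to a compact set under the continuous multiplication.
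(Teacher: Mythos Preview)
Your proof is correct. Both your argument and the paper's hinge on the same key fact: Hausdorffness of $G^0$ ensures that the set of composable pairs is ``closed under limits.'' The paper unwinds this into a direct net argument (take a net in $C\cdot D$, lift to nets in $C$ and $D$, pass to convergent subnets, and use uniqueness of limits in the Hausdorff space $G^0$ to conclude that the limit pair is still composable), whereas you package the same idea more abstractly by observing that $G^{(2)}=\psi^{-1}[\Delta_{G^0}]$ is closed in $G\times G$ because the diagonal of a Hausdorff space is closed, and then apply the continuous image of a compact set is compact. Your route is a bit slicker and avoids explicit net manipulation; the paper's route is more hands-on but makes the role of unique limits in $G^0$ slightly more visible. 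Substantively they are the same argument.
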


\begin{proof}
It suffices to show that any net $(g_\lambda)\subseteq C\cdot D$ has a subnet $(g_\alpha)$ converging to some $g\in C\cdot D$.  But if $(g_\lambda)\subseteq C\cdot D$ then we have nets $(c_\lambda)\subseteq C$ and $(d_\lambda)\subseteq D$ with $g_\lambda=c_\lambda d_\lambda$.  As $C$ and $D$ are compact, we have subnets $(c_\alpha)$ and $(d_\alpha)$ with $c_\alpha\rightarrow c\in C$ and $d_\alpha\rightarrow d\in D$ (find a convergent subnet $(c_\alpha)$ first, then take a further subnet so that $(d_\alpha)$ converges too).  Thus $c_\alpha^{-1}c_\alpha\rightarrow c^{-1}c$ and $d_\alpha d_\alpha^{-1}\rightarrow dd^{-1}$.  Moreover, $c_\alpha^{-1}c_\alpha=d_\alpha d_\alpha^{-1}\in G^0$, for all $\alpha$, so we must have $c^{-1}c=dd^{-1}$, as $G^0$ is Hausdorff and limits are unique in Hausdorff spaces.  Thus $cd$ is defined and $g_\alpha=c_\alpha d_\alpha\rightarrow cd\in C\cdot D$, as required.
\end{proof}

\subsection{\'{E}tale Bases}

\begin{dfn}\label{EB}
We call a basis $S$ of $G$ an \emph{\'{e}tale basis} if, for all $O,N\in S$,
\begin{enumerate}
\item\label{O-1N} $O^{-1}\in S$.
\item\label{OcdotN} $O\cdot N\in S$.
\item\label{O-1cdotO} $O^{-1}\cdot O\subseteq G^0$.
\end{enumerate}
We call $S$ a \emph{$\cup$-\'{e}tale basis} if \eqref{O-1cdotO} is replaced by
\[\tag{3$'$}\label{OcupN}O\cup N\in S\ \Leftrightarrow\ O\cup N\subseteq B,\text{ for some compact bi-Hausdorff bisection }B\subseteq G.\]
\end{dfn}

Note that \eqref{O-1N} and \eqref{O-1cdotO} imply that every element of an \'{e}tale basis must be a bisection.  Likewise, as with \eqref{UnionClosed} for $\cup$-bases, taking $O=N$ in the $\Rightarrow$ part of \eqref{OcupN}, we see that every element of a $\cup$-\'{e}tale basis is contained in a compact bi-Hausdorff bisection.  Then the $\Leftarrow$ part of \eqref{OcupN} ensures that $S$ is a conditional $\vee$-semilattice.  By \autoref{LHCbisections}, we could also restate \eqref{OcupN} using $\Subset$ as
\[\label{OcupNEquiv}\tag{3$'$}O\cup N\in S\quad\Leftrightarrow\quad O\cup N\Subset B,\text{ for some open Hausdorff bisection }B\subseteq G.\]

\begin{prp}\label{EG<=>EB}
$G$ has an \'{e}tale basis if and only if $G$ is an \'{e}tale groupoid.
\end{prp}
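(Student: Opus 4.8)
The plan is to prove both implications, with the substantial content lying in reconstructing the groupoid structure from an \'{e}tale basis.

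For the ``if'' direction, I would suppose $G$ is an \'{e}tale groupoid and take $S$ to be the collection of all open bisections. Since $g\mapsto g^{-1}g$ is a local homeomorphism and $g\mapsto gg^{-1}$ is too (being its composite with the homeomorphism $g\mapsto g^{-1}$), every point has an open neighbourhood on which both maps are injective, i.e. an open bisection, so $S$ is a basis. By standard \'{e}tale groupoid theory, $S$ is closed under inverses (inversion is a homeomorphism) and under products (a product of open bisections is again an open bisection), and open bisections satisfy $O^{-1}O\subseteq G^0$ by definition; thus $S$ verifies \eqref{O-1N}, \eqref{OcdotN} and \eqref{O-1cdotO} and is an \'{e}tale basis.

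For the ``only if'' direction, I would suppose $S$ is an \'{e}tale basis and show inversion and multiplication are continuous and that $s\colon g\mapsto g^{-1}g$ is a local homeomorphism; recall that \eqref{O-1N} and \eqref{O-1cdotO} already force every $O\in S$ to be a bisection. Continuity of inversion is immediate, as the preimage of $O\in S$ under $g\mapsto g^{-1}$ is $O^{-1}\in S$ by \eqref{O-1N}, hence open. The heart of the argument is the local homeomorphism property. Fixing $O\in S$, injectivity of $s$ on $O$ comes from the bisection property, and a short computation gives $s(O)=O^{-1}O$, which lies in $S$ by \eqref{O-1N} and \eqref{OcdotN} and sits inside $G^0$ by \eqref{O-1cdotO}; so $s$ maps $O$ bijectively onto an open subset of $G^0$. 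The key identity for continuity is that, for $W\in S$ with $W\subseteq O^{-1}O\subseteq G^0$, one has $O\cdot W=\{g\in O:g^{-1}g\in W\}=s^{-1}[W]\cap O$, which is open by \eqref{OcdotN}; as such $W$ form a basis of $O^{-1}O$, $s|_O$ is continuous. Applying the same identity $s(U)=U^{-1}U\in S$ to each $U\in S$ with $U\subseteq O$ shows $s$ carries basic open subsets of $O$ to open sets, so $s|_O$ is open, hence a homeomorphism; since the $O\in S$ cover $G$, the source map is a local homeomorphism.

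Finally, for continuity of multiplication at a composable pair $(g,h)$ with $gh\in W\in S$, I would pick $U_0,V_0\in S$ with $g\in U_0$, $h\in V_0$, shrink the basis element $W$ so that $gh\in W\subseteq U_0V_0$, and set $U=U_0\cap WV_0^{-1}$ and $V=V_0\cap U_0^{-1}W$; these are open by \eqref{O-1N} and \eqref{OcdotN} and contain $g=(gh)h^{-1}$ and $h=g^{-1}(gh)$. For composable $u\in U$ and $v\in V$, writing $u=wv_1^{-1}$ with $w\in W$ and $v_1\in V_0$, composability together with injectivity of the range map on $V_0$ (from $V_0^{-1}V_0\subseteq G^0$) forces $v_1=v$, whence $uv=w(v^{-1}v)=w\in W$, so $U\cdot V\subseteq W$. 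I expect this multiplication step to be the main obstacle, as it is the one place where composability, associativity and the bisection identity $O^{-1}O\subseteq G^0$ must be combined with care; the source-map computation, by contrast, collapses cleanly to the single identity $O\cdot W=s^{-1}[W]\cap O$.
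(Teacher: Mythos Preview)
Your argument is correct, and your ``only if'' direction is genuinely different from the paper's. The paper proceeds by showing $G$ is topological, multiplication is open (since $O\cdot N\in S$), and $G^0$ is open (since $e\in O^{-1}\cdot O\subseteq G^0$ for any $O\in S$ containing $e$), then cites \cite[Theorem 5.18]{Resende2007} to conclude $G$ is \'{e}tale. You instead verify the local homeomorphism property of the source map directly via the identity $O\cdot W=s^{-1}[W]\cap O$, which makes your proof self-contained at the cost of a little extra work. For continuity of multiplication, the paper's argument is noticeably slicker than your shrinking construction: given $gh\in M\in S$, simply take any $O\in S$ with $g\in O$; then $h\in O^{-1}\cdot M\in S$ and $O\cdot(O^{-1}\cdot M)\subseteq M$ follows immediately from $O\cdot O^{-1}\subseteq G^0$. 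This avoids the need to shrink $W$ or invoke injectivity of the range map on $V_0$, though your version is also correct.
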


\begin{proof}
If $G$ is an \'{e}tale groupoid then we claim that the collection $S$ of all open bisections is an \'{e}tale basis.  To see this, note that if $O$ is a bisection then so is $O^{-1}$.  Also, as $g\mapsto g^{-1}$ is continuous it is also homeomorphism, begin an involution.  Thus $O^{-1}\in S$ whenever $O\in S$.  Likewise, if $O,N\in S$ then $O\cdot N$ is a bisection which is also open, by \cite[Lemma 2.4.11]{Sims2017}, i.e. $O\cdot N\in S$.

Conversely, assume $S$ is an \'{e}tale basis of $G$.  As $O^{-1}\in S$, for all $O\in S$, the map $g\mapsto g^{-1}$ is continuous.  Whenever $gh\in M\in S$, we can take $O\in S$ with $g\in O$.  Then $h\in O^{-1}\cdot M\in S$ so $gh\in O\cdot (O^{-1}\cdot M)\subseteq M$. Thus multiplication is also continuous and hence $G$ is topological.  As $O\cdot N\in S$, for all $O,N\in S$, multiplication is also an open map.  Moreover, for any $e\in G^0$, we have some $O\in S$ with $e\in O$.  Then $O^{-1}\cdot O\in S$ and $e\in O^{-1}\cdot O\subseteq G^0$, so $G^0$ is open.  Thus $G$ is \'{e}tale, by \cite[Theorem 5.18]{Resende2007}.
\end{proof}

Any \'etale basis for $G$ forms an inverse semigroup under pointwise operations.  The resulting inverse semigroup relations have natural interpretations within $G$.

\begin{prp}\label{EtaleBasisleqsim}
If $S$ is an \'etale basis then, for any $O,N\in S$,
\begin{align}
\label{OleqN}O\leq N\qquad&\Leftrightarrow\qquad O\subseteq N.\\
\label{OsimN}O\sim N\qquad&\Leftrightarrow\qquad O\cup N\text{ is a bisection}.
\end{align}
\end{prp}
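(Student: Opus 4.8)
The plan is to argue pointwise throughout, using that on an \'etale basis the inverse semigroup operations are the pointwise product $O\cdot N=\{gh:g\in O,\ h\in N,\ gh\text{ defined}\}$ and pointwise inverse $O^{-1}=\{g^{-1}:g\in O\}$, and that every element of $S$ is a bisection, as noted after \autoref{EB}. First I would pin down the idempotents: if $e\subseteq G^0$ then $e\cdot e=e$ (for units $u,v$ the product $uv$ is defined only when $u=v$, in which case $uv=u$), while a short pointwise argument using injectivity of the range map $g\mapsto gg^{-1}$ on a bisection shows conversely that any idempotent of $S$ lies in $G^0$. Thus $E=\{O\in S:O\subseteq G^0\}$. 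I would also record the unit identities $r(h)\,h=h$ and $h\,s(h)=h$, where $r(h)=hh^{-1}$ and $s(h)=h^{-1}h$.

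For \eqref{OleqN}, recall $O\leq N$ means $O=e\cdot N$ for some $e\in E$, i.e.\ some $e\subseteq G^0$. For $\Rightarrow$, each element of $e\cdot N$ has the form $u\,h$ with $u\in e$ a unit and $h\in N$ satisfying $r(h)=u$, whence $u\,h=h$; so $O=e\cdot N\subseteq N$. For $\Leftarrow$, given $O\subseteq N$ I would take $e=O\cdot O^{-1}\in S$, which lies in $G^0$ (the bisection condition $OO^{-1}\subseteq G^0$) and is therefore idempotent. The crux is to verify $O\cdot O^{-1}\cdot N=O$: a pointwise computation, using injectivity of $r$ on the bisection $O$, gives $O O^{-1}N=\{k\in N:r(k)\in OO^{-1}\}$, and since $N$ too is a bisection the unique element of $N$ with a prescribed range unit in $OO^{-1}$ is already the corresponding element of $O$. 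Hence $e\cdot N=O$ and $O\leq N$.

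For \eqref{OsimN}, I would expand the two bisection conditions for the subset $O\cup N\subseteq G$, using that inverse and product distribute over unions pointwise:
\[(O\cup N)^{-1}(O\cup N)=O^{-1}O\cup O^{-1}N\cup N^{-1}O\cup N^{-1}N.\]
As $O,N$ are bisections we have $O^{-1}O,N^{-1}N\subseteq G^0$, so this set lies in $G^0$ exactly when $O^{-1}N\subseteq G^0$ (noting $N^{-1}O=(O^{-1}N)^{-1}$ and that $G^0$ is inverse-closed). Symmetrically $(O\cup N)(O\cup N)^{-1}\subseteq G^0$ exactly when $ON^{-1}\subseteq G^0$. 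Therefore $O\cup N$ is a bisection iff $O^{-1}N,ON^{-1}\subseteq G^0$; since $O^{-1}N$ and $ON^{-1}$ are products of elements of $S$ and hence lie in $S$, the characterization $E=\{F\in S:F\subseteq G^0\}$ turns these into $O^{-1}N,ON^{-1}\in E$, which is precisely $O\sim N$.

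I expect the sole genuine obstacle to be the $\Leftarrow$ direction of \eqref{OleqN}, where one must check the equality $O\cdot O^{-1}\cdot N=O$ and not merely the containment $O\subseteq O\cdot O^{-1}\cdot N$; this is exactly where the bisection hypothesis on both $O$ and $N$ is used in an essential way. The remaining steps reduce to the distributivity of the groupoid operations over unions and the elementary arithmetic of units, and are routine.
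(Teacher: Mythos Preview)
Your proposal is correct and follows essentially the same approach as the paper: pointwise computations exploiting that every element of an \'etale basis is a bisection. Your treatment is in fact slightly more complete than the paper's, since you explicitly characterise $E=\{O\in S:O\subseteq G^0\}$ and prove both directions of each equivalence, whereas the paper only writes out the $\Leftarrow$ direction of \eqref{OleqN} (showing $O=O\cdot O^{-1}\cdot N$ directly) and the $\Rightarrow$ direction of \eqref{OsimN}, leaving the easier directions implicit.
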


\begin{proof}\
\begin{itemize}
\item[\eqref{OleqN}]  If $O\subseteq N$ then, for each $g\in O$, we have $g=gg^{-1}g\in O\cdot O^{-1}\cdot N$ so $O\subseteq O\cdot O^{-1}\cdot N$.  Conversely, if $fg^{-1}h$ is defined, for some $f,g\in O$ and $h\in N$ then, as $g\in O\subseteq N$ and $N$ is a bisection, we must have $h=g$.  Likewise, $f=g$ so $fg^{-1}h=gg^{-1}g=g$ and hence $O\cdot O^{-1}\cdot N\subseteq O$.  Thus $O=O\cdot O^{-1}\cdot N$, which means $O\leq N$.

\item[\eqref{OsimN}]  If $O\cdot N^{-1}$ and $O^{-1}\cdot N$ are idempotents in $S$ then they must consist only of units of $G$, i.e. $O\cdot N^{-1},O^{-1}\cdot N\subseteq G^0$.  As $O$ and $N$ are bisections, we also have $O\cdot O^{-1},O^{-1}\cdot O,N\cdot N^{-1},N^{-1}\cdot N\subseteq G^0$ and hence
\[(O\cup N)\cdot(O\cup N)^{-1},(O\cup N)^{-1}\cdot(O\cup N)\subseteq G^0,\]
i.e. $O\cup N$ is a bisection.\qedhere
\end{itemize}
\end{proof}

\begin{prp}\label{Uetalebasis=>LCLHEG}
If $G$ has a $\cup$-\'{e}tale basis $S$ then $G$ is a locally compact locally Hausdorff \'{e}tale groupoid.
\end{prp}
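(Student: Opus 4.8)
The plan is to funnel the hypotheses through the three workhorse results \autoref{EG<=>EB}, \autoref{LHCbisections} and \autoref{LCLHchars}, applied in that order, so the only creative work lies in bridging them.

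First I would observe that a $\cup$-\'etale basis is automatically an \'etale basis in the sense of \autoref{EB}. Conditions \eqref{O-1N} and \eqref{OcdotN} are assumed outright, so only \eqref{O-1cdotO} needs verification. But taking $O=N$ in the $\Rightarrow$ direction of \eqref{OcupN} shows that each $O\in S$ lies inside a compact bi-Hausdorff bisection $B$; since a subset of a bisection is again a bisection, $O^{-1}\cdot O\subseteq B^{-1}\cdot B\subseteq G^0$, which is exactly \eqref{O-1cdotO}. Hence $S$ is an \'etale basis and \autoref{EG<=>EB} immediately yields that $G$ is an \'etale groupoid.

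Next I would establish local Hausdorffness, which is the prerequisite for invoking \autoref{LHCbisections}. Given $g\in G$, pick $O\in S$ with $g\in O$. Since $G$ is \'etale, the source map $s\colon h\mapsto h^{-1}h$ is a local homeomorphism, and it is injective on the bisection $O$; as a local homeomorphism that is injective and open, restricted to the open set $O$ it is a homeomorphism onto the open subset $s(O)=O^{-1}\cdot O$ of $G^0$. As $O\subseteq B$ we have $O^{-1}\cdot O\subseteq B^{-1}\cdot B$, which is Hausdorff by bi-Hausdorffness of $B$, so $s(O)$, and therefore $O$, is Hausdorff. Thus every point has a Hausdorff neighbourhood and $G$ is locally Hausdorff.

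Finally, with $G$ now known to be \'etale and locally Hausdorff, I would produce compact Hausdorff neighbourhoods. Keeping $g\in O\subseteq B$ as above, \autoref{LHCbisections} places the compact bi-Hausdorff bisection $B$ inside an open Hausdorff bisection $M$. Then $B$ is a compact subspace of the Hausdorff space $M$, hence itself compact Hausdorff, and since $g\in O\subseteq B$ with $O$ open, $B$ is a neighbourhood of $g$. So every point has a compact Hausdorff neighbourhood, and \autoref{LCLHchars} (via \eqref{L(CH)}$\Rightarrow$\eqref{LCLH}) gives that $G$ is locally compact locally Hausdorff. The only real subtlety is the \emph{ordering}: local Hausdorffness must be secured by the source-map argument before \autoref{LHCbisections} can be applied, and it is precisely the bi-Hausdorff hypothesis on $B$ (rather than full Hausdorffness of $B$, which can fail) that makes the source-map step go through.
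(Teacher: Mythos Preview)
Your proof is correct, but it takes a longer route than the paper's. Both arguments begin identically: verify \eqref{O-1cdotO} from the bisection $B$ supplied by \eqref{OcupN}, then invoke \autoref{EG<=>EB} to get that $G$ is \'etale. The divergence is in how one shows each point has a compact Hausdorff neighbourhood.

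The paper observes directly that $B$ itself is Hausdorff: since $B$ is a bisection, the source map $g\mapsto g^{-1}g$ is injective on $B$, and since $G$ is topological it is continuous; a continuous injection into the Hausdorff space $B^{-1}B$ forces the domain $B$ to be Hausdorff. Thus $B$ is already a compact Hausdorff neighbourhood of $g$, and \autoref{LCLHchars} finishes. You instead establish Hausdorffness only for the open bisection $O$ (via the homeomorphism argument), deduce local Hausdorffness of $G$, and then invoke \autoref{LHCbisections} to embed $B$ in an open Hausdorff set $M$, concluding that $B$ is Hausdorff as a subspace of $M$. This works, but the detour through \autoref{LHCbisections} is unnecessary: the very same source-map idea you use for $O$ applies verbatim to $B$, since only continuity and injectivity (not openness of the domain) are needed to pull back Hausdorffness. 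So your ordering concern in the final paragraph, while not wrong, is an artefact of the extra machinery you brought in.
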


\begin{proof}
By \autoref{EG<=>EB}, $G$ is an \'{e}tale groupoid.  By the $O=N$ case of \autoref{EB} \eqref{OcupN}, we see that every $O\in S$ is contained in a compact bi-Hausdorff bisection $B$.  As $G$ is topological, $B$ and hence $O$ must be Hausdorff.  Indeed, for any distinct $g,h\in B$, $g^{-1}g\neq h^{-1}h$ can be separated by disjoint open sets.  This means $g$ and $h$ can be separated by disjoint open sets, as $g\mapsto g^{-1}g$ is continuous.  As each $g\in G$ is contained in some $O\in S$, $G$ is locally compact locally Hausdorff, by \autoref{LCLHchars}.
\end{proof}

Unfortunately, the converse does not hold in general.

\begin{xpl}\label{noUetalebasis}
We construct a locally compact locally Hausdorff \'{e}tale groupoid $G$ such that every \'{e}tale basis of $G$ contains a set with no compact extension.

To construct such a $G$, consider the bug-eyed Cantor space $C$ given in \autoref{bug-eyedC} (the bug-eyed interval would also do), denoting the two `eyes' by $x$ and $y$.  Let $B=C\sqcup C'\sqcup C''$ consist of three copies of $C$ and consider the equivalence relation $G\subseteq B\times B$ defined as follows.  First,
\[G\cap(C\times C)\cap(C'\times C')\cap(C''\times C'')\ =\ \Delta_B\cap\ (C\times C)\cap(C'\times C')\cap(C''\times C''),\]
i.e. the restriction of $G$ to $C$, $C'$ or $C''$ is just the equality relation.  Also, for $c\in C$,
\begin{align}
(c,c')\in G\qquad&\Leftrightarrow\qquad c\in C\setminus\{y\}.\\
(c,c'')\in G\qquad&\Leftrightarrow\qquad c\in C\setminus\{x\}.\\
\label{b'b''}(c',c'')\in G\qquad&\Leftrightarrow\qquad c\in C\setminus\{x,y\}.
\end{align}
This completely determines the equivalence relation $G$, which becomes a groupoid in a standard way.  Moreover, $G$ is topological as a subspace of $B\times B$.  One can also check that $G$ has the local homeomorphisms necessary to be \'{e}tale.

Now any \'{e}tale basis $S$ of $G$ must include open sets $O$ and $N$ with
\begin{align*}
(x,x') &\in O\subseteq C\times C'.\\
(y,y'') &\in N\subseteq C\times C''.
\end{align*}
As $S$ is \'{e}tale, $O^{-1}\cdot N\in S$.  As $x$ and $y$ are not isolated in $C$, we have a sequence $(c_n)\subseteq C\setminus\{x,y\}$ with $c_n\rightarrow x$ or, equivalently, $c_n\rightarrow y$.  Thus $(c_n,c_n')\rightarrow(x,x')$ so $(c_n,c_n')$ is eventually in $O$.  Likewise $( c_n,c_n'')$ is eventually in $N$, as $(c_n,c_n'')\rightarrow(y,y'')$.  Thus $(c_n',c_n'')=(c_n',c_n)\cdot(c_n,c_n'')$ is eventually in $O^{-1}\cdot N$.  However, $(c_n',c_n'')$ has no limit in $G$, as \eqref{b'b''} excludes all of the potential limits $( x',x'')$, $(y',x'')$, $(x',y'')$ and $(y',y'')$.   This means no subnet of $(c_n',c_n'')$ has a limit in $G$ either so $O^{-1}\cdot N$ is not contained in any compact subset of $G$.

In particular, the $\Rightarrow$ part of \autoref{EB} \eqref{OcupN} fails, so $G$ has no $\cup$-\'{e}tale basis.
\end{xpl}

Note too that the above $G$ is \'{e}tale and has a basis of compact open subsets, even though it does not have an \'{e}tale basis of compact open subsets.  Although a simpler example of such a $G$ would be the bug-eyed Cantor space itself, seen as an \'{e}tale groupoid with trivial multiplication.

The non-Hausdorff nature of \autoref{noUetalebasis} was crucial.  Indeed, as long as the unit space $G^0$ is Hausdorff, we do have a converse of \autoref{Uetalebasis=>LCLHEG}.

\begin{rmk}\label{GeneralizationRemark}
If one really does want to work with general locally compact locally Hausdorff groupoids $G$ where $G^0$ may not be Hausdorff, this would likely still be feasible.  On the one hand, one would replace \autoref{EB} \eqref{OcupN} with
\begin{align*}
O\cup N\in S\qquad\Leftrightarrow\qquad&(O\cup N)^{-1}\cdot(O\cup N)\subseteq B\text{ and }(O\cup N)\cdot(O\cup N)^{-1}\subseteq C,\\
&\text{for some compact Hausdorff }B,C\subseteq G^0.
\end{align*}
On the other hand, one would replace \eqref{CompatibleJoins} with
\begin{align*}
a\vee b\text{ exists}\qquad\Leftrightarrow\qquad&a\sim b\text{ and }\exists e,f,g,h\\
&(a^{-1}a\prec e\U f\succ b^{-1}b\quad\text{and}\quad aa^{-1}\prec g\U h\succ bb^{-1}).
\end{align*}
Also $\Subset$ would now correspond to $\pp$ rather than $\prec$.  While somewhat messier, essentially the same duality could probably be obtained.  However, $G^0$ is Hausdorff for most \'{e}tale groupoids of interest to operator algebraists and we already have sufficient generality for these, by \autoref{G0H=>Uetalebasis}.
\end{rmk}

\begin{prp}\label{G0H=>Uetalebasis}
If $G$ is \'{e}tale and $G^0$ is locally compact and Hausdorff then $G$ has a $\cup$-\'{e}tale basis $S$.
\end{prp}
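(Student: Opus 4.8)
The plan is to exhibit the basis explicitly, taking
\[
S=\{O\subseteq G: O\text{ is an open bisection and }O\subseteq C\text{ for some compact bisection }C\},
\]
and then to check directly that $S$ satisfies the three requirements of \autoref{EB}. The observation that streamlines everything is that, because $G^0$ is Hausdorff, \emph{every} bisection $B$ is automatically bi-Hausdorff, since $B^{-1}B$ and $BB^{-1}$ are subsets of the Hausdorff space $G^0$. Consequently the bi-Hausdorff clause in \eqref{OcupN} imposes no extra constraint, and throughout I only need to manufacture compact \emph{bisections}, never verifying Hausdorffness separately.

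First I would show $S$ is a basis. Since $G$ is \'etale, the open bisections form a basis (as used in \autoref{EG<=>EB}), so given $g\in G$ and open $W\ni g$ I pick an open bisection $O$ with $g\in O\subseteq W$, shrinking $O$ if necessary so that the source map $s:h\mapsto h^{-1}h$ restricts to a homeomorphism of $O$ onto the open set $s(O)\subseteq G^0$ (possible as $s$ is a local homeomorphism). Because $G^0$ is locally compact Hausdorff, I can choose an open $V\subseteq G^0$ with $s(g)\in V$ and $\overline{V}$ compact and $\overline{V}\subseteq s(O)$. Pulling back, $O':=(s|_O)^{-1}(V)$ is an open bisection with $g\in O'\subseteq W$, while $(s|_O)^{-1}(\overline{V})$ is homeomorphic to $\overline{V}$, hence a compact bisection (being a subset of the bisection $O$) containing $O'$. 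Thus $O'\in S$, and since the $O'$ range over a neighbourhood base at each point, $S$ is a basis (with $\emptyset\in S$ trivially).

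Next I would verify the defining conditions. For \eqref{O-1N}, if $O\in S$ with $O\subseteq C$ a compact bisection, then $O^{-1}$ is an open bisection contained in the compact bisection $C^{-1}$, so $O^{-1}\in S$. For \eqref{OcdotN}, given $O,N\in S$ with $O\subseteq C$ and $N\subseteq D$ for compact bisections $C,D$, the product $O\cdot N$ is again an open bisection (as in \autoref{EG<=>EB}) and $O\cdot N\subseteq C\cdot D$, which is compact by \autoref{CompactProduct} (using $G^0$ Hausdorff) and a bisection (products of bisections are bisections); hence $O\cdot N\in S$. For the union condition \eqref{OcupN}: if $O\cup N\subseteq B$ for some compact bi-Hausdorff bisection $B$, then $O\cup N$ is a subset of the bisection $B$, hence itself an open bisection contained in the compact bisection $B$, so $O\cup N\in S$; conversely any $O\cup N\in S$ lies in a compact bisection, which is bi-Hausdorff because $G^0$ is Hausdorff.

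The only genuinely substantial step is the basis construction, and this is exactly where both hypotheses are used essentially: local compactness of $G^0$ supplies the compact closures $\overline{V}$, and the \'etale structure transports them into $G$ as compact bisections through the local homeomorphism $s$. Everything else reduces to the elementary stability of the class of bisections under inverses, products and passage to subsets, together with \autoref{CompactProduct} for compactness of products and the Hausdorffness of $G^0$ to discharge the bi-Hausdorff requirement.
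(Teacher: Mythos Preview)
Your proof is correct and takes essentially the same approach as the paper: you define $S$ to be the open (bi)sections contained in some compact bisection, observe that Hausdorffness of $G^0$ makes every bisection bi-Hausdorff, and then verify the three conditions of \autoref{EB} using \autoref{CompactProduct} for products. The paper's proof is more terse---it simply asserts that $S$ is a basis ``as $G$ is \'etale and $G^0$ is locally compact''---whereas you spell out the pull-back construction via the source map explicitly, but the content is the same.
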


\begin{proof}
Simply let $S$ be the collection of open subsets of $G$ contained in some (automatically bi-Hausdorff) compact bisection.  As $G$ is \'{e}tale and $G^0$ is locally compact, $S$ is a basis of $G$.  As $g\mapsto g^{-1}$ is a homeomorphism, $S$ also satisfies \autoref{EB} \eqref{O-1N}.  As $G$ is \'{e}tale and $G^0$ is Hausdorff, $S$ also satisfies \autoref{EB} \eqref{OcdotN}, by \autoref{CompactProduct}.  The definition of $S$ also immediately yields \autoref{EB} \eqref{OcupN}.
\end{proof}

As in \autoref{Ubases}, we obtain obtain general examples of $\simeq$-basic semigroups from $\cup$-\'{e}tale bases (the proof is essentially the same).

\begin{thm}\label{EtaleBases}
Let $S$ be a $\cup$-\'{e}tale basis for some (necessarily) \'etale groupoid $G$. Then $S$ is a $\simeq$-basic semigroup.  Also, for any $O,N\in S$,
\begin{align}
\label{OprecN2}O\prec N\qquad&\Leftrightarrow\qquad O\Subset N.\\
\label{OUN2}O\U N\qquad&\Leftrightarrow\qquad O\cup N\text{ is Hausdorff}.
\end{align}
\end{thm}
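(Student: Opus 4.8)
The plan is to reduce everything order-theoretic to \autoref{Ubases} and to treat the multiplicative (bisection) structure separately. First I would record three structural facts about a $\cup$-\'{e}tale basis $S$. By \autoref{Uetalebasis=>LCLHEG}, $G$ is a locally compact locally Hausdorff \'{e}tale groupoid, and taking $O=N$ in the $\Rightarrow$ part of \autoref{EB} \eqref{OcupN} shows every $O\in S$ is a (necessarily open Hausdorff) bisection. Moreover $S$ is closed under bounded unions: if $O,N\subseteq M\in S$ then $O\cup N$ lies in the compact bi-Hausdorff bisection containing $M$, so $O\cup N\in S$; in particular $S$ is a conditional $\vee$-semilattice with $O\vee N=O\cup N$. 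Finally $S$ is $\Supset$-round by the argument opening the proof of \autoref{Ubases}, now invoking \autoref{LHCbisections} in place of \autoref{LHCneighbourhoods} to extend the relevant compact bi-Hausdorff bisection to an open Hausdorff bisection $M$ and then covering by $S$-elements $\Subset M$, whose finite unions lie in $S$ by \eqref{OcupNEquiv}.

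With these in hand, \eqref{OprecN2}, \eqref{OUN2} and $\prec$-distributivity follow exactly as the corresponding items \eqref{OprecN}, \eqref{OUN} and \eqref{precDistributivity} in the proof of \autoref{Ubases}, since those arguments use only that $S$ is a $\Supset$-round basis of relatively compact open Hausdorff sets closed under bounded unions \textendash\ all of which we now have. Combining $\prec$-distributivity with $\Supset$-roundness (which yields \eqref{succRound} via \eqref{OprecN2}) and the conditional $\vee$-semilattice structure shows $S$ is a basic poset.

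The genuinely new step is to bring in the inverse structure and prove \eqref{CompatibleJoins}. I would first upgrade $\Supset$-round to $\qq$-round, i.e.\ produce for each $O$ some $O'$ with $O\pp O'$. Given $O\prec O'$ (from $\Supset$-roundness and \eqref{OprecN2}), the source map $g\mapsto g^{-1}g$ is a homeomorphism of the open bisection $O'$ onto $O'^{-1}O'$, and being continuous it carries $O\Subset O'$ to $O^{-1}O\Subset O'^{-1}O'$; by \eqref{OprecN2} applied to these idempotents this reads $O^{-1}O\prec O'^{-1}O'$, so $O\pp O'$. Thus $S$ is a basic semigroup and, by \autoref{precE}, $\prec\ =\ \pp$. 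I expect this passage between topological compact containment and the algebraic relation $\pp$ \textendash\ resting on the \'{e}tale local-homeomorphism property \textendash\ to be the main obstacle, exactly as \autoref{LHCbisections} is the technical heart replacing \autoref{LHCneighbourhoods}.

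It then remains to verify \eqref{CompatibleJoins}, which I would do by translating both sides into topology. Using \eqref{OsimN} and \eqref{OUN2}, the statement $O'\simeq N'$ says precisely that $O'\cup N'$ is a Hausdorff bisection, while by $\prec\ =\ \pp$ and \eqref{OprecN2} the statements $O\pp O'$ and $N'\qq N$ say $O\Subset O'$ and $N\Subset N'$. For $\Leftarrow$, these give $O\cup N\Subset O'\cup N'$ with $O'\cup N'$ an open Hausdorff bisection, so $O\cup N\in S$ by \eqref{OcupNEquiv} and $O\vee N$ exists. For $\Rightarrow$, if $O\vee N$ exists then $O\cup N\Subset B$ for some open Hausdorff bisection $B$ by \eqref{OcupNEquiv}; covering a compact witness $C$ (with $O\cup N\subseteq C\subseteq B$) by $S$-elements $\Subset B$ and taking a finite subcover yields a single $O'\in S$ with $O\cup N\subseteq C\subseteq O'\subseteq B$, whence $O\Subset O'$, $N\Subset O'$ and $O'\cup O'=O'$ is a Hausdorff bisection, so $O\pp O'\simeq O'\qq N$. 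This establishes \eqref{CompatibleJoins}, so $S$ is $\prec$-distributive and satisfies \eqref{CompatibleJoins}, i.e.\ a $\simeq$-basic semigroup.
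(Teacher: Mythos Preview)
Your proposal is correct and follows essentially the same approach as the paper, which simply defers to \autoref{Ubases} with the remark that ``the proof is essentially the same.'' You have spelled out explicitly the two places where something new is needed beyond \autoref{Ubases}: replacing \autoref{LHCneighbourhoods} by \autoref{LHCbisections} to get $\Supset$-roundness within $S$, and using the local-homeomorphism property of the source map to upgrade $\Supset$-round to $\qq$-round (hence $\prec=\pp$ via \autoref{precE}) before verifying \eqref{CompatibleJoins}; these are exactly the adaptations the paper leaves implicit.
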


It might seem reasonable to guess that $\pp$ and $\prec$ should coincide in any inverse semigroup which is a basic poset.  We now give an elementary counterexample, showing that $S$ really needs to be $\qq$-round, not just $\succ$-round, for this to hold.

\begin{xpl}\label{precvspp}
First let $G$ be the space consisting of two copies of the unit interval, i.e. $G=[0,1]\sqcup[0,1]'$.  We turn this into an \'{e}tale groupoid by defining $r\cdot r=r'\cdot r'=r$ and $r\cdot r'=r'\cdot r=r'$, for each $r\in[0,1]$ and corresponding $r'\in[0,1]'$.  Let $S$ be the collection of all open bisections $O$ of $G$ such that $0'$ is not in the boundary of $O$, i.e. such that
\[0'\in\overline{O}\qquad\Rightarrow\qquad 0'\in O.\]
Then $S$ is an $\cup$-\'{e}tale basis of $G$ and hence a $\simeq$-basic semigroup.  In particular, $S$ is a basic poset, by \autoref{precE} (and $E$ is also a basic poset, by \autoref{Ebasic}).

Now consider the groupoid $H=G\setminus\{0'\}$.  The map $\phi(O)=O\cap H$ takes $S$ to an inverse semigroup $T$ of open bisections of $H$.  Moreover, $\phi$ is an order isomorphism and even a $\sim$-isomorphism, although not a semigroup homomorphism, as \[\phi([0,1]'\cdot[0,1]')=\phi([0,1])=[0,1]\neq(0,1]=(0,1]'\cdot(0,1]'=\phi([0,1]')\cdot\phi([0,1]').\]
In particular, both $T$ and its idempotents form basic posets too and $T$ even still satisfies \eqref{CompatibleJoins}.  However, $T$ is not $\qq$-round.  Indeed, $a=(0,1]'$ is a maximal element of $T$, so trivially $a\prec a$, even though $e\not\prec e$ for $e=aa^{-1}=a^{-1}a=(0,1]$, as $e\subseteq[0,1]\ni0$ and there is no open subset disjoint from $(0,1]$ which contains $0$.
\end{xpl}

\subsection{Lenz Groupoids}

In the next section we will examine a natural groupoid structure on $\prec$-ultrafilters.  But first, we consider general $\leq$-filters.
\begin{center}
\textbf{Throughout this subsection, let $S$ be an arbitrary inverse semigroup.}
\end{center}
By \cite[Theorem 3.1]{Lenz2008}, the set of all $\leq$-filters on $S$ becomes an inverse semigroup under the multiplication operation $\cdot$ given by
\[T\cdot U=(TU)^\leq.\]
Restricting multiplication to the case when
\[(T^{-1}T)^\leq=(UU^{-1})^\leq,\]
we obtain the \emph{Lenz groupoid} of $S$ \textendash\, see \cite[\S3.1]{Lawson1998} and \cite[\S3.3]{LawsonMargolisSteinberg2013}.

We wish to give some characterizations of when the product is defined in the Lenz groupoid \textendash\, see \autoref{TUdefined}.  First, as in \cite[Lemma 3.3]{LawsonMargolisSteinberg2013}, we note that $U=UU^{-1}U$, for any filter $U$.  Indeed, $U\subseteq UU^{-1}U$ holds for arbitrary $U\subseteq S$, while the reverse inclusion follows from the following slightly more general result.

\begin{prp}
If $T,U\subseteq S$ and $U$ is a filter then
\begin{equation}\label{TUUsubT}
T^{-1}T\subseteq(UU^{-1})^\leq\qquad\Leftrightarrow\qquad T^{-1}TU\subseteq U.
\end{equation}
\end{prp}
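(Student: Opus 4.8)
The plan is to prove the two implications of the biconditional separately, after isolating the single nontrivial semigroup computation as a preliminary claim about $U$. Throughout I would use only the standard facts that the natural order on $S$ is preserved by products and by inversion and that $a\leq b\Leftrightarrow a=aa^{-1}b$, together with the hypotheses that $U$ is upward closed and $\leq$-directed. The preliminary claim is that $u_1u_2^{-1}u\in U$ whenever $u_1,u_2,u\in U$. To prove it I would apply $\leq$-directedness twice to choose $v\in U$ with $v\leq u_1,u_2,u$, and then show $v\leq u_1u_2^{-1}u$, so that upward closure yields $u_1u_2^{-1}u\in U$. To verify $v\leq u_1u_2^{-1}u$, note first that $v\leq u_2$ gives $v^{-1}\leq u_2^{-1}$, hence $vv^{-1}\leq u_2u_2^{-1}$ and therefore $vu_2^{-1}=vv^{-1}u_2u_2^{-1}=vv^{-1}$; next $v\leq u$ gives $vv^{-1}u=v$; and $v\leq u_1$ gives $vv^{-1}u_1=v$. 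Combining these,
\[vv^{-1}(u_1u_2^{-1}u)=(vv^{-1}u_1)u_2^{-1}u=vu_2^{-1}u=vv^{-1}u=v,\]
so $v=vv^{-1}(u_1u_2^{-1}u)$, i.e. $v\leq u_1u_2^{-1}u$.

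For the forward implication I would assume $T^{-1}T\subseteq(UU^{-1})^{\leq}$ and take an arbitrary generator $t^{-1}su$ of $T^{-1}TU$, with $t,s\in T$ and $u\in U$. By hypothesis $t^{-1}s\geq u_1u_2^{-1}$ for some $u_1,u_2\in U$, and right-multiplication by $u$ preserves the order, so $t^{-1}su\geq u_1u_2^{-1}u$. The preliminary claim gives $u_1u_2^{-1}u\in U$, whence upward closure yields $t^{-1}su\in U$; as the generator was arbitrary, $T^{-1}TU\subseteq U$.

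For the reverse implication I would assume $T^{-1}TU\subseteq U$ and fix any $u\in U$. Given $t^{-1}s\in T^{-1}T$, the hypothesis gives $t^{-1}su\in U$, whence $t^{-1}s\,uu^{-1}=(t^{-1}su)u^{-1}\in UU^{-1}$; since $uu^{-1}$ is idempotent, $t^{-1}s\,uu^{-1}\leq t^{-1}s$, so $t^{-1}s\in(UU^{-1})^{\leq}$, as required. The main obstacle is the preliminary claim, specifically the chain of identities establishing $v\leq u_1u_2^{-1}u$; once that is in hand each implication reduces to a single order-preserving multiplication followed by an appeal to upward closure. (The reverse implication tacitly selects some $u\in U$, which is the substantive case $U\neq\emptyset$.)
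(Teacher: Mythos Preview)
Your proof is correct and follows essentially the same approach as the paper's. The paper also reduces the forward direction to finding a common lower bound $c'\in U$ of $a',b',c$ and showing $c'\leq a'b'^{-1}c$, and the reverse direction is identical to yours. The only cosmetic difference is that the paper dispatches the inequality $c'\leq a'b'^{-1}c$ in one line via $c'=c'c'^{-1}c'\leq a'b'^{-1}c$ (using that products preserve the natural order componentwise), whereas you isolate this as a preliminary claim and verify it by an explicit chain of identities.
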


\begin{proof}
If $T^{-1}T\subseteq(UU^{-1})^\leq$ then, for any $a,b\in T$ and $c\in U$, we have $a',b'\in U$ with $a'b'^{-1}\leq a^{-1}b$.  As $U$ is a filter, we have $c'\in U$ with $c'\leq a',b',c$ so $c'=c'c'^{-1}c'\leq a'b'^{-1}c\leq a^{-1}bc$ and hence $a^{-1}bc\in U$.  Thus $T^{-1}TU\subseteq U$.  Conversely, if $T^{-1}TU\subseteq U$ then, for any $a,b\in T$ and $c\in U$, we have $a^{-1}bc\in U$ so $a^{-1}b\geq a^{-1}bcc^{-1}\in UU^{-1}$, i.e. $a^{-1}b\in(UU^{-1})^\leq$.  Thus $T^{-1}T\subseteq(UU^{-1})^\leq$.
\end{proof}

Also note that, as long as $T\neq\emptyset$ and $U$ is upwards closed,
\begin{equation}\label{TTUleq}
(T^{-1}TU)^\leq=U\qquad\Leftrightarrow\qquad T^{-1}TU\subseteq U.
\end{equation}
Indeed, taking $a\in T$, we have $U\subseteq(a^{-1}aU)^\leq\subseteq(T^{-1}TU)^\leq$.  Also $(T^{-1}TU)^\leq\subseteq U$ implies $T^{-1}TU\subseteq U$ while, conversely, $T^{-1}TU\subseteq U$ implies $(T^{-1}TU)^\leq\subseteq U^\leq=U$.

We also note that $T^{-1}TU\subseteq U$ implies that the product $(TU)^\leq$ can be calculated from any single element of $T$.

\begin{prp}\label{aU=TU}
If $a\in T$ and $T^{-1}TU\subseteq U$ then $(aU)^\leq=(TU)^\leq$.
\end{prp}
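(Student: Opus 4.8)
The plan is to prove the two inclusions separately, the first being essentially trivial and the second requiring one short computation. For the inclusion $(aU)^\leq\subseteq(TU)^\leq$: since $a\in T$ we have $aU\subseteq TU$, and as $\leq$-upward closure is monotone, this immediately gives $(aU)^\leq\subseteq(TU)^\leq$. No hypothesis on $T^{-1}TU$ is needed here.

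For the reverse inclusion $(TU)^\leq\subseteq(aU)^\leq$, I would observe that $(aU)^\leq$ is already $\leq$-upward closed, so it suffices to show $TU\subseteq(aU)^\leq$. Accordingly, I would fix arbitrary $b\in T$ and $u\in U$ and exhibit some $v\in U$ with $av\leq bu$, which witnesses $bu\in(aU)^\leq$. The natural candidate is $v=a^{-1}bu$. That $v\in U$ follows directly from the hypothesis: since $a,b\in T$ we have $a^{-1}b\in T^{-1}T$, whence $v=a^{-1}bu\in T^{-1}TU\subseteq U$.

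The key step is then the inequality $av\leq bu$. Computing, $av=aa^{-1}bu$, and since $aa^{-1}\in E$ is idempotent, we have $aa^{-1}(bu)\in E(bu)$; by the very definition of the canonical order, $x\leq y\Leftrightarrow x\in Ey$, this yields $av=aa^{-1}bu\leq bu$. Hence $bu\geq av$ with $v\in U$, so $bu\in(aU)^\leq$. As $b\in T$ and $u\in U$ were arbitrary, $TU\subseteq(aU)^\leq$, and therefore $(TU)^\leq\subseteq(aU)^\leq$. Combining the two inclusions gives $(aU)^\leq=(TU)^\leq$, as required.

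The only mildly delicate point—what I would flag as the crux rather than a genuine obstacle—is the verification that $av\leq bu$, which rests entirely on $aa^{-1}$ being idempotent together with the $x\in Ey$ characterization of $\leq$. Everything else is a routine unwinding of definitions, so the argument is short; the hypothesis $T^{-1}TU\subseteq U$ is used exactly once, to place $v=a^{-1}bu$ back in $U$.
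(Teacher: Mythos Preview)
Your proof is correct and follows essentially the same approach as the paper: both rest on the observation that for $a,b\in T$ and $u\in U$ one has $a(a^{-1}bu)=aa^{-1}bu\leq bu$ with $a^{-1}bu\in T^{-1}TU\subseteq U$. The paper packages this as the single chain $(aU)^\leq\subseteq(bb^{-1}aU)^\leq\subseteq(bT^{-1}TU)^\leq\subseteq(bU)^\leq$ for arbitrary $a,b\in T$, while you split into the two inclusions and argue elementwise, but the content is identical.
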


\begin{proof}
It suffices to show that $(aU)^\leq\subseteq(bU)^\leq$, for all $a,b\in T$.  But as $T^{-1}TU\subseteq U$, this follows from $(aU)^\leq\subseteq(bb^{-1}aU)^\leq\subseteq(bT^{-1}TU)^\leq\subseteq(bU)^\leq$.
\end{proof}

The following shows that a product in a filter can be used to express the filter itself as a product in the Lenz groupoid.

\begin{prp}\label{abW}
If $W$ is a filter with $ab\in W$ then we have filters $U=(Wb^{-1})^\leq$ and $V=(a^{-1}W)$ with $(U^{-1}U)^\leq=(VV^{-1})^\leq$ and $W=(UV)^\leq$.
\end{prp}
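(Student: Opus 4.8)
The plan is to read $V=(a^{-1}W)^\leq$ (the displayed upward-closure is clearly intended, since $a^{-1}W$ need not be $\leq$-closed) and to settle the three assertions in turn: that $U$ and $V$ are filters, that their source/range idempotent filters agree, and that their product recovers $W$. First I would dispose of the routine facts. Since left and right multiplication preserve the natural order and inversion is an order automorphism, the sets $Wb^{-1}$ and $a^{-1}W$ are already $\leq$-directed (given $w_1,w_2\in W$, choose $z\in W$ with $z\leq w_1,w_2,ab$ and use $zb^{-1}\leq w_ib^{-1}$ and $a^{-1}z\leq a^{-1}w_i$), so their upward closures $U$ and $V$ are genuine filters. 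I would also record immediately that $a\in U$ and $b\in V$, since $ab\in W$ forces $a\geq(ab)b^{-1}\in Wb^{-1}$ and $b\geq a^{-1}(ab)\in a^{-1}W$.

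For the equality $W=(UV)^\leq$ I would avoid computing $UV$ head-on. Using \eqref{TUUsubT} with $U$ in the role of $T$, the equality $(U^{-1}U)^\leq=(VV^{-1})^\leq$ gives $U^{-1}U\subseteq(VV^{-1})^\leq$ and hence $U^{-1}UV\subseteq V$. Then \autoref{aU=TU} (taking $U$ for $T$, $V$ for the second filter, and the element $a\in U$) yields $(UV)^\leq=(aV)^\leq$. It remains to compute $(aV)^\leq$: since $(aX^\leq)^\leq=(aX)^\leq$ for any $X$, this equals $(aa^{-1}W)^\leq$, and I would show $(aa^{-1}W)^\leq=W$. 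The inclusion $W\subseteq(aa^{-1}W)^\leq$ is immediate from $aa^{-1}w\leq w$, while for the reverse I filter $w\in W$ below $ab$ to obtain $z\leq w,ab$ in $W$; from $z\leq ab$ one gets $zz^{-1}\leq aa^{-1}$, so $aa^{-1}z=z$ and therefore $aa^{-1}w\geq z\in W$ lies in $W$.

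The genuine work — and the step I expect to be the main obstacle — is the equality $(U^{-1}U)^\leq=(VV^{-1})^\leq$. First I would identify the two sides explicitly as $(bW^{-1}Wb^{-1})^\leq$ and $(a^{-1}WW^{-1}a)^\leq$, each generator $u_1^{-1}u_2$ of $U^{-1}U$ dominating a generator $bw_1^{-1}w_2b^{-1}$ and symmetrically on the other side. The crux is then the purely algebraic identity
\[z\leq ab\qquad\Longrightarrow\qquad bz^{-1}zb^{-1}=a^{-1}zz^{-1}a.\]
To prove it I would set $e=z^{-1}z$ and note $e\leq(ab)^{-1}(ab)=b^{-1}a^{-1}ab\leq b^{-1}b$, so that $beb^{-1}$ is idempotent (using $eb^{-1}b=e$) and satisfies $beb^{-1}\leq a^{-1}a$; writing $z=(ab)e$ gives $zz^{-1}=a(beb^{-1})a^{-1}$, whence $a^{-1}zz^{-1}a=(a^{-1}a)(beb^{-1})(a^{-1}a)=beb^{-1}=bz^{-1}zb^{-1}$ because idempotents commute and $beb^{-1}\leq a^{-1}a$.

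Granting this identity, the equality of the two idempotent filters follows by a filtering argument: any generator $bw_1^{-1}w_2b^{-1}$ of $(U^{-1}U)^\leq$ dominates $bz^{-1}zb^{-1}=a^{-1}zz^{-1}a$ for a suitable $z\leq w_1,w_2,ab$ in $W$ — this is exactly where $ab\in W$ and the directedness of $W$ are used — placing it in $(VV^{-1})^\leq$, and the reverse inclusion is the mirror image. The only delicate point is the idempotent bookkeeping in the displayed identity, namely checking that $beb^{-1}$ is idempotent and comparable to $a^{-1}a$; this is precisely where commutativity of idempotents in an inverse semigroup carries the argument, and I would flag it as the one place to proceed carefully rather than by inspection.
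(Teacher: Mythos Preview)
Your argument is correct. The filter claims, the identification $(U^{-1}U)^\leq=(bW^{-1}Wb^{-1})^\leq$ and $(VV^{-1})^\leq=(a^{-1}WW^{-1}a)^\leq$, the pointwise identity $bz^{-1}zb^{-1}=a^{-1}zz^{-1}a$ for $z\leq ab$, and the reduction $(UV)^\leq=(aV)^\leq=(aa^{-1}W)^\leq=W$ all go through as written.

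The paper reaches the same endpoints but by a different mechanism for the key equality $(bW^{-1}Wb^{-1})^\leq=(a^{-1}WW^{-1}a)^\leq$. Rather than proving an elementwise idempotent identity, it stays at the level of set manipulations: from $ab\in W$ it first deduces $(abb^{-1}a^{-1}W)^\leq=W$ via \eqref{TUUsubT} and \eqref{TTUleq}, then uses idempotent commutation to get $(bb^{-1}a^{-1}W)^\leq=(a^{-1}W)^\leq$, and applies \autoref{aU=TU} to $W$ (with the element $b^{-1}a^{-1}\in W^{-1}$) to obtain $(W^{-1}W)^\leq=(b^{-1}a^{-1}W)^\leq$ and hence $(bW^{-1}W)^\leq=(a^{-1}W)^\leq$. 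Combining this with $W=WW^{-1}W$ yields the equality of the two idempotent filters in one line. For the product it uses $abb^{-1}\in U$ rather than your $a\in U$, but that is immaterial. Your approach is more self-contained and makes the underlying idempotent computation explicit; the paper's approach leans harder on the preceding lemmas and avoids singling out any individual $z\leq ab$.
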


\begin{proof}
As $ab\in W$, we have $abb^{-1}a^{-1}\in WW^{-1}$ so $(abb^{-1}a^{-1}W)^\leq=W$, by \eqref{TUUsubT} and \eqref{TTUleq} (taking $T=\{b^{-1}a^{-1}\}$).  Thus
\[(bb^{-1}a^{-1}W)^\leq=(bb^{-1}a^{-1}aa^{-1}W)^\leq=(a^{-1}abb^{-1}a^{-1}W)^\leq=(a^{-1}W)^\leq.\]
Also $(W^{-1}W)^\leq=(b^{-1}a^{-1}W)^\leq$, by \eqref{TUUsubT} and \autoref{aU=TU}, from which we obtain $(bW^{-1}W)^\leq=(bb^{-1}a^{-1}W)^\leq=(a^{-1}W)^\leq$.  This and $W=WW^{-1}W$ yields
\[(bW^{-1}Wb^{-1})^\leq=(bW^{-1}WW^{-1}Wb^{-1})^\leq=(a^{-1}WW^{-1}a)^\leq,\]
i.e. $(U^{-1}U)^\leq=(VV^{-1})^\leq$.  Thus $(UV)^\leq=(abb^{-1}V)^\leq=(abb^{-1}a^{-1}W)^\leq=W$, again by \eqref{TUUsubT} and \autoref{aU=TU}.
\end{proof}

\begin{prp}\label{TUdefined}
The following are equivalent, for any filters $T,U\subseteq S$.
\begin{enumerate}
\item\label{TTneqUU} $(T^{-1}T)^\leq=(UU^{-1})^\leq$.
\item\label{TTUU} $T^{-1}T\cap UU^{-1}=T^{-1}TUU^{-1}$.
\item\label{TFU} $TU\cap TZU=\emptyset$, where $Z=S\setminus(T^{-1}T\cap UU^{-1})^\leq$.
\end{enumerate}
\end{prp}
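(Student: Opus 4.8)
The plan is to set $D=T^{-1}T$ and $R=UU^{-1}$ as subsets of $S$ (not upward closed), together with $M=(D\cap R)^\leq$, so that $Z=S\setminus M$. Since $D\cap R\subseteq D$ and $D\cap R\subseteq R$, taking upward closures gives $M\subseteq D^\leq\cap R^\leq$, a containment I will use repeatedly. I will also record that, for $s,t$ in the filter $T$ with a common lower bound $w\in T$, one has $w^{-1}w\leq s^{-1}t$; consequently $D^\leq=(\{w^{-1}w:w\in T\})^\leq$ and likewise $R^\leq=(\{ww^{-1}:w\in U\})^\leq$, so each of $D^\leq,R^\leq$ is generated by idempotents. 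I would then prove the cycle (\ref{TTneqUU})$\Rightarrow$(\ref{TTUU})$\Rightarrow$(\ref{TFU})$\Rightarrow$(\ref{TTneqUU}).

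For (\ref{TTneqUU})$\Rightarrow$(\ref{TTUU}), first note $D\cap R\subseteq DR$ holds for arbitrary filters: given $x=vw^{-1}\in R$, the filter $U$ contains some $u\leq v,w$, and a short calculation gives $u\leq wv^{-1}v$, so $wv^{-1}v\in U$ by upward closure and hence $x^{-1}x=(wv^{-1}v)w^{-1}\in R$; thus any $x\in D\cap R$ factors as $x=x\cdot(x^{-1}x)\in DR$. For the reverse inclusion $DR\subseteq D\cap R$ I would invoke \eqref{TUUsubT} and its $S^{op}$-dual: condition (\ref{TTneqUU}) yields both $T^{-1}TU\subseteq U$ and $TUU^{-1}\subseteq T$, whence $DR=T^{-1}TUU^{-1}\subseteq UU^{-1}=R$ and $DR=T^{-1}(TUU^{-1})\subseteq T^{-1}T=D$.

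For (\ref{TTUU})$\Rightarrow$(\ref{TFU}) I argue by contradiction: suppose $g\in TU\cap TZU$, say $g=t_1u_1=t_2zu_2$ with $t_i\in T$, $u_i\in U$ and $z\in Z$. Setting $z'=t_2^{-1}gu_2^{-1}$ and evaluating $g$ both ways gives $z'=(t_2^{-1}t_1)(u_1u_2^{-1})\in DR$ and $z'=(t_2^{-1}t_2)\,z\,(u_2u_2^{-1})\leq z$. By (\ref{TTUU}), $z'\in DR=D\cap R\subseteq M$, so $z\in M$ since $M$ is upward closed, contradicting $z\in Z=S\setminus M$.

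Finally, for (\ref{TFU})$\Rightarrow$(\ref{TTneqUU}) I would prove the contrapositive. If (\ref{TTneqUU}) fails then $D^\leq\neq R^\leq$, so at least one of the two inclusions fails; as $D^\leq,R^\leq$ are generated by the idempotents $w^{-1}w$ and $ww^{-1}$, this produces either some $t\in T$ with $t^{-1}t\notin R^\leq$ or some $u\in U$ with $uu^{-1}\notin D^\leq$. Since $M\subseteq D^\leq\cap R^\leq$, the offending idempotent lies in $Z$, and then $g=tu=t(t^{-1}t)u$ (respectively $g=tu=t(uu^{-1})u$) exhibits $g\in TU\cap TZU$. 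I expect the main obstacle to be the two structural facts underpinning the whole argument, namely the general inclusion $D\cap R\subseteq DR$ (which rests on filter directedness and upward closure) and the correct placement of the witnessing idempotent in $Z$ via $M\subseteq D^\leq\cap R^\leq$, rather than the bookkeeping in the three implications themselves.
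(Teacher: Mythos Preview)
Your proof is correct and follows essentially the same route as the paper's. The only cosmetic differences are that you prove the cycle (\ref{TTneqUU})$\Rightarrow$(\ref{TTUU})$\Rightarrow$(\ref{TFU})$\Rightarrow$(\ref{TTneqUU}) whereas the paper shows (\ref{TTneqUU})$\Leftrightarrow$(\ref{TTUU}) and (\ref{TTUU})$\Leftrightarrow$(\ref{TFU}) separately, and for the inclusion $D\cap R\subseteq DR$ you use the factorisation $x=x\cdot(x^{-1}x)$ with $x^{-1}x\in R$ while the paper uses $a=(aa^{-1})\cdot a$ with $aa^{-1}\in T^{-1}TT^{-1}T=T^{-1}T$; both are instances of the same idea.
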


\begin{proof}\
\begin{itemize}
\item[\eqref{TTneqUU}$\Rightarrow$\eqref{TTUU}]  If $a\in T^{-1}T\cap UU^{-1}$ then $a=aa^{-1}a\in T^{-1}TT^{-1}TUU^{-1}=T^{-1}TUU^{-1}$, i.e. $T^{-1}T\cap UU^{-1}\subseteq T^{-1}TUU^{-1}$ whenever $T$ (or $U$) is a filter.  Now say $(T^{-1}T)^\leq=(UU^{-1})^\leq$.  In particular, $UU^{-1}\subseteq (T^{-1}T)^\leq$ so $TUU^{-1}\subseteq T$, by \eqref{TUUsubT}, and hence $T^{-1}TUU^{-1}\subseteq T^{-1}T$.  Likewise, $T^{-1}TUU^{-1}\subseteq UU^{-1}$ and hence $T^{-1}TUU^{-1}\subseteq T^{-1}T\cap UU^{-1}$.

\item[\eqref{TTUU}$\Rightarrow$\eqref{TTneqUU}]  If $T^{-1}TUU^{-1}\subseteq(T^{-1}T)^\leq$ then, for any $u\in U$,
\[(T^{-1}T)^\leq\subseteq(T^{-1}Tuu^{-1})^\leq\subseteq(T^{-1}TUU^{-1})^\leq\subseteq(T^{-1}T)^\leq.\]
Likewise if $T^{-1}TUU^{-1}\subseteq(UU^{-1})^\leq$ then, for any $t\in T$,
\[(UU^{-1})^\leq\subseteq(t^{-1}tUU^{-1})^\leq\subseteq(T^{-1}TUU^{-1})^\leq\subseteq(UU^{-1})^\leq.\]
Thus if $T^{-1}TUU^{-1}\subseteq(T^{-1}T\cap UU^{-1})^\leq$ then
\[(T^{-1}T)^\leq=(T^{-1}TUU^{-1})^\leq=(UU^{-1})^\leq.\]

\item[\eqref{TTUU}$\Rightarrow$\eqref{TFU}]  Say \eqref{TFU} fails, so we have $ab=a'cb'$ for some $a,a'\in T$, $b,b'\in U$ and $c\notin(T^{-1}T\cap UU^{-1})^\leq$.  In fact we can assume $a=a'$ and $b=b'$, as $T$ and $U$ are filters (taking $a''\in T$ with $a''\leq a,a'$ and $b''\in U$ with $b''\leq b,b'$, we obtain $a''b''=a''a''^{-1}abb''^{-1}b''=a''a''^{-1}a'cb'b''^{-1}b''=a''cb''$).   Then $c\geq a^{-1}acbb^{-1}=a^{-1}abb^{-1}$ so $c\in(T^{-1}TUU^{-1})^\leq$.  In particular, it follows that $T^{-1}T\cap UU^{-1}\neq T^{-1}TUU^{-1}$, i.e. \eqref{TTUU} also fails.

\item[\eqref{TFU}$\Rightarrow$\eqref{TTUU}]  Say \eqref{TTUU} and hence \eqref{TTneqUU} fails so $(T^{-1}T)^\leq\neq (UU^{-1})^\leq$.  W.l.o.g. we can assume $(T^{-1}T)^\leq\nsubseteq(UU^{-1})^\leq$.  As $(T^{-1}T)^\leq=\{t^{-1}t:t\in T\}^\leq$, we can take $t\in T$ with $t^{-1}t\notin(UU^{-1})^\leq$.  For any $u\in U$, we have $tu=tt^{-1}tu$ where $t^{-1}t\in E\setminus(UU^{-1})^\leq\subseteq S\setminus(T^{-1}T\cap UU^{-1})^\leq$, i.e. \eqref{TFU} also fails. \qedhere
\end{itemize}
\end{proof}



\subsection{Ultrafilter Groupoids}\label{ssStoneSpaces}

In \autoref{SubsetUltrafilters} we showed how each point $g$ in a space corresponds to an ultrafilter $U_g=\{O\in S:g\in O\}$ of elements in a basis $S$.  In \'etale groupoids, multiplying points also corresponds to multiplying the corresponding ultrafilters.

\begin{prp}\label{UltrafilterMultiplication}
If $S$ is a basis of an \'etale groupoid $G$ then $U_{gh}=(U_gU_h)^\subseteq$,
\[\text{i.e.}\quad\{O\in S:gh\in O\}=\{O\in S:g\in M\in S,h\in N\in S\text{ and }MN\subseteq O\},\]
whenever $gh$ is defined.  Moreover, consider the following statements.
\begin{enumerate}
\item\label{gh} $gh$ is defined.
\item\label{UgUh} $\bigcap U_gU_h\neq\emptyset$.
\item\label{UgUgUhUh} $(U_g^{-1}U_g)^\subseteq=(U_hU_h^{-1})^\subseteq$.
\end{enumerate}
Generally, \eqref{gh} $\Rightarrow$ \eqref{UgUh} and \eqref{UgUgUhUh}.  If $G$ is $T_0$, \eqref{gh} $\Leftrightarrow$ \eqref{UgUgUhUh}.  If $G$ is $T_1$, \eqref{gh} $\Leftrightarrow$ \eqref{UgUh}.
\end{prp}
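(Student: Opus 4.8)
The plan is to treat the displayed multiplication formula as the engine driving the whole proposition, and then read the three statements off it together with the two source/range identities it specialises to.

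First I would establish $U_{gh}=(U_gU_h)^\subseteq$ when $gh$ is defined. The inclusion $(U_gU_h)^\subseteq\subseteq U_{gh}$ is immediate, since $MN\subseteq O$ with $g\in M$ and $h\in N$ gives $gh\in MN\subseteq O$. For the reverse inclusion I would invoke continuity of multiplication: as $G$ is \'etale it is in particular a topological groupoid, so if $gh\in O$ there are open $M'\ni g$ and $N'\ni h$ all of whose composable products lie in $O$, and refining by the basis $S$ yields $M,N\in S$ with $g\in M$, $h\in N$ and $MN\subseteq O$, so $O\in(U_gU_h)^\subseteq$. The same argument, using continuity of inversion as well, gives the two identities
\[(U_g^{-1}U_g)^\subseteq=U_{g^{-1}g}\qquad\text{and}\qquad(U_hU_h^{-1})^\subseteq=U_{hh^{-1}},\]
since $g^{-1}g\in M^{-1}N$ and $hh^{-1}\in MN^{-1}$ for all relevant $M,N$, while conversely a neighbourhood of $g^{-1}g$ (resp. $hh^{-1}$) pulls back through multiplication and inversion to basic factors. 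None of this requires a separation axiom.

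With these identities the implications out of \eqref{gh} are routine. Since $gh$ defined means $g^{-1}g=hh^{-1}$, the identities give $U_{g^{-1}g}=U_{hh^{-1}}$, which is exactly \eqref{UgUgUhUh}, while $gh\in\bigcap U_gU_h$ gives \eqref{UgUh}. For the $T_0$ equivalence \eqref{gh}$\Leftrightarrow$\eqref{UgUgUhUh}, the identities recast \eqref{UgUgUhUh} as $U_{g^{-1}g}=U_{hh^{-1}}$, i.e.\ $g^{-1}g$ and $hh^{-1}$ lie in exactly the same members of the basis $S$; under $T_0$ this forces $g^{-1}g=hh^{-1}$, so $gh$ is defined.

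The main obstacle is the $T_1$ implication \eqref{UgUh}$\Rightarrow$\eqref{gh}, where a naive net argument fails because $T_1$ does not guarantee unique limits. The key idea is to use \'etaleness to reduce composability to separating two individual points. Fix $k\in\bigcap U_gU_h$ and suppose $gh$ is undefined. Using that $G$ is \'etale, choose an open bisection neighbourhood of $g$ and then $M\in S$ with $g\in M$ inside it, so that the range map $x\mapsto xx^{-1}$ is injective on $M$. If $gg^{-1}\neq kk^{-1}$, then by $T_1$ and continuity of the range map I can shrink $M$ so that no element of $M$ has range $kk^{-1}$, whence $k\notin MN$ for every $N\in U_h$, contradicting $k\in\bigcap U_gU_h$. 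Otherwise $gg^{-1}=kk^{-1}$, so $g^{-1}k$ is defined and, by injectivity of the range map on $M$, the element $g$ is the unique possible left factor of $k$ in $M$; thus $k\in MN$ if and only if $g^{-1}k\in N$. Since $gh$ is undefined we must have $g^{-1}k\neq h$, as $g^{-1}k=h$ would give $hh^{-1}=g^{-1}(kk^{-1})g=g^{-1}g$ and hence $gh$ defined; so by $T_1$ there is $N\in S$ with $h\in N$ and $g^{-1}k\notin N$, again forcing $k\notin MN$ and a contradiction. In every case $gh$ must be defined. I expect this last step to carry all the subtlety, the point being that \'etaleness collapses the left factor to a single candidate and thereby turns the problem into separating two specific points, which is precisely what $T_1$ supplies; the remaining parts are bookkeeping around the multiplication formula.
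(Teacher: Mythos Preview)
Your proof is correct. The overall structure matches the paper's: the multiplication formula is the core, the identities $(U_g^{-1}U_g)^\subseteq=U_{g^{-1}g}$ and $(U_hU_h^{-1})^\subseteq=U_{hh^{-1}}$ follow from it, and the implications out of \eqref{gh} together with the $T_0$ equivalence are read off exactly as you do. (For the reverse inclusion $U_{gh}\subseteq(U_gU_h)^\subseteq$ you use continuity of multiplication directly; the paper instead uses that in an \'etale groupoid the product map is open, picking a bisection $N\in U_h$ and taking $M\in U_g$ with $M\subseteq O\cdot N^{-1}$. Both work.)

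The $T_1$ step \eqref{UgUh}$\Rightarrow$\eqref{gh} is where you genuinely diverge. The paper argues the contrapositive: assuming $g^{-1}g\neq hh^{-1}$, it fixes a bisection $M\in U_g$ with $M^{-1}M$ missing $hh^{-1}$, and then for \emph{each} $f\in M$ produces $N_f\in U_h$ with $\{f\}\cdot N_f=\emptyset$, concluding $\bigcap\{M\cdot N:N\in U_h\}=\emptyset$. You instead fix a putative $k\in\bigcap U_gU_h$ and use the bisection property to isolate a single candidate left factor: either no element of $M$ has range $kk^{-1}$, or else that element must be $g$ and the right factor must be $g^{-1}k$, which you then separate from $h$. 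Your argument is a bit cleaner, reducing the whole step to separating one pair of points (either $gg^{-1}$ from $kk^{-1}$, or $h$ from $g^{-1}k$), whereas the paper's argument ranges over all of $M$. The paper's version, on the other hand, yields the slightly stronger conclusion $\bigcap U_gU_h=\emptyset$ directly. Either route is fine.
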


\begin{proof}
If $g\in M\in S$, $h\in N\in S$ and $MN\subseteq O$ then certainly $gh\in O$, i.e. $(U_gU_h)^\subseteq\subseteq U_{gh}$.  Conversely, if $gh\in O\in S$ then, as $G$ is \'etale and $S$ is a basis, we have some bisection $N\in U_h$, i.e. $h\in N\in S$ so $g=ghh^{-1}\in O\cdot N^{-1}$.  As $G$ is \'etale, the product is an open map so $O\cdot N^{-1}$ is open.  As $S$ is a basis, we thus have some $M\in U_g$, i.e. $g\in M\in S$, such that $M\subseteq O\cdot N^{-1}$.  Then
\[M\cdot N\subseteq O\cdot N^{-1}\cdot N\subseteq O.\]
As $O$ was arbitrary, this shows that $U_{gh}\subseteq(U_gU_h)^\subseteq$, as required.

\begin{itemize}
\item[\eqref{gh}$\Rightarrow$\eqref{UgUh}]  If $gh$ is defined then $gh\in\bigcap U_gU_h$.

\item[\eqref{gh}$\Rightarrow$\eqref{UgUgUhUh}]  If $gh$ is defined, $g^{-1}g=hh^{-1}$ so $(U_g^{-1}U_g)^\subseteq=U_{g^{-1}g}=U_{hh^{-1}}=(U_hU_h^{-1})^\subseteq$.

\item[\eqref{UgUgUhUh}$\Rightarrow$\eqref{gh}]  If $gh$ is not defined then $g^{-1}g\neq hh^{-1}$, so if $G$ is $T_0$ then we have some $O\in S$ distinguishing $g^{-1}g$ and $hh^{-1}$, i.e. $O$ is in precisely one of $U_{g^{-1}g}$ and $U_{hh^{-1}}$ and hence $(U_g^{-1}U_g)^\subseteq=U_{g^{-1}g}\neq U_{hh^{-1}}=(U_hU_h^{-1})^\subseteq$.


\item[\eqref{UgUh}$\Rightarrow$\eqref{gh}]  Assume $G$ is $T_1$.  If $gh$ is not defined then again $g^{-1}g\neq hh^{-1}$ so we have some $O\in S$ with $g^{-1}g\in O\not\ni hh^{-1}$.  As $G$ is \'etale, we have a bisection $M\in U_g$ with $M^{-1}\cdot M\subseteq O\not\ni hh^{-1}$.  For any $f\in M$, $f^{-1}f\neq hh^{-1}$ so, again as $G$ is $T_1$, we have some $O'\in S$ with $hh^{-1}\in O'\not\ni f^{-1}f$.  Then we have a bisection $N\in U_h$ with $N\cdot N^{-1}\subseteq O'\not\ni f^{-1}f$.  Thus $\{f\}\cdot N=\emptyset$, as the product with $f$ is not defined for any element of $N$.  As $M$ is a bisection, this means $M\cdot N$ does not contain any element of the form $fk$, for $k\in N$.  As $f$ was arbitrary, this shows that $\bigcap U_gU_h\subseteq\bigcap\{M\cdot N:N\in U_h\}=\emptyset$. \qedhere
\end{itemize}
\end{proof}



Our primary goal in this section is to show that we can also turn the $\prec$-ultrafilter space of a basic semigroup into an \'etale groupoid by multiplying $\prec$-ultrafilters.

\begin{center}
\textbf{Let $S$ be a basic semigroup and let $G$ be the set of $\prec$-ultrafilters of $S$.}
\end{center}


\begin{prp}\label{SGaction}
For any $a\in S$ and $U\in G$, the following are equivalent.
\begin{enumerate}
\item\label{aUsubVinG} $aU\subseteq V\in G$, for some $V$.
\item\label{aUinG} $(aU)^\leq\in G$.
\item\label{aa-1UU-1} $a^{-1}a\in(UU^{-1})^\leq$.
\item\label{0notinaU} $a^{-1}a\in(UU^{-1})^\U$ and $0\notin bU$, for some $b\prec a$.
\end{enumerate}
\end{prp}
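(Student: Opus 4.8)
The plan is to establish the cycle $\eqref{aa-1UU-1}\Rightarrow\eqref{aUinG}\Rightarrow\eqref{aUsubVinG}\Rightarrow\eqref{aa-1UU-1}$ together with the equivalence $\eqref{aa-1UU-1}\Leftrightarrow\eqref{0notinaU}$. Throughout I write $e=a^{-1}a$ and use the elementary identities $av=0\Leftrightarrow a^{-1}a\perp vv^{-1}$ and $vv^{-1}\leq a^{-1}a\Leftrightarrow a^{-1}av=v$, the fact that $x\mapsto x^{-1}$ preserves $\leq$, $\perp$ and hence $\prec$ and $\U$, and the identity $(UU^{-1})^\leq=\{uu^{-1}:u\in U\}^\leq$ valid for filters (cf. the proof of \autoref{TUdefined}). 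In particular \eqref{aa-1UU-1} says precisely that $a^{-1}a\geq u_0u_0^{-1}$ for some $u_0\in U$.

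The heart of the argument is $\eqref{aa-1UU-1}\Rightarrow\eqref{aUinG}$. Fixing $u_0\in U$ with $u_0u_0^{-1}\leq a^{-1}a$, I would first check that $(aU)^\leq$ is a proper $\prec$-filter: it is $\leq$-up-closed and $\leq$-directed by construction, it omits $0$ since for $v\in U$ any $w\in U$ with $w\leq v,u_0$ gives a nonzero $ww^{-1}\leq u_0u_0^{-1}\wedge vv^{-1}\leq a^{-1}a\wedge vv^{-1}$, and it is $\prec$-round because the $av$ with $v\leq u_0$ are cofinal and for such $v$ we have $a^{-1}av=v$, so $av'\prec av$ whenever $v'\prec v$ in $U$, by \eqref{ESupported}; the same computation yields $(a^{-1}aU)^\leq=U$. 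For maximality I would run the multiplication backwards: if $(aU)^\leq\subseteq V'$ with $V'$ a proper $\prec$-filter, then $au_0\in V'$ gives $aa^{-1}\geq(au_0)(au_0)^{-1}\in V'V'^{-1}$, so $(a^{-1},V')$ also meets the hypothesis of \eqref{aa-1UU-1}; applying the first part to it, $(a^{-1}V')^\leq$ is a proper $\prec$-filter containing $(a^{-1}aU)^\leq=U$, hence $(a^{-1}V')^\leq=U$, and multiplying back by $a$ forces $V'=(aa^{-1}V')^\leq\subseteq(aU)^\leq$, so $V'=(aU)^\leq$. Then $\eqref{aUinG}\Rightarrow\eqref{aUsubVinG}$ is immediate (take $V=(aU)^\leq$), and $\eqref{aUsubVinG}\Rightarrow\eqref{aa-1UU-1}$ reuses the same machine: from $aU\subseteq V$ we get $aa^{-1}\in(VV^{-1})^\leq$, so $(a^{-1}V)^\leq$ is a $\prec$-ultrafilter containing $U$ and hence equal to $U$; now $a^{-1}au_0\in a^{-1}V\subseteq U$ has range $a^{-1}a\,u_0u_0^{-1}\leq a^{-1}a$, which is \eqref{aa-1UU-1}.

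For $\eqref{aa-1UU-1}\Rightarrow\eqref{0notinaU}$ I would note that $a^{-1}a\geq u_0u_0^{-1}$ gives $a^{-1}a\U u_0u_0^{-1}$ (as $x\geq y$ implies $x\U y$), and that, choosing $u_1\in U$ with $u_1\prec u_0$, the element $b=au_1u_1^{-1}$ satisfies $b\prec a$ by \autoref{Edetermined} (since $b^{-1}b=u_1u_1^{-1}\prec a^{-1}a$) and $0\notin bU$ (for $v\in U$ a nonzero $ww^{-1}\leq u_1u_1^{-1}\wedge vv^{-1}=b^{-1}b\wedge vv^{-1}$ witnesses $bv\neq0$).

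The main obstacle is $\eqref{0notinaU}\Rightarrow\eqref{aa-1UU-1}$, which I would settle by passing to the idempotents, since left multiplication by $b$ does \emph{not} preserve $\prec$ and so one cannot work with $(bU)^\leq$ directly. Set $\mathcal{U}=\{uu^{-1}:u\in U\}^\leq=(UU^{-1})^\leq\cap E$ and $e_0=b^{-1}b\prec a^{-1}a$. The key claim is that $\mathcal{U}$ is a $\prec$-ultrafilter of the basic poset $E$ (\autoref{Ebasic}): it is a proper $\prec$-filter because $u\mapsto uu^{-1}$ carries the $\prec$-directedness of $U$ into $E$ (using that inverse preserves $\prec$ together with \eqref{ESupported}, so $w\prec u\Rightarrow ww^{-1}\prec uu^{-1}$), and it is maximal because it satisfies \eqref{Prime}: if $g\vee g'\in\mathcal{U}$ with $g,g'\in E$, then some $u\in U$ has $uu^{-1}\leq g\vee g'$, so $u=gu\vee g'u$ by \eqref{Distributivity}, whence $gu\in U$ or $g'u\in U$ by \eqref{Prime} for $U$, placing $g$ or $g'$ in $\mathcal{U}$. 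Now \eqref{0notinaU} translates entirely into $\mathcal{U}$: its first clause gives, via \eqref{UAuxiliarity}, some $f\in\mathcal{U}$ with $a^{-1}a\U f$, while $0\notin bU$ says exactly that $e_0\not\perp F$ for every $F\in\mathcal{U}$ (by $bv=0\Leftrightarrow e_0\perp vv^{-1}$). Assuming $a^{-1}a\notin\mathcal{U}$ for contradiction and applying \eqref{Complementary} for $\mathcal{U}$ (\autoref{ultrachars} applied in $E$) to $e_0\prec a^{-1}a\notin\mathcal{U}$, I would obtain either some $F\in\mathcal{U}$ with $e_0\perp F$, contradicting nonvanishing, or $a^{-1}a\not\U F$ for all $F\in\mathcal{U}$, contradicting $a^{-1}a\U f\in\mathcal{U}$. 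Hence $a^{-1}a\in\mathcal{U}$, which is \eqref{aa-1UU-1}. Thus the two genuinely new ingredients are the back-and-forth ultrafilter bijection underlying $\eqref{aa-1UU-1}\Rightarrow\eqref{aUinG}$ and this idempotent reduction, the latter being the delicate point of the proposition.
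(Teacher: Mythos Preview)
Your argument is correct, and the back-and-forth via $a^{-1}$ that you use for \eqref{aa-1UU-1}$\Rightarrow$\eqref{aUinG} (and again for \eqref{aUsubVinG}$\Rightarrow$\eqref{aa-1UU-1}) is essentially the same maneuver the paper performs in its \eqref{aUsubVinG}$\Rightarrow$\eqref{aUinG} step, just entered from a different point in the cycle.

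The genuinely different part is the handling of condition \eqref{0notinaU}. The paper proves the cycle \eqref{0notinaU}$\Rightarrow$\eqref{aUsubVinG}$\Rightarrow$\eqref{aUinG}$\Rightarrow$\eqref{aa-1UU-1}$\Rightarrow$\eqref{0notinaU}, and in \eqref{0notinaU}$\Rightarrow$\eqref{aUsubVinG} it works directly in $S$: it forms $V=\bigcup_{b\prec c\leq a}cU$ and uses \autoref{acprecbd} together with the Hausdorff hypothesis $a^{-1}a\U ww^{-1}$ to see that this set is $\prec$-directed, then extends to an ultrafilter. Your \eqref{0notinaU}$\Rightarrow$\eqref{aa-1UU-1} instead drops to the idempotent poset $E$: you observe that $\mathcal{U}=\{uu^{-1}:u\in U\}^\leq$ is itself a $\prec$-ultrafilter of $E$ (via \eqref{Prime} for $U$ and distributivity, which indeed holds by \autoref{distributiveidempotents} and \autoref{Distributivity=>Decomposition}), and then the two clauses of \eqref{0notinaU} kill exactly the two alternatives of \eqref{Complementary} applied to $b^{-1}b\prec a^{-1}a\notin\mathcal{U}$. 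This is a nice structural use of \autoref{ultrachars}: it explains precisely why the $\U$-clause in \eqref{0notinaU} is there, rather than verifying $\prec$-directedness of a concrete set. The paper's route, on the other hand, avoids having to check that $\mathcal{U}$ is an ultrafilter in $E$ and shows more explicitly how \autoref{acprecbd} drives the construction; both arguments are of comparable length.
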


\begin{proof}\
\begin{itemize}
\item[\eqref{0notinaU}$\Rightarrow$\eqref{aUsubVinG}]  Say $a^{-1}a\in(UU^{-1})^\U$, $0\notin bU$ and $b\prec a$.  So $a^{-1}a\U uv^{-1}$, for some $u,v\in U$.  As $U$ is $\leq$-directed, we have some $w\in U$ with $w\leq u,v$ so $ww^{-1}\leq uv^{-1}$ and hence $a^{-1}a\U ww^{-1}$, by \eqref{UAuxiliarity}.

We claim $V=\bigcup_{b\prec c\leq a}cU\supseteq aU$ is $\prec$-directed.  As $U$ is a $\prec$-filter, we can replace $U$ with $U^w$, i.e. it suffices to show $\{cu:b\prec c\leq a,w\geq v\in U\}$ is $\prec$-directed.  But $\{c:b\prec c\leq a\}$ is $\prec$-directed, by \eqref{LocallyHausdorff} and \eqref{UInterpolation}, and $U^w$ is $\prec$-directed, as $U$ is a $\prec$-filter.  Thus \autoref{acprecbd}, $a^{-1}a\U ww^{-1}$ and \eqref{UAuxiliarity} imply that their product is also $\prec$-directed.  Thus $V$ can be extended to a $\prec$-ultrafilter.

\item[\eqref{aUsubVinG}$\Rightarrow$\eqref{aUinG}]  Assume $aU\subseteq V\in G$ and take any $u\in U$.  Then
\[U\subseteq(a^{-1}aU)^\leq\subseteq(a^{-1}V)^\leq=(a^{-1}V^{au})^\leq.\]
By \eqref{Invariance}, $a^{-1}V^{au}$ is $\prec$-directed so $(a^{-1}V^{au})^\leq$ is a proper $\prec$-filter.  By the maximality of $U$, we have equality above, i.e. $U=(a^{-1}V)^\leq$ so
\[V\subseteq(aa^{-1}V)^\leq=(a(a^{-1}V)^\leq)^\leq=(aU)^\leq.\]
By the maximality of $V$, equality holds again, i.e. $(aU)^\leq=V\in G$.

\item[\eqref{aUinG}$\Rightarrow$\eqref{aa-1UU-1}]  Assume $V=(aU)^\leq\in G$.  Then as above, for any $u\in U$,
\[U\subseteq(a^{-1}aU)^\leq\subseteq(a^{-1}V)^\leq=(a^{-1}V^{au})^\leq.\]
The maximality of $U$ again yields equality so $U=(a^{-1}aU)^\leq$.  In particular, $a^{-1}au\in U$ so $a^{-1}a\geq a^{-1}auu^{-1}a^{-1}a\in UU^{-1}$, i.e. $a^{-1}a\in(UU^{-1})^\leq$.

\item[\eqref{aa-1UU-1}$\Rightarrow$\eqref{0notinaU}]  Assume $a^{-1}a\in(UU^{-1})^\leq\subseteq(UU^{-1})^\U$, i.e. $a^{-1}a\geq uv^{-1}$ for some $u,v\in U$.  Taking $w\in U$ with $w\leq u,v$, we have $ww^{-1}\leq a^{-1}a$.  Taking $x\in U$ with $x\prec w$, \eqref{Invariance} yields both $xx^{-1}=xw^{-1}\prec ww^{-1}\leq a^{-1}a$ and $b=axx^{-1}\prec aa^{-1}a=a$.  For any $u\in U$, we have $v\in U$ with $v\leq u,x$ and hence $bu=axx^{-1}u\geq avv^{-1}v=av$.  As
\[v^{-1}a^{-1}av\geq v^{-1}xx^{-1}v\geq v^{-1}vv^{-1}v=v^{-1}v\neq0,\]
we have $av\neq 0$ and hence $bu\neq 0$.\qedhere
\end{itemize}
\end{proof}

In a similar vein, we have the following.

\begin{cor}
For any $U,V\in G$, the following are equivalent.
\begin{enumerate}
\item\label{UVsubW} $UV\subseteq W\in G$, for some $W$.
\item\label{U-1U=V-1V} $(U^{-1}U)^\leq=(VV^{-1})^\leq$
\item\label{0UV} $0\notin UV$ and $u^{-1}u\smallsmile vv^{-1}$, for some $u\in U$ and $v\in V$.
\end{enumerate}
\end{cor}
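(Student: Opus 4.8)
The plan is to prove the cycle \eqref{UVsubW} $\Rightarrow$ \eqref{U-1U=V-1V} $\Rightarrow$ \eqref{0UV} $\Rightarrow$ \eqref{UVsubW}, leaning throughout on \autoref{SGaction} applied to a single element $u\in U$ in place of $a$. First I would record two preliminaries: for any filter $(U^{-1}U)^\leq=\{u^{-1}u:u\in U\}^\leq$ (using $\leq$-directedness), and that $a\mapsto a^{-1}$ is an order automorphism of $(S,\leq)$ preserving $\perp$, $\prec$ and $\U$, so that $U\in G$ implies $U^{-1}\in G$ and $(UV)^{-1}=V^{-1}U^{-1}$.

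For \eqref{UVsubW} $\Rightarrow$ \eqref{U-1U=V-1V}: given $UV\subseteq W\in G$, each $u\in U$ satisfies $uV\subseteq W\in G$, so \autoref{SGaction} yields $u^{-1}u\in(VV^{-1})^\leq$, hence $(U^{-1}U)^\leq\subseteq(VV^{-1})^\leq$. Applying the same reasoning to $V^{-1}U^{-1}\subseteq W^{-1}\in G$ (so that each $v^{-1}\in V^{-1}$ has $v^{-1}U^{-1}\subseteq W^{-1}\in G$, giving $vv^{-1}\in(U^{-1}U)^\leq$) produces the reverse inclusion, and thus \eqref{U-1U=V-1V}.

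For \eqref{U-1U=V-1V} $\Rightarrow$ \eqref{0UV}: the compatibility is immediate, since for any $u\in U$ the element $u^{-1}u$ lies in $(VV^{-1})^\leq$ and so dominates some $vv^{-1}$, whence $u^{-1}u\U vv^{-1}$ because $\leq\ \subseteq\ \U$ and $\U$ is symmetric. That $0\notin UV$ follows because $uv=0$ is equivalent to $u^{-1}u\perp vv^{-1}$, and here $u^{-1}u$ and $vv^{-1}$ both lie in the $\prec$-directed set $(VV^{-1})^\leq$, producing a nonzero common $\prec$-lower bound and contradicting \eqref{perpEquivalent}.

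The real work, and the step I expect to be the main obstacle, is \eqref{0UV} $\Rightarrow$ \eqref{UVsubW}. Suppose $0\notin UV$ and $u_0^{-1}u_0\U v_0v_0^{-1}$ with $u_0\in U$, $v_0\in V$. I would first check the hypotheses of \autoref{SGaction} (its condition \eqref{0notinaU}) for the pair $(u_0,V)$: the $\U$-clause is given, and $0\notin bV$ for $b\in U$ with $b\prec u_0$ holds since $bV\subseteq UV$; this produces $W:=(u_0V)^\leq\in G$. The delicate point is upgrading $u_0V\subseteq W$ to all of $UV$, because a single $u_0$ only controls $u_0V$ and the compatibility hypothesis must be transported along the filter. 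For any $w\in U$ with $w\leq u_0$, the relation $w^{-1}w\leq u_0^{-1}u_0\U v_0v_0^{-1}$ together with \eqref{UAuxiliarity} gives $w^{-1}w\U v_0v_0^{-1}$, so \autoref{SGaction} again yields $(wV)^\leq\in G$; since $w\leq u_0$ forces $wV\subseteq W$, maximality gives $(wV)^\leq=W$ and hence $w^{-1}w\in(VV^{-1})^\leq$. As such $w$ are cofinal in $U$, this establishes $(U^{-1}U)^\leq\subseteq(VV^{-1})^\leq$. Finally, for arbitrary $u\in U$ I would choose $w\in U$ with $w\leq u,u_0$ and exploit the identity $uv'=wv'$, valid for $v'\in V$ chosen below $v$ with $v'v'^{-1}\leq w^{-1}w$ (possible since $w^{-1}w\in(VV^{-1})^\leq$); this forces $(uV)^\leq=(wV)^\leq=W$, so $uV\subseteq W$, and taking the union over $u\in U$ gives $UV\subseteq W\in G$. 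The crux is precisely this transport: \eqref{UAuxiliarity} propagates the compatibility downward from $u_0$, and the cofinality of $\{w\in U:w\leq u_0\}$ is what lets a property verified at $u_0$ spread over the whole ultrafilter.
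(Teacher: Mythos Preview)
Your argument is essentially correct and the implications \eqref{UVsubW}$\Rightarrow$\eqref{U-1U=V-1V} and \eqref{U-1U=V-1V}$\Rightarrow$\eqref{0UV} match the paper's closely.  There is, however, a small directional slip in your \eqref{0UV}$\Rightarrow$\eqref{UVsubW}: from $w\leq u_0$ one gets $wv\leq u_0v$ for each $v\in V$, which yields $u_0V\subseteq(wV)^\leq$ and hence $W=(u_0V)^\leq\subseteq(wV)^\leq$, not ``$wV\subseteq W$'' directly.  The conclusion $(wV)^\leq=W$ then follows from maximality of $W$ together with $(wV)^\leq\in G$, after which $wV\subseteq W$ is immediate.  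With that correction your argument goes through.

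The paper's route for \eqref{0UV}$\Rightarrow$\eqref{UVsubW} is markedly shorter: rather than bootstrapping via repeated appeals to \autoref{SGaction}, it observes directly that $UV$ itself is $\prec$-directed.  Given $u'v',u''v''\in UV$, one picks $u''',u''''\in U$ with $u''''\prec u'''\leq u,u',u''$ and $v''',v''''\in V$ with $v''''\prec v'''\leq v,v',v''$; then \eqref{UAuxiliarity} gives $u'''^{-1}u'''\U v'''v'''^{-1}$, and \autoref{acprecbd} yields $u''''v''''\prec u'''v'''\leq u'v',u''v''$.  Since $0\notin UV$, this $\prec$-directed set extends to some $W\in G$, and $UV\subseteq W$ is automatic.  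Your approach has the merit of squeezing more out of \autoref{SGaction} and incidentally establishing $(U^{-1}U)^\leq\subseteq(VV^{-1})^\leq$ along the way, but the paper's direct $\prec$-directedness argument avoids the delicate transport step entirely.
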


\begin{proof}\
\begin{itemize}
\item[\eqref{0UV}$\Rightarrow$\eqref{UVsubW}]  As $U$ and $V$ are $\leq$-filters, $UV$ is $\leq$-directed.  If $u^{-1}u\smallsmile vv^{-1}$, for some $u\in U$ and $v\in V$, then $UV$ is even $\prec$-directed.  Indeed, for any $u'\in U$ and $v'\in V$ we have $u'',u'''\in U$ and $v'',v'''\in V$ with $u'''\prec u''\leq u',u$ and $v'''\prec v''\leq v',v$ so, by \autoref{acprecbd}, $u'''v'''\prec u''v''\leq u'v'$.  Thus we can extend $UV$ to some $\prec$-ultrafilter $W$.

\item[\eqref{UVsubW}$\Rightarrow$\eqref{U-1U=V-1V}]  If $UV\subseteq W\in G$ then, for any $a,b\in U$, we have $c\in U$ with $c\leq a,b$.  Then $cV\subseteq W$ so \autoref{SGaction} yields $a^{-1}b\geq c^{-1}c\in(VV^{-1})^\leq$.  As $a$ and $b$ were arbitrary, this shows that $U^{-1}U\subseteq(VV^{-1})^\leq$.  Likewise $VV^{-1}\subseteq(U^{-1}U)^\leq$ and hence $(U^{-1}U)^\leq=(VV^{-1})^\leq$.

\item[\eqref{U-1U=V-1V}$\Rightarrow$\eqref{0UV}]  If $(U^{-1}U)^\leq=(VV^{-1})^\leq$ then, for all $u\in U$, we have $u^{-1}u\in(VV^{-1})^\leq$ so \autoref{SGaction} yields $u^{-1}u\in(VV^{-1})^\U$ and $0\notin uV$.  As $u$ was arbitrary, $0\notin UV$. \qedhere
\end{itemize}
\end{proof}

\begin{prp}
$G$ is a subgroupoid of the Lenz groupoid of $S$.
\end{prp}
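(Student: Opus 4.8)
The plan is to realize $G$ as a subset of the underlying set of the Lenz groupoid and then verify that it is closed under inversion and under the partially-defined multiplication, which is all a subgroupoid requires. Since $\prec\ \subseteq\ \leq$, every $\prec$-filter is in particular a $\leq$-filter, so each $U\in G$ is a point of the Lenz groupoid; it remains to check the two closure properties.

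For closure under inversion, I would first note that $a\mapsto a^{-1}$ is an order automorphism of $S$: it fixes $0$, satisfies $a\leq b\Leftrightarrow a^{-1}\leq b^{-1}$, and preserves joins as well as the disjointness relation $\perp$ (being order preserving, it preserves meets). It therefore preserves the rather below relation $\prec$, since if $a\prec b$ and $c\geq b^{-1}$, then applying $a\prec b$ to $c^{-1}\geq b$ gives $a'\perp a$ with $c^{-1}\leq a'\vee b$, whence $c\leq(a')^{-1}\vee b^{-1}$ with $(a')^{-1}\perp a^{-1}$; together with $a^{-1}\leq b^{-1}$ this is $a^{-1}\prec b^{-1}$. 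Being an involution preserving $\prec$, inversion carries $\prec$-ultrafilters to $\prec$-ultrafilters, so $U\in G$ implies $U^{-1}=\{u^{-1}:u\in U\}\in G$.

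The substance is closure under the Lenz product. Recall that $U\cdot V=(UV)^\leq$ is defined exactly when $(U^{-1}U)^\leq=(VV^{-1})^\leq$, and I must show that then $(UV)^\leq\in G$. The idea is to collapse this product to a single-element action. Under the defining condition, \eqref{TUUsubT} yields $U^{-1}UV\subseteq V$, so for any $a\in U$ \autoref{aU=TU} gives $(UV)^\leq=(aV)^\leq$. Since $a^{-1}a\in U^{-1}U\subseteq(U^{-1}U)^\leq=(VV^{-1})^\leq$, the implication \eqref{aa-1UU-1}$\Rightarrow$\eqref{aUinG} of \autoref{SGaction} (applied to $a$ and $V$) shows that $(aV)^\leq$ is a $\prec$-ultrafilter. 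Hence $(UV)^\leq=(aV)^\leq\in G$. Closure under inversion and product then automatically returns the units $U\cdot U^{-1}=(UU^{-1})^\leq$ of each $U\in G$ back to $G$, so $G$ is indeed a subgroupoid of the Lenz groupoid.

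The main obstacle is precisely this last reduction: a priori $(UV)^\leq$ is only the upward closure of the $\prec$-directed set $UV$, hence merely a $\prec$-\emph{filter}, and one must upgrade it to a $\prec$-ultrafilter. Arguing maximality of $(UV)^\leq$ directly would be awkward; the clean route is the single-element collapse through \autoref{aU=TU} combined with the maximality already packaged into \autoref{SGaction}. Alternatively one could invoke the corollary preceding the statement, which gives $UV\subseteq W$ for some $W\in G$, and then identify $(UV)^\leq=W$ by maximality, but passing through the action result is more direct.
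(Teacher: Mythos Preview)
Your proposal is correct and follows essentially the same route as the paper: for closure under the Lenz product you pick $a\in U$, use \eqref{TUUsubT} and \autoref{aU=TU} to collapse $(UV)^\leq$ to $(aV)^\leq$, then invoke \autoref{SGaction} via $a^{-1}a\in(VV^{-1})^\leq$ to conclude $(aV)^\leq\in G$. The paper's proof does exactly this and nothing more; your additional explicit verification that inversion preserves $\prec$ (and hence $G$) is a point the paper leaves implicit.
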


\begin{proof}
We need to show that
\[U,V\in G\quad\text{and}\quad(U^{-1}U)^\leq=(VV^{-1})^\leq\qquad\Rightarrow\qquad(UV)^\leq\in G.\]
For any $u\in U$, $(U^{-1}U)^\leq=(VV^{-1})^\leq$ implies $u^{-1}u\in(VV^{-1})^\leq$.  As $V\in G$, \autoref{SGaction} then yields $(uV)^\leq\in G$.  But $(UV)^\leq=(uV)^\leq$, by \eqref{TUUsubT} and \autoref{aU=TU}.
\end{proof}

%

\begin{thm}\label{SemigroupRep}
Let $S$ be a basic semigroup and let $G$ be the set of $\prec$-ultrafilters of $S$. Then $G$ is a locally compact locally Hausdorff \'{e}tale groupoid and
\begin{align*}
O_a^{-1}&=O_{a^{-1}}.\\
O_{ab}&=O_a\cdot O_b.
\end{align*}
Moreover, if $S$ is a $\simeq$-basic semigroup then $(O_a)_{a\in S}$ is an $\cup$-\'{e}tale basis for $G$.
\end{thm}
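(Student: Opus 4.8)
The plan is to build on \autoref{SpacesFromPosets}: since a basic semigroup is a basic poset by \autoref{precE}, that theorem already tells us $G$ (as $\prec$-ultrafilter space) is locally compact locally Hausdorff and carries the relations \eqref{inclusion}, \eqref{Hunion} and \eqref{compcont}, while the preceding proposition shows $G$ is a subgroupoid of the Lenz groupoid, with inverse $W\mapsto W^{-1}=\{w^{-1}:w\in W\}$ and product $U\cdot V=(UV)^\leq$ (defined exactly when $(U^{-1}U)^\leq=(VV^{-1})^\leq$). So only the two formulas and the étale property remain. The inverse formula is immediate: since $c\mapsto c^{-1}$ is an involutive anti-isomorphism of $S$, the map $W\mapsto W^{-1}$ is an involution of $G$, and $W^{-1}\in O_{a^{-1}}\Leftrightarrow a^{-1}\in W^{-1}\Leftrightarrow a\in W\Leftrightarrow W\in O_a$, giving $O_a^{-1}=O_{a^{-1}}$.

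The inclusion $O_a\cdot O_b\subseteq O_{ab}$ is also easy, as any composable pair $U\in O_a$, $V\in O_b$ satisfies $ab\in UV\subseteq(UV)^\leq=U\cdot V$. The reverse inclusion is the main obstacle. Given $W\in O_{ab}$, i.e. $ab\in W$, I would invoke \autoref{abW} to obtain $\leq$-filters $U_0=(Wb^{-1})^\leq$ and $V_0=(a^{-1}W)^\leq$ with $a\in U_0$, $b\in V_0$, $(U_0^{-1}U_0)^\leq=(V_0V_0^{-1})^\leq$ and $(U_0V_0)^\leq=W$. The crux is to upgrade $U_0,V_0$ to genuine $\prec$-ultrafilters. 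From $ab\in W$ one computes $aa^{-1}\geq abb^{-1}a^{-1}\in WW^{-1}$ and $b^{-1}b\geq b^{-1}a^{-1}ab\in W^{-1}W$, so $aa^{-1}\in(WW^{-1})^\leq$ and $b^{-1}b\in(W^{-1}W)^\leq$; then \autoref{SGaction} applied to $a^{-1}$ and $W$ gives $V_0=(a^{-1}W)^\leq\in G$, and its right-handed dual (via the anti-isomorphism $c\mapsto c^{-1}$) applied to $b^{-1}$ and $W$ gives $U_0=(Wb^{-1})^\leq\in G$. As $G$ is a subgroupoid, the composable product $U_0\cdot V_0$ lies in $G$ and equals $W$, so $W\in O_a\cdot O_b$, establishing $O_{ab}=O_a\cdot O_b$.

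Next I would identify the unit space. Any $\prec$-ultrafilter $W$ containing an idempotent $e$ has $W\cap{\downarrow}e\subseteq E$ (an element below an idempotent is idempotent), so by \autoref{InitialSegment} $W$ is generated as a $\prec$-filter by idempotents; hence $W^{-1}=W$ and $W\cdot W=W$, i.e. $W\in G^0$. In particular $O_{a^{-1}a}\subseteq G^0$. Combined with the two formulas, the basis $(O_a)_{a\in S}$ satisfies conditions \eqref{O-1N}--\eqref{O-1cdotO} of \autoref{EB}, namely $O_a^{-1}=O_{a^{-1}}$, $O_a\cdot O_b=O_{ab}$, and $O_a^{-1}\cdot O_a=O_{a^{-1}a}\subseteq G^0$. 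Thus $(O_a)$ is an étale basis and $G$ is an étale groupoid by \autoref{EG<=>EB}, while local compactness and local Hausdorffness come for free from \autoref{SpacesFromPosets}.

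Finally, for the ``Moreover'' part, assume $S$ is $\simeq$-basic. Since $a\mapsto O_a$ is an inverse-semigroup isomorphism onto the étale basis $(O_a)$, \autoref{EtaleBasisleqsim} gives $a\sim b\Leftrightarrow O_a\cup O_b$ is a bisection, and \eqref{Hunion} gives $a\U b\Leftrightarrow O_a\cup O_b$ is Hausdorff, so $a\simeq b\Leftrightarrow O_a\cup O_b$ is a Hausdorff bisection. I would then verify the $\cup$-étale condition \eqref{OcupNEquiv} (valid by \autoref{LHCbisections}). For $\Rightarrow$: if $O_a\cup O_b=O_c$ then $a,b\leq c$, so $a\vee b$ exists and \eqref{CompatibleJoins} (with $\pp\,=\,\prec$ by \eqref{ESupported}) yields $a',b'$ with $a\prec a'\simeq b'\succ b$, whence $O_{a'}\cup O_{b'}$ is an open Hausdorff bisection and $O_a\cup O_b\Subset O_{a'}\cup O_{b'}$ by \eqref{compcont}. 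For $\Leftarrow$: if $O_a\cup O_b\Subset B$ for an open Hausdorff bisection $B$, cover a compact $C$ with $O_a\cup O_b\subseteq C\subseteq B$ by finitely many $O_{c_i'}$ with $c_i'\prec c_i$ and $O_{c_i}\subseteq B$, and build $\bigvee_i c_i'$ by repeated use of \eqref{CompatibleJoins}, exactly as in the ``Moreover'' argument of \autoref{SpacesFromPosets} (using \autoref{aveebllc} and \eqref{Interpolation} to amalgamate joins); the only change is that each pairwise union $O_{c_i}\cup O_{c_j}\subseteq B$ is now a Hausdorff bisection, so the relevant elements are $\simeq$-related. As $a,b\leq\bigvee_i c_i'$, the join $a\vee b$ exists and $O_a\cup O_b=O_{a\vee b}$, giving \eqref{OcupNEquiv}. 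The single genuinely delicate point throughout is the reverse inclusion in the product formula, where one must realize a $\prec$-ultrafilter containing $ab$ as an honest composable product, which is precisely what \autoref{abW} together with \autoref{SGaction} (and its dual) deliver.
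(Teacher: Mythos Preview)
Your proposal is correct and follows essentially the same route as the paper: the crux is the reverse inclusion $O_{ab}\subseteq O_a\cdot O_b$, which both you and the paper handle by combining \autoref{abW} with \autoref{SGaction} (and its dual under $c\mapsto c^{-1}$) to realize $W\ni ab$ as $(Wb^{-1})^\leq\cdot(a^{-1}W)^\leq$ with both factors in $G$. You are in fact a bit more careful than the paper in explicitly checking $O_{a^{-1}a}\subseteq G^0$ (needed for \autoref{EB}~\eqref{O-1cdotO}) and in spelling out the adaptation of the ``Moreover'' argument from \autoref{SpacesFromPosets} to \eqref{CompatibleJoins}; the paper leaves both of these implicit.
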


\begin{proof}
By \autoref{SpacesFromPosets}, $G$ is locally compact locally Hausdorff.

If $a\in U$ and $b\in V$ then certainly $ab\in UV$ so $O_a\cdot O_b\subseteq O_{ab}$.  Conversely, if $ab\in W\in G$ then $b\in(a^{-1}W)^\leq\in G$, by \autoref{SGaction}, as $aa^{-1}\geq abb^{-1}a^{-1}\in WW^{-1}$.   Likewise, $a\in(Wb^{-1})^\leq\in G$.  Setting $U=(Wb^{-1})^\leq$ and $V=(a^{-1}W)^\leq$, we have $(U^{-1}U)^\leq=(VV^{-1})^\leq$ and $W=(UV)^\leq$, by \autoref{abW}, and hence $W\in O_a\cdot O_b$.  As $W$ was arbitrary, $O_{ab}\subseteq O_a\cdot O_b$.  Thus $(O_a)_{a\in S}$ is an \'{e}tale basis and hence $G$ is an \'{e}tale groupoid, by \autoref{EG<=>EB}.

If $S$ is a $\simeq$-basic semigroup then we can show that $(O_a)_{a\in S}$ is a $\cup$-\'{e}tale basis by essentially the same argument as at the end of the proof of \autoref{SpacesFromPosets}.
\end{proof}


This completes the duality as far as the objects are concerned.  As in \autoref{secFunctoriality}, this duality is also functorial with respect to the appropriate morphisms, namely partial continuous functors between \'{e}tale groupoids (i.e. contiuous maps preserving multiplication whenever it is defined) and basic ($\vee$-)morphisms $\sqsubset$ between basic semigroups which also preserve multiplication and inverses, i.e. also satsifying
\begin{align*}
a\sqsubset a'\quad\text{and}\quad b\sqsubset b'\qquad&\Rightarrow\qquad ab\sqsubset a'b'.\\
a\sqsubset a'\qquad&\Rightarrow\qquad a^{-1}\sqsubset a'^{-1}.
\end{align*}

\bibliography{maths}{}

\newcommand{\etalchar}[1]{$^{#1}$}
\begin{thebibliography}{GHK{\etalchar{+}}03}

\bibitem[BdR63]{BaayendeRijk1963}
P.C. Baayen and R.P.G. de~Rijk.
\newblock Compingent lattices and algebras.
\newblock {\em Indagationes Mathematicae (Proceedings)}, 66(Supplement C):591
  -- 602, 1963.
\newblock \href {http://dx.doi.org/10.1016/S1385-7258(63)50058-4}
  {\path{doi:10.1016/S1385-7258(63)50058-4}}.

\bibitem[BS16]{BiceStarling2016}
Tristan Bice and Charles Starling.
\newblock Locally compact {S}tone duality.
\newblock 2016.
\newblock (to appear in J. Log. Anal.).
\newblock \href {http://arxiv.org/abs/1609.09708} {\path{arXiv:1609.09708}}.

\bibitem[Exe10]{Exel2010}
Ruy Exel.
\newblock Reconstructing a totally disconnected groupoid from its ample
  semigroup.
\newblock {\em Proc. Amer. Math. Soc.}, 138(8):2991--3001, 2010.
\newblock \href {http://dx.doi.org/10.1090/S0002-9939-10-10346-3}
  {\path{doi:10.1090/S0002-9939-10-10346-3}}.

\bibitem[GHK{\etalchar{+}}03]{GierzHofmannKeimelLawsonMisloveScott2003}
G.~Gierz, K.~H. Hofmann, K.~Keimel, J.~D. Lawson, M.~Mislove, and D.~S. Scott.
\newblock {\em Continuous lattices and domains}, volume~93 of {\em Encyclopedia
  of Mathematics and its Applications}.
\newblock Cambridge University Press, Cambridge, 2003.
\newblock \href {http://dx.doi.org/10.1017/CBO9780511542725}
  {\path{doi:10.1017/CBO9780511542725}}.

\bibitem[GL13]{Goubault2013}
Jean Goubault-Larrecq.
\newblock {\em Non-{H}ausdorff topology and domain theory}, volume~22 of {\em
  New Mathematical Monographs}.
\newblock Cambridge University Press, Cambridge, 2013.
\newblock [On the cover: Selected topics in point-set topology].
\newblock \href {http://dx.doi.org/10.1017/CBO9781139524438}
  {\path{doi:10.1017/CBO9781139524438}}.

\bibitem[Joh86]{Johnstone1986}
Peter~T Johnstone.
\newblock {\em Stone spaces}, volume~3.
\newblock Cambridge University Press, 1986.

\bibitem[Kei17]{Keimel2016}
Klaus Keimel.
\newblock The {C}untz semigroup and domain theory.
\newblock {\em Soft Computing}, 21(10):2485--2502, 2017.

\bibitem[Kel97]{Kellendonk1997}
Johannes Kellendonk.
\newblock Topological equivalence of tilings.
\newblock {\em Journal of Mathematical Physics}, 38(4):1823--1842, 1997.
\newblock \href {http://dx.doi.org/10.1063/1.531915}
  {\path{doi:10.1063/1.531915}}.

\bibitem[KL16]{KudryavtsevaLawson2016}
Ganna Kudryavtseva and Mark~V. Lawson.
\newblock Boolean sets, skew {B}oolean algebras and a non-commutative {S}tone
  duality.
\newblock {\em Algebra Universalis}, 75(1):1--19, 2016.
\newblock \href {http://dx.doi.org/10.1007/s00012-015-0361-0}
  {\path{doi:10.1007/s00012-015-0361-0}}.

\bibitem[KL17]{KudryavtsevaLawson2017}
Ganna Kudryavtseva and Mark~V. Lawson.
\newblock A perspective on non-commutative frame theory.
\newblock {\em Advances in Mathematics}, 311(Supplement C):378 -- 468, 2017.
\newblock \href {http://dx.doi.org/10.1016/j.aim.2017.02.028}
  {\path{doi:10.1016/j.aim.2017.02.028}}.

\bibitem[Law98]{Lawson1998}
Mark~V. Lawson.
\newblock {\em Inverse semigroups}.
\newblock World Scientific Publishing Co., Inc., River Edge, NJ, 1998.
\newblock The theory of partial symmetries.
\newblock \href {http://dx.doi.org/10.1142/9789812816689}
  {\path{doi:10.1142/9789812816689}}.

\bibitem[Law12]{Lawson2012}
M.~V. Lawson.
\newblock Non-commutative stone duality: Inverse semigroups, topological
  groupoids and {C}*-algebras.
\newblock {\em International Journal of Algebra and Computation},
  22(06):1250058, 2012.
\newblock \href {http://dx.doi.org/10.1142/S0218196712500580}
  {\path{doi:10.1142/S0218196712500580}}.

\bibitem[Len08]{Lenz2008}
Daniel~H. Lenz.
\newblock On an order-based construction of a topological groupoid from an
  inverse semigroup.
\newblock {\em Proc. Edinb. Math. Soc. (2)}, 51(2):387--406, 2008.
\newblock \href {http://dx.doi.org/10.1017/S0013091506000083}
  {\path{doi:10.1017/S0013091506000083}}.

\bibitem[LL13]{LawsonLenz2013}
Mark~V. Lawson and Daniel~H. Lenz.
\newblock Pseudogroups and their \'{e}tale groupoids.
\newblock {\em Advances in Mathematics}, 244(Supplement C):117 -- 170, 2013.
\newblock \href {http://dx.doi.org/10.1016/j.aim.2013.04.022}
  {\path{doi:10.1016/j.aim.2013.04.022}}.

\bibitem[LMS13]{LawsonMargolisSteinberg2013}
M.~V. Lawson, S.~W. Margolis, and B.~Steinberg.
\newblock The \'etale groupoid of an inverse semigroup as a groupoid of
  filters.
\newblock {\em J. Aust. Math. Soc.}, 94(2):234--256, 2013.
\newblock URL: \url{https://doi.org/10.1017/S144678871200050X}.

\bibitem[Mar02]{Marker2002}
David Marker.
\newblock {\em Model theory}, volume 217 of {\em Graduate Texts in
  Mathematics}.
\newblock Springer-Verlag, New York, 2002.
\newblock An introduction.

\bibitem[PP12]{PicadoPultr2012}
Jorge Picado and Ale{\v{s}} Pultr.
\newblock {\em Frames and locales: Topology without points}.
\newblock Frontiers in Mathematics. Birkh\"auser/Springer Basel AG, Basel,
  2012.
\newblock \href {http://dx.doi.org/10.1007/978-3-0348-0154-6}
  {\path{doi:10.1007/978-3-0348-0154-6}}.

\bibitem[Res07]{Resende2007}
Pedro Resende.
\newblock \'{E}tale groupoids and their quantales.
\newblock {\em Advances in Mathematics}, 208(1):147 -- 209, 2007.
\newblock \href {http://dx.doi.org/10.1016/j.aim.2006.02.004}
  {\path{doi:10.1016/j.aim.2006.02.004}}.

\bibitem[Sco13]{Scott2013}
Brian~M. Scott.
\newblock Compact and locally {H}ausdorff, but not locally compact.
\newblock Mathematics Stack Exchange, 2013.
\newblock URL: \url{https://math.stackexchange.com/q/337657}.

\bibitem[{Shi}52]{Shirota1952}
Taira {Shirota}.
\newblock {A generalization of a theorem of I. Kaplansky.}
\newblock {\em {Osaka Math. J.}}, 4:121--132, 1952.
\newblock URL: \url{http://projecteuclid.org/euclid.ojm/1200687806}.

\bibitem[Sim17]{Sims2017}
Aidan Sims.
\newblock {\'E}tale groupoids and their {C}*-algebras, 2017.
\newblock \href {http://arxiv.org/abs/1710.10897} {\path{arXiv:1710.10897}}.

\bibitem[Vri62]{deVries1962}
H.~De Vries.
\newblock Compact spaces and compactifications: An algebraic approach.
\newblock Thesis Amsterdam, 1962.

\end{thebibliography}
\bibliographystyle{alphaurl}

\end{document}